\author{Andy Hammerlindl}
\address{School of Mathematical Sciences, Monash University, Victoria 3800 Australia}
\email{andy.hammerlindl@monash.edu}
\author{Rafael Potrie}
\address{Centro de Matem\'atica, Universidad de la Rep\'ublica, Uruguay \& IRL-IFUMI (CNRS) }
\email{rpotrie@cmat.edu.uy}
\urladdr{http://www.cmat.edu.uy/~rpotrie/}
\title{Horizontality of partially hyperbolic foliations}
    \newcommand{\bbR}{\mathbb{R}}
    \newcommand{\bbH}{\mathbb{H}}
    \newcommand{\bbZ}{\mathbb{Z}}
    \newcommand{\subof}{\subset}
    \newcommand{\ti}{\times}
    \newcommand{\sans}{\setminus}
    \newcommand{\Es}{E^s}
    \newcommand{\Ec}{E^c}
    \newcommand{\Eu}{E^u}
    \newcommand{\Ecu}{E^{cu}}
    \newcommand{\Ecs}{E^{cs}}
    \newcommand{\Ws}{W^s}
    \newcommand{\Wu}{W^u}
    \newcommand{\inv}{^{-1}}
    \newcommand{\invn}{^{-n}}
    \newcommand{\Fcal}{\mathcal{F}}
    \newcommand{\Gcal}{\mathcal{G}}
    \newcommand{\Fs}{\mathcal{F}^s}
    \newcommand{\Fc}{\mathcal{F}^c}
    \newcommand{\Fu}{\mathcal{F}^u}
    \newcommand{\Fcu}{\mathcal{F}^{cu}}
    \newcommand{\Fcs}{\mathcal{F}^{cs}}
    \newcommand{\Fchat}{{\mathcal{\hat F}}^c}
    \newcommand{\Fcshat}{{\mathcal{\hat F}}^{cs}}
    \newcommand{\Lchat}{{\hat L}^c}
    \newcommand{\Lcshat}{{\hat L}^{cs}}
    \newcommand{\Lcuhat}{{\hat L}^{cu}}
    \newcommand{\hcs}{h^{cs}}
    \newcommand{\hcu}{h^{cu}}
    \newcommand{\hcsep}{h^{cs}_{\ep}}
    \newcommand{\hcuep}{h^{cu}_{\ep}}
    \newcommand{\hcep}{h^{c}_{\ep}}
    \newcommand{\Lep}{L_{\ep}}
    \newcommand{\Lcsep}{L^{cs}_{\ep}}
    \newcommand{\Lcuep}{L^{cu}_{\ep}}
    \newcommand{\Lcep}{L^{c}_{\ep}}
    \newcommand{\Fcsep}{\mathcal{F}^{cs}_{\ep}}
    \newcommand{\Fcuep}{\mathcal{F}^{cu}_{\ep}}
    \newcommand{\Fcep}{\mathcal{F}^{c}_{\ep}}
    \newcommand{\Wbran}{\mathcal{W}}
    \newcommand{\Wcsbran}{\mathcal{W}^{cs}}
    \newcommand{\Wcubran}{\mathcal{W}^{cu}}
    \newcommand{\Wcbran}{\mathcal{W}^{c}}
    \newcommand{\Lc}{L^{c}}
    \newcommand{\Ls}{L^{s}}
    \newcommand{\Lcs}{L^{cs}}
    \newcommand{\Lcu}{L^{cu}}
    \newcommand{\Jc}{J^{c}}
    \newcommand{\Js}{J^{s}}
    \newcommand{\Ju}{J^{u}}
    \newcommand{\tM}{\widetilde{M}}
    \newcommand{\Mhat}{\hat{M}}
    \newcommand{\fhat}{\hat{f}}
    \newcommand{\ep}{\epsilon}
    \newcommand{\lam}{\lambda}
    \newcommand{\Lam}{\Lambda}
    \newcommand{\Lamcs}{\Lambda^{cs}}
    \newcommand{\Lamcu}{\Lambda^{cu}}
    \newcommand{\Gam}{\Gamma}
    \newcommand{\gam}{\gamma}
    \newcommand{\Sig}{\Sigma}
    \newcommand{\sig}{\sigma}
    \newcommand{\al}{\alpha}
    \newcommand{\bt}{\beta}
    \newcommand{\phic}{\phi}
    \newcommand{\hatphic}{\hat \phi}
    \newcommand{\qandq}{\quad \text{and} \quad}
    \newcommand{\qorq}{\quad \text{or} \quad}
    \newcommand{\dist}{\operatorname{dist}}
    \newcommand{\area}{\operatorname{area}}
    \newcommand{\length}{\operatorname{length}}
    \newcommand{\volume}{\operatorname{volume}}
    \newcommand{\diam}{\operatorname{diam}}
    \newcommand{\ior}{\operatorname{int}}
    \newcommand{\del}{\partial}
\newcommand{\simpleCD}[9]{
\begin{CD}
#1 @>#2>> #3 \\
@VV#4V  @VV#6V \\
#7 @>#8>> #9
\end{CD}
}
\numberwithin{equation}{section}
\newtheorem{thm}[equation]{Theorem}
\newtheorem{cor}[equation]{Corollary}
\newtheorem{lemma}[equation]{Lemma}
\newtheorem{claim}[equation]{Claim}
\newtheorem{prop}[equation]{Proposition}
\newtheorem{addendum}[equation]{Addendum}
\theoremstyle{remark}
\newtheorem*{remark} {\textbf{Remark}}
\crefname{figure}{figure}{Figure}
\begin{document}

\maketitle

\begin{abstract}
    We show exactly which Seifert manifolds support
    partially hyperbolic dynamical systems.
    In particular, a circle bundle over a higher-genus surface
    supports
    a partially hyperbolic system if and only
    if it supports an Anosov flow.
    We also show for these systems that the
    center-stable and center-unstable foliations
    can be isotoped so that their leaves are transverse
    to the circle fibering.
    As a consequence,
    every partially hyperbolic system defined on the unit tangent bundle
    of a higher-genus surface is a collapsed Anosov flow.
\end{abstract}

\section{Introduction} \label{sec:intro}

Partially hyperbolic systems have been studied since the 1970s
as a large and important class of chaotic dynamical systems
and as a natural generalization of the concept of a
uniformly hyperbolic system.
We give a precise definition of partial hyperbolicity
in \cref{sec:bran}.

Despite decades of intensive study,
it was a long-standing open question if any 3-dimensional manifold could
support a partially hyperbolic system.
This question was finally answered in the negative
by Brin, Burago, and Ivanov, who developed
branching foliation theory and showed that the 3-sphere
does not support a partially hyperbolic system
\cite{BBI1, BI, BBI2}.
In fact, they showed that a 3-manifold with a partially hyperbolic
system must also support a Reebless foliation
and this rules out many 3-manifolds.
This breakthrough led in the subsequent years to a wealth of
new results for partially hyperbolic systems.
See the surveys \cite{hp2018survey, crhrhu2018survey}
as well as the more recent results in
\cite{bffp1, bffp2, bfp2023collapsed}.

We now answer the question for Seifert manifolds,
also called Seifert fiber spaces,
showing exactly which of these manifolds support partially
hyperbolic diffeomorphisms.

\begin{thm} \label{thm:whichseifert}
    A Seifert manifold $M$ supports a partially hyperbolic diffeomorphism
    if and only if
    \begin{enumerate}
        \item $M$ is a nilmanifold,
        \item
        $M$ is double covered by a nilmanifold, or
        \item
        $M$ finitely covers the unit tangent bundle of a
        hyperbolic orbifold.
    \end{enumerate} \end{thm}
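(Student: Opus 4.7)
The ``if'' direction is by explicit construction. On a $3$-torus (which is a nilmanifold) a hyperbolic matrix in $SL(3,\bbZ)$ is Anosov and hence partially hyperbolic; on the Heisenberg nilmanifold there is the standard partially hyperbolic example whose center bundle is tangent to the $\bbS^1$-fibers, and both constructions descend to give partially hyperbolic diffeomorphisms on the double quotients appearing in case~(2). For case~(3), the time-one map of the geodesic flow on the unit tangent bundle $\UT \Sig$ of a hyperbolic orbifold $\Sig$ is partially hyperbolic, and the property passes to finite covers.

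The bulk of the work is the ``only if'' direction. Assume $f$ is a partially hyperbolic diffeomorphism of a Seifert manifold $M$, with fibration $p\colon M \to \Sig$ over a $2$-orbifold $\Sig$. Following the strategy of Brin-Burago-Ivanov, one first produces Reebless branching center-stable and center-unstable foliations $\Wcsbran$ and $\Wcubran$ for $f$. The horizontality result announced in the abstract then lets one isotope these branching foliations so that each leaf becomes transverse to the circle fibers of $p$. Hence $M$ carries a transverse pair of horizontal codimension-one foliations relative to its Seifert fibration, which is a very restrictive structural condition on the pair $(M, p)$.

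The argument then proceeds by cases on the geometry of $\Sig$. If $\Sig$ is a spherical or bad orbifold, no Reebless (let alone horizontal) codimension-one foliation on $M$ can exist, ruling these cases out. If $\Sig$ is Euclidean, the classification of Seifert fibrations with Euclidean base, combined with the horizontal foliation data, places $M$ into case~(1) or~(2). The principal obstacle is the hyperbolic base case: there one must identify the rational Euler number of $p\colon M \to \Sig$ with that of $\UT \Sig$ up to a finite multiplicative factor. I would approach this through a Milnor-Wood type inequality for the Euler number of a horizontal foliation, together with analysis of the holonomy representation of the leaves, saturating only when $M$ is commensurable with $\UT \Sig$. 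Extra care is needed for non-orientable $\Sig$ and for orientation-reversing symmetries of the fibration, which is precisely where the double-cover subtlety of case~(2) enters naturally.
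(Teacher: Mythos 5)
Your overall skeleton (split on the geometry of the base orbifold; horizontality for the hyperbolic base; a separate argument otherwise) resembles the paper's, but two steps as you describe them would fail. First, you invoke horizontality of the branching foliations \emph{before} the case division on the base. The horizontality theorem (\cref{thm:main}) is proved only for circle bundles over higher-genus surfaces, i.e.\ hyperbolic base, and it is simply false otherwise: for the standard partially hyperbolic skew products on nilmanifolds the center direction \emph{is} the fiber direction, so the cs and cu leaves are unions of fibers and cannot be made transverse to the fibration. For a non-hyperbolic base the paper instead observes that $\pi_1(M)$ is virtually nilpotent and quotes the complete classification of partially hyperbolic systems on such manifolds from \cite[Appendix~A]{HP2}, which is what actually produces cases (1) and (2); your ``classification of Seifert fibrations with Euclidean base combined with the horizontal foliation data'' has no horizontal foliation data to work with, and the Euclidean case is a substantial classification theorem, not a foliation-theoretic observation.

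Second, in the hyperbolic-base case a Milnor--Wood inequality cannot close the argument. Milnor--Wood only gives $|e(M)| \le |\chi(\Sigma)|$, and it is satisfied with $e(M)=0$ by the trivial bundle $S \times S^1$, which carries horizontal foliations without compact leaves (suspensions of Euler-number-zero representations into $\operatorname{Homeo}^+(S^1)$) yet supports no partially hyperbolic diffeomorphism --- that is exactly \cref{cor:productmfld}, a statement your argument would have to prove. Nothing in your sketch forces the inequality to be saturated. The paper's route is different: once both branching foliations are horizontal (\cref{thm:main}), it invokes Sections 6 and 7 of \cite{hps2018seif}, where the \emph{dynamics} on the pair of transverse horizontal foliations (the one-dimensional intersection foliation behaving coarsely like a geodesic flow) is what forces $M$ to finitely cover the unit tangent bundle; this is \cref{thm:onlyflow}. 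If you want to pursue the Euler-class route you would still need a dynamical argument that the horizontal holonomy is semi-conjugate to a Fuchsian representation, which is precisely the content your sketch omits.
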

Here, the 3-dimensional nilmanifolds include the 3-torus.

\begin{cor} \label{cor:productmfld}
    Let $S$ be a closed oriented surface of genus $g \ge 2,$
    then there are no partially hyperbolic diffeomorphisms
    on $S \ti S^1.$
\end{cor}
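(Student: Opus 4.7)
The plan is to reduce to \cref{thm:whichseifert} and verify that $M = S \ti S^1$ does not satisfy any of its three conditions. Since $M$ is a trivial $S^1$-bundle over the higher-genus surface $S$, it is automatically a Seifert manifold, so the reduction is immediate.

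Conditions (1) and (2) are easy: $\pi_1(M) = \pi_1(S) \ti \bbZ$ contains the genus-$g$ surface group $\pi_1(S)$, which has exponential growth for $g \ge 2$ and hence is not virtually nilpotent by Gromov's theorem. Thus $M$ is neither a nilmanifold nor doubly covered by one.

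The main task is ruling out (3): that $M$ cannot finitely cover any unit tangent bundle $\UTSig$ of a hyperbolic 2-orbifold. The cleanest approach is via Thurston geometries: $M$ carries an $\bbH^2 \ti \bbR$ structure (its universal cover being $\bbH^2 \ti \bbR$), while $\UTSig$ carries a $\widetilde{PSL}(2,\bbR)$ structure. Since the model geometry of a closed geometric 3-manifold is a finite-cover invariant and these two geometries are distinct, $M$ cannot finitely cover $\UTSig$.

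If one prefers a purely algebraic argument, it proceeds via Euler classes. The central extension $1 \to \bbZ \to \pi_1(\UTSig) \to \pi_1^{\mathrm{orb}}(\Sig) \to 1$ has nonzero rational Euler class $e$, corresponding to the nonzero Euler number of the Seifert fibration over a hyperbolic base. A finite-index inclusion $\pi_1(M) \hookrightarrow \pi_1(\UTSig)$ would, after projecting modulo centers, exhibit $\pi_1(S)$ as a finite-index subgroup $\Gam \subset \pi_1^{\mathrm{orb}}(\Sig)$, and the pullback extension would be the trivial product $\pi_1(S) \ti \bbZ$. Thus $\mathrm{res}^{\pi_1^{\mathrm{orb}}(\Sig)}_{\Gam}(e)$ would have to vanish in $H^2(\Gam; \bbQ)$; but restriction to a finite-index subgroup is injective on rational cohomology via the transfer, contradicting $e \neq 0$.
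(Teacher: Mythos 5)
Your proposal is correct and is exactly the intended derivation: the paper presents \cref{cor:productmfld} as an immediate consequence of \cref{thm:whichseifert}, leaving implicit the (standard) verifications that you spell out — that $\pi_1(S)\ti\bbZ$ is not virtually nilpotent, and that $S\ti S^1$ (Euler number zero, geometry $\bbH^2\ti\bbR$) cannot finitely cover a unit tangent bundle (nonzero Euler number, geometry $\widetilde{PSL}(2,\bbR)$). Both your geometric and your Euler-class arguments for ruling out condition (3) are valid.
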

Here as usual, $S^1$ denotes the circle.

This result improves and builds on our previous paper with Shannon
\cite{hps2018seif}. In that paper, we obtained the same results under
additional assumptions on the partially hyperbolic diffeomorphism, such as
transitivity or information on the action in the fundamental group. In fact,
those assumptions were used to prove the main technical tool in  order to get
the results, which is to deduce horizontality of the dynamical foliations. Our
main result here is to prove this general result (which is interesting on its
own) without needing further assumptions:  

\begin{thm} \label{thm:main}
    For a partially hyperbolic diffeomorphism defined on a
    circle bundle over a higher-genus surface,
    the cs and cu branching foliations are horizontal.
\end{thm}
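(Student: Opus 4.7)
The plan is to rule out non-horizontality of the center-stable (and by symmetry, center-unstable) branching foliation $\Wcsbran$ on the circle bundle $M\to S$, where $S$ has genus at least two.

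First I would work in the universal cover $\tM$, which, since $S$ has higher genus, is homeomorphic to $\bbR^3$ and fibers over $\bbH^2$ via a projection $p$. By Brin-Burago-Ivanov, $\Wcsbran$ is Reebless and, since $\tM$ admits no compact leaves, lifts to a branching foliation $\widetilde{\Wcsbran}$ by properly embedded planes. Horizontality is equivalent to the statement that $p$ restricted to each leaf of $\widetilde{\Wcsbran}$ is a homeomorphism onto $\bbH^2$.

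Second, I would approximate $\Wcsbran$ by an honest Reebless foliation $\Fcs$ on $M$ and apply the classical topological classification of Reebless foliations on Seifert manifolds, due to Brittenham and Eisenbud-Hirsch-Neumann: after isotopy, $\Fcs$ is horizontal away from a disjoint union of incompressible vertical tori and Klein bottles splitting $M$ into pieces on each of which $\Fcs$ is either horizontal or vertical. Thus failure of horizontality would force a vertical periodic leaf $T$ of $\Wcsbran$: a torus or Klein bottle which is a union of Seifert fibers.

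Third, and this is the key step, I would derive a contradiction from such a $T$. After passing to a finite cover and a power of $f$, we may assume $T$ is two-sided and $f$-invariant, and that $f|_T$ is partially hyperbolic in dimension two, hence Anosov. Any self-homeomorphism of $M$ preserves the center of $\pi_1(M)$, which is infinite cyclic generated by the Seifert fiber class $h$, so $f_\ast(h)=h^{\pm1}$. Because $T$ is vertical and incompressible, the inclusion $T\hookrightarrow M$ is $\pi_1$-injective and sends some generator of $\pi_1(T)\cong\bbZ^2$ to $h$; naturality then forces this generator to be an eigenvector of the induced matrix $(f|_T)_\ast\in\operatorname{GL}_2(\bbZ)$ with eigenvalue $\pm1$. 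But $(f|_T)_\ast$ is the homology action of an Anosov diffeomorphism of the torus, hence hyperbolic with no eigenvalue of modulus one, a contradiction.

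The main obstacle, and what distinguishes this from the earlier work with Shannon where transitivity and other extra hypotheses were used, lies in the second step: because $\Wcsbran$ genuinely branches and need not be tame enough to directly apply Brittenham's classification, one must carefully approximate and control the branching locus, ensuring that the vertical piece produced by the dichotomy actually corresponds to a dynamically meaningful periodic leaf of $\Wcsbran$ rather than an artifact of the approximation. Handling this passage, together with the Klein bottle case of the third step (which requires an additional double cover), is the heart of what the present paper contributes beyond the earlier joint work. The symmetric argument applied to $f\inv$ yields horizontality of $\Wcubran$.
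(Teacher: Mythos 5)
There is a genuine gap, and it sits exactly where the real difficulty of the theorem lies. Your Step 2 asserts that failure of horizontality forces a compact vertical leaf $T$ (a torus or Klein bottle that is a union of fibers). This is not what Brittenham's theorem gives. After isotopy, the vertical part of the foliation is the preimage under $\pi : M \to S$ of a \emph{geodesic lamination} on $S,$ and the leaves of a geodesic lamination on a higher-genus surface are typically non-compact; the corresponding vertical leaves of $\Fcs$ are then properly embedded open cylinders $\bbR \times S^1,$ not tori. The torus case is in fact the trivial one: an invariant torus tangent to $\Ecs$ is excluded immediately by the result of Rodriguez Hertz--Rodriguez Hertz--Ures (this is the paper's \cref{lemma:onetwo}), so your Step 3, whatever its merits, only disposes of a case that is already known to be empty. (As an aside, even there your argument is shaky: a leaf of $\Wcsbran$ is tangent to $\Ec \oplus \Es,$ and the restriction of $f$ to such a torus need not be Anosov, since nothing forces expansion along $\Ec.$)

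What remains after the compact leaves are removed --- ruling out non-compact vertical cylinder leaves --- is the actual content of the theorem, and your proposal contains no mechanism for it. The paper's route is entirely different: it puts $\Fcs$ into ``ideal position'' over a geodesic lamination, studies the one-dimensional center foliation $\Fc = \Fcs \cap \Fcu$ inside the vertical cylinders (using coarse contraction, well-placed segments, and length-versus-area estimates to control center circles and exclude one-ended center leaves), and then builds an \emph{averaged vector field} on $S$ from the center flow. Horizontality then follows from the Poincar\'e--Hopf theorem: if there were vertical leaves, the averaged vector field would have too few zeros of too small an index to account for $\chi(S) < 0$ (the circle-free case gives a nowhere-zero field outright; the general case requires the excision and index bookkeeping of Sections 8--13). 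No appeal to a torus decomposition of $M$ can substitute for this, because the vertical sublamination generically contains no compact leaf at all.
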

Throughout this paper, a \emph{higher-genus surface} is
a closed oriented surface $S$ without boundary and
with genus at least two.
We further assume that $S$ is equipped with some specific choice of hyperbolic
metric and so is a quotient of the hyperbolic plane $\bbH^2.$

In \cite{bfp2023collapsed} a class of partially hyperbolic
diffeomorphisms is proposed,
modeled using self-orbit equivalences of Anosov flows.
This class, called \emph{collapsed Anosov flows}, could potentially be a
model to describe \emph{all} partially hyperbolic diffeomorphisms on 3-manifolds
without a virtually solvable fundamental group.
For the virtually solvable case, a complete classification
is given in \cite{HP2}.
We do not give the definition of a collapsed Anosov flow here
as it is somewhat technical, and refer the reader to
\cite{bfp2023collapsed}.

In \cite{fp20XXtransverse}, it is shown that under the
assumption of horizontality of the branching foliations, partially hyperbolic
diffeomorphisms of unit tangent bundles of higher genus surfaces must be
collapsed Anosov flows.
\Cref{thm:main} shows that this assumption can be removed
and so we obtain the following:

\begin{cor} \label{cor:collapsed}
    Every partially hyperbolic diffeomorphism
    on the unit tangent bundle of a higher-genus surface
    is a collapsed Anosov flow.
\end{cor}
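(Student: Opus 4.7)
The plan is to derive this corollary as a direct combination of two ingredients already available to us: \Cref{thm:main} of the present paper, and the transversality-to-collapsed-Anosov-flow result of \cite{fp20XXtransverse}. There is essentially no new content beyond recognizing that the hypotheses of both results are met.

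First I would observe that the unit tangent bundle $UT S$ of a higher-genus surface $S$ is a circle bundle over $S$: the fiber over a point $x \in S$ is the unit circle in $T_x S$ (with respect to the chosen hyperbolic metric), and this fibering is locally trivial and smooth. In particular, $UT S$ falls squarely within the class of manifolds to which \Cref{thm:main} applies. So for any partially hyperbolic diffeomorphism $f$ on $UT S$, we immediately obtain that its cs and cu branching foliations are horizontal, that is, isotopic to branching foliations transverse to the circle fibers.

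Next I would invoke the main theorem of \cite{fp20XXtransverse}, which asserts precisely that a partially hyperbolic diffeomorphism on the unit tangent bundle of a higher-genus surface whose cs and cu branching foliations are horizontal must be a collapsed Anosov flow. Combining these two steps yields the conclusion directly: horizontality comes from \Cref{thm:main}, and then \cite{fp20XXtransverse} converts horizontality into the collapsed Anosov flow structure.

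Since this is a two-line combination, there is no real technical obstacle; the only small point worth checking is that the notion of ``horizontal branching foliation'' used in \Cref{thm:main} agrees with the hypothesis of the theorem cited from \cite{fp20XXtransverse}, which in both cases means the leaves of the branching foliation can be isotoped to be transverse to the Seifert fibering. Once this is noted, the deduction is immediate and no further argument is required.
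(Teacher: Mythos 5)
Your proposal is correct and matches the paper's own derivation exactly: the paper obtains \Cref{cor:collapsed} by combining \Cref{thm:main} (horizontality of the cs and cu branching foliations on a circle bundle over a higher-genus surface) with the result of \cite{fp20XXtransverse} that horizontality implies the collapsed Anosov flow structure on unit tangent bundles. No further comment is needed.
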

This extends the results of \cite{bffp1,bffp2,fp2024pseudo} that establish
this result for hyperbolic 3-manifolds as well as some isotopy classes of
Seifert manifolds.
Note that \cite{BGHP} shows that there are plenty of isotopy classes of
diffeomorphisms of unit tangent bundles admitting partially hyperbolic
diffeomorphisms. 

\smallskip

\Cref{thm:main} holds if we replace
``circle bundle over a higher-genus surface''
with
``Seifert manifold over a hyperbolic orbifold''
since the latter condition can always be replaced by the former condition
by lifting to a finite cover.
Lifting to a finite cover does not affect horizontality of the foliations.
Using the results given in sections 6 and 7
of \cite{hps2018seif}, \cref{thm:main} implies the following:

\begin{thm} \label{thm:onlyflow}
    Suppose $M$ is a Seifert manifold over a hyperbolic orbifold $\Sig.$
    Then $M$ supports a partially hyperbolic diffeomorphism
    if and only if
    it it finitely covers the unit tangent bundle of $\Sig.$
\end{thm}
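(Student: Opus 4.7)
The plan is to derive \cref{thm:onlyflow} essentially as an immediate consequence of \cref{thm:main} combined with the arguments already present in sections 6 and 7 of \cite{hps2018seif}, so most of the work lies in the reduction rather than in any new construction.

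The ``if'' direction is standard: when $M$ finitely covers $\UTSig$, the time-one map of the geodesic flow of $\Sig$ is Anosov, hence partially hyperbolic, on $\UTSig$, and lifting it along the finite cover yields a partially hyperbolic diffeomorphism on $M$. (Strictly, one may need to pass to an iterate so that the lift is well-defined on $M$ itself, which is harmless.)

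For the ``only if'' direction, I would begin by passing to a finite cover. Since $\Sig$ is a hyperbolic orbifold, it admits a closed higher-genus surface $S$ as a finite regular cover; pulling this back through the Seifert fibration produces a finite cover $M' \to M$ with $M'$ a circle bundle over $S$. Lifting a suitable iterate of the partially hyperbolic diffeomorphism on $M$ to $M'$ gives a partially hyperbolic diffeomorphism there, so it suffices to show that $M'$ finitely covers $\UT S$: this then descends to the statement that $M$ finitely covers $\UTSig$. At this point \cref{thm:main} applies and gives horizontality of the $cs$ and $cu$ branching foliations on $M'$.

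With horizontality in hand, the arguments in sections 6 and 7 of \cite{hps2018seif} take over. In that paper, horizontality of the two branching foliations was established only under additional dynamical hypotheses (such as transitivity or information on the action on fundamental group), but once horizontality is granted, what those sections deduce, namely that the circle bundle $M'$ must finitely cover $\UT S$, uses the pair of transverse horizontal branching foliations as its only input and not the specific dynamical mechanism producing them. The main obstacle is simply to check that this reduction is clean and that the arguments of \cite{hps2018seif} go through verbatim once the unconditional horizontality of \cref{thm:main} replaces their conditional version; the real substantive content is carried by \cref{thm:main} itself.
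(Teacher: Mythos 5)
Your proposal matches the paper's own (very brief) argument: pass to a finite cover that is a circle bundle over a higher-genus surface, note that horizontality is unaffected by lifting, apply \cref{thm:main}, and then invoke sections 6 and 7 of \cite{hps2018seif}, whose conditional hypotheses were only ever used to establish horizontality. The one caveat is that your intermediate ``descent'' from $M'$ finitely covering $\UT S$ to $M$ finitely covering $\UTSig$ is not a formal covering-space fact (a common finite cover does not by itself give a covering between the two manifolds); it is cleaner to observe, as the paper implicitly does, that sections 6 and 7 of \cite{hps2018seif} already treat the Seifert manifold over the hyperbolic orbifold $\Sig$ directly once horizontality of the branching foliations on the finite cover is supplied.
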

If the base orbifold is not hyperbolic,
then the Seifert manifold has virtually nilpotent fundamental group
\cite{sco1983geometries}.
As mentioned above,
the partially hyperbolic systems on such manifolds
have already been classified.
Combining the results in \cite[Appendix A]{HP2}
with \cref{thm:onlyflow} yields the proof of
\cref{thm:whichseifert}.

The proof of \cref{thm:main} relies on the following result of
Brittenham \cite{britt1993essential}.

\begin{thm}
    [Brittenham] \label{thm:brittenham}
    Let $M$ be a circle bundle over a higher-genus surface $S$
    and let $\Fcal$ be a foliation on $M$ without compact leaves
    and tangent to a continuous plane field.
    Then there is a homeomorphism $h : M \to M$ isotopic to
    the identity such that
    $h(\Fcal)$ is a $C^0$ foliation with $C^1$ leaves
    tangent to a continuous plane field
    and 
    every leaf of $h(\Fcal)$ is either
    a union of fibers or is transverse to the fibering.
\end{thm}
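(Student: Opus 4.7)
The plan is to adapt Roussarie--Thurston type techniques for surface foliations to the Seifert-fibered setting, analyzing the foliation piece-by-piece along a decomposition of $M$ by vertical annuli. First, I would choose a pants decomposition of the base surface $S$ by disjoint essential simple closed curves $\gamma_1,\ldots,\gamma_k$. Their preimages $A_i = \pi^{-1}(\gamma_i)$ form a disjoint family of embedded vertical annuli in $M$, and the complementary region $M_0 = M \setminus \bigcup A_i$ is a disjoint union of trivially fibered pieces $P_j \times S^1$, where each $P_j$ is a pair of pants. This reduces the problem to understanding $\Fcal$ on an annulus and on a product $P_j \times S^1$.

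Next, I would put $\Fcal$ in good position with respect to the annuli. After a preliminary isotopy supported near each $A_i$, the tangencies of $\Fcal$ with $A_i$ can be made generic so that the induced singular one-dimensional foliation on $A_i$ has only centers and saddles of controlled type. The hypothesis that $\Fcal$ has no compact leaves, combined with Novikov's theorem, guarantees that $\Fcal$ is Reebless and hence that the induced foliation on each annulus carries no Reeb component. A classical analysis of foliations on an annulus without Reeb components then allows an isotopy of $A_i$, extended to a neighborhood, so that on each $A_i$ every leaf-trace is either transverse to the fiber direction or is a vertical circle equal to a fiber.

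Once the behavior on $\bigcup A_i$ is tamed, I would isotope $\Fcal$ inside each complementary region $P_j \times S^1$. On the boundary of such a region, $\Fcal$ meets two kinds of curves: transverse arcs and vertical circles. A Roussarie--Thurston argument applied inside the product, using the Reebless condition, allows one to conclude that leaves running between the transverse boundary pieces can be taken globally transverse to the fibering, while leaves touching the vertical circles can be arranged to be saturated unions of fibers. Assembling the isotopies on the $A_i$ and on the regions $P_j \times S^1$ and extending by the identity yields the required ambient isotopy $h$.

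The main obstacle is the global coherence of the argument: the local isotopies on the $A_i$ and on the complementary pieces must patch together into a single ambient isotopy of $M$ isotopic to the identity while preserving, at every stage, the condition that the foliation is $C^0$ with $C^1$ leaves tangent to a continuous plane field. A second, more subtle issue is ensuring that a leaf which contains a single fiber is actually saturated by fibers on \emph{all} of its extent; this requires propagating the local ``vertical'' behavior across the annuli via holonomy and monodromy arguments along the Seifert fibration, using the Reebless property to exclude mixed (transverse-then-tangent) leaves.
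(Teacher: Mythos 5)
This statement is not proved in the paper at all: it is quoted as an external result of Brittenham \cite{britt1993essential}, with the precise form justified by the introduction of \cite{britt1999boundary} and an alternative treatment in \cite[\S 4.10]{calegari2007book}. So there is no internal proof to compare against; I can only assess your sketch on its own merits, and as a proof it has genuine gaps.

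Your overall strategy (decompose $M$ over a pants decomposition of $S$, tame the foliation on the vertical pieces over the pants curves, then work inside the trivially fibered complementary pieces) is the right general flavor, but the two places where Brittenham's theorem is actually hard are exactly the places you defer to an unexplained ``Roussarie--Thurston argument.'' First, note that $\pi^{-1}(\gamma_i)$ is a vertical \emph{torus}, not an annulus, since $\gamma_i$ is a closed curve; more importantly, the Roussarie--Thurston theorem moves an incompressible torus into good position \emph{relative to a fixed foliation} (transverse to it or a leaf), whereas what you need is to move the \emph{foliation} into good position relative to a fixed fibration, and in particular to control the traces on $\pi^{-1}(\gamma_i)$ not just as a foliation of a torus but relative to the fiber direction (each trace must become either a fiber or transverse to fibers). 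That is a strictly stronger conclusion and does not follow from RT general position plus absence of Reeb annuli; it is part of what has to be proved. Second, inside $P_j\times S^1$ the claim that ``leaves running between the transverse boundary pieces can be taken globally transverse to the fibering, while leaves touching the vertical circles can be arranged to be saturated unions of fibers'' is essentially a restatement of the theorem on the complementary piece, and you give no mechanism for producing the isotopy. The central difficulty --- which you correctly flag in your last paragraph but do not resolve --- is ruling out or repairing \emph{mixed} leaves, i.e.\ showing that the vertical behavior propagates to a saturated sublamination and that its complement can be made everywhere transverse. This requires a genuine argument (in Brittenham's treatment, a complexity/minimality argument on the induced structure in the fibered pieces; in Calegari's, a Morse-position and cancellation argument), not just holonomy considerations. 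Finally, your appeal to generic tangencies with ``centers and saddles'' presupposes more regularity than the hypotheses give ($\Fcal$ is only $C^0$ with $C^1$ leaves), so the general-position step itself needs the $C^0$ versions of these tools. As written, the proposal is a plausible outline of the known strategy rather than a proof.
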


The theorem as originally given in \cite{britt1993essential}
is stated in a slightly weaker form than the above.
However, Brittenham clarifies in the introduction
to \cite{britt1999boundary}
that a version equivalent to \cref{thm:brittenham} above holds.
The assumption of no compact leaves above rules out the Reeb components
discussed in \cite{britt1997bundle, britt1999boundary}.
See also section 4.10 of \cite{calegari2007book} for another explanation
of how \cref{thm:brittenham} can be proved.

Let $\Fcal$ be a foliation which satisfies the
hypotheses of \cref{thm:brittenham}.
A leaf of $\Fcal$ is \emph{vertical} if it contains
a loop which is freely homotopic to a fiber of the circle bundle.
A leaf is \emph{horizontal} if it is not vertical.
The function $h$ in \cref{thm:brittenham} takes each vertical leaf to a union of
fibers
and each horizontal leaf to a surface transverse to the fibers.
We call the foliation $\Fcal$ \emph{horizontal} if every leaf is horizontal.

In most of this paper, we will want to further isotope
a foliation to put the vertical leaves into a slightly nicer form.
We say that a $C^0$ foliation $\Fcal$ with $C^1$ leaves is in \emph{ideal position} if
every horizontal leaf is transverse to the fibering and
every vertical leaf is the pre-image of a geodesic on $S.$

\begin{thm} \label{thm:ideal}
    Let $M$ be a circle bundle over a higher-genus surface $S$
    and let $\Fcal$ be a foliation on $M$ without compact leaves
    and tangent to a continuous plane field.
    Further assume that no two distinct vertical leaves are
    at finite Hausdorff distance when lifted to the universal cover.
    Then there is a homeomorphism $h : M \to M$ isotopic to the identity
    such that $h(\Fcal)$ is in ideal position.
    Moreover, $h(\Fcal)$ is tangent to a (different) continuous plane field.
\end{thm}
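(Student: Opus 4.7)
The plan is to apply \cref{thm:brittenham} first, then perform a second isotopy that straightens the projections of vertical leaves to geodesics on $S$. By \cref{thm:brittenham}, there is a homeomorphism $h_1$ isotopic to the identity such that $h_1(\Fcal)$ is a $C^0$ foliation with $C^1$ leaves, tangent to a continuous plane field, with each horizontal leaf transverse to the fibering and each vertical leaf a union of fibers. Call this intermediate foliation $\Gcal$. Each vertical leaf $L$ of $\Gcal$ is a fiber-saturated $C^1$ surface, so $\pi(L) \subset S$ is an injectively immersed $C^1$ $1$-submanifold; the no-compact-leaves hypothesis forces $\pi(L)$ to be non-compact. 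Lifting to the universal cover and identifying $\tM \cong \bbH^2 \ti \bbR$ via a smooth trivialization, each connected component of a lift $\tL$ has the form $\tilde\gamma \ti \bbR$ for a proper $C^1$ arc $\tilde\gamma \subset \bbH^2$.

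The key intermediate step is to prove that each such $\tilde\gamma$ admits two well-defined distinct endpoints on $\partial_\infty \bbH^2$. This is where the Hausdorff distance hypothesis enters crucially: if some $\tilde\gamma$ failed to limit to two distinct boundary points (for instance, if both ends approached a single $p \in \partial_\infty\bbH^2$ or if the limit set were positive-dimensional), then an argument using the hyperbolic geometry of $\bbH^2$ together with the $\pi_1(S)$-action would produce either a nontrivial deck-translate of $\tilde\gamma$ or a lift of a different vertical leaf at finite Hausdorff distance from $\tilde\gamma$, contradicting the hypothesis. I expect this to be the main obstacle in the proof, since a rigorous version must carefully analyze the possible asymptotic behavior of proper $C^1$ arcs in $\bbH^2$ under the rigidity imposed by the foliation being tangent to a continuous plane field and having no compact leaves.

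Once distinct endpoints are secured, one associates to each $\tilde\gamma$ the unique hyperbolic geodesic $\tilde g_{\tilde\gamma}$ with the same endpoint pair. The hypothesis further guarantees that distinct lifts of vertical leaves yield distinct geodesics, since two $\tilde\gamma$'s sharing both endpoints would each lie at bounded distance from the common geodesic and hence from one another. The assignment $\tilde\gamma \mapsto \tilde g_{\tilde\gamma}$ is thus injective and $\pi_1(S)$-equivariant. Construct a $\pi_1(S)$-equivariant isotopy $\tilde\Phi_t$ of $\bbH^2$ that drags each $\tilde\gamma$ to its associated geodesic, is supported in a suitable neighborhood of the projected vertical locus, and interpolates continuously between vertical leaves (e.g.\ by sliding along transversals read off from the horizontal leaves of $\Gcal$). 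This $\tilde\Phi_t$ descends to an isotopy of $S$ and then lifts to a fiber-preserving isotopy of $M$, producing the desired $h_2$; we set $h = h_2 \circ h_1$.

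Finally, one verifies that $h(\Fcal)$ is in ideal position and is tangent to a continuous plane field. Horizontal leaves of $h(\Fcal)$ remain transverse to the fibering because $h_2$ preserves the fibering, while vertical leaves become pre-images of geodesics by construction. Continuity of a tangent plane field at the vertical leaves follows from the $C^\infty$ regularity of geodesics, and continuity elsewhere follows by transporting the continuous plane field of $\Gcal$ through the $C^0$ isotopy $h_2$; the interface between horizontal regions and the vertical geodesic preimages is handled by ensuring the interpolation used in defining $\tilde\Phi_t$ converges uniformly in $C^0$ as one approaches a vertical leaf from either side.
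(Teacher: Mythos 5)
Your overall architecture matches the paper's: apply \cref{thm:brittenham} first, straighten the projected vertical curves to geodesics on the base by passing to ideal endpoints in $\partial \bbH^2$, and then lift the surface isotopy fiber-preservingly to $M$ (the paper does the lift by trivializing the bundle over the complement of a small disk disjoint from the lamination, which is essentially your step). However, the two points you identify or gloss over as "the hard part" are exactly where your write-up has genuine gaps. First, the existence of two distinct ideal endpoints for each lifted projected curve $\tilde\gamma$ is not obtained in the paper from the finite-Hausdorff-distance hypothesis at all: it comes from the fact that, after Brittenham--Thurston general position, the projected vertical locus is a \emph{quasi-geodesic} lamination on $S$, so the Morse lemma immediately gives a unique geodesic at bounded distance from each leaf. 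The Hausdorff-distance hypothesis is used only for what you correctly note in your next paragraph, namely injectivity of the assignment $\tilde\gamma \mapsto \tilde g_{\tilde\gamma}$ (the paper's "non-redundancy"). You flag the endpoint step as "the main obstacle" and then do not prove it; as written, a proper $C^1$ arc in $\bbH^2$ need not have well-defined endpoints, and your sketched contradiction argument is not supplied.

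Second, and more seriously, the construction of the equivariant straightening homeomorphism of the surface is asserted rather than carried out. "Dragging each $\tilde\gamma$ to its geodesic, supported in a neighborhood of the vertical locus, interpolating by sliding along transversals read off from the horizontal leaves" does not obviously produce a well-defined homeomorphism: the lamination may have uncountably many leaves accumulating on one another, the closest-point projection to the associated geodesics is not injective leaf-by-leaf, and the horizontal leaves live in $M$, not $S$, so they do not canonically furnish transversals to the projected lamination. The paper resolves precisely these issues by (i) using Fuller's averaging trick to turn the closest-point projections into an equivariant \emph{homeomorphism} of the lamination onto the geodesic lamination, and (ii) extending over each complementary region via Riemann maps and the Douady--Earle conformally natural extension, whose naturality gives equivariance and whose continuity estimates (together with the finiteness of complementary regions, a Gauss--Bonnet area bound) give global continuity. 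Without some substitute for these two ingredients, your second isotopy is not constructed. Your final continuity claim for the tangent plane field along the vertical leaves is also delicate: the Douady--Earle extension need not be $C^1$ at the lamination, which is exactly why the paper needs \cref{addendum:superDE} later; "continuity follows from the $C^\infty$ regularity of geodesics" does not by itself control the tangent planes of nearby horizontal leaves after the isotopy.
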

Note here that the map $h$ in \cref{thm:ideal} is only a homeomorphism
and not a diffeomorphism. This difference in regularity is discussed
in \cref{sec:regularity}.
We give the proof of \cref{thm:ideal} in \cref{sec:ideal}.

For an embedded circle $C$ in a circle bundle $M$ over a surface $S,$
we call $C$ a \emph{vertical circle} if it is isotopic
to a fiber of the circle bundle.
Much of the proof of \cref{thm:main} is not specific to partially hyperbolic
dynamics and holds for any two transverse foliations on $M$
which satisfy certain properties. This non-dynamical part of the proof is
encapsulated in the following theorem.

\begin{thm} \label{thm:twofolns}
    Let $M$ be a circle bundle over a higher-genus surface $S$
    and suppose on $M$ that there are two transverse foliations
    $\Fcs$ and $\Fcu$ intersecting in a one-dimensional foliation $\Fc$
    and which satisfy the following conditions:
    \begin{enumerate}
        \item each of the two foliations is $C^0$ regular with $C^1$ leaves
        and is tangent to a continuous plane field on $M;$
        \item
        neither foliation has compact leaves;
        \item
        no two distinct vertical leaves of $\Fcs$ or of $\Fcu$ are
        at finite Hausdorff distance when lifted to the universal cover;
        \item
        \label{item:firstcircle}
        the foliation $\Fc$ contains at most finitely many vertical circles;
        \item
        \label{item:secondcircle}
        a leaf of $\Fc$ is a vertical circle if and only if
        it lies in both a vertical leaf of $\Fcs$ and
        a vertical leaf of $\Fcu;$
        \item
        \label{item:lastcircle}
        each vertical leaf of $\Fcs$ or $\Fcu$
        contains at most one vertical circle of $\Fc;$ and
        \item
        \label{item:noonesided}
        inside a vertical leaf of $\Fcs$ or $\Fcu,$
        the $\Fc$ foliation does not contain any ``one-ended leaves''.
    \end{enumerate}
    Then both $\Fcs$ and $\Fcu$ are horizontal.
\end{thm}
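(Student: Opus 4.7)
The plan is to argue by contradiction, assuming $\Fcs$ contains a vertical leaf $L$ and deriving a contradiction (the case of $\Fcu$ being symmetric). Conditions (1), (2), and (3) are precisely the hypotheses of \Cref{thm:ideal}, so after an ambient isotopy we may place $\Fcs$ in ideal position, writing $L = \pi^{-1}(\gamma)$ for some complete geodesic $\gamma \subset S.$ Horizontality and the conditions (4)--(7) are invariant under this isotopy, so no generality is lost. The topological type of $L$ is then either a torus (if $\gamma$ is a simple closed geodesic) or a properly immersed open cylinder.

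Next I would analyze the one-dimensional foliation $\Fc|_L.$ Each leaf has the form $L \cap L^{cu}$ for some $\Fcu$-leaf $L^{cu}.$ By condition (5), such a leaf is a vertical circle exactly when $L^{cu}$ is also vertical; in that case the intersection is the preimage of the discrete set $\gamma \cap \gamma',$ hence a finite union of fibers. Condition (6) forces at most one such vertical circle in $L,$ and condition (4) bounds the global count. Consequently, except for at most one fibral leaf, every leaf of $\Fc|_L$ comes from a non-vertical $\Fcu$-leaf, and these remaining leaves are transverse to the fibering of $L.$ Condition (7), which rules out Reeb-type spiraling onto the fibral leaf, then constrains $\Fc|_L$ to a suspension-like fibration of $L$ over a transversal.

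The contradiction is extracted by passing to the universal cover $\tilde M \cong \bbH^2 \times \bbR.$ The plane $\tilde L = \tilde\gamma \times \bbR$ carries a $\tilde\Fc$-foliation by complete graphs over $\tilde\gamma,$ each contained in a horizontal $\tilde\Fcu$-leaf (with at most one fibral exception). One propagates these horizontal $\tilde\Fcu$-leaves globally in $\tilde M,$ using condition (3) applied to $\Fcu$ to keep any vertical $\tilde\Fcu$-leaves separated. The goal is to show that extending these $\tilde\Fcu$-leaves forces either new vertical circles of $\Fc$ (violating (4) or (6)) or a one-ended leaf inside some vertical leaf of $\Fcs$ or $\Fcu$ (violating (7)), producing the contradiction.

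The hardest step is expected to be the last: the contradiction demands careful global control of how horizontal $\Fcu$-leaves interact with the vertical leaf $L,$ and must treat both the compact (closed $\gamma$) and non-compact cases of $L.$ A related subtlety is that placing only $\Fcs$ in ideal position does not automatically put $\Fcu$ in ideal position; one should either apply \Cref{thm:ideal} separately to $\Fcu$ at an appropriate stage, or argue directly using only that vertical $\Fcu$-leaves contain fibers up to isotopy. The success of the approach hinges on conditions (4)--(7) being sharp enough to rule out every possible configuration of $\Fc|_L$ that survives the topological analysis inside $L.$
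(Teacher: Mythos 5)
There is a genuine gap, and it sits exactly where you flag it: the final step. Your plan is to propagate the horizontal $\Fcu$-leaves meeting the vertical leaf $L$ through the universal cover and force a violation of conditions (4), (6), or (7). But those conditions are not by themselves contradictory in the presence of a vertical leaf, and no amount of leaf-propagation will make them so. The clearest test case is when $\Fc$ has no vertical circles at all but $\Fcs$ has a vertical leaf $L$: then every $\Fcu$-leaf meeting $L$ is horizontal, $\Fc|_L$ is a foliation of a cylinder by two-ended leaves, and nothing in (4)--(7) is threatened. The obstruction that actually kills this configuration is global and index-theoretic: it is the nonvanishing of $\chi(S)$, which your outline never invokes. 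The paper's proof builds, from the center flow and a carefully chosen flow time $\tau$, an ``induced map'' $f\colon M\to TS$ (via the exponential map on $\bbH^2$), shows that the presence of a vertical $\Fcs$-leaf forces the fiberwise degree of the normalized map to be zero, and then averages over fibers to obtain a vector field $X$ on $S$. In the circle-free case $X$ is nowhere zero, contradicting $\chi(S)<0$ immediately. In the case with center circles, $X$ has finitely many zeros; one must isotope the circles to fibers with product-structure plaque neighborhoods, show via return-map analysis that each zero has index in $\{-1,0,+1\}$ (and in $\{-\tfrac12,0,+\tfrac12\}$ on each half-disk cut by the critical geodesic), excise the non-isolated leaves of the vertical geodesic lamination leaving ``dumbbell'' boundaries, cut along critical arcs, and apply a boundary version of Poincar\'e--Hopf to produce a disk region trapping half of a properly embedded vertical $\Fcu$-leaf --- the contradiction. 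None of this machinery is foreshadowed by your outline, and conditions (4)--(7) are used there not as the source of the contradiction but as inputs guaranteeing that the averaged field is well behaved (finitely many zeros, bounded indices, no one-ended leaves spoiling the choice of $\tau$).

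Two smaller but real inaccuracies: a vertical circle of $\Fc$ is only \emph{isotopic} to a fiber, and the intersection of a vertical $\Fcs$-leaf with a vertical $\Fcu$-leaf is not ``the preimage of $\gamma\cap\gamma'$'' --- that would require both foliations to be in ideal position simultaneously, which is not available (the paper deliberately puts only $\Fcs$ in ideal position and later isotopes the finitely many center circles onto fibers by a separate construction). Also, condition (7) does not directly rule out ``Reeb-type spiraling''; its role is to guarantee, via the Poincar\'e--Bendixson analysis of flows on cylinders, that long center segments in a vertical leaf must sweep across many fibers, which is what makes the induced vector nonzero away from center circles.
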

Of course, the conclusion of \cref{thm:twofolns} means that there are no vertical
leaves or vertical circles and so items (3) though (7) of the list are
vacuously true. We give a precise definition of a ``one-ended leaf'' in
\cref{sec:folncyl}.

For the dynamical part of the proof of \cref{thm:main}, we show the following:

\begin{prop} \label{prop:approxgood}
    Let $f$ be a partially hyperbolic diffeomorphism on
    a circle bundle over a higher-genus surface
    and such that $Df$ preserves the orientations of
    $\Eu, \Ec,$ and $\Es.$
    Then the cs and cu branching foliations have
    approximating true foliations which satisfy the
    hypotheses of \cref{thm:twofolns}.
\end{prop}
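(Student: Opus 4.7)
The plan is to take the approximating foliations produced by standard Burago--Ivanov theory and verify the seven conditions of Theorem \ref{thm:twofolns} in turn, with the absence of compact leaves (item (2)) being the main dynamical input and the remaining items following from topological arguments on annuli.

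Since $Df$ preserves the three orientations, I would first apply the construction of \cite{BI} to obtain $f$-invariant branching foliations $\Wcsbran$ and $\Wcubran$ tangent to $\Ecs$ and $\Ecu$, together with $\ep$-approximating true foliations $\Fcsep$ and $\Fcuep$ whose leaves are $C^1$ and tangent to continuous plane fields close to $\Ecs$ and $\Ecu$. For $\ep$ small, the uniform transversality of $\Ecs$ and $\Ecu$ forces transversality of $\Fcsep$ and $\Fcuep$, so their intersection $\Fcep = \Fcsep \cap \Fcuep$ is a one-dimensional foliation; this establishes condition (1). Next, I would verify (2) by ruling out compact leaves: such a leaf would be an incompressible closed surface in $M$, hence in a circle bundle over a hyperbolic surface either a vertical torus (over a closed geodesic on $S$) or a horizontal cover of $S$. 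The vertical torus case is excluded by the classical argument that an $f$-periodic torus tangent to $\Ecs$ forces the center direction to lie tangent to the torus, contradicting the fiber structure; the horizontal case is excluded because the induced invariance of $\Ecs$ would produce a finite $f$-invariant family of horizontal surfaces, again inconsistent with partial hyperbolicity on a circle bundle of nontrivial topology. Condition (3), on pairs of vertical leaves at finite Hausdorff distance in the universal cover, is handled in a similar spirit: after the Brittenham-type isotopy of Theorem \ref{thm:brittenham}, two such leaves would project to the same geodesic on $\bbH^2$ and bound an invariant vertical slab forbidden by dynamics, following the strategy of \cite{hps2018seif}.

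For items (4)--(7), I would bring the foliations into ideal position via Theorem \ref{thm:ideal}, after which each vertical leaf of $\Fcsep$ or $\Fcuep$ is (a finite quotient of) a Euclidean annulus carrying the induced one-dimensional foliation $\Fcep$. Item (6), at most one vertical circle per vertical leaf, follows from the classical classification of 1-dimensional foliations on the annulus. Item (7), the absence of one-ended leaves, follows from a Poincar\'e--Bendixson style analysis combined with item (2). Item (5) is immediate from the definitions, since a vertical circle of $\Fcep$ is by construction freely homotopic to a fiber and so forces verticality of the ambient $\Fcsep$ and $\Fcuep$ leaves. Item (4), finiteness of vertical circles, follows because by (5) and (6) each vertical circle of $\Fcep$ generates a distinct vertical torus in $M$; dynamical invariance of the branching foliations, together with (2), limits these to finitely many. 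The main obstacle I expect is item (2): removing the extra dynamical hypotheses (transitivity or constraints on the $\pi_1$-action) that were needed in \cite{hps2018seif} to rule out compact leaves, so that the argument goes through assuming only the orientation hypothesis on $Df$. Once this is in hand, the remaining items become largely topological consequences of the Seifert structure and the standard properties of Burago--Ivanov approximations.
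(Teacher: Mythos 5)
Your treatment of conditions (1)--(3) is broadly in line with the paper: (1) is indeed immediate from the Burago--Ivanov approximation for small $\ep$, (2) is disposed of by the non-existence of periodic tori tangent to $\Ecs$ or $\Ecu$ (the paper simply cites \cite{RHRHU-tori}; no case analysis of incompressible surfaces is needed), and (3) is a volume-versus-length argument -- though note that the invariance of the ``slab'' between two close vertical leaves is not automatic: one must first choose the pair of leaves maximally far apart, show the region has volume bounded below under iteration because it contains a growing unstable segment, conclude $f^n(D)=D$, and only then invoke \cite[Proposition 5.2]{hps2018seif}.

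The genuine gap is in your handling of conditions (4)--(7), which you dismiss as ``largely topological consequences'' but which constitute the bulk of the paper's proof (all of Sections 6 and 7). A one-dimensional foliation of a bi-infinite cylinder can perfectly well have infinitely many closed leaves, or one-ended leaves (Reeb-like annular components), so neither item (6) nor item (7) follows from ``the classical classification of foliations on the annulus'' or from Poincar\'e--Bendixson alone; these are dynamical statements. The paper's route is: establish a quantitative length-versus-area bound for well-placed center and stable segments inside vertical leaves; use it to prove that $f$ is \emph{coarsely contracting} on every periodic vertical cs leaf (a three-case argument); deduce that the intersection of two periodic vertical leaves consists only of circles and, via the density and periodicity of accessible boundary leaves, that any vertical cs leaf meets any vertical cu leaf in exactly one circle and that such leaves are isolated -- this is what gives (4), (5), (6). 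Your claim that (5) is ``immediate from the definitions'' covers only the easy direction; the hard direction is precisely that a center leaf lying in two vertical leaves must close up. Condition (7) is then an entirely separate argument: one assumes a one-ended center leaf in a vertical cs leaf, shows the ambient cu leaf is horizontal and bounds a controlled disk over a one-ended curve in $\bbH^2$, and derives a contradiction by comparing the lengths of nearby center, stable, and unstable segments under $f^{\pm n}$ using the rates $\lam<\sig<1<\mu$. You have also misplaced the main difficulty: item (2) is a one-line citation in the paper, whereas the removal of the transitivity/$\pi_1$ hypotheses of \cite{hps2018seif} is achieved precisely through the coarse-contraction and one-ended-leaf arguments you have omitted.
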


The definitions of branching foliations and their approximating true
foliations are given in \cref{sec:bran}.
The above definitions for horizontal and vertical leaves also hold for branching
foliations in the setting of \cref{thm:main}.
\Cref{prop:approxgood} and \cref{thm:twofolns} together imply \cref{thm:main}.

The proof of \cref{thm:twofolns} splits into two cases depending on whether or not
$\Fc$ has any vertical circles.
The case without vertical circles is simpler and given in \cref{sec:average}.
The case with vertical circles is much more complicated and
all of sections \ref{sec:regularity} to \ref{sec:finale}
contribute to its proof.

\medskip

\noindent\textbf{Acknowledgements.}
The authors thank Sergio Fenley and Mario Shannon for helpful discussions.
This research was partially funded by the Australian Research Council.
R.P.~was partially supported by CSIC.

\section{Branching foliations} \label{sec:bran} 

In this section, we go into further detail about the branching foliations
and their approximating true foliations as appearing in \cref{prop:approxgood}.

For a continuous plane field $E$ on a 3-manifold $M,$
a \emph{branching foliation} $\Wbran$ is a collection of immersed surfaces,
called \emph{leaves}, with the following properties:
\begin{enumerate}
    \item every leaf is a boundaryless and complete surface tangent to $E,$
    \item
    for every point $x \in M,$ at least one leaf passes through $x,$
    \item
    no two leaves topologically cross
    and no leaf topologically crosses itself, and
    \item
    if a sequence $\{L_n\}$ of leaves
    converges in the compact-open
    topology to a limit surface $L,$ then
    $L$ is also a leaf.
\end{enumerate}
For a diffeomorphism $f : M \to M,$ we say that the branching foliation
is $f$-\emph{invariant} if $L \in \Wbran$ if and only if $f(L) \in \Wbran.$

We now consider partially hyperbolic systems.
A $C^1$ diffeomorphism $f : M \to M$ of a closed Riemannian 3-manifold
is \emph{partially hyperbolic} if there is $n \ge 1$ and a $Df$-invariant
splitting of the tangent bundle into one-dimensional subbundles
$TM = \Es \oplus \Ec \oplus \Eu$
such that
\[
    \| Df^n v^s \| < 1 < \| Df^n v^u \| 
    \qandq
    \| Df^n v^s \| < \| Df^n v^c \| < \| Df^n v^u \|
\]
for all points $x \in M$ and unit vectors
$v^s \in \Es_x,$ $v^c \in \Ec_x$ and $v^u \in \Eu_x.$
By changing the metric on $M,$ we may freely assume that $n = 1$
and we do so for the rest of the paper.
By classical results, the stable $\Es$ and unstable $\Eu$ bundles
are uniquely integrable and yield foliations $W^u$ and $W^s.$
In what follows, we write $\Ecs = \Ec \oplus \Es$ and $\Ecu = \Ec \oplus \Eu.$

\begin{thm}
    [Burago--Ivanov] \label{thm:bran}
    Suppose $f : M \to M$ is
    a partially hyperbolic diffeomorphism of a closed 3-manifold $M$
    and that the derivative $Df$ preserves orientations of each
    of $\Eu, \Ec,$ and $\Es.$
    Then there exists an $f$-invariant branching foliation $\Wcsbran$
    tangent to $\Ecs.$
\end{thm}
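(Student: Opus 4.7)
The plan is to follow the original Burago--Ivanov strategy: approximate $\Ecs$ by integrable smooth plane fields $E^{cs}_{\ep}$ lying in a thin $cs$-cone, whose leaves form genuine foliations $\Fcsep$, and then pass to Hausdorff limits of leaves as $\ep \to 0$ to define $\Wcsbran$.

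First, I would construct the approximating foliations. The key observation is that $\Es$ is already uniquely integrable to $\Ws$, so it is natural to choose a smooth section $X_\ep$ of a narrow cone around $\Ec$ and let $E^{cs}_\ep$ be the plane field spanned by $\Es$ and $X_\ep$. This is automatically integrable because $\Es$ is, and as the cone around $\Ec$ shrinks, $E^{cs}_\ep$ converges uniformly to $\Ecs$. For any point $x \in M,$ I would then look at the leaf of $\Fcsep$ through $x,$ represent it locally as a graph over a small disk in $\Ecs_x$ via an exponential chart, and apply Arzela--Ascoli to extract a $C^1$ graph limit along a subsequence $\ep_n \to 0.$ These local graph limits glue along chart overlaps into a complete immersed surface tangent to $\Ecs;$ declare $\Wcsbran$ to consist of all such limit surfaces as $x$ and the sequences vary.

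Most of the axioms of a branching foliation are then immediate from the construction: leaves are boundaryless complete surfaces tangent to $\Ecs,$ and closure under compact-open limits of leaves follows by a diagonal argument. The hard part will be verifying the non-crossing axiom, and this is exactly where the orientability hypotheses come in. The orientation of $\Eu$ provides a continuous transverse normal to $\Ecs,$ and thus to each approximating plane field $E^{cs}_\ep;$ the orientations of $\Es$ and $\Ec$ give a coherent global frame on $\Ecs$ itself. If two limit leaves $L, L' \in \Wcsbran$ were to topologically cross, this coherent transverse orientation would have to reverse across the crossing arc, and lifting the crossing back to nearby approximating leaves of $\Fcsep$ for small $\ep$ would contradict the fact that $\Fcsep$ is a genuine (non-branching) foliation. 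The same argument excludes self-crossings.

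Finally, I would produce an $f$-invariant branching foliation. Since $Df$ preserves $\Ecs$ and (by hypothesis) the orientations of $\Es, \Ec, \Eu,$ the pushforward $f(\Wcsbran)$ consists of surfaces tangent to $\Ecs$ carrying the same coherent transverse orientation, so it is itself a branching foliation of the same type. Taking the closure of $\bigcup_{n \in \bbZ} f^n(\Wcsbran)$ under compact-open limits of leaves preserves non-crossing (by the same orientation argument) and is $f$-invariant by construction, yielding the required $\Wcsbran$ tangent to $\Ecs.$ The main obstacle throughout is the no-crossing step: one needs estimates uniform in $\ep$ so that a would-be topological crossing of limit leaves is detected at the approximating level, and it is precisely the orientability assumptions that make these estimates globally well defined rather than only defined up to sign.
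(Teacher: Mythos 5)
The paper does not prove this statement at all: it is quoted verbatim from Burago--Ivanov and the text simply says ``Both of the above theorems are proved in \cite{BI}.'' So the only comparison available is with the original Burago--Ivanov argument, and your sketch does not reproduce it; more importantly, it contains a gap that cannot be repaired along the lines you propose.

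The fatal step is the very first one: you assert that the plane field spanned by $\Es$ and a smooth section $X_\ep$ of a thin cone about $\Ec$ ``is automatically integrable because $\Es$ is.'' This is false. Unique integrability of the line field $\Es$ says nothing about integrability of a plane field containing it; integrability of a plane field is a Frobenius-type condition on the whole distribution, and $\Es$ is in general only H\"older, so the distribution you describe is not even $C^1$. If one could always produce integrable plane fields uniformly close to $\Ecs$, the entire theory of \emph{branching} foliations would be unnecessary --- one would simply take a limit of genuine foliations. The actual logical order in \cite{BI} is the reverse of yours: the branching foliation $\Wcsbran$ is constructed first, directly, and the approximating true foliations $\Fcsep$ of \cref{thm:approxfoln} are then obtained \emph{from} $\Wcsbran$ by ``pulling apart'' the branching. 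Burago and Ivanov build the leaf through a point $x$ as a canonical (``lowermost'' with respect to the transverse orientation given by $\Eu$) limit of forward iterates of stable-saturated local discs tangent to a $cs$-cone; it is precisely the extremal nature of this choice that yields both the non-crossing property and $f$-invariance in one stroke, and this is where the orientability hypotheses enter (they make ``above/below'' globally well defined and preserved by $f$). Your final step has a related problem: the closure of $\bigcup_{n} f^n(\Wcsbran)$ under limits of leaves need not satisfy the non-crossing axiom, since leaves of $f^n(\Wcsbran)$ and $f^m(\Wcsbran)$ for $n \ne m$ may cross each other even if each collection is separately non-crossing; invariance must be built into the construction, not imposed afterwards by saturation.
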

For the remainder of the section, we assume $f$ and $\Wcsbran$ are as in this
theorem. There are also analogous results for a center-unstable
branching foliation $\Wcubran$ tangent to $\Ecu.$
Burago and Ivanov further prove the following.

\begin{thm}
    [Burago--Ivanov] \label{thm:approxfoln}
    Let $\ep > 0.$
    Then there exists a foliation $\Fcsep$ and a continuous map
    $\hcsep : M \to M$ such that
    \begin{enumerate}
        \item $\Fcsep$ is a $C^0$ foliation with $C^1$ leaves and
        is tangent to a distribution which is $\ep$-close to $\Ecs,$
        \item
        for all $x \in M, d(\hcsep(x),x) < \ep,$
        \item
        for every leaf $\Lep$ in $\Fcsep,$ the image
        $L = \hcsep(\Lep)$ is a leaf in $\Wcsbran$
        and the restricted map $\hcsep|_{\Lep} : \Lep \to L$
        is a $C^1$-diffeomorphism,
        and
        \item
        for every leaf $L$ in $\Wcsbran,$
        there is at least one leaf $\Lep$ in $\Fcsep$ such
        that
        \[
            \hcsep(\Lep) = L.
        \] \end{enumerate} \end{thm}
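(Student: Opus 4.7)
The plan is to construct $\Fcsep$ as a transverse smoothing of the branching foliation $\Wcsbran$ that resolves branching into genuine leaves while keeping every new leaf $\ep$-close to a branching leaf. The central structural input is that two leaves of $\Wcsbran$ never topologically cross: in any small foliation chart $U \cong D^2 \times I$ with $D^2$ almost tangent to $\Ecs$ and $I$ running along $\Eu,$ every leaf-segment of $\Wcsbran$ meeting $U$ is a $C^1$ graph over (a subdomain of) $D^2,$ and the collection of such graphs is pointwise ordered. This ordering allows me to treat the branching foliation locally as a non-decreasing one-parameter family of graphs, parameterized by height over a basepoint.

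First I would cover $M$ by finitely many such charts of diameter less than $\ep/4.$ In each chart, I would smooth the branching by convolving this height-parameterized family against a smooth positive kernel supported in a $\delta$-neighbourhood of each height, with $\delta = \delta(\ep) \ll \ep.$ Because the original family $\tau \mapsto g_\tau(x)$ is non-decreasing in $\tau$ for each $x,$ the convolved family $\tau \mapsto \tilde g_\tau$ is strictly increasing; the graphs separate wherever they had merged, each new graph remains within transverse distance $\delta$ of an original branching graph, and the tangent planes of the new graphs lie within $\ep$ of $\Ecs.$ Inside each chart this produces a genuine $C^0$ foliation with $C^1$ leaves.

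The step I expect to be the main obstacle is patching the local smoothings into a globally defined foliation, since the height parameterization is chart-dependent and branches must be assigned distinct smoothed leaves consistently across overlaps. My approach would be to carry out the construction equivariantly on the universal cover $\tM.$ The lifted branching foliation $\widetilde{\Wcsbran}$ has a leaf space that, while possibly non-Hausdorff, is totally ordered along any local transversal because lifted leaves still do not cross; the convolution can be set up so that it replaces this by a Hausdorff one-parameter family of surfaces that is $\pi_1(M)$-equivariant and therefore descends. Completeness of each leaf of $\Fcsep$ follows from its uniform proximity to a complete leaf of $\Wcsbran,$ which is preserved by the global construction.

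Finally, $\hcsep$ is the natural collapse that sends each smoothed leaf back to the branching leaf it was built from, along the transverse direction. The four listed properties then follow essentially by construction: (1) from the chartwise smoothing; (2) from the bound $\delta < \ep$ on transverse displacement; (3) because in each chart the collapse is the graph reparameterization $(x, \tilde g_\tau(x)) \mapsto (x, g_\tau(x)),$ which is a leafwise $C^1$ diffeomorphism; and (4) because every branching leaf appears as a one-sided limit of smoothed graphs and so is the image of at least one $\Lep.$
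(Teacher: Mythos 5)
First, note that the paper does not prove this statement at all: it is quoted as a theorem of Burago--Ivanov, and the text immediately after it says that both approximation theorems ``are proved in \cite{BI}.'' So there is no in-paper argument to compare against; the relevant benchmark is the construction in \cite{BI}, whose general shape (local monotone families of almost-tangent graphs, a monotone reparameterization that separates branches, and a global gluing) your outline does follow.

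However, your central mechanism does not work as stated. Parameterizing the leaves in a chart ``by height over a basepoint'' already collapses exactly the data you need to keep: distinct branching leaves through the same point of the basepoint fiber receive the same parameter $\tau$, and more generally two distinct leaves of $\Wcsbran$ may have identical traces in a given chart (the branching can occur entirely outside it), so no purely local operation on the family $\tau \mapsto g_\tau$ can separate them. Moreover, convolving a non-decreasing family against a kernel supported in a $\delta$-window is non-decreasing but not strictly increasing: wherever $\tau \mapsto g_\tau(x)$ is constant on an interval of length $> 2\delta$ for all $x$ in the chart --- which is precisely the merged-leaf situation you are trying to resolve --- the convolved family is still constant there. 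The separation of branches must instead come from reparameterizing the (totally ordered, non-Hausdorff) local leaf space itself, i.e.\ a Cantor-function-type monotone surjection onto an interval that is injective on leaves; this, together with making the chart-dependent reparameterizations agree on overlaps, is the technical heart of the Burago--Ivanov proof and is exactly the part your proposal asserts (``the convolution can be set up so that\ldots'') without supplying an argument. As written, properties (3) and (4) --- in particular that \emph{every} branching leaf is hit, and hit by a leafwise $C^1$ diffeomorphism --- are consequences of the missing step rather than of the convolution.
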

We call $\Fcsep$ the \emph{approximating foliation} for $\Wcsbran$
and we call $\hcsep$ the \emph{collapsing map}.

Both of the above theorems are proved in \cite{BI}.
In that paper, they allow the possibility that two leaves of the 
branching foliation are the same up to reparameterization.
That is, they allow for immersed surfaces $i_1 : S_1 \to M$
and $i_2 : S_2 \to M$ both in $\Wcsbran$ such that there is a homeomorphism
$h_{12} : S_2 \to S_1$ with $i_1 \circ h_{12} = i_2.$
In the current paper, we do not allow duplicate surfaces.
We require that every surface in the branching foliation is unique,
even up to reparameterization. We then get the following property:
\begin{quote}
    for every leaf $L \in \Wcsbran,$
    there is a \emph{unique} leaf $\Lep \in \Fcsep$ such that $\hcsep(\Lep) = L.$
\end{quote}
We call $\Lep$ the \emph{approximating leaf} for $L.$
For more details on why this uniqueness can be assumed, see 
\cite[Appendix A.1]{bfp2023collapsed}.

The leaves of the branching foliations have additional nice properties
when lifted to the universal cover of $M.$
Let $L$ be the lift of a leaf in $\Wcsbran$ to the universal cover $\tM.$
Then
\begin{itemize}
    \item $L$ is an embedded surface in $\tM$ and is homeomorphic to $\bbR^2,$ and
    \item
    every lifted unstable leaf in $\tM$ intersects $L$ at most once.
\end{itemize}
These facts are proved in \cite{BI}.
See also sections 4 and 5 of the survey paper \cite{hp2018survey}.
These properties give us a way to define a branching center foliation.
Suppose $\Lcs$ is the lift of a leaf of $\Fcs$ to $\tM$
and $\Lcu$ is the lift of a leaf of $\Fcu$ to $\tM.$
Then their intersection consists of a disjoint union of properly embedded lines
in $\tM$ tangent to the (lifted) center direction.
Let $\widetilde{\Wcbran}$ be the collection of all connected components
of intersections $\Lcs \cap \Lcu$ of this form.
Define $\Wcbran$ as the quotient of each curve in $\widetilde{\Wcbran}$
down to a curve in $M.$ Each element of $\Wcbran$ is an immersed curve
(either a circle or a copy of $\bbR$) tangent to $\Ec.$
More details on the center branching foliation are given in
\cite[\S 3.3]{bffp2}.

If $\ep > 0$ is sufficiently small, then leaves of the approximating foliations
$\Fcsep$ and $\Fcuep$ are transverse to each other and so their intersections
define a one-dimensional foliation $\Fcep.$
We will always assume that $\ep > 0$ is small enough that this property holds.
We now show that the leaves of $\Fcep$ approximate the leaves of $\Wcbran$
in the following sense.

\begin{prop} \label{prop:centerapprox}
    There is a continuous map $\hcep : M \to M$ such that
    \begin{enumerate}
        \item for each leaf $\Lep \in \Fcep,$
        the image $L = \hcep(\Lep)$ is a leaf of $\Wcbran$ and the restriction
        of $\hcep$ to $\Lep$ gives a covering map from $\Lep$ to $L;$
        \item
        for each leaf $L \in \Wcbran,$
        there is at least one leaf $\Lep \in \Fcep$ such that
        $\hcep(\Lep) = L.$
    \end{enumerate} \end{prop}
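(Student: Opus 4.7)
The plan is to construct $\hcep$ pointwise by working in the universal cover $\tM$ and exploiting the fact that lifted leaves of both branching foliations are embedded. For each $\tilde{p} \in \tM$, let $\tilde{L}^{cs}_\ep(\tilde{p})$ and $\tilde{L}^{cu}_\ep(\tilde{p})$ denote the leaves of the lifted approximating foliations through $\tilde{p}$, and let $\tilde{L}^{cs}(\tilde{p}) = \tilde{h}^{cs}_\ep(\tilde{L}^{cs}_\ep(\tilde{p}))$ and $\tilde{L}^{cu}(\tilde{p}) = \tilde{h}^{cu}_\ep(\tilde{L}^{cu}_\ep(\tilde{p}))$ be the uniquely corresponding lifted branching leaves in $\widetilde{\Wcsbran}$ and $\widetilde{\Wcubran}$. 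Both embedded surfaces pass within distance $\ep$ of $\tilde{p}$ and are tangent to continuous plane fields $\ep$-close to $\Ecs$ and $\Ecu$; by uniform transversality of these two distributions on the compact $M,$ they intersect transversely along $C^1$ curves tangent to $\Ec,$ which by definition are the lifted leaves of $\widetilde{\Wcbran}$ inside $\tilde{L}^{cs}(\tilde{p}) \cap \tilde{L}^{cu}(\tilde{p}).$

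Define $\tilde{h}^c_\ep(\tilde{p})$ to be the nearest point of $\tilde{L}^{cs}(\tilde{p}) \cap \tilde{L}^{cu}(\tilde{p})$ to $\tilde{p}.$ A quantitative implicit function theorem, applied in a chart centered at $\tilde{p}$ in which the two approximating surfaces become $C^1$-graphs over the model $cs$ and $cu$ planes, shows that for $\ep$ sufficiently small (with smallness depending only on the uniform minimum angle between $\Ecs$ and $\Ecu$ and on uniform $C^1$-bounds for the branching leaves) this nearest point is unique, lies at distance $O(\ep)$ from $\tilde{p},$ and sits on a distinguished connected component of the intersection. Continuity of $\tilde{h}^c_\ep$ in $\tilde{p}$ follows from continuous dependence of the leaves $\tilde{L}^{cs}_\ep(\tilde{p}),\tilde{L}^{cu}_\ep(\tilde{p})$ on $\tilde{p}$ in the compact-open topology together with continuity of $\tilde{h}^{cs}_\ep,\tilde{h}^{cu}_\ep.$ Because the construction is canonical, it is equivariant under the deck transformation action and descends to a continuous map $\hcep : M \to M.$

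For item~(1), fix $\Lep \in \Fcep$ with lift $\tilde{\Lep},$ a connected component of $\tilde{L}^{cs}_\ep \cap \tilde{L}^{cu}_\ep$ for specific leaves $\tilde{L}^{cs}_\ep$ and $\tilde{L}^{cu}_\ep.$ For every $\tilde{p} \in \tilde{\Lep}$ the ambient branching surfaces $\tilde{L}^{cs}$ and $\tilde{L}^{cu}$ are the same, so $\tilde{h}^c_\ep$ sends $\tilde{\Lep}$ into $\tilde{L}^{cs} \cap \tilde{L}^{cu};$ connectedness and continuity force the image to lie in a single connected component $\tilde{L},$ a lifted $\widetilde{\Wcbran}$-leaf. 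The restriction $\tilde{\Lep} \to \tilde{L}$ is $C^1,$ proper (since $\tilde{h}^c_\ep$ stays within $O(\ep)$ of the identity), and locally injective by the implicit function construction, hence a covering map; descending gives the required covering $\Lep \to L \in \Wcbran.$ Item~(2) is symmetric: given $L \in \Wcbran$ with a lift $\tilde{L} \subset \tilde{L}^{cs} \cap \tilde{L}^{cu},$ the uniqueness clause following \cref{thm:approxfoln} recovers the unique $\tilde{L}^{cs}_\ep$ and $\tilde{L}^{cu}_\ep$ collapsing to them, and the implicit function construction pairs $\tilde{L}$ with a component $\tilde{\Lep}$ of $\tilde{L}^{cs}_\ep \cap \tilde{L}^{cu}_\ep$ whose image under $\tilde{h}^c_\ep$ is $\tilde{L}.$ The principal obstacle is ensuring that the ``nearest component'' selection is quantitatively uniform over $\tM$ — that a single $\ep_0 > 0$ makes the construction everywhere unambiguous and continuous — which reduces to uniform transversality of $\Ecs$ and $\Ecu$ together with uniform $C^1$-control of the branching leaves, both consequences of the compactness of $M.$
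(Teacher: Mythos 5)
Your overall strategy is the same as the paper's: pass to the universal cover, use the uniqueness of approximating leaves to pair $\Lcsep \cap \Lcuep$ with $\Lcs \cap \Lcu,$ and define $\hcep$ by a projection between these two nearby one-dimensional sets. The gap is in how you define that projection. You take the \emph{nearest point} of $\Lcs \cap \Lcu$ to $\tilde p$ and assert that a quantitative implicit function theorem makes this unique and continuous for small $\ep.$ It does not. The branching leaves are only $C^1,$ tangent to the merely continuous plane fields $\Ecs$ and $\Ecu,$ so the intersection curves are $C^1$ with no curvature control. Nearest-point projection onto a $C^1$ curve can fail to be single-valued at points arbitrarily close to the curve (for the graph of $s \mapsto |s|^{3/2},$ the nearest point to $(0,t)$ with $t>0$ small is attained at two symmetric points, neither of them the origin), and any selection of nearest points then fails to be continuous. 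Uniform transversality of $\Ecs$ and $\Ecu$ and uniform continuity of the tangent planes do not repair this; one would need $C^{1,1}$ bounds that are not available. In addition, there is no uniform lower bound on the separation between distinct components of $\Lcs \cap \Lcu$ (distinct center leaves of a branching foliation can be arbitrarily close or merge), so ``the nearest component'' is not made unambiguous by a single choice of $\ep_0;$ what actually rules out a second plaque of $\Lcs$ or $\Lcu$ in a small ball is the property that a lifted unstable (resp.\ stable) leaf crosses a lifted cs-leaf (resp.\ cu-leaf) at most once, not transversality or embeddedness alone.

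The paper's construction replaces your nearest-point selection with a canonical dynamical projection: for $x \in \Lcsep \cap \Lcuep,$ set $y = \hcsep(x) \in \Lcs$ and define $h(x)$ as the intersection of the stable leaf $\Ws(y)$ with $\Lcu$ (equivalently with $\Lcs \cap \Lcu,$ since $\Ws(y) \subof \Lcs$). This is well defined precisely because a lifted stable leaf meets a lifted leaf of $\Wcubran$ at most once; it is continuous because $\Ws$ is a true foliation; and it is invertible, the inverse being given by following stable leaves from $\Lcs \cap \Lcu$ back to the set $\hcsep(\Lcsep \cap \Lcuep),$ which is what actually delivers the surjectivity you assert but do not establish in your treatment of item~(2). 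If you substitute this stable-holonomy projection (or any projection along a transverse foliation enjoying the ``intersects at most once'' property) for your nearest-point map, the remainder of your argument — components map to components, properness, descent to covering maps on $M$ — goes through.
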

\begin{proof}
    We will construct a map $h : \tM \to \tM$ on the universal cover
    which is equivarent with respect to deck transformations.
    It will quotient down to the desired map on $M.$

    Consider surfaces $\Lcs, \Lcu \subof \tM$ which are
    lifts of leaves in $\Wcsbran$ and $\Wcubran$ respectively.
    Let $\Lcsep$ and $\Lcuep$ be the nearby lifts to $\tM$ of the
    corresponding approximating leaves in $\Fcsep$ and $\Fcuep.$
    In particular, $\Lcs$ and $\Lcsep$ are $\ep$-close in Hausdorff distance
    as subsets of $\tM,$ and the same holds for $\Lcu$ and $\Lcuep.$
    By a slight abuse of notation, we write $\hcsep : \tM \to \tM$
    for the lift of the collapsing map to the universal cover.
    In particular, $\hcsep(\Lcsep) = \Lcs.$

    Consider a point $x \in \Lcsep \cap \Lcuep$ and define $y = \hcsep(x) \in \Lcs.$
    Then $d(x,y) < \ep$ implies that $\dist(y, \Lcu) < 2 \ep.$
    We assume $\ep$ is small enough that the stable manifold $\Ws(y)$
    through $y$ intersects $\Lcu.$
    Therefore define a map $h : \Lcsep \cap \Lcuep \to \Lcs \cap \Lcu,$
    by setting $h(x)$ to be the intersection of $\Ws(y)$ with $\Lcs.$

    We show that $h$ is invertible.
    For this, note that the set $Y = \hcsep(\Lcsep \cap \Lcuep)$
    is a union of $C^1$ curves
    in $\Lcs$ whose tangents are $\ep$-close to the center direction.
    Moreover, inside of $\Lcs,$
    the set $Y$ is close in Hausdorff distance
    to $\Lcs \cap \Lcu.$
    The inverse map is therefore given by $h \inv(z) = (\hcsep) \inv(y)$ where $y$ is
    the intersection of $\Ws(z)$ with $Y.$
    This shows that $h : \Lcsep \cap \Lcuep \to \Lcs \cap \Lcu$
    is a homeomorphism.
    To extend $h$ to a map defined on all of $\tM,$
    consider all such intersections of lifts of pairs of leaves.
\end{proof}
By making $\ep > 0$ small, we can make the map $\hcep$ arbitrarily $C^0$-close
to the identity map.

We call a leaf of $\Wcbran$ a \emph{center circle} if it is the image
under $\hcep$ of a circle in $\Fcep.$
Note that being a center circle is a stronger property
than being an immersed circle $C \subof M$ tangent to the center
bundle $\Ec.$
Further, the uniqueness of approximating leaves in $\Fcsep$ and $\Fcuep$
does not imply uniqueness of approximating leaves in $\Fcep.$
For instance, it could be the case that multiple circular leaves in
$\Fcep$ all collapse down to the same center circle in $\Wcbran.$

\section{Foliations on cylinders} \label{sec:folncyl} 

In this section, we define the notion of a one-ended leaf
and also establish some properties of foliations on cylinders which
will be needed later.

Consider a continuous foliation $\Fcal$ on a cylinder.
Up to homeomorphism, we can suppose that the cylinder is $\bbR \ti S^1.$
We assume that the foliation is oriented in the sense
that there is a continuous flow $\phi$ on $\bbR \ti S^1$ such that
the leaves of the foliation are the orbits of the flow.
Because of this, we will use the words ``leaf'' and ``orbit''
interchangeably in what follows.

A \emph{half cylinder} is a subset of $\bbR \ti S^1$
of the form $(-\infty, a] \ti S^1$ or $[a, +\infty) \ti S^1.$
A leaf of $\Fcal$ is a \emph{one-ended leaf} if it is properly embedded
and contained in a half cylinder.
A leaf of $\Fcal$ is a \emph{two-ended leaf} if it is properly embedded
and not contained in any half cylinder.
A leaf of $\Fcal$ is a \emph{vertical circle} if it is 
isotopic to $\{0\} \ti S^1.$

\begin{thm} \label{thm:cylinderfoliation}
    Consider a continuous flow without fixed points on a cylinder.
    \begin{enumerate}
        \item If an orbit is properly embedded, then it is
        one of the following: \\
        a vertical circle,
        a one-ended leaf,
        or a two-ended leaf.
        \item
        The $\omega$-limit set of an orbit is one of the following: \\
        an empty set,
        a single vertical circle,
        or a union of one-ended leaves.
        \item
        If both the $\alpha$ and $\omega$-limit sets of an orbit
        are vertical circles,
        then the two circles are distinct.
        \item
        If $L$ is an isolated periodic orbit,
        then on each side of $L,$
        the flow is either topologically attracting towards $L$
        or topologically repelling away from $L.$
    \end{enumerate} \end{thm}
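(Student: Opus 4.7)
My plan is to apply standard Poincar\'e--Bendixson arguments on the cylinder $\bbR \times S^1$, relying on two pillars: (a) Brouwer's theorem forbids any null-homotopic simple closed orbit, since the flow-invariant disk it would bound must contain a fixed point; and (b) the ``Poincar\'e-box'' monotonicity, namely that consecutive intersections of any orbit with a local transversal $\tau$ move monotonically along $\tau$. Item (1) will then be immediate: a properly embedded periodic orbit is an essential simple closed curve by (a), hence a vertical circle, while a properly embedded non-periodic orbit is a copy of $\bbR$ whose two ends each escape every compact set and so exit one of the two ends of the cylinder, giving a one-ended or two-ended leaf. Item (4) will follow from (b): for an isolated periodic orbit $L$, the first-return map $P$ on a small transversal $\tau$ has $p = L \cap \tau$ as its unique nearby fixed point, so on each component of $\tau \setminus \{p\}$ the function $P - \id$ has constant sign by continuity---topological attraction on the side where $P$ points toward $p$, and topological repulsion on the side where it points away.

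For item (2), I would let $\Omega = \omega(L)$ and split into cases. If $\Omega$ is empty there is nothing to prove; if $\Omega$ is nonempty and compact, classical Poincar\'e--Bendixson in the absence of fixed points forces $\Omega$ to be a single periodic orbit, which by item (1) is a vertical circle. The substantive case is when $\Omega$ is noncompact, and there I would show every orbit $L' \subset \Omega$ is a one-ended leaf by eliminating the alternatives. Such an $L'$ cannot be periodic, else $\Omega$ would collapse to that orbit and be compact; it must be properly embedded, since self-accumulation of $L'$ would produce a periodic orbit in $\overline{L'} \subset \Omega$ via the surface Poincar\'e--Bendixson alternative, contradicting what we just established; and it cannot be two-ended, since by (b) the orbit $L$ accumulates on $L'$ from a fixed transverse side, and since the flow is nearly parallel to $L'$ in a thin tubular neighborhood, $L$ is forced to track $L'$ in a single asymptotic direction, preventing it from accumulating on points of $L'$ near the opposite end.

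For item (3), I would suppose $\alpha(L) = \omega(L) = C$ for a non-periodic orbit $L$. Since $C$ is itself an orbit, $L$ cannot cross $C$ and so lies entirely in one component $A$ of $(\bbR \times S^1) \setminus C$. I would then take a transversal $\tau$ at some $p \in C$, parameterized so that $\tau(0) = p$ and $\tau(s) \in A$ for $s > 0$; the intersections $L \cap \tau$ form a bi-infinite sequence of positive parameters $(s_n)_{n \in \bbZ}$ with $s_n \to 0^+$ as $n \to +\infty$ (from $\omega(L) = C$) and also as $n \to -\infty$ (from $\alpha(L) = C$). By (b) the sequence is globally monotone in $n$, which is incompatible with both tails converging to $0^+$.

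The hardest step will be the noncompact case of item (2), and specifically ruling out two-ended leaves in $\Omega$. A two-ended $L'$ in $\bbR \times S^1$ need not separate the cylinder---for instance, a horizontal line $\bbR \times \{\theta_0\}$ has connected complement---so a naive Jordan-type separation argument is unavailable, and one must combine Poincar\'e-box monotonicity with the near-parallel local flow structure along $L'$ to pin the orbit $L$ to a single end of $L'$. This is where I expect the technical heart of the proof to lie.
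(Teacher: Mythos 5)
Your overall route is genuinely different from the paper's: you argue directly on the cylinder via Brouwer and Poincar\'e-box monotonicity, whereas the paper adds a fixed point at each end to compactify to $S^2$ and then invokes Poincar\'e--Bendixson there. Items (1), (3), (4) and the compact cases of item (2) are fine as you present them; the monotonicity of successive crossings of a transversal is indeed valid on the cylinder because the cylinder is planar, so every Jordan curve (in particular the one built from an orbit arc and a transversal segment) separates it.

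However, the step you yourself flag --- ruling out two-ended leaves in a noncompact $\omega$-limit set --- is a genuine gap, and the mechanism you propose does not close it. ``Near-parallel local flow structure along $L'$'' cannot force $L$ to track $L'$ in one direction: nothing local prevents $L$ from leaving a neighborhood of $L'$ and re-entering near a far-away portion of $L'$ at a later time, and for a merely continuous flow along a noncompact orbit there is in any case no uniform tubular neighborhood to track within. The correct mechanism is global, and it is the one the paper uses: take $p \in L' \subset \omega(L)$, a transversal $\tau$ at $p,$ and two consecutive crossings $y_1, y_2$ of $L$ with $\tau;$ let $\Gamma$ be the Jordan curve formed by the orbit arc of $L$ from $y_1$ to $y_2$ together with the subsegment $[y_1,y_2]$ of $\tau.$ First, $\Gamma$ must be essential in the cylinder: if it bounded a disk $D_0,$ the flow would cross $\partial D_0$ only along $[y_1,y_2]$ and only in one direction, so orbits interior to $D_0$ would be trapped (forward or backward); Poincar\'e--Bendixson in the disk then yields a periodic orbit bounding an invariant sub-disk, hence a fixed point by Brouwer --- a contradiction. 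Being essential, $\Gamma$ separates the two ends of the cylinder (equivalently, the two added fixed points of $S^2$), and the standard trapping argument confines the forward orbit of $L$ after $y_2,$ hence all of $\omega(L),$ to the closure of the complementary component containing only one end. A two-ended leaf accumulates on both ends, so it cannot lie in $\omega(L).$ So the Jordan curve to use is not $L'$ itself (you are right that $L'$ need not separate) but the trapping curve $\Gamma,$ whose essentiality is what your argument is missing.
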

\begin{proof}
    Compactify the cylinder to a sphere $S^2$
    by adding a fixed point at each end.
    By slowing down the flow at each end,
    we can extend it to a flow defined on $S^2$
    with exactly two fixed points.
    We can then apply the Poincar\'e-Bendixson theorem
    for flows on $S^2.$
    All the above stated results can then be deduced from this
    and we leave most of the details to the reader.
    However, we give here a proof that an orbit cannot accumulate
    on a two-ended leaf, as this is slightly subtle.

    If an orbit accumulates on a point $p$
    which is not one of the two added fixed points,
    then this yields a trapping region given by a Jordan curve
    (as in the proof of the Poincar\'e-Bendixson theorem).
    This Jordan curve splits the sphere into two topological hemispheres
    with exactly one fixed point in each hemisphere.
    No $\omega$-limit set can contain both fixed points
    and so no $\omega$-limit set contains a two-ended leaf.
\end{proof}

\section{Foliations on a covering space} \label{sec:intercover} 

Suppose $\pi : M \to S$ defines a circle bundle over a
higher-genus surface $S.$
Let $\hat M$ denote the covering space of $M$ which is obtained
obtained by pulling the circle bundle back by the covering $\bbH^2 \to S.$
In particular, $\hat M$ is homeomorphic to $\bbH^2 \ti S^1$
and we again use $\pi$ to denote the projection $\pi : \hat M \to \bbH^2.$

Consider a foliation on $M$ which satisfies the hypothesis
of \cref{thm:brittenham} and lift the foliation to the covering space $\Mhat.$
Since all of our analysis in this section occurs on $\Mhat,$
we use $\Fcal$ to denote the lifted foliation on $\Mhat$ and leave
the original foliation on $M$ unnamed here.
Let $h : \Mhat \to \Mhat$ be the lift
to $\Mhat$ of the $C^1$ diffeomorphism given by \cref{thm:brittenham}.
In particular, $h$ is a finite distance
from the identity map on $\Mhat$ and for every leaf $L \in F,$
the image $h(L)$ is either transverse to the circle fibering
or is a union of fibers.
It therefore makes since to talk of the horizontal and vertical leaves
of the lifted foliation $\Fcal.$
Let $\Lam$ denote the sublamination of vertical leaves of $\Fcal.$

\begin{lemma} \label{lemma:horizontalplane}
    If $L$ is a horizontal leaf of $\Fcal,$ then
    $L$ is a topological plane and the map $\pi \circ h$
    when restricted to $L$ 
    is an embedding
    $\pi \circ h|_L : L \to \bbH^2.$
\end{lemma}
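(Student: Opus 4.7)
Since $h(L)$ is transverse to the $S^1$-fibering of $\Mhat = \bbH^2 \ti S^1$, the restriction $\pi|_{h(L)}$ is automatically a local $C^1$ diffeomorphism; the goal is to show it is a homeomorphism onto an open subset of $\bbH^2$ homeomorphic to $\bbR^2$. This will simultaneously yield that $L \cong h(L)$ is a topological plane and that $\pi \circ h|_L$ is the desired embedding. The plan is to pass to the full universal cover of $M$, where the topology of the leaves is controlled.

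Let $\tM = \bbH^2 \ti \bbR$ be the universal cover of $M$; the cover $\tM \to \Mhat$ is a $\bbZ$-cover with deck group acting as $(p,t) \mapsto (p, t+1)$. Lift the foliation $h(\Fcal)$ from $\Mhat$ to $\tM$. The no-compact-leaves hypothesis passes to the lifted foliation and rules out Reeb components, so a Novikov--Palmeira-type theorem --- in the form valid for $C^0$ foliations tangent to a continuous plane field on a 3-manifold whose universal cover is homeomorphic to $\bbR^3$ --- gives that every leaf in $\tM$ is a properly embedded topological plane. The main obstacle I anticipate is verifying this Novikov--Palmeira-type statement in the present $C^0$-with-$C^1$-leaves regularity.

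Let $\tL$ be any lift of $h(L)$ to $\tM$. Since the covering $\tM \to \Mhat$ preserves the fibering, $\tL$ is transverse to the $\bbR$-fibers. A properly embedded plane in $\bbH^2 \ti \bbR$ transverse to the $\bbR$-direction must be a graph over an open subset of $\bbH^2$: by Jordan--Schoenflies it separates $\tM$, and because the $\bbR$-component of its normal field is nowhere zero (by transversality) and has a consistent sign (by connectedness of $\tL$), any $\bbR$-fiber crosses $\tL$ with the same orientation at every intersection, which the separation property allows only once. Hence $\pi|_{\tL}$ is a homeomorphism onto an open set $U \subof \bbH^2$, and since $\tL \cong \bbR^2$, so is $U$. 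Finally, the deck action $(p,t) \mapsto (p, t + 1)$ sends the graph $\tL$ to the disjoint translated graph $\tL + 1$, so the $\bbZ$-action is free on the orbit of $\tL$, and the quotient map $\tM \to \Mhat$ restricts to a homeomorphism $\tL \to h(L)$. Composing, $\pi \circ h|_L$ is a homeomorphism onto $U$ and $L \cong \bbR^2$, as claimed.
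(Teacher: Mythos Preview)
Your argument is correct and complete, though the route differs from the paper's. You invoke the full Novikov--Palmeira package (leaves in $\tM$ are properly embedded planes) and then run a separation/orientation argument to conclude each lifted horizontal leaf is a graph over an open subset of $\bbH^2$. The paper instead uses only the more primitive consequence of Reeblessness: in the universal cover, no transverse arc can have both endpoints on the same leaf. Concretely, the paper argues that if $h(L) \subset \Mhat$ carried an essential loop, or if $\pi|_{h(L)}$ failed to be injective, then lifting the offending curve to $\tM$ would produce two distinct points of a single leaf $\tilde L$ lying on the same $\bbR$-fiber; the fiber segment between them is a transverse arc with both endpoints on $\tilde L$, contradicting tautness. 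This bypasses the separation/graph discussion entirely and sidesteps the regularity worry you flag, since the ``no transverse arc with endpoints on the same leaf'' statement needs only that leaves are closed subsets of $\tM$. Your approach has the virtue of making the graph structure explicit; the paper's is shorter and uses a lighter tool. One small quibble: the separating result you want for a properly embedded plane in $\bbR^3$ is not Jordan--Schoenflies (which is two-dimensional); it follows from Alexander duality or a direct argument.
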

\begin{proof}
    Let $\Gcal = h(\Fcal)$ denote the foliation after $h$ has been applied.
    It is enough to prove for each horizontal leaf $L$ of $\Gcal$ that
    $L$ is homeomorphic to a plane and that $\pi|_L$ is an embedding.

    Suppose that $\hat \gam$ is an essential closed curve on the leaf $L.$
    The universal covering map $\tM \to \hat M$ is a cyclic covering
    which quotients a fibering of $\tM$ by lines down to
    a fibering of $\hat M$ by circles.
    Lift $\hat \gam$ to a curve $\tilde \gam$ inside a horizontal leaf
    $\tilde L \subof \tilde M.$
    As $\hat \gam$ is essential,
    the endpoints of $\tilde \gam$ are distinct and lie on the same fiber.
    This fiber is everywhere transverse to the foliation on $\tM$
    and so the segment between the two endpoints
    produces a transverse curve connecting two distinct points on
    the leaf $\tilde L.$
    For a lift of a taut foliation to the universal cover,
    such a transverse curve is not possible,
    giving a contradiction.

    Suppose now that $\pi|_L$ is not injective. Then there is a
    curve $\hat \gam$ inside of $L$ which starts and ends on the same fiber.
    The same steps as above again give a contradiction.
    Therefore $\pi|_L$ must be injective.
    As $L$ is transverse to the circle fibering, $\pi|_L$ is an open map.
\end{proof}
If the foliation contains both horizontal and vertical leaves,
then the embedding $\pi \circ h|_L$ given in \cref{lemma:horizontalplane}
will not have a uniformly continuous inverse
since the angle between the leaf and the fibers
tends to zero as we approach the vertical sublamination $\Lam.$
However, the map behaves nicely on subsets bounded
away from the vertical sublamination.
This behaviour is captured in the following result
which will be needed in \cref{sec:noonesided}.

\begin{lemma} \label{lemma:awayfromvert}
    Let $L$ be a horizontal leaf of $\Fcal$ and let $D \subof L$
    be such that $\dist(D, \Lam) > 0.$
    Then $\pi \circ h|_D$ has a uniformly continuous inverse.
\end{lemma}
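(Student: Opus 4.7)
The plan is to split the argument into a reduction, a uniform angle lower bound on $h(D),$ and a local graph argument. Since $h$ is the lift of a homeomorphism of the compact manifold $M,$ both $h$ and $h\inv$ are uniformly continuous on $\Mhat,$ so the hypothesis $\dist(D, \Lam) > 0$ upgrades to $\dist(h(D), h(\Lam)) \ge \delta > 0$ for some $\delta.$ Consequently it suffices to show that $\pi|_{h(D)}$ has a uniformly continuous inverse; precomposing with $h\inv$ then yields the lemma.

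The key input is a uniform positive lower bound for the angle that the tangent planes of $h(\Fcal)$ make with the fibers over points of $h(D).$ Let $E'$ be the continuous plane field on $\Mhat$ tangent to $h(\Fcal),$ and let $\theta : \Mhat \to [0, \pi/2]$ be the angle between $E'$ and the fiber direction; both are $\pi_1$-equivariant and descend to continuous objects on $M.$ The essential geometric observation is that $\{\theta = 0\} = h(\Lam)$: along a union of fibers the fiber direction is tangent to $E',$ while \cref{thm:brittenham} forces each horizontal leaf of $h(\Fcal)$ to be everywhere transverse to the fibering, so $\theta > 0$ there. I would then argue by contradiction that $\theta \ge c > 0$ on $h(D)$: if $p_n \in h(D)$ had $\theta(p_n) \to 0,$ compactness of $M$ would give an accumulation point of the projected sequence at some $y$ with $\theta(y) = 0,$ hence $y$ lies in the image of $h(\Lam)$ under the covering $\Mhat \to M;$ the local product structure of this covering near $y$ then supplies points $\tilde y_n \in h(\Lam)$ with $\dist(p_n, \tilde y_n) \to 0,$ contradicting the bound $\delta.$ This is the step where I expect the main technical subtlety, since it is the only place where the non-compactness of $\Mhat$ has to be neutralized by descending to the compact quotient.

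Once the lower bound $\theta \ge c$ on $h(D)$ is established, the conclusion is a standard graph argument. Uniform continuity of $E'$ on $M$ yields a radius $\rho > 0$ such that $\theta \ge c/2$ throughout every $\rho$-ball around a point of $h(D);$ in such a ball each leaf of $h(\Fcal)$ is a Lipschitz graph over a disk in $\bbH^2$ with constants depending only on $c,$ so the restriction of $\pi$ to the intrinsic leaf disk of radius $\rho/2$ about $x \in h(D)$ is bi-Lipschitz onto a neighborhood of $\pi(x),$ with constants independent of $x.$ Combined with the global injectivity of $\pi|_{h(L)}$ from \cref{lemma:horizontalplane}, this forces the unique $\pi|_{h(L)}$-preimages of points in $\pi(h(D))$ that are close in $\bbH^2$ to be close in $\Mhat,$ which is exactly the uniform continuity of the inverse that is needed.
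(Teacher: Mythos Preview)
Your proof is correct, but it takes a genuinely different route from the paper's argument.

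After the same initial reduction to $\pi|_{h(D)},$ the paper argues by a direct sequential contradiction: if the inverse were not uniformly continuous there would be sequences $\{x_n\}, \{y_n\}$ in $h(D)$ with leaf distance $d(x_n, y_n) \to a > 0$ but $d(\pi(x_n), \pi(y_n)) \to 0.$ Applying deck transformations and passing to subsequences produces convergent sequences $x'_n \to x,$ $y'_n \to y$ lying on horizontal leaves and bounded away from $h(\Lam);$ the limits $x \ne y$ then sit on the same circle fiber and on the same horizontal leaf, contradicting \cref{lemma:horizontalplane} directly. No angle function, no graph argument.

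Your approach instead factors through a quantitative transversality statement: you first extract a uniform lower bound on the angle $\theta$ over $h(D)$ by the same compactness-and-deck-transformation mechanism, and then run a local bi-Lipschitz graph argument, invoking the injectivity of \cref{lemma:horizontalplane} only at the end to identify the nearby graph point with $y.$ This is longer but buys you more: you actually obtain a bi-Lipschitz bound $d_{\text{leaf}}(x, y) \le C\, d_{\bbH^2}(\pi(x), \pi(y))$ with $C$ depending only on the angle lower bound, not merely uniform continuity of the inverse. The paper's argument is shorter and uses \cref{lemma:horizontalplane} as the single punchline, while yours separates the compactness step (angle bound) from the local geometry, which could be reused if one later needed the Lipschitz constant explicitly. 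One small point worth making explicit in your write-up: the continuation of the leaf-graph over the short segment from $\pi(x)$ to $\pi(y)$ stays inside the region where $\theta \ge c/2$ because the lifted path has leaf-length at most $\epsilon/\sin(c/2) < \rho,$ so the ODE-style continuation is valid for all $t \in [0,1].$
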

\begin{proof}
    Again let $\Gcal = h(\Fcal)$ denote the foliation after $h$ has been applied.
    Let $\Gam = h(\Lam)$ be the vertical sublamination of $\Gcal.$
    Since $h \inv$ is uniformly continuous,
    it is enough to prove for
    a horizontal leaf $L$ of $\Gcal$ and a subset $D \subof L$
    with $\dist(D, \Gam) < 0$ that
    $\pi|_D$ has a uniformly continuous inverse.

    Suppose not.
    Then there are a constant $a > 0$ and
    sequences $\{x_n\}$ and $\{y_n\}$ in $D$
    such that
    \[    
        \lim_{n \to \infty} d(x_n, y_n) = a
        \qandq
        \lim_{n \to \infty} d(\pi(x_n), \pi(y_n)) = 0.
    \]
    where $d(x_n, y_n)$ is distance measured inside of the leaf $L.$
    By applying deck transformations and passing to subsequences,
    we may find convergent sequences $\{x'_n\}$ and $\{y'_n\}$ in $\Mhat$
    such that for all $n \ge 0,$ the points
    $x'_n$ and $y'_n$ are on the same horizontal leaf $L'_n$ and
    \[
        \dist(\pi(x'_n), \pi(\Gam))
        \ \ge \ \dist(\pi(D), \pi(\Gam)) \ > \ 0.
    \]
    Further,
    $\lim_{n \to \infty} d(x'_n, y'_n) = a,$ and
    $\lim_{n \to \infty} d(\pi(x'_n), \pi(y'_n)) = 0.$
    Then, $\{x'_n\}$ and $\{y'_n\}$ converge to distinct
    points $x$ and $y$ which are on the same circle fiber
    and on the same horizontal leaf,
    a contradiction.
\end{proof}
\section{Proving \cref{prop:approxgood}} \label{sec:firstthree} 

We now focus on proving \cref{prop:approxgood}.
For this, we must show that if a partially hyperbolic diffeomorphism
$f : M \to M$
satifies the hypotheses of \cref{prop:approxgood}, then its approximating
foliations $\Fcsep$ and $\Fcuep$ satisfy all seven conditions listed in
\cref{thm:twofolns}.
In this section, we show conditions (1), (2), (3), which have 
short and straightforward proofs.
\Cref{sec:wellplaced} establishes quantitative estimates on the
contraction or expansion of vertical leaves and
shows conditions (4), (5), (6).
In \cref{sec:noonesided}, we show condition (7) which rules out one-sided leaves.

In this section, let $f : M \to M$ be a diffeomorphism as in \cref{prop:approxgood}.
Let $\Wcsbran$ and $\Wcubran$ be its branching foliations, and let
$\Fcsep$ and $\Fcuep$ be the approximating foliations as defined in \cref{sec:bran}.
We first prove conditions (1) and (2).

\begin{lemma} \label{lemma:onetwo}
    The foliations $\Fcsep$ and $\Fcuep$ are $C^0$ regular with $C^1$ leaves
    tangent to a continuous distribution and neither of the foliations has
    compact leaves.
\end{lemma}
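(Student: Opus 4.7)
The plan is to treat the two assertions of the lemma separately. The regularity claim — that $\Fcsep$ and $\Fcuep$ are $C^0$ regular with $C^1$ leaves tangent to a continuous distribution — is immediate from \cref{thm:approxfoln}(1), since this is precisely built into the Burago--Ivanov construction: the approximating foliation is tangent to a continuous plane field $\ep$-close to $\Ecs$ (respectively $\Ecu$). No further argument is needed for this part.

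The substantive content is the absence of compact leaves, and I would argue it by contradiction in the $cs$ case; the $cu$ case is identical. Suppose $\Lep \in \Fcsep$ is compact. By \cref{thm:approxfoln}(3), the restriction $\hcsep|_{\Lep}$ is a $C^1$ diffeomorphism, so $L := \hcsep(\Lep)$ is a compact leaf of the branching foliation $\Wcsbran,$ tangent to $\Ecs$ and saturated by the stable leaves through its points. Since $\Wcsbran$ is $f$-invariant, the iterates $f^n(L)$ are all compact leaves, and after replacing $f$ by a suitable power one may assume $f(L)=L.$

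To rule out such an $L$ I would combine topological and dynamical information. Since $\Wcsbran$ is Reebless \cite{BI}, the leaf $L$ is incompressible in $M;$ by the classification of incompressible surfaces in Seifert fibered spaces, $L$ is then either a vertical torus or a horizontal closed surface. In the horizontal case, $M$ is a finite cover of the trivial bundle $S \ti S^1,$ and a direct argument using the uniform stable contraction along $\Es$ rules this out. In the vertical case $L = \pi\inv(\gam)$ for some essential closed curve $\gam$ on $S,$ so the circle fibers are tangent to $L$ and hence lie in $\Ecs;$ because fibers are closed curves while $\Es$ is uniformly contracted, the fiber direction cannot coincide with $\Es,$ and the resulting dynamical picture for $f|_L$ on the torus $L$ can be shown to be incompatible with $L$ being a fibered torus inside $M.$

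The main obstacle is precisely this vertical case, because circle bundles over higher-genus surfaces genuinely contain vertical incompressible tori, so Reeblessness of $\Wcsbran$ alone is not enough to exclude them. Producing the contradiction will require carefully exploiting the interplay between the fiber direction, the uniformly contracted subbundle $\Es,$ and the induced dynamics of $f|_L$ on an invariant vertical torus.
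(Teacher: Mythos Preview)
Your treatment of the regularity claim is correct and matches the paper exactly: both simply invoke \cref{thm:approxfoln}(1).

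For the absence of compact leaves, the paper takes a much shorter route than yours. It observes that a compact leaf of $\Fcsep$ collapses to a compact leaf of $\Wcsbran$, which (having the nowhere-zero line field $\Es$) must be a torus, and then cites the Rodriguez Hertz--Rodriguez Hertz--Ures result \cite{RHRHU-tori}: a partially hyperbolic diffeomorphism of a closed 3-manifold admitting an $f$-periodic 2-torus tangent to $\Ecs$ or $\Ecu$ forces $M$ to be (up to finite cover) the 3-torus or a mapping torus of a linear automorphism of $\bbT^2$. A circle bundle over a higher-genus surface is none of these, so no such torus exists. That is the entire argument.

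Your proposal is, in effect, an attempt to reprove the RHRHU theorem in the specific Seifert setting, and you correctly isolate the vertical-torus case as the essential difficulty. But this case is genuinely hard --- it is the heart of the RHRHU result --- and the sketch you give does not close the gap. In particular, ``the fiber direction cannot coincide with $\Es$'' is true but not decisive: nothing prevents the fiber direction from lying in $\Ec$ along the torus, and the ensuing dynamics of $f|_L$ on a vertical torus does not yield an immediate contradiction by the kind of local argument you outline. The RHRHU proof uses global considerations (Anosov dynamics on the torus, structure of its basin in $M$) that are not easily reproduced in a few lines. The right move is simply to invoke \cite{RHRHU-tori} as a black box, as the paper does.

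One small remark on your horizontal case: the contradiction there is even more direct than you indicate. A horizontal closed incompressible surface in a circle bundle over $S$ finitely covers $S$ and so has genus at least two; but any compact leaf of $\Wcsbran$ carries the nonvanishing line field $\Es$ and hence has Euler characteristic zero. So the horizontal case is ruled out immediately, before any dynamical input.
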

\begin{proof}
    The regularity is part of the approximation assumption in \cref{thm:approxfoln}.
    The existence of a closed leaf of $\Fcsep$ or $\Fcuep$ implies the existence of a
    closed $f$-invariant torus in $\Wcsbran$ or $\Wcubran$ which is excluded by
    \cite{RHRHU-tori}.
\end{proof}
To prove condition (3) we use a standard \emph{volume vs.~length} argument.

\begin{lemma} \label{lemma:nocloseleaves}
    There are no pairs of vertical leaves $L, L'$ in $\Fcsep$
    (or in $\Fcuep$)
    which lift to leaves at bounded Hausdorff distance from each other. 
\end{lemma}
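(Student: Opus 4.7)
The plan is to argue by contradiction along the lines of Brin's classical \emph{volume versus length} estimate. Suppose there exist distinct vertical leaves $L \ne L'$ of $\Fcsep$ whose lifts $\tL, \tL' \subset \tM$ have Hausdorff distance at most $K < \infty$. First I set up equivariance: since each vertical leaf contains a loop freely homotopic to a fiber of the circle bundle, and the fiber class is represented by a deck transformation $t \in \pi_1(M)$ that is central up to finite index, one can choose the lifts so that both $\tL$ and $\tL'$ are $t$-invariant. The fiber class is fixed up to sign by $f_*$, so verticality is preserved under iteration and $f^{-n}L$, $f^{-n}L'$ are vertical for every $n$, with $t$-invariant lifts $f^{-n}\tL$, $f^{-n}\tL'$.

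Next I exploit the transversality between $\Fcsep$ and $\Fu$. For $\ep$ small the angle between these foliations is uniformly bounded away from zero, and the Burago--Ivanov property that each lifted unstable leaf meets each lifted $cs$-leaf of $\Wcsbran$ at most once passes to $\Fcsep$ for small $\ep$. Consequently, for every $x \in \tL$ the unstable leaf $\Wu(x)$ reaches $\tL'$ in a unique point along an arc of length at most $CK$, for a uniform constant $C$. Applying $f^{-n}$ and using that the minimum unstable expansion rate is some $\lambda > 1$, the strip between $f^{-n}\tL$ and $f^{-n}\tL'$ has unstable width at most $CK \lambda^{-n}$, which goes to zero exponentially fast.

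The contradiction then comes from a volume comparison performed in a $t$-fundamental domain. A $t$-fundamental loop $\gamma \subset L$ pushes forward to a loop $\gamma_n \subset f^{-n}L$ in the fiber class; its length is bounded below by the minimum translation length $c_0 > 0$ of $t$, a constant depending only on the ambient metric. A tubular strip around $\gamma_n$ inside $f^{-n}L$ of $\Fcsep$ together with the unstable saturation out to $f^{-n}\tL'$ has volume bounded above by $c_1 \cdot \mathrm{length}(\gamma_n) \cdot CK\lambda^{-n}$, while this same region is the $f^{-n}$-image of a region of volume bounded below by $V_0 \cdot J^{-n}$ where $J$ is the uniform bound on the Jacobian of $f$. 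By strict domination of $\Ec$ by $\Eu$, the length of $\gamma_n$ grows at a rate strictly smaller than $\lambda^n J^n$, so comparing these estimates forces $V_0 = 0$, a contradiction.

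The main technical obstacle is making the domination estimate for $\mathrm{length}(\gamma_n)$ precise enough that the upper bound on the strip volume beats the lower bound for large $n$. One clean way to handle this is to always choose $\gamma_n$ to be a length-minimizing representative of the fiber class on $f^{-n}L$, so that its length stays controlled by $c_0$ uniformly from below and by a purely topological estimate from above; this removes the need to track how the length of a specific loop evolves under iteration and reduces everything to the bounded-geometry property that continuous plane fields on the compact manifold $M$ have leaves of uniformly bounded local area.
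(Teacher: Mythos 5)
There are genuine gaps here, and the argument as written does not close. The most serious is the assertion that for every $x \in \tL$ the unstable arc from $x$ to $\tL'$ has length at most $CK$, where $K$ is the Hausdorff distance between the lifts: finite Hausdorff distance between two $cs$-surfaces gives no control whatsoever on the length of an unstable arc joining them, since the arc is only constrained to be transverse to the leaves and may travel arbitrarily far inside the region between them before crossing. Obtaining such length control is exactly what the volume-versus-length estimate is for, and you never actually invoke it. Second, the closing comparison is incoherent: from $V_0 J^{-n} \le c_1 C K \lambda^{-n}\,\length(\gamma_n)$ one needs an \emph{upper} bound $\length(\gamma_n) = o\bigl((\lambda/J)^n\bigr)$ to force $V_0 = 0$; your statement that the length ``grows at a rate strictly smaller than $\lambda^n J^n$'' is both unproved and pointing the wrong way (if the Jacobian bound $J$ exceeds $\lambda$, the inequality is satisfied for all $n$ with $V_0 > 0$), and the proposed fix of taking $\gamma_n$ length-minimizing only gives a \emph{lower} bound $c_0$; uniformly bounded local area of leaves does not bound the systole of the cylinder $f^{-n}(L)$ from above. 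Third, $\Fcsep$ is not $f$-invariant, so $f^{-n}(L)$ need not be a leaf of $\Fcsep$ at all; one must first reduce, via the collapsing map, to the $f$-invariant branching foliation $\Wcsbran$.

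For comparison, the paper's proof makes that reduction to $\Wcsbran$ first, then considers the region $D \subset M$ between the two leaves (which has bounded volume), shows that $\volume(f^n(D))$ is bounded below for all $n \ge 0$ because a small unstable segment inside $D$ grows under forward iteration, and uses finiteness of $\volume(M)$ together with a maximality choice of $L, L'$ to conclude $f^n(D) = D$ for some $n$. The contradiction is then \emph{not} a pure volume estimate: one identifies $\pi_1(D) \cong \bbZ$ from the stabilizer of $\widetilde D$ under deck transformations and applies Proposition 5.2 of \cite{hps2018seif} to the resulting periodic region. Your attempt to replace this final topological step by a self-contained metric comparison is where the argument breaks down.
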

\begin{proof}
    Since the collapsing map $\hcsep$ moves leaves
    by a small distance, it is enough to prove this result for
    two leaves in the branching foliation $\Wcsbran.$
    Assume by contradiction that such a pair of vertical leaves exists.
    That is, $L$ and $L'$ in $\Wcsbran$ are vertical leaves with
    lifts to $\tM$ which are at finite Hausdorff distance.
    Denote by $\widetilde D$ the open region in between
    the lifts of $L$ and $L'$.
    This projects to a region $D$ in $M$ of bounded volume between
    $L$ and $L'$.
    We may freely assume that $L$ and $L'$ are as far apart as possible,
    so that the region $D$ is as large as possible.
    That is, assume that if $L''$ is any other vertical leaf with
    its lift at finite distance from the lift of $L,$
    then $L''$ lies in the closure of $D.$

    Let $J$ be a small unstable segment inside of $D.$
    Since the length of $f^n(J)$
    grows under forward iteration, there is 
    $\delta > 0$ such that
    $\volume(f^n(D)) > \delta$
    for all $n \ge 0.$
    As $M$ has finite volume, there is $n > 0$ such that
    $f^n(D)$ intersects $D.$ The assumption of $L$ and $L'$ as far apart as possible
    then implies that $f^n(D) = D.$

    On the universal cover, the stabilizer of the region $\widetilde D$
    under the deck transformations $\pi_1(M)$ is
    exactly the infinite cyclic subgroup associated to the circle fibering.
    Therefore, $D \subof M$ has fundamental group isomorphic to $\bbZ$ and
    one can apply \cite[Proposition 5.2]{hps2018seif}
    to derive a contradiction.
\end{proof}
\section{Intersections of vertical leaves} \label{sec:wellplaced} 

This section analyzes the dynamics on vertical
leaves of $\Lamcs$ and $\Lamcu$ and their intersections
and establishes conditions (4), (5), (6) for \cref{prop:approxgood}.
As in \cref{sec:firstthree}, we assume throughout that $f : M \to M$
satisfies the hypotheses of \cref{prop:approxgood}.

To state some of the results,
we define a notion of a ``well-placed'' center or stable segment.
Let $\Lcs$ be a leaf of $\Wcsbran$ and let $\Lc$ be a leaf of $\Wcbran$ inside of $\Lcs.$
A connected subset $\Jc \subof \Lc$ is \emph{well-placed} if
every stable leaf intersects $\Jc$ at most once.
If this property holds for $\Jc = \Lc,$ we that $\Lc$ is a well-placed leaf.
Similarly, if $\Ls$ is a stable leaf in $\Lcs,$ then
a connected subset $\Js \subof \Ls$ is \emph{well-placed} if
it intersects every leaf of the branching center foliation at most once.

Let $X$ be a subset of a leaf $L$ of either $\Wcsbran$ or $\Wcubran.$
Then for $r > 0,$ define a neighbourhood $U_r(X) \subof L$
by $y \in U_r(X)$ if and only if $\dist(y,X) < r$
where distance is measured inside $L.$

\begin{lemma} \label{lemma:csonesided}
    Let $\Lcs$ be a vertical leaf of $\Wcsbran$ and let
    $\Lc$ be a one-sided center leaf inside of $\Lcs.$
    Then
    \begin{enumerate}
        \item the set $\Lcs \sans \Lc$ has two connected components,
        one of which, call it $D,$
        is homeomorphic to an open disk;
        \item
        any segment $\Jc \subof \Lc$ is well-placed; and
        \item
        any stable segment $\Js$ inside of $D$
        is well-placed.
    \end{enumerate} \end{lemma}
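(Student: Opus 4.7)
The plan is to handle the three parts of the conclusion in sequence. Part (1) is purely topological, while parts (2) and (3) both reduce to the classical principle that two continuous transverse $C^1$ foliations on a topological disk cannot form a foliated bigon --- a sub-disk bounded by an arc of each foliation meeting at two corners, which would violate the Poincar\'e-Hopf formula.

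For part (1), the leaf $\Lcs$ is orientable and, by \cref{lemma:nocloseleaves}, contains no closed cs-leaf, so it is homeomorphic to an open cylinder $\bbR \times S^1$. Since $\Lc$ is one-sided, it is a properly embedded line in $\Lcs$ both of whose ends escape through the same end of the cylinder. Compactifying $\Lcs$ to a 2-sphere by adjoining one ideal point at each end turns $\Lc$ into a simple closed curve passing through one of these ideal points. Jordan's theorem now partitions $S^2$ into two open disks; the one containing the opposite ideal point corresponds in $\Lcs$ to an annular component of $\Lcs \sans \Lc$, while the other Jordan disk lies entirely in $\Lcs$ and supplies the desired open disk $D$.

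For part (2), suppose a stable leaf $\Ws$ meets $\Jc \subof \Lc$ at two distinct points $p$ and $q$, and let $\alpha$ denote the arc of $\Ws$ between them. Since $\Ws$ switches sides of $\Lc$ at every transverse crossing, any sub-arc of $\alpha$ between consecutive crossings lies entirely on one side of $\Lc$; a sub-arc in $\overline{D}$ would close up with a piece of $\Lc$ into a forbidden bigon inside the disk $\overline{D}$. Therefore $\alpha$ has no intermediate crossings of $\Lc$ and lies entirely in $\overline{D^c}$. The closed curve $\gamma = \alpha \cup \beta$, where $\beta$ is the compact $\Lc$-arc joining $p$ and $q$, then lies in $\overline{D^c}$. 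If $\gamma$ is null-homotopic in $\Lcs$ the disk it bounds yields another forbidden bigon; if it is essential, it wraps nontrivially around the circle fiber of $\Lcs$. To rule out this essential case one iterates by $f^n$: the winding is preserved up to sign, $f^n(\alpha)$ contracts exponentially in length, and $f^n(\beta)$ remains a sub-arc of the one-sided leaf $f^n(\Lc)$ contained in a half-cylinder of $f^n(\Lcs)$. A topological analysis, using that all lifts of a one-sided center leaf to the universal cover of $\Lcs$ lie in a common half-plane, then shows that such iterated loops cannot realize nontrivial winding around the fiber for large $n$, producing the desired contradiction.

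For part (3), suppose the stable segment $\Js \subof D$ meets a center leaf $\Lc'$ at two distinct points $p$ and $q$. Since leaves of the branching center foliation do not topologically cross, $\Lc'$ is disjoint from $\Lc = \partial D$; as $\Lc'$ is connected and contains a point of $D$, the entire leaf $\Lc'$ lies in $D$. The compact sub-arc of $\Lc'$ between $p$ and $q$, together with the corresponding sub-arc of $\Js$, bounds a closed curve in the simply connected disk $D$, and the bigon principle applied to the sub-disk it bounds gives the contradiction. The main obstacle in the whole proof is the essential sub-case of part (2), where the ambient region $\overline{D^c}$ is annular rather than simply connected, so combining topology with the dynamical contraction of the stable arc is unavoidable; the remaining steps follow cleanly from the disk-bigon principle.
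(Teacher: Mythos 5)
Parts (1) and (3) are essentially the paper's argument (compactify to $S^2$ plus Jordan for (1); a forbidden bigon / Poincar\'e--Bendixson index argument for (3)). Part (3) does contain a small error you should be aware of: since $\Wcbran$ is only a \emph{branching} foliation, the leaf $L^{c\prime}$ through a point of $D$ need \emph{not} be disjoint from $\Lc$ --- it may touch $\Lc$ without crossing it. The correct conclusion is $L^{c\prime} \subof \Lc \cup D = \overline{D},$ which is still simply connected, so your bigon argument survives after relocating the closed curve to $\overline{D}$ rather than $D.$

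The genuine gap is in part (2), in the case you yourself flag as the main obstacle: when the loop $\gam = \al \cup \bt$ is essential in the cylinder $\Lcs.$ You do not prove anything there; you assert that iterating by $f^n$ and ``a topological analysis'' rules out nontrivial winding, but no such analysis is given, and it is far from routine ($f$ does not preserve $\Lcs,$ and essential loops made of a stable arc and a center arc genuinely occur on vertical cs-leaves --- this exact configuration is analyzed, not excluded, in Case Three of the coarse-contraction lemma later in the paper). What makes it excludable \emph{here} is the one-sidedness of $\Lc,$ and the paper exploits this with a two-line orientation argument that handles the null-homotopic and essential configurations simultaneously, with no dynamics at all: orient $\Es$ so that along the connected curve $\Lc$ it points into $D$ (possible since $\Es$ is orientable and transverse to $\Lc,$ and the choice is locally constant). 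Then every crossing of $\Lc$ by a stable leaf is a crossing \emph{into} $D$; a stable arc meeting $\Lc$ twice would lie in $D$ just after the first crossing and in the complement just before the second, hence would have to leave $D$ in between without crossing its frontier $\Lc,$ which is impossible. You should replace your case analysis for (2) with this argument; as written, the essential case is an unproven claim and the proof of (2) is incomplete.
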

\begin{proof}
    To prove item (1), compactify $\Lcs$ to a 2-sphere
    by adding a point at each end. As a one-sided leaf,
    $\Lc$ is properly embedded in $\Lcs$ and so its
    one-point compactification gives an embedding of
    a circle in the 2-sphere.
    Item (1) then follows from the Jordan curve theorem.

    For item (2), note that we can orient $\Es$ so that
    it points into $D$ everywhere along $\Lc.$ Therefore, a stable
    leaf cannot cross $\Lc$ twice as it would have to enter
    $D$ twice without leaving $D.$

    Item (3) follows from a standard Poincar\'e-Bendixson argument.
    Note here that as $\Wcbran$ is a branching foliation, a center leaf $L$
    though a point $x \in D$ might intersect the one-sided leaf $\Lc,$
    but $L$ cannot topologically cross $\Lc$ and so $L \subof \Lc \cup D.$
\end{proof}
\begin{lemma} \label{lemma:wellplaced}
    There exists a uniform constant $K_0 > 0$ such that if
    $J$ is a well-placed center or stable segment,
    then $\length(J) \le K_0 \area(U_1(J))$.
\end{lemma}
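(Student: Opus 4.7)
The plan is to exhibit a uniformly thick neighbourhood of $J$ inside its ambient leaf, built out of short arcs of the transverse one-dimensional foliation, and use the well-placed hypothesis to ensure that this neighbourhood is swept out injectively.

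Treat first the case when $J$ is a well-placed center segment inside a leaf $\Lcs$ of $\Wcsbran.$ Because $\Es \subset \Ecs$ is a continuous one-dimensional subbundle transverse to $\Ec$ inside $\Ecs,$ and because both $\Ec$ and $\Es$ are uniformly transverse (the angle between them being bounded below on the compact manifold $M$), there exist uniform constants $\delta > 0$ and $c > 0$, independent of the leaf, such that for every $x \in M,$ the map
\[
    \Phi_x : (-\delta,\delta) \ti (-\delta,\delta) \to \Lcs,
    \qquad (s,t) \mapsto \text{stable flow by time } t \text{ from the center flow by time } s,
\]
is a $C^1$ embedding whose Jacobian (with respect to the induced metric on $\Lcs$) is bounded below by $c.$ Parametrising $J$ by arclength by $\gam : [0,\length(J)] \to \Lcs,$ define
\[
    \Phi : [0,\length(J)] \ti (-\delta, \delta) \to \Lcs,
    \qquad \Phi(s,t) = \text{stable flow by time $t$ starting at $\gam(s)$}.
\]
By the local description above, $\Phi$ is a $C^1$ immersion with Jacobian bounded below by $c.$

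The point where the well-placed hypothesis enters is injectivity of $\Phi.$ If $\Phi(s_1,t_1) = \Phi(s_2,t_2),$ then the common image point lies on the stable leaf through both $\gam(s_1)$ and $\gam(s_2).$ Since $J$ is well-placed, a stable leaf meets $J$ at most once, so $s_1 = s_2$; then $t_1 = t_2$ follows from injectivity on vertical slices. Hence $\Phi$ is injective and
\[
    \area\bigl(\Phi([0,\length(J)] \ti (-\delta,\delta))\bigr) \ \ge \ 2c \, \delta \, \length(J).
\]
The image of $\Phi$ consists of stable arcs of length at most $\delta$ issuing from points of $J,$ so provided $\delta \le 1$ it is contained in $U_1(J).$ Taking $K_0 = (2c\delta)^{-1}$ gives the desired bound.

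The case where $J$ is a well-placed stable segment is proved by the same argument with the roles of the stable and center directions swapped: push short \emph{center} arcs out of $J$ via the local product chart in $\Lcs,$ and use that every center leaf meets $J$ at most once to conclude injectivity. The constants can be chosen uniformly for both cases, giving a single $K_0.$ The only substantive ingredient beyond linear algebra is the uniform lower bound on the angle between $\Ec$ and $\Es$ and the uniform size of the local product chart inside a $cs$-leaf; both follow from compactness of $M$ and continuity of the invariant splitting, and I expect this to be the only real point to check carefully.
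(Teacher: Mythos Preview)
Your approach is exactly the volume-versus-length argument the paper has in mind; indeed the paper does not give a self-contained proof at all, merely citing \cite{BI} and \cite[Proposition 2.8]{hh2021surfaces}. For the center segment case your argument is correct as written: the stable bundle is uniquely integrable, so if $\Phi(s_1,t_1)=\Phi(s_2,t_2)$ the common point lies on a single stable leaf containing both $\gamma(s_1)$ and $\gamma(s_2)$, and well-placedness forces $s_1=s_2$.

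The stable segment case, however, has a gap you have not addressed. The center direction is only tangent to a \emph{branching} foliation $\Wcbran$, so through a given point $p\in\Lcs$ there may pass several distinct center leaves. Thus $\Phi(s_1,t_1)=\Phi(s_2,t_2)=p$ could arise from two different center leaves $L_1\ni\gamma(s_1)$ and $L_2\ni\gamma(s_2)$, each meeting $\Js$ exactly once but at different points; well-placedness does not rule this out. Equivalently, there is no genuine ``local product chart'' for the pair $(W^s,\Wcbran)$ in which center plaques are coordinate curves. To repair this you must either argue that the branching locus is negligible for the area estimate, or replace the branching center foliation by a nearby true one (for instance the approximating foliation $\Fcep$) and check that the well-placed hypothesis transfers. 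This is presumably what the cited reference does, but it is a genuine step and not the same as the center case with the labels swapped.
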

\begin{proof}
    This is an adaptation to the two-dimensional setting
    of the volume-versus-length argument of \cite{BI}.
    See \cite[Proposition 2.8]{hh2021surfaces} for details.
\end{proof}
Let $\hcsep : M \to M$ denote the collapsing map given by \cref{thm:approxfoln} and
let $\hcs : M \to M$ be the ``Brittenham'' diffeomorphism given by
\cref{thm:brittenham} applied to $\Fcsep.$
We apologize for using similar names for the two functions,
but we were unable to find a better way to name them.

For each vertical leaf $L$ of $\Wcsbran,$ define a circle fibering on $L$ as follows.
Let $\Lep$ be the approximating leaf in $\Fcsep.$ Then $\hcs(\Lep)$ is a union of fibers
of $\pi$ and this pulls back to define a circle fibering on $\Lep.$
Apply $\hcsep|_{L_\ep}$ to define a circle fibering on $L$ itself.
For a point $x \in L,$ let $C_x$ denote the circle through $x.$
If $x,y \in L$ lie on distinct fibers, let $A_{x,y}$ denote the closed annulus
in $L$ which has $C_x$ and $C_y$ as its two boundary components.

Since both $\hcsep$ and $\hcs$ lift to maps on $\Mhat$ which are
a finite distance from the identity,
these circle fibers have uniformly bounded diameters. Up to rescaling the
metric on $M,$ we assume that $\diam(C_x) < 1$ for every circle fiber of every leaf
of $\Wcsbran.$

\begin{lemma} \label{lemma:annulus}
    There exists a uniform constant $K > 0$ such that
    if $L$ is a vertical leaf of $\Wcsbran,$
    $x$ and $y$ are points on $L$ with $d_L(x,y) > 2,$ and
    $J$ is a well-placed center or stable segment,
    contained in the annulus $A_{x,y},$
    then
    \[
        \length(J) \le K d_L(x,y).
    \] \end{lemma}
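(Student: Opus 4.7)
The plan is to reduce to \cref{lemma:wellplaced} and then bound the area of the neighbourhood geometrically using the bounded-length circle fibering on vertical leaves.

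First I would apply \cref{lemma:wellplaced} directly, which gives $\length(J) \le K_0 \area(U_1(J))$. Since $J \subof A_{x,y}$ and $U_1(J) \subof L$ is a $1$-neighbourhood in the intrinsic metric of $L$, the set $U_1(J)$ is contained in a slightly larger sub-annulus $A' \subof L$ bounded by two circle fibers $C_{x'}, C_{y'}$, where $x'$ and $y'$ can be chosen so that $d_L(x',x)$ and $d_L(y',y)$ are each bounded by a uniform constant (the $1$ from the neighbourhood plus the normalized bound $\diam(C_z) < 1$ on fiber diameters). So it suffices to establish a uniform estimate of the form $\area(A') \le K_1 \, d_L(x,y)$ whenever $d_L(x,y) > 2$.

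For the area estimate, I would use the circle fibering of $L$ constructed just before the lemma. Because $\hcsep$ and $\hcs$ both lift to maps at bounded distance from the identity on $\Mhat$, and $\Mhat/\pi_1(M)$ is compact, the circle fibers $C_z \subof L$ have uniformly bounded length, say bounded by some $C_1$, in addition to the already-normalized diameter bound. This lets us parametrize $A'$ as a topological cylinder $[0,T] \ti S^1$ whose $\{t\} \ti S^1$ slices are the circle fibers, and whose transverse length $T$ is bounded by $d_L(x,y) + C_2$ for a uniform constant $C_2$. Since each leaf is $C^1$ and tangent to the continuous distribution $\Ecs$ on the compact manifold $M$, the induced Riemannian metric on leaves has uniformly controlled geometry, and in particular the area of the product-like annulus $A'$ is bounded by $C_1 \cdot T$ up to a uniform multiplicative constant. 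Using $d_L(x,y) > 2$ to absorb the additive $C_2$ into the constant yields $\area(A') \le K_1 \, d_L(x,y)$. Composing with the inequality from \cref{lemma:wellplaced} then gives $\length(J) \le K_0 K_1 \, d_L(x,y)$, and setting $K = K_0 K_1$ concludes the proof.

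The main obstacle is the area bound on $A'$: one needs to package the uniform bound on fiber length together with the bounded transverse extent into a uniform linear area estimate. In practice this is straightforward because the fibering is obtained by transporting the actual circle fibering of $M$ across the maps $\hcs$ and $\hcsep$, both of which have uniformly bounded displacement; the only subtlety is checking that transverse distance within $L$ is indeed comparable, up to an additive constant, to the ``height'' parameter of the cylinder $A'$, which follows from the fact that the fibers themselves have bounded diameter so translating across one fiber changes the intrinsic distance by at most $1$.
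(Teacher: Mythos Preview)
Your proposal is correct and follows essentially the same approach as the paper: reduce to \cref{lemma:wellplaced} and then bound $\area(U_1(J))$ linearly in $d_L(x,y)$ using the cylinder geometry of vertical leaves. The paper phrases the area bound slightly more abstractly---invoking that balls $B_r(x)$ in vertical leaves have area proportional to $r$ and that vertical leaves are uniformly quasi-isometrically embedded in $\Mhat$---while you work more concretely with the bounded-length circle fibering, but these are two packagings of the same geometric fact.
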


\begin{proof}
    For the vertical leaves of $\Wcsbran,$
    the area of a ball $B_r(x) \subof L$ is proportional to its radius $r.$
    That is, for large $r > 0,$
    the ratio $\area(B_r(x))/r$ is uniformly bounded
    independent of the leaf $L$ or the point $x \in L.$
    Further, the vertical leaves of $\Wcsbran$ are uniformly
    quasi-isometrically embedded in $\Mhat.$
    Therefore for an annulus $A_{x,y},$ the ratio
    $\area(U_1(A_{x,y}))/d_L(x,y)$ is also uniformly
    bounded so long as $x$ and $y$ are far enough apart.
    By \cref{lemma:wellplaced},
    \[
        \length(J) \le K_0 \area(U_1(J)) \le K_0 \area(U_1(A_{x,y})).
        \qedhere
    \] \end{proof}
Let $L \in \Lamcs$ be a periodic vertical leaf.
We say that $f$ is \emph{coarsely contracting} on $L$
if there is a compact set $X_0 \subof L$
such that for any compact subset $X \subof L$ there exists $n \ge 1$
such that $f^n(X) \subof X_0.$

\begin{lemma} \label{lemma:coarsecontract}
    For every periodic leaf $L \in  \Lamcs,$
    $f$ is coarsely contracting on $L.$
\end{lemma}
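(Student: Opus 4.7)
The plan is to reduce to the case $f(L)=L$ by passing to a positive iterate, and then exhibit a compact attractor $X_0\subof L$ for $f|_L$ by combining uniform contraction of stable segments with the length--width bound of \cref{lemma:annulus}. After replacing $f$ by a suitable power so that $f(L)=L,$ $f$ preserves the orientation of $L,$ and $f$ fixes each of its two ends, I parameterize $L$ as $\bbR\ti S^1$ using the Brittenham fibering (inherited from the approximating leaf $\Lep\in\Fcsep$ via $\hcsep$), so that the circle fibers $C_x$ correspond to $\{t\}\ti S^1$ and the ends of $L$ correspond to $t\to\pm\infty.$

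The two quantitative inputs are: first, uniform stable contraction, giving $\length(f^n(J))\le\lam^n\length(J)$ for every stable segment $J$ and $n\ge 0;$ and second, the length--width bound of \cref{lemma:annulus}, which says that any well-placed stable or center segment $J\subof A_{x,y}\subof L$ has length at most $K\,d_L(x,y).$ The intuition is that a forward orbit of $f|_L$ escaping to one end of $L$ would, upon backward iteration, force arbitrarily long stable segments to extend out of a fixed compact annulus, which the second estimate can be used to forbid.

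Concretely, I argue by contradiction. If $f$ is not coarsely contracting on $L,$ there exist a compact $X\subof L,$ points $p_k\in X$ and times $n_k\to\infty$ with $f^{n_k}(p_k)$ escaping to (say) the end $E^+.$ Take well-placed stable segments $J_k\subof \Ws(f^{n_k}(p_k))$ of uniformly bounded positive length. Then $f^{-n_k}(J_k)$ is a stable segment of length at least $\lam^{-n_k}\length(J_k)\to\infty$ through $p_k\in X.$ Using the classification of orbits on cylinders from \cref{thm:cylinderfoliation} together with the well-placedness assertions of \cref{lemma:csonesided}, I expect to show that these long pullback segments remain well-placed inside an annulus $A_{x_k^-,x_k^+}$ whose width $d_L(x_k^-,x_k^+)$ grows much more slowly than $\lam^{-n_k};$ then \cref{lemma:annulus} provides the contradiction $\length(f^{-n_k}(J_k))\le K\,d_L(x_k^-,x_k^+).$

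The main obstacle I anticipate is controlling the geometry of the long pullback stable segments. A stable leaf inside $L$ is a properly embedded arc whose ends, by \cref{thm:cylinderfoliation}, either tend toward an end of $L$ or accumulate on a one-sided center leaf of $\Wcbran|_L;$ ruling out pathological winding around the cylinder requires a careful Poincar\'e--Bendixson analysis on $L,$ combined with \cref{lemma:csonesided} to pass from accumulation on one-sided leaves to well-placedness on one side. This is the technical heart of the argument and I expect it to require substantially more care than the formal setup above.
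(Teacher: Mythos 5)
Your setup (pass to an iterate with $f(L)=L$, identify $L$ with $\bbR\ti S^1$, combine uniform stable contraction with \cref{lemma:annulus}) matches the paper's, but the backward-iteration contradiction you propose does not close, for a reason of direction in the key inequality. \Cref{lemma:annulus} bounds the \emph{length} of a well-placed segment from above by (a constant times) the \emph{width} of an annulus containing it. So when you pull back a bounded well-placed stable segment $J_k$ through $f^{n_k}(p_k)$ and obtain a segment of length roughly $\sig^{-n_k}\length(J_k)$ through $p_k$, the only conclusion available is that any annulus containing it must be \emph{wide} — i.e.\ the stable manifold of $p_k$ reaches fibers at distance at least $\sig^{-n_k}\length(J_k)/K$ from $p_k$. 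That is perfectly consistent with the dynamics and yields no contradiction; there is no mechanism in your setup forcing the containing annulus to have width $o(\sig^{-n_k})$. The paper instead runs the estimate \emph{forward}: a well-placed stable segment spanning the annulus from a reference fiber $\{0\}\ti S^1$ to $\{x\}\ti S^1$ has length at most $Kx$; one application of $f$ shrinks it to length at most $\sig K x$ while one endpoint stays within a fixed distance $r$ of the origin, so the fiber $\{x\}\ti S^1$ is pushed into $[-\sig Kx-r-\eta,\sig Kx+r+\eta]\ti S^1$; after arranging $\sig K<1$ by taking an iterate, this gives strict inward motion of far-out fibers.

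The second, independent gap is that your argument presupposes the existence of suitable well-placed stable segments, and this can fail. The paper's proof is a trichotomy: either every annulus is spanned by a well-placed stable segment (Case One), or there is a one-sided stable leaf tending to the relevant end, which is automatically well-placed by \cref{lemma:csonesided} (Case Two), or neither holds. In the last case some stable leaf is not well-placed; this produces a transverse circle homotopic to a fiber, and a graph-transform argument yields an $f$-invariant circle $\bt$ tangent to $\Ec$ whose stable saturate contains a whole half-cylinder $[x_0,\infty)\ti S^1$. There coarse contraction holds because $f^n(\{x\}\ti S^1)$ converges to $\bt$ — a mechanism entirely absent from your proposal, and one where the well-placedness machinery you rely on is unavailable. (A minor point: with $\lam<\|Df\,v^s\|<\sig$, the pullback expands stable length by at least $\sig^{-n}$ and at most $\lam^{-n}$, so your lower bound should read $\sig^{-n_k}\length(J_k)$.)
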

\begin{proof}
    Without loss of generality, we can replace $f$ by an
    iterate and assume $f(L) = L.$
    For this proof, we will identify $L$ with $\bbR \ti S^1,$
    and assume that $f$ is a diffeomorphism defined on $\bbR \ti S^1$
    and that the fibers are of the form $\{x\} \ti S^1.$
    Since $L$ is both quasi-isometric and $C^1$-diffeomorphic to
    $\bbR \ti S^1$ equipped with the standard metric,
    there is no loss of generality.

    There are then uniform constants $\sig < 1 < \eta$ such that
    $ \| Df v^s \| < \sig$
    for any unit vector in $\Es$
    and $ \| Df v \| < \eta$ for any unit vector in the tangent bundle of $L.$
    By \cref{lemma:annulus}, there is $K > 0$ such that
    if $J$ is a well-placed stable segment inside of the annulus
    $[a,b] \ti S^1,$ then
    $\length(J) \le K (b-a)$.
    Since $K$ depends only on the stable foliation and the center branching
    foliation, we are free to replace $f$ by an iterate
    and assume that $\sig K < 1.$

    We will show by cases that if $x > 0$ is sufficiently large,
    then the fiber $\{x\} \ti S^1$ is mapped by some iterate $f^n$
    into $(-\infty, x-1] \ti S^1.$
    By symmetry, a similar result holds for $x < 0$ sufficiently large
    in absolute value and together these show coarse contraction.

    \smallskip

    \textbf{Case One:} assume for any interval $[a,b] \subof \bbR$
    that there is a well-placed stable segment with endpoints
    on $\{a\} \ti S^1$ and $\{b\} \ti S^1.$

    Let $r > 0$ be such that
    $f(\{0\} \ti S^1) \subof [-r,r] \ti S^1.$
    Let $x > 0$ be very large and consider a well-placed stable segment $J$
    with endpoints on $\{0\} \ti S^1$ and $\{x\} \ti S^1.$
    Then
    \[
        \length(f(J)) < \sig \length(J) < \sig K x
    \]
    and since one endpoint of $f(J)$ lies in $[-r,r] \ti S^1,$
    the other endpoint lies in $[-\sig K x -r, \sig K x + r] \ti S^1.$
    The set $f(\{x\} \ti S^1)$ has diameter at most $\eta$ and so
    \[
        f(\{x\} \ti S^1) \subof [-\sig K x -r - \eta , \sig K x + r + \eta] \ti S^1.
    \]
    If $x > 0$ is sufficiently large then $\sig K x + r + \eta < x - 1.$

    \smallskip

    \textbf{Case Two:}
    assume that there is a one-sided stable leaf tending to $+\infty.$

    By this, we mean a properly embedded leaf $L$ such that
    $\pi(L) \subof \bbR$ is of the form $[x_0, +\infty)$ for some $x_0 \in \bbR.$
    By \cref{lemma:csonesided}, $L$ is well-placed.
    Let $r > 0$ be such that
    $f(\{x_0\} \ti S^1) \subof [-r,r] \ti S^1.$
    Let $x > 0$ be very large and consider a well-placed stable segment $J$
    with endpoints on $\{x_0\} \ti S^1$ and $\{x_0 + x\} \ti S^1.$
    Then adapting the arguments from Case One,
    \[
        f(\{x_0 + x\} \ti S^1)
        \subof [-\sig K x -r - \eta , \sig K x + r + \eta] \ti S^1.
    \]
    If $x > 0$ is sufficiently large then $\sig K x + r + \eta < x_0 + x - 1.$

    \smallskip

    \textbf{Case Three:}
    assume neither Case One or Two holds.

    If all stable leaves are well-placed, then we are in Case One,
    so consider a leaf $L$ which is not well-placed.
    Then there are a stable segment and a center segment
    with the same endpoints, from which we can construct
    a circle $\al$ transverse to the stable foliation.
    This circle cannot be nullhomotopic and so it is homotopic
    to a fiber, as is its image $f(\al).$
    The union of the bounded components of
    $L \sans (\al \cup f(\al))$
    consists either of an annulus (if $\al$ and $f(\al)$ are disjoint)
    or a union of bigons, each of whose boundary is made of two arcs
    transverse to the stable direction.
    Using a Poincar\'e--Bendixson argument in each of these components,
    we see that every stable leaf through a point in $\al$
    must intersect a point in $f(\al)$ and vice versa.
    This implies that $\al$ and $f(\al)$ have the same stable saturate
    $\Ws(\al) = \Ws(f(\al)).$
    By a graph transform argument (see \cite[Lemma H.1]{bffp1}
    or \cite[Theorem 2.4]{ham2018prop}) there is an invariant circle
    $\bt = f(\bt)$ tangent to $\Ec.$

    As there are no stable circles, the boundary of the set $\Ws(\al) = \Ws(\bt)$
    is either empty or
    consists of one-sided stable leaves, all of which are well-placed.
    Since we are assuming that Case Two does not hold,
    none of these one-sided leaves tends to $+\infty$ and therefore
    there is $x_0 \in \bbR$ such that
    $[x_0, \infty) \ti S^1 \subof \Ws(\bt).$
    One can then show that $f(\Ws(\bt)) = \Ws(\bt).$
    For any fiber $\{x\} \ti S^1$ with $x > x_0,$
    we have that $f^n(\{x\} \ti S^1)$ converges to $\bt$
    and we can use this to prove coarse contraction.
\end{proof}
\begin{lemma} \label{lemma:circleperiodic}
    If a periodic leaf $L \in \Lamcs$ has a center circle,
    then it has a periodic center circle.
\end{lemma}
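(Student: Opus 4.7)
The plan is to produce a periodic center circle as a fixed point of an order-preserving continuous self-map on a compact totally ordered set. First, after replacing $f$ by an iterate, I arrange that $f(L)=L$ and that $f$ preserves each of the two ends of the cylinder $L$. Applying \cref{lemma:coarsecontract} gives a compact set $X_0\subof L$ absorbing every compact subset under forward iteration, and applying coarse contraction once more to $X_0$ itself and taking a further iterate yields $f(X_0)\subof X_0$. The class of center circles is $f$-invariant: $f\Wcbran=\Wcbran$, and since $\hcep$ is $\ep$-close to the identity on $\tM$, its restriction to any leaf of $\Fcep$ cannot collapse a line onto a compact circle, so the $\Fcep$-preimage of a circle in $\Wcbran$ is itself a circle. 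Combined with coarse contraction, the hypothesis produces a nonempty set $\mathcal{C}^*$ of center circles contained in $X_0$, and $f$ restricts to a self-map of $\mathcal{C}^*$.

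I next transport the geometry to the approximating leaf $\Lep$ via the diffeomorphism $\hcsep|_{\Lep}:\Lep\to L$. The foliation $\Fcep\cap\Lep$ is a genuine fixed-point-free flow on the cylinder $\Lep$, and its closed orbits map onto the center circles of $L$ under $\hcep$. A Poincar\'e--Hopf argument rules out nullhomotopic periodic orbits of a fixed-point-free flow on a surface, so every center circle is essential (vertical) in $L$. By \cref{thm:cylinderfoliation}, the set of closed orbits of $\Fcep\cap\Lep$ is closed in the Hausdorff topology, which together with the compactness of $X_0$ yields compactness of $\mathcal{C}^*$. Because distinct leaves of $\Wcbran$ cannot topologically cross, two distinct vertical circles in $\mathcal{C}^*$ must be nested along $L$, endowing $\mathcal{C}^*$ with a natural total order. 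The map $f|_{\mathcal{C}^*}$ is then continuous (as the restriction of a homeomorphism in the Hausdorff topology) and order-preserving (since $f$ preserves the ends of $L$).

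A Tarski-style fixed point argument then finishes the proof: let $c^*=\max\mathcal{C}^*$, which exists by compactness and total ordering. Then $f(c^*)\in\mathcal{C}^*$ forces $f(c^*)\le c^*$, and order-preservation of $f$ makes the sequence $\{f^n(c^*)\}$ non-increasing in $\mathcal{C}^*$. By compactness it converges in the Hausdorff topology to some $c_\infty\in\mathcal{C}^*$, and continuity of $f$ yields $f(c_\infty)=c_\infty$. Unwinding the initial iterate of $f$ produces the desired periodic center circle of the original diffeomorphism.

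The main technical obstacle I anticipate is making the total-ordering step watertight: distinct closed orbits of $\Fcep\cap\Lep$ are automatically disjoint in $\Lep$, but their $\hcep$-images in $L$ can in principle share points as leaves of a branching foliation without topologically crossing, so some care is needed to verify that the nesting order on $\mathcal{C}^*$ is well-defined and continuous in the Hausdorff topology. The $f$-invariance of the class of center circles, via the $\ep$-closeness of $\hcep$, is a secondary but nontrivial point that also requires care.
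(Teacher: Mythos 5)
Your strategy is genuinely different from the paper's: the paper takes a single center circle $\al,$ shows $\Ws(\al) = \Ws(f(\al))$ by a Poincar\'e--Bendixson argument in the cylinder, and then runs a graph transform (as in Case Three of \cref{lemma:coarsecontract}) to produce an invariant circle $\bt = f(\bt)$ onto which the iterates $f^n(\al)$ converge \emph{uniformly as curves}; the closedness of the branching foliation under such limits then yields $\bt \in \Wcbran.$ You instead try to extract a fixed circle abstractly, as the limit of a monotone sequence in a compact totally ordered family of circles.

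The gap is in the compactness step, and it is exactly the point that the paper's graph transform is designed to circumvent. \Cref{thm:cylinderfoliation} does not assert that the set of closed orbits of a cylinder flow is closed in the Hausdorff topology; it only describes $\omega$-limit sets of individual orbits. A Hausdorff limit of the compact sets $f^n(c^*)$ certainly exists along a subsequence, but you have not shown that this limit is a single closed leaf rather than, say, a union of several circles together with orbits accumulating on them, nor that the full (monotone) sequence converges. More seriously, even if the Hausdorff limit of the \emph{images} is a topological circle, that is not enough to conclude it is a leaf of $\Wcbran$: the closure property of branching foliations is stated for convergence in the compact-open topology of the leaves, which requires control on the parametrizations --- in particular on the lengths of the circles $f^n(c^*),$ which could a priori blow up (center circles need not be well-placed, so \cref{lemma:annulus} does not bound their lengths). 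The paper avoids all of this by constructing the limit circle explicitly as an invariant graph inside the stable saturation $\Ws(\al),$ for which uniform convergence of $f^n(\al)$ is automatic. Without that (or an equivalent substitute), your passage from ``Hausdorff limit of center circles'' to ``periodic center circle'' does not close. The secondary issues you flag yourself --- that $f$ carries center circles to center circles (a compact leaf of $\Wcbran$ could a priori be covered only by a line leaf of $\Fcep$), and that the nesting order is well defined for possibly overlapping leaves of a branching foliation --- are real but repairable; the compactness step is the one that needs a new idea.
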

\begin{proof}
    Taking an iterate, assume $L$ is $f$-invariant and
    let $\al$ be a center circle.
    Similar to the proof of \cref{lemma:coarsecontract},
    we have that $\Ws(\al) = \Ws(f(\al))$
    and can find an invariant circle $\bt = f(\bt)$ tangent to $\Ec$
    and such that $\Ws(\al) = \Ws(\bt).$
    In this case, since $\{f^n(\al)\}$ is a sequence of
    center leaves which converge uniformly to $\bt,$
    it follows that $\bt$ is a leaf in the center branching foliation.
\end{proof}
\begin{lemma} \label{lemma:properint}
    For vertical leaves $\Lcs \in \Lamcs$ and $\Lcu \in \Lamcu,$
    any center leaf in the intersection $\Lcs \cap \Lcu$
    is properly embedded in both $\Lcs$ and $\Lcu.$
\end{lemma}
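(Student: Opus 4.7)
The plan is to lift to the universal cover $\tM$ and exploit the embedded-plane structure of branching leaves together with the central fiber subgroup of $\pi_1(M)$. Let $\widetilde{\Lcs}$ and $\widetilde{\Lcu}$ be lifts of $\Lcs$ and $\Lcu$ to $\tM$; by the Burago--Ivanov results recalled in \cref{sec:bran}, each is an embedded plane in $\tM$ and hence a closed subset of $\tM$. Since $\Ecs \cap \Ecu = \Ec$ and $\Ecs + \Ecu = TM$, the two planes are transverse, and so $\widetilde{\Lcs} \cap \widetilde{\Lcu}$ is a closed $1$-submanifold of $\widetilde{\Lcs}$. Each connected component is either a circle or a properly embedded line, and a lift $\widetilde{\Lc}$ of the center leaf $\Lc$ is such a component; hence $\widetilde{\Lc}$ is properly embedded in $\widetilde{\Lcs}$ and in $\widetilde{\Lcu}$.

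To descend to the cylinder $\Lcs$ (the identification of vertical leaves with $\bbR \ti S^1$ is the one used in the proof of \cref{lemma:coarsecontract}), note that both $\operatorname{Stab}(\widetilde{\Lcs})$ and $\operatorname{Stab}(\widetilde{\Lcu})$ are infinite cyclic subgroups of $\pi_1(M)$, and both contain the fiber generator $\tau$ since the leaves are vertical. Consequently $\operatorname{Stab}(\widetilde{\Lcs}) \cap \operatorname{Stab}(\widetilde{\Lcu})$ is nontrivial, hence of finite index in each. This means only finitely many distinct lifts $\gamma \cdot \widetilde{\Lcu}$ appear as $\gamma$ ranges over $\operatorname{Stab}(\widetilde{\Lcs})$, and for each such lift the set $\widetilde{\Lcs} \cap (\gamma \cdot \widetilde{\Lcu})$ is a closed $1$-submanifold of $\widetilde{\Lcs}$. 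The preimage of $\Lc$ in $\widetilde{\Lcs}$ is a union of connected components of these finitely many closed $1$-submanifolds, hence is closed in $\widetilde{\Lcs}$. Projecting down, $\Lc$ is closed in $\Lcs$; it is also embedded since distinct components of a $1$-submanifold are disjoint. Thus $\Lc$ is properly embedded in $\Lcs$, and the symmetric argument gives the same conclusion in $\Lcu$.

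The main obstacle is the descent step, which relies on the finite-index observation above. Without the shared fiber generator in the two stabilizers, the $\operatorname{Stab}(\widetilde{\Lcs})$-translates of $\widetilde{\Lcu}$ could form an infinite family of distinct lifts of $\Lcu$, and one would have to invoke a volume-versus-length argument in the spirit of \cref{lemma:nocloseleaves} to rule out their accumulation inside $\widetilde{\Lcs}$. The central fiber structure of $\pi_1(M)$ for circle bundles over higher-genus surfaces makes this accumulation impossible for free, and the remainder of the argument is a purely topological consequence of transversality and the embedded-plane property of lifted branching leaves.
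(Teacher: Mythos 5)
Your proof is correct in substance but takes a genuinely different route from the paper. The paper works on the intermediate cover $\Mhat$ rather than the universal cover $\tM$, and with the \emph{approximating} true foliations $\Fcsep,\Fcuep$ rather than the branching leaves themselves: since a vertical leaf is a cylinder whose core is freely homotopic to a fiber, it lifts \emph{homeomorphically} to $\Mhat$ (its fundamental group lands in $\pi_1(\Mhat)\cong\bbZ$), so properness of the intersection components in $\Mhat$ transfers to the leaves with no descent step at all, and \cref{prop:centerapprox} then carries the conclusion back to $\Wcbran.$ Your argument instead pays for working in $\tM$ with the stabilizer computation — which is correct, and in fact sharper than you state: since the fiber class $\tau$ is central and not a proper power in $\pi_1(M),$ both stabilizers are \emph{exactly} $\langle\tau\rangle,$ so only the single translate $\widetilde{\Lcu}$ appears and the preimage of $\Lc$ under the covering $\widetilde{\Lcs}\to\Lcs$ is the $\langle\tau\rangle$-orbit of one component of $\widetilde{\Lcs}\cap\widetilde{\Lcu},$ a union of components of a single properly embedded $1$-submanifold of a plane, hence closed and locally a finite union of arcs; its quotient is then properly embedded in the cylinder. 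What your route buys is independence from the approximating foliations; what the paper's route buys is immunity to branching.

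Two small points deserve care if you keep your version. First, $\Lcs$ and $\Lcu$ here are leaves of \emph{branching} foliations, so distinct lifts of a leaf need not be disjoint (they may touch without crossing); your phrase ``the preimage of $\Lc$ in $\widetilde{\Lcs}$'' must therefore be read as the preimage under the covering of abstract surfaces $\widetilde{\Lcs}\to\Lcs,$ i.e.\ the $\langle\tau\rangle$-orbit of $\widetilde{\Lc},$ not the full $\pi_1(M)$-preimage of the immersed curve $\Lc$ intersected with $\widetilde{\Lcs}$ — the latter could pick up extra pieces from touching lifts. With the former reading the argument closes. Second, ``embedded plane'' does not by itself give ``closed subset of $\tM$''; you are implicitly using that lifted branching leaves are \emph{properly} embedded, which is part of the Burago--Ivanov theory (and is what the paper invokes for the approximating leaves), so it is worth citing explicitly rather than deriving it from embeddedness alone.
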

\begin{proof}
    Consider the approximating leaves $\Lcs_\ep$ and $\Lcu_\ep$
    and lift these to surfaces $\hat L^{cs}_\ep$ and $\hat L^{cu}_\ep$
    embedded in $\Mhat.$
    Since $\hat L^{cs}_\ep$ and $\hat L^{cu}_\ep$ are transverse to each other
    and each is properly embedded in $\Mhat,$
    the intersection of the two surfaces is also
    properly embedded in $\Mhat.$
    It follows that each connected component $\Lc_\ep$
    of $\Lcs_\ep \cap \Lcu_\ep$ is properly embedded inside of $\Lcs_\ep$
    and $\Lcu_\ep.$
    From \cref{prop:centerapprox}, the result follows.
\end{proof}
\begin{lemma} \label{lemma:periodcircles}
    If $\Lcs$ and $\Lcu$ are periodic vertical leaves,
    then every center leaf in the intersection $\Lcs \cap \Lcu$ 
    is a circle.
\end{lemma}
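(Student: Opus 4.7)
The plan is to combine the forward coarse contraction of $g = f^k$ on $\Lcs$ with the backward coarse contraction of $g^{-1}$ on $\Lcu$ to trap every center leaf $\Lc \subset \Lcs \cap \Lcu$ inside a compact set, thereby forcing it to be a circle. After replacing $f$ by an iterate $g = f^k$ with $g(\Lcs) = \Lcs$ and $g(\Lcu) = \Lcu$, let $\Lc$ be a center leaf in the intersection. By \cref{lemma:properint}, $\Lc$ is properly embedded in both $\Lcs$ and $\Lcu$. By \cref{lemma:coarsecontract} there is a compact $X_0^s \subset \Lcs$ such that $g^n(K) \subset X_0^s$ eventually for every compact $K \subset \Lcs$, and by the same lemma applied to $f^{-1}$ (for which $\Lcu$ is a periodic vertical $cs$-leaf) there is a compact $X_0^u \subset \Lcu$ such that $g^{-n}(K) \subset X_0^u$ eventually for every compact $K \subset \Lcu$.

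Fix $p \in \Lc$. The forward orbit $\{g^n(p) : n \ge 0\}$ lies in $\Lcs \cap \Lcu$ by the invariance of both leaves and, by coarse contraction on $\Lcs$, is eventually contained in $X_0^s$, so it lies in a compact subset $K_0 \subset \Lcs$. Since vertical leaves are quasi-isometrically embedded in the cover $\Mhat$ (as used in the proof of \cref{lemma:annulus}) and their lifts have trivial stabilizer in $\pi_1(S)$, the same orbit is contained in a compact subset of $\Lcu$. Applying the coarse contraction of $g^{-1}$ on $\Lcu$ to this compact set yields $g^{-m}(K_0 \cap \Lcu) \subset X_0^u$ for sufficiently large $m$, and since $p = g^{m-m}(p)$ lies in $g^{-m}(K_0 \cap \Lcu)$, we conclude that $p \in X_0^u$.

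Because $p \in \Lc$ was arbitrary, $\Lc \subset X_0^u$. As $\Lc$ is closed in $\Lcu$ by proper embedding and $X_0^u$ is compact in $\Lcu$, $\Lc$ is itself compact; being a connected, boundaryless, compact $1$-manifold, $\Lc$ is a circle. The most delicate step in this plan is the transfer of intrinsic compactness between $\Lcs$ and $\Lcu$ for subsets of their intersection, which is where one must work in the cover $\Mhat$, where vertical leaves are properly embedded and quasi-isometrically included, rather than directly in $M$.
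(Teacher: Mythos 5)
Your high-level mechanism is the same as the paper's: play the forward coarse contraction of $f$ on $\Lcs$ against the backward coarse contraction of $f\inv$ on $\Lcu$ to trap a center leaf of the intersection in a compact set. The tail of your argument (properly embedded plus contained in a compact set implies compact, hence a circle) is fine. But the step you yourself flag as delicate --- transferring intrinsic compactness of the orbit from $\Lcs$ to $\Lcu$ --- is a genuine gap, not a detail, and the words ``quasi-isometrically embedded'' and ``trivial stabilizer'' do not fill it. To use proper embedding you must work in $\Mhat$, so you must lift the dynamics. A lift $\fhat$ chosen so that $\fhat(\Lcshat)=\Lcshat$ satisfies $\fhat(\Lcuhat)=\Psi(\Lcuhat)$ for some deck transformation $\Psi$ that need not be the identity. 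If $\Psi\neq\id$, the lifted forward orbit $\{\fhat^{\,n}(\hat p)\}$ stays on $\Lcshat$ (so it is bounded there) but wanders through the translates $\Psi_n(\Lcuhat)$; the lifts of $g^n(p)$ that actually lie on the fixed leaf $\Lcuhat$ are $\Psi_n\inv(\fhat^{\,n}(\hat p))$, and there is no reason for these to be bounded in $\Mhat$. Without boundedness on a single $\fhat$-invariant lift of $\Lcu$, the backward coarse contraction cannot be invoked, and your conclusion $p\in X_0^u$ does not follow.

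This is exactly the point where the paper spends the first (and harder) half of its proof: it shows $\Psi=\id$ by observing that non-compactness of $\Lcshat\cap\Psi(\Lcuhat)$ forces the shadowed geodesics $\ell$ and $\psi(\ell)$ to share an ideal endpoint, which would produce a closed (or spiralling) geodesic in the lamination and hence a compact leaf of $\Lamcu$ --- a contradiction. Only after both lifts are simultaneously $\fhat$-invariant does the contraction argument go through (the paper then runs it on a sequence $p_k\to\infty$ in $\Lchat$ with varying iterates $n_k$, rather than on a single forward orbit, but that difference is cosmetic). A secondary, fixable point: coarse contraction as defined only gives, for each compact $X$, \emph{some} $n$ with $f^n(X)\subof X_0$, so ``the forward orbit is eventually contained in $X_0^s$'' needs the slightly stronger statement implicit in the proof of \cref{lemma:coarsecontract}. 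To repair your proof, insert the $\Psi=\id$ argument before the transfer step.
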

\begin{proof}
    Suppose the intersection has a non-compact center leaf $\Lc.$
    As in the last proof, we will need to lift the leaves
    to $\Mhat.$ Without loss of generality, we can assume
    $f(\Lcs) = \Lcs$ and $f(\Lcu) = \Lcu.$
    Choose a lifted map $\fhat : \Mhat \to \Mhat$
    and a lifted leaf $\Lcshat$ such that $\fhat(\Lcshat) = \Lcshat.$
    Choose lifts of $\Lc$ and $\Lcu$ such that the intersection
    $\Lcshat \cap \Lcuhat$ contains the non-compact leaf $\Lchat.$
    Then there is a deck transformation $\Psi : \Mhat \to \Mhat$
    of the covering map $\Mhat \to M$ such that
    $\fhat(\Lcuhat) = \Psi(\Lcuhat).$
    The intersection $\Lcshat \cap \Psi(\Lcuhat)$ is non-compact
    since it contains $\fhat(\Lchat).$
    We wish to show that $\Psi$ is the identity.

    Suppose not. Then there is a non-trivial deck transformation
    $\psi : \bbH^2 \to \bbH^2$ such that $\psi \circ \pi = \pi \circ \Psi$
    where $\pi$ denotes both 
    the map $M \to S$ and the map $\Mhat \to \bbH^2$ which define the circle fiberings
    on $M$ and $\Mhat$ respectively.
    By \cref{thm:ideal}, $\pi(\Lcuhat)$ shadows a geodesic $\ell$ in $\bbH^2.$
    Non-compactness of $\Lcshat \cap \Psi(\Lcuhat)$
    implies that the geodesics $\psi(\ell)$ and $\ell$
    share a point on the circle at infinity.
    This means that when we quotient $\bbH$ down to the closed surface $S,$
    the image of $\ell$ is either a closed geodesic or
    it spirals into a closed geodesic.
    Either way, the geodesic lamination given by \cref{thm:ideal}
    would contain a compact leaf.
    This would imply that $\Lamcu$ contains a compact leaf, a contradiction.
    This shows that $\Psi$ is the identity and so
    the lifts satisfy both $\fhat(\Lcshat) = \Lcshat$
    and $\fhat(\Lcuhat) = \Lcuhat.$

    By \cref{lemma:properint},
    there is a sequence $\{p_k\}$ in $\Lchat$
    which escapes every compact subset of $\Mhat.$
    By coarse contraction of $f$ on $\Lcs,$
    there is sequence of integers $\{n_k\}$ tending to $+\infty$
    such that $\{q_k\}$ defined by $q_k = \fhat^{n_k}(p_k)$
    is a bounded sequence inside of $\Lcshat.$
    Then $\{q_k\}$ lies inside of a compact set inside
    of $\Lcshat,$ but the sequence $\{p_k\} = \{\fhat^{-n_k}(q_k)\}$
    is unbounded. This contradicts the coarse contraction
    of $f \inv$ on $\Lcu.$
\end{proof}

We now combine the above properties
with results from \cite{hps2018seif} to analyse
the intersections of the vertical laminations.
We say that a leaf $L$ in $\Lamcs$ or $\Lamcu$
is an \emph{accessible boundary leaf} if
the approximating leaf $L_\ep$ in $\Lamcs_\ep$ or $\Lamcu_\ep$
lies on the accessible boundary of that lamination.

\begin{lemma} \label{lemma:accboundary}
    The accessible boundary leaves are dense in $\Lamcs$
    and in $\Lamcu$ and every accessible boundary leaf is periodic.
\end{lemma}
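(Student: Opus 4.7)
The plan is to translate the problem to the base surface $S$ via Brittenham's ideal position and then invoke the classical theory of geodesic laminations on closed hyperbolic surfaces. Applying \cref{thm:ideal} to the approximating foliation $\Fcsep$---valid by \cref{lemma:onetwo} (no compact leaves) and \cref{lemma:nocloseleaves} (no two distinct vertical leaves at finite Hausdorff distance in the universal cover)---puts $\Fcsep$ in ideal position. Each vertical leaf of $\Lamcs_\ep$ then projects via $\pi$ to a single geodesic on $S,$ and the collection of these geodesics is a geodesic lamination on $S,$ which I will call $\Gamcs.$ By \cref{lemma:nocloseleaves} again, distinct vertical leaves project to distinct geodesics, so the projection $\Lamcs_\ep \to \Gamcs$ is a bijection (and similarly for $\Lamcu$). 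The problem thus transfers to a statement about the geodesic lamination $\Gamcs$ on the closed hyperbolic surface $S.$

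For density, I would use the standard fact that in any geodesic lamination on a closed hyperbolic surface, the leaves lying on the boundary of a complementary region are dense in the lamination. Briefly: the lamination decomposes into finitely many minimal sublaminations together with isolated non-minimal leaves; each minimal sublamination is either a closed geodesic (which is its own accessible boundary leaf) or has some complementary region whose boundary contains one of its leaves, and then minimality of the leaf dynamics makes the orbit of that boundary leaf dense in the minimal sublamination. Pulling back density along the bijection $\Lamcs_\ep \to \Gamcs$ gives the density half of the lemma.

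For periodicity, $f$ preserves $\Wcsbran$ and permutes its vertical leaves, so it induces a homeomorphism $f_*$ of $S$ preserving $\Gamcs.$ A geodesic lamination on a closed hyperbolic surface has only finitely many complementary regions (their total area is $2\pi|\chi(S)|$ and each region has area at least $\pi$), and each such region is a generalized ideal polygon bounded by finitely many geodesics. Since $f_*$ permutes a finite set of regions, each region is periodic under some iterate; and since each region has only finitely many boundary leaves, each accessible boundary leaf of $\Gamcs$ is periodic under some iterate of $f_*.$ Bijectivity of $\Lamcs_\ep \to \Gamcs$ (together with the fact that $f$ acts on $\Lamcs_\ep$ compatibly with $f_*$ on $\Gamcs$) forces the corresponding vertical leaf to be periodic under $f$ itself.

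The main obstacle I anticipate is not the dynamics or the lamination combinatorics but the bookkeeping around the descent: the isotopy from \cref{thm:ideal} is only a homeomorphism and depends on the foliation in a nontrivial way, so one must verify that $f$ really descends to a homeomorphism of $S$ preserving $\Gamcs,$ and that ``accessible boundary'' transfers correctly across the bijection. This is precisely the kind of setup worked out in sections 6 and 7 of \cite{hps2018seif}, which we can invoke; once the descent is in hand, both conclusions follow from the classical geodesic-lamination facts sketched above.
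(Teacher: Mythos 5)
Your proof is correct and follows essentially the same route as the paper: put the foliation in ideal position via \cref{thm:ideal}, transfer to a geodesic lamination on $S,$ and apply standard lamination theory (the paper gets density more directly from the fact that a geodesic lamination has empty interior, and outsources periodicity to \cite[Lemma 3.5]{hps2018seif}, whose content is essentially your finitely-many-complementary-regions argument). One small imprecision in your density sketch: a closed-geodesic minimal component that is accumulated by other leaves of the lamination on both sides is not itself an accessible boundary leaf, but it is approximated by the (accessible) leaves spiralling onto it, so the density conclusion is unaffected.
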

\begin{proof}
    Density of these leaves follows from
    \cref{thm:ideal} and the fact that a geodesic lamination on a surface
    has empty interior \cite[\S 1.7.4]{calegari2007book}.
    Periodicity is given by \cite[Lemma 3.5]{hps2018seif}.
\end{proof}
\begin{lemma} \label{lemma:twocircles}
    Let $L$ be an accessible boundary leaf of $\Lamcs$ or $\Lamcu$
    and suppose that there is a periodic center circle
    $\gam = f^k(\gam) \subof L.$
    Then any center circle in $L$ must intersect $\gam.$
\end{lemma}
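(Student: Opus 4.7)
The plan is to argue by contradiction, supposing there is a center circle $\gam'$ in $L$ disjoint from $\gam.$ After passing to an iterate, we may assume $f(L) = L$ and $f(\gam) = \gam,$ so each of the two components $L^\pm$ of $L \setminus \gam$ is $f$-invariant; relabel so that $\gam' \subset L^+.$ Since $\gam$ and $\gam'$ are both non-contractible simple closed curves in the cylinder $L,$ they bound a compact annulus $A \subset L$ whose interior lies in $L^+.$

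The first step is to reduce to the case that $\gam'$ is also $f$-invariant. Coarse contraction of $f$ on $L$ (\cref{lemma:coarsecontract}) keeps all iterates $\{f^n(\gam')\}$ inside a fixed compact subset of $\overline{L^+},$ and closedness of the center branching foliation under uniform limits produces accumulation center leaves. A graph-transform argument in the spirit of \cref{lemma:coarsecontract} (Case Three) and \cref{lemma:circleperiodic} extracts from the $\omega$-limit an $f$-invariant center circle $\bt \subset \overline{L^+}.$ If $\bt \ne \gam,$ replace $\gam'$ by $\bt;$ otherwise, run the graph-transform on the stable saturate of a center circle in $L^+$ not lying in $\Ws(\gam)$ to produce an invariant center circle distinct from $\gam.$ We may therefore assume both $\gam$ and $\gam'$ are $f$-invariant center circles, disjoint in $L.$

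Next, I identify the $cu$ leaves $L^{cu}_\gam$ and $L^{cu}_{\gam'}$ containing $\gam$ and $\gam',$ and show each is vertical. Were $L^{cu}_\gam$ horizontal, then \cref{lemma:horizontalplane} would give an embedding of its lift $\widehat{L^{cu}_\gam} \subset \Mhat$ into $\bbH^2$ via $\pi \circ h,$ but the lift of $\gam$ to $\widehat{L^{cu}_\gam}$ is a simple closed loop wrapping once around a fiber of $\Mhat \cong \bbH^2 \ti S^1$ and so projects to a single point, contradicting injectivity; the same argument gives $L^{cu}_{\gam'}$ vertical. Because $f$ fixes both $\gam$ and $\gam',$ both $cu$ leaves are periodic, so \cref{lemma:periodcircles} applied to $(L, L^{cu}_\gam)$ and $(L, L^{cu}_{\gam'})$ forces every center leaf of $L \cap L^{cu}_\gam$ and of $L \cap L^{cu}_{\gam'}$ to be a circle. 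In particular, $L^{cu}_\gam$ and $L^{cu}_{\gam'}$ are distinct vertical $cu$ leaves crossing $L$ at disjoint center circles.

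The contradiction then comes from the accessibility of $L_\ep.$ Applying \cref{thm:ideal} to the $cu$ approximating foliation, $L^{cu}_\gam$ and $L^{cu}_{\gam'}$ project to distinct geodesics $\ell_\gam, \ell_{\gam'}$ of $\bbH^2$ each transversely crossing the geodesic $\ell_{L_\ep}$ associated with $L_\ep.$ Combining this geodesic rigidity with the rigid circular structure of the center foliation inside the annulus $A$ forces vertical approximating $cs$ leaves distinct from $L_\ep$ to accumulate on $L_\ep$ from its accessible side, contradicting that $L_\ep$ lies on the accessible boundary of $\Lambda^{cs}_\ep.$ The main obstacle is precisely this last step: converting the geometric information given by \cref{thm:ideal} together with the annular structure of $A$ into an explicit sequence of vertical $cs$ approximating leaves converging to $L_\ep$ from the accessible side. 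The graph-transform reductions of the first step and the verticality identification of the $cu$ leaves in the second step are routine adaptations of techniques already developed in the paper.
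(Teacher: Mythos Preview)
Your proposal has a genuine gap in the final step, which you yourself flag as ``the main obstacle.'' You assert that the existence of two disjoint invariant center circles $\gam, \gam'$ in $L$, each lying in a vertical $cu$ leaf, forces vertical $cs$ approximating leaves to accumulate on $L_\ep$ from its accessible side. But you give no mechanism for producing these $cs$ leaves: having two vertical $cu$ leaves crossing $L$ says nothing a priori about the distribution of \emph{other} vertical $cs$ leaves near $L$. The geodesic picture from \cref{thm:ideal} tells you where $\ell_\gam$ and $\ell_{\gam'}$ sit relative to $\ell_{L_\ep}$, but this is information about $\Lamcu$, not about $\Lamcs$, and there is no transversality or intersection argument on the page that converts it into a sequence in $\Lambda^{cs}_\ep$ approaching $L_\ep$ from the accessible side. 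As written, step~3 is a statement of what you hope is true, not a proof.

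Step~1 is also shakier than you suggest. If $\gam' \subset \Ws(\gam)$ inside $L$, then $f^n(\gam') \to \gam$ and the graph-transform limit is $\gam$ itself; your fallback (``run the graph-transform on the stable saturate of a center circle in $L^+$ not lying in $\Ws(\gam)$'') presupposes the existence of such a circle, which is exactly what is in question. Step~2, by contrast, is fine and in fact simpler than you make it: a leaf containing a loop homotopic to a fiber is vertical by definition, so no appeal to \cref{lemma:horizontalplane} is needed.

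For comparison, the paper does not attempt a self-contained argument here at all: it observes that the hypotheses place us in the setting of \cite{hps2018seif} immediately after Lemma~4.4 of that paper, and invokes the remainder of Section~4 there to reach a contradiction, noting that those arguments go through for an accessible boundary leaf of any $f$-invariant sublamination $\Lam \subset \Lamcs$ without requiring dynamical coherence or minimality. The contradiction in \cite{hps2018seif} is obtained by a different route than the one you sketch, and does not proceed by manufacturing vertical $cs$ leaves accumulating on the accessible side.
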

\begin{proof}
    Suppose $\al$ is a circle in $L$ disjoint from $\gam.$
    We are then in the setting of \cite{hps2018seif}
    immediately after the proof of lemma 4.4 of that paper.
    We can follow all of the steps in the remainder of section 4
    of \cite{hps2018seif} to arrive at a contradiction.
    The proofs there all hold for an accessible boundary leaf $L$
    of an $f$-invariant sublamination $\Lam \subof \Lamcs.$
    In this case, $\Lam = \Lamcs.$
    In particular, we do not need that $f$ is dynamically coherent
    or that $\Lam$ is minimal.
\end{proof}
\begin{lemma} \label{lemma:accintersect}
    If accessible boundary leaves
    $\Lcs \in \Lamcs$ and $\Lcu \in \Lamcu$ intersect,
    then their intersection is a single circle.
\end{lemma}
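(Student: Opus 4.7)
The plan is to argue by contradiction: assuming that $\Lcs\cap\Lcu$ has at least two distinct connected components, one produces a periodic center circle inside the intersection via \cref{lemma:circleperiodic} and then invokes \cref{lemma:twocircles} to force two disjoint components to meet.

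To set the stage, \cref{lemma:accboundary} tells us both $\Lcs$ and $\Lcu$ are periodic, so after replacing $f$ by an appropriate iterate we may assume $f(\Lcs)=\Lcs$ and $f(\Lcu)=\Lcu$. The plane fields $\Ecs$ and $\Ecu$ are transverse, hence $\Lcs$ and $\Lcu$ meet transversally and $\Lcs\cap\Lcu$ is a one-dimensional submanifold of $M$. By \cref{lemma:periodcircles} every connected component of $\Lcs\cap\Lcu$ is a compact leaf of $\Wcbran$, and since distinct connected components are always disjoint, $\Lcs\cap\Lcu$ is a pairwise disjoint union of center circles.

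Now suppose for contradiction that two distinct components $\al$ and $\gam$ exist. Applying \cref{lemma:circleperiodic} to $\Lcs$ with the center circle $\al$ yields a periodic center circle $\bt\subset\Lcs$; its proof realizes $\bt$ as the uniform limit of the sequence $\{f^n(\al)\}_{n\ge 0}$. Since $f$ preserves $\Lcu$, each $f^n(\al)$ also lies in $\Lcu$, and the coarse contraction of $f$ on $\Lcs$ from \cref{lemma:coarsecontract} traps the sequence inside a fixed compact subset of $\Lcs$ for large $n$; inside that compact region the circles $f^n(\al)$ run through a compact piece of the one-submanifold $\Lcs\cap\Lcu$, from which it follows that $\bt\subset\Lcs\cap\Lcu$, and hence $\bt$ coincides with one of the disjoint circular components. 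Now \cref{lemma:twocircles}, applied to the accessible boundary leaf $\Lcs$ with the periodic center circle $\bt$, forces every center circle of $\Lcs$ to meet $\bt$. Choosing $\delta\in\{\al,\gam\}$ with $\delta\neq\bt$, the lemma gives $\delta\cap\bt\neq\emptyset$, yet $\delta$ and $\bt$ are distinct components of the disjoint decomposition of $\Lcs\cap\Lcu$ and are therefore disjoint, the desired contradiction.

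The main obstacle will be verifying that $\bt$ actually lies in $\Lcu$ and not merely in the closure $\overline{\Lcu}$, since individual leaves of a branching foliation need not be closed subsets of $M$, so a priori the uniform limit of circles in $\Lcu$ could escape to a nearby cu-leaf. Overcoming this relies on combining the coarse contraction on $\Lcs$, which confines the entire tail of the sequence $f^n(\al)$ to one fixed compact region, with the structure of the transverse intersection $\Lcs\cap\Lcu$ as a one-submanifold: a limit of circles trapped in a compact portion of this submanifold has no room to escape into a distinct cu-leaf.
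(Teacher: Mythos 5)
Your argument rests on two claims that do not survive the branching-foliation setting, and both are genuine gaps. First, you assert that $\Lcs\cap\Lcu$ is a one-dimensional submanifold whose components are pairwise disjoint, so that two distinct center circles in the intersection cannot meet. But $\Lcs$ and $\Lcu$ are leaves of \emph{branching} foliations: they are only immersed in $M$, and the disjointness of the components of a transverse intersection holds only in $\tM$ for a \emph{fixed} pair of lifts. Down in $M$, two distinct leaves of $\Wcbran$ lying in $\Lcs\cap\Lcu$ may touch or share arcs without topologically crossing; indeed \cref{lemma:twocircles} \emph{forces} any second circle to intersect the periodic circle $\bt$, and the paper treats this as a real configuration that must be excluded by further work, not as an instant contradiction. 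Second, your key step $\bt\subof\Lcu$ is not justified. The circle $\bt$ produced by \cref{lemma:circleperiodic} is the uniform limit of the circles $f^n(\al)$, hence lies in $\Lcs$ and in \emph{some} cu-leaf obtained as a compact-open limit of leaves; since leaves of branching foliations are not closed subsets of $M$, that limit leaf need not be $\Lcu$. Your ``no room to escape'' heuristic fails precisely here: infinitely many disjoint circles of $\Lcs\cap\Lcu$ can accumulate, inside a fixed compact region, on a circle belonging to a different cu-leaf --- this is exactly the degeneration that branching permits.

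The paper's proof avoids both issues. It never needs $\bt\subof\Lcu$: \cref{lemma:twocircles} applies to $\bt$ as a periodic center circle of the accessible boundary leaf $\Lcs$, and yields that every circle of $\Lcs\cap\Lcu$ must meet $\bt$. If some circle $\al\subof\Lcs\cap\Lcu$ is distinct from $\bt$, an arc of $\al$ and an arc of $\bt$ bound a bigon $D$ in the cu-leaf; iterating forward, an unstable segment in $D$ grows, a length-versus-area estimate forces the diameter of $f^n(D)$ to blow up, and this produces arbitrarily long center segments of $\Lcs\cap\Lcu$ with an endpoint on $\bt$, which accumulate on an unbounded center ray in $\Lcs\cap\Lcu$ --- contradicting \cref{lemma:periodcircles}. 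This dynamical blow-up argument is the content of the lemma, and your proposal has no substitute for it.
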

\begin{proof}
    Without loss of generality, assume that
    $f(\Lcs) = \Lcs$ and $f(\Lcu) = \Lcu.$
    By \cref{lemma:periodcircles}, the intersection is a union of center circles.
    By \cref{lemma:circleperiodic},
    $\Lcs$ contains a periodic center center circle $\bt.$
    By \cref{lemma:twocircles}, all of the circles in $\Lcs \cap \Lcu$ must intersect $\bt.$
    Suppose there is a circle $\al$ distinct from $\bt.$
    Then a segment of $\bt$ and a segment of $\al$ together
    bound a disk $D$ inside of $\Lcu.$
    Consider an unstable segment $\Ju \subof D.$
    By iterating forward and using a length versus area argument,
    we see that the diameter of $f^n(D)$ tends to infinity
    when regarded as a subset of $\Lcu.$
    This means that the intersection $\Lcs \cap \Lcu$
    must contain arbitrarily long center segments $\Jc_n$ with one endpoint
    on $\bt.$ Taking $n \to \infty,$
    these center segments accumulate on
    an unbounded center ray inside of $\Lcs \cap \Lcu,$
    contradicting \cref{lemma:periodcircles}.
\end{proof}
\begin{lemma} \label{lemma:commoncircle}
    Distinct accessible boundary leaves $L_1, L_2 \in \Lamcs$
    cannot intersect the same accessible boundary leaf $\Lcu \in \Lamcu.$
\end{lemma}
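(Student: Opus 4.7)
The plan is to argue by contradiction. Assume that accessible boundary leaves $L_1 \neq L_2 \in \Lamcs$ both meet the accessible boundary leaf $\Lcu \in \Lamcu.$ By \cref{lemma:accboundary} all three leaves are periodic, so after replacing $f$ by a common iterate I may assume each is $f$-invariant. \cref{lemma:accintersect} then identifies $L_i \cap \Lcu$ with a single center circle $\gamma_i,$ and the uniqueness of this intersection forces $f(\gamma_i) = \gamma_i.$ Thus $\Lcu$ contains two $f$-invariant periodic center circles $\gamma_1, \gamma_2.$

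Applying \cref{lemma:twocircles} to $L = \Lcu$ with the periodic circle $\gamma = \gamma_1$ shows that every center circle in $\Lcu$ must meet $\gamma_1,$ so in particular $\gamma_1 \cap \gamma_2 \neq \emptyset.$ If $\gamma_1 \neq \gamma_2$ as subsets of $\Lcu,$ I mimic the final step of the proof of \cref{lemma:accintersect}: pick a pair of consecutive points of $\gamma_1 \cap \gamma_2$ and form a disk $D \subset \Lcu$ whose boundary consists of a subarc of $\gamma_1$ and a subarc of $\gamma_2;$ choose an unstable segment $\Ju \subset D$ and iterate $f^n$ forward. Since $f$ preserves $\gamma_1 \cup \gamma_2$ setwise, the boundary of $f^n(D)$ stays in this bounded set for every $n,$ while $\length(f^n(\Ju))$ grows exponentially; in the cylinder $\Lcu$ with the quasi-isometric metric used in \cref{lemma:annulus}, an embedded unstable segment cannot grow without bound inside a disk of uniformly bounded boundary length, yielding the desired contradiction.

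It remains to rule out the case $\gamma := \gamma_1 = \gamma_2,$ which I expect to be the main obstacle because branching foliations a priori allow distinct $cs$ leaves to share an entire center circle. Here $\gamma$ lies in $L_1 \cap L_2 \cap \Lcu,$ and I pass to the approximating foliation: the circles $\gamma_1^\ep \subset L_1^\ep \cap \Lcu^\ep$ and $\gamma_2^\ep \subset L_2^\ep \cap \Lcu^\ep$ are disjoint because $L_1^\ep, L_2^\ep$ are distinct leaves of the true foliation $\Fcsep,$ yet both collapse under the $\ep$-close map $\hcep$ to the single circle $\gamma.$ Placing $\Fcsep$ in ideal position via \cref{thm:ideal}, the leaves $L_i^\ep$ become $\pi\inv(\ell_i^\ep)$ for distinct geodesics $\ell_i^\ep$ in $S$ and the circles $\gamma_i^\ep$ become the fibers over the points $\ell_i^\ep \cap m^\ep,$ where $m^\ep$ is the geodesic associated to $\Lcu^\ep.$ The common collapse forces these two intersection points to lie within $2\ep$ of each other on $m^\ep,$ and taking $\ep \to 0$ together with \cref{lemma:nocloseleaves} forces the limiting geodesics $\ell_1, \ell_2$ associated to $L_1, L_2$ to share a point of $m,$ hence to cross. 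Since disjointness of leaves of the limiting geodesic lamination on $S$ prohibits two distinct leaves from crossing, we conclude $\ell_1 = \ell_2$ and therefore $L_1 = L_2,$ contradicting $L_1 \neq L_2.$
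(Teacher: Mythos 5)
Your reduction to the case of a shared circle is sound and close in spirit to the paper: the paper also first shows $\al_1 = L_1\cap\Lcu$ and $\al_2 = L_2\cap\Lcu$ coincide (it does so via the common stable saturate and periodicity rather than via \cref{lemma:twocircles} plus a bigon-iteration argument, but your route through the finitely many $f$-permuted bigons with a growing unstable segment is essentially the mechanism already used in \cref{lemma:accintersect} and is acceptable). You also correctly identify that the whole difficulty is the remaining case $\gam_1=\gam_2$, where two \emph{distinct} leaves of the branching foliation share an entire center circle.

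That is exactly where your argument breaks. First, the ideal-position homeomorphism of \cref{thm:ideal} is only a \emph{uniformly bounded} distance from the identity (the bound comes from the quasi-geodesic constants of the lamination), not an $\ep$-distance; so all you can conclude is that the two fixed geodesics $\ell_1\neq\ell_2$ shadowed by $L_1$ and $L_2$ pass within some uniform constant $2C$ of each other near $\pi(\gam)$. There is no family of geodesics converging as $\ep\to 0$: for every $\ep$ the approximating leaf $L_i^\ep$ is $\ep$-close to the fixed leaf $L_i$ and therefore shadows the \emph{same} geodesic $\ell_i$. Second, even granting that $\ell_1$ and $\ell_2$ both meet the geodesic $m$ shadowed by $\Lcu$ at nearby points, this does not force them to share a point or to cross; two disjoint leaves of a geodesic lamination can perfectly well intersect a transverse geodesic at points a bounded distance apart. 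So no contradiction is reached. The paper closes this case dynamically rather than by coarse geometry: since $\Es$ is uniquely integrable, the full stable saturate $\Ws(\al_1)$ of the shared circle lies in $L_1\cap L_2$; because $\ell_1$ and $\ell_2$ are distinct, $L_1\cap L_2$ is compact (or half-leaf compact when the geodesics share an ideal endpoint, handled by cutting along $\al_1$), so this saturate is precompact with a compact, stable-saturated boundary, which would produce a stable circle --- a contradiction. You need an argument of this kind (or some other genuinely dynamical input) to finish; the $\ep\to 0$ limit as written cannot.
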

\begin{proof}
    Suppose such leaves exist and let
    $\al_i$ denote the intersection $L_i \cap \Lcu.$
    Since both circles are transverse to the stable direction,
    $\Ws(\al_1) = \Ws(\al_2).$
    As each circle is periodic, it follows that $\al_1 = \al_2.$
    Since these are branching foliations instead of true folations,
    this does not immediately imply that $L_1 = L_2$
    and more work is needed to produce a contradiction.
    
    By \cref{lemma:nocloseleaves}, the lifts of $L_1$ and $L_2$ to $\Mhat$
    cannot be at finite distance from each other and so
    they shadow two distinct geodesics $\ell_1$ and $\ell_2$ on $\bbH^2.$
    We first consider the case where $\ell_1$ and $\ell_2$ have
    four distinct endpoints on $\del \bbH^2.$
    This means that the intersection $L_1 \cap L_2$ is compact.
    Since the unstable bundle $\Es$ is uniquely integrable,
    this intersection includes the basin of attraction
    $\Ws(\al_1) = \Ws(\al_2)$ of the center circles 
    which is therefore precompact in each of $L_1$ and $L_2.$
    Its boundary would be compact and stable saturated,
    which would imply a stable circle, a contradiction.

    In the slightly more complicated case where $\ell_1$ and $\ell_2$
    yield three distinct points on $\del \bbH^2,$
    we first split $L_1$ and $L_2$ into half-leaves 
    by cutting along $\al_1$ and then do the above steps
    on those half leaves which tend to distinct points on $\del \bbH^2.$
\end{proof}
\begin{lemma} \label{lemma:acclamintersect}
    For an accessible boundary leaf $\Lcu \in \Lamcu,$
    the intersection of $\Lamcs$ with $\Lcu$ is either empty
    or consists of a single circle.
\end{lemma}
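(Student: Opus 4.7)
The plan is to reduce the general intersection question to the accessible boundary case already handled by Lemmas \ref{lemma:accintersect} and \ref{lemma:commoncircle}. Suppose $\Lamcs \cap \Lcu$ is nonempty; I aim to produce a unique accessible boundary leaf $\Lcs \in \Lamcs$ whose intersection with $\Lcu$ is a single circle, and to show that no other leaf of $\Lamcs$ meets $\Lcu$ at all.

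First I would pass to the approximating foliations. If some leaf $\Lcs' \in \Lamcs$ meets $\Lcu$ at a point $p$, then the corresponding approximating leaves $\Lcs'_\ep$ and $\Lcu_\ep$ cross transversely near a corresponding point, since $\Fcsep$ and $\Fcuep$ are uniformly transverse foliations. By Lemma \ref{lemma:accboundary} the accessible boundary leaves are dense in $\Lamcs_\ep$, and openness of transverse intersection then guarantees that some accessible boundary leaf of $\Lamcs_\ep$ sufficiently Hausdorff-close to $\Lcs'_\ep$ also crosses $\Lcu_\ep$. The corresponding accessible leaf $\Lcs \in \Lamcs$ then satisfies $\Lcs \cap \Lcu \ne \emptyset$, so by Lemma \ref{lemma:accintersect} the intersection $\Lcs \cap \Lcu$ is a single circle $\al$, and by Lemma \ref{lemma:commoncircle} no other accessible boundary leaf of $\Lamcs$ meets $\Lcu$.

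To finish, I would rule out any leaf $\Lcs' \ne \Lcs$ of $\Lamcs$ meeting $\Lcu.$ If one existed, the density-plus-openness argument above would produce a sequence of accessible boundary leaves converging to $\Lcs'_\ep$ and each crossing $\Lcu_\ep$; by the uniqueness just obtained, each such leaf would have to equal $\Lcs.$ But then $\Lcs'$ would be a Hausdorff limit of the constant sequence $\Lcs$, forcing $\Lcs' = \Lcs$, a contradiction. Hence $\Lcs$ is the only leaf of $\Lamcs$ meeting $\Lcu$, and $\Lamcs \cap \Lcu = \al$ is a single circle.

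The main obstacle I expect is justifying the openness step rigorously — that a leaf of $\Lamcs_\ep$ sufficiently Hausdorff-close to $\Lcs'_\ep$ necessarily meets $\Lcu_\ep$ near the crossing point. This should follow from continuity of the plane field tangent to $\Fcsep$ combined with uniform transversality to the $\Fcuep$ plane field in a small neighbourhood of $p$, so that the picture locally is a stable topological transverse intersection; nonetheless one must argue carefully in the lamination setting to avoid accessible leaves that are Hausdorff-close to $\Lcs'_\ep$ only over a compact region and swing away elsewhere without crossing $\Lcu_\ep.$
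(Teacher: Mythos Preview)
Your proposal is correct and follows essentially the same approach as the paper's (one-sentence) proof, which simply invokes Lemmas \ref{lemma:accintersect} and \ref{lemma:commoncircle} together with the density of accessible boundary leaves in $\Lamcs.$ Your self-identified obstacle is not a genuine difficulty: density here is pointwise, so you only need an accessible boundary leaf passing near the intersection point $p$, and the uniform transversality of the $\Ecs$ and $\Ecu$ plane fields (which persists for $\Fcsep$ and $\Fcuep$) makes the local crossing stable under such perturbation; no global Hausdorff control on the nearby leaf is required.
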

\begin{proof}
    This follows from lemmas
    \ref{lemma:accintersect} and \ref{lemma:commoncircle}
    and the fact that the accessible boundary leaves are dense in $\Lamcs.$
\end{proof}
\begin{lemma} \label{lemma:goodvert}
    If a leaf $\Lcs \in \Lamcs$ intersects a leaf $\Lcu \in \Lamcu,$
    then both leaves are isolated and the intersection
    is a single circle.
\end{lemma}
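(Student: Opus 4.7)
The plan is to reduce to the accessible boundary case already handled in \cref{lemma:accintersect}: I will show that whenever $\Lcs \cap \Lcu \neq \emptyset$, each of $\Lcs$ and $\Lcu$ must be isolated in its respective lamination (hence, in particular, an accessible boundary leaf). The single-circle statement then follows immediately from \cref{lemma:accintersect}.

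By symmetry it suffices to prove $\Lcs$ is isolated in $\Lamcs$, and I argue this by contradiction. Assume $\Lcs$ is not isolated. Combined with density of accessible boundary leaves (\cref{lemma:accboundary}), this lets me choose a sequence of accessible boundary leaves $L_n \in \Lamcs$ with $L_n \neq \Lcs$ and $L_n \to \Lcs$. Passing to the approximating foliations $\Fcsep$ and $\Fcuep$, the assumption $\Lcs \cap \Lcu \neq \emptyset$ yields a non-empty transverse intersection $\Lcs_\ep \cap \Lcu_\ep$ via \cref{prop:centerapprox}. Since the approximating leaves $L_n^\ep$ converge to $\Lcs_\ep$ and $\Lcu_\ep$ is transverse to every $L_n^\ep$, stability of transverse intersection gives $L_n^\ep \cap \Lcu_\ep \neq \emptyset$ for large $n$, and collapsing produces $L_n \cap \Lcu \neq \emptyset$. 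Applying density once more on the $\Lamcu$ side, I choose accessible boundary leaves $K_m \in \Lamcu$ with $K_m \to \Lcu$; for $n, m$ sufficiently large, both $\Lcs \cap K_m$ and $L_n \cap K_m$ are non-empty.

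Now I apply \cref{lemma:acclamintersect} to the accessible boundary leaf $K_m$: the full set $\Lamcs \cap K_m$ is a single vertical circle $C_m \subset K_m$. Using \cref{thm:ideal}, I put $\Fcsep$ into ideal position, so every vertical leaf $L \in \Lamcs$ becomes the preimage $\pi \inv(g_L)$ of a geodesic $g_L$ on $S$, and the intersection of $L$ with $K_m$ (itself a union of fibers) is a disjoint union of full fibers of $\pi$. Hence each of $\Lcs \cap K_m$ and $L_n \cap K_m$, being a non-empty union of full fibers contained in $C_m$, must equal $C_m$ in its entirety. In particular, $\Lcs$ and $L_n$ both contain the vertical circle $C_m$. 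But in ideal position the geodesics associated to distinct leaves of $\Lamcs$ are pairwise disjoint, so no two distinct vertical leaves of $\Lamcs$ share a common fiber. This forces $\Lcs = L_n$, contradicting $L_n \neq \Lcs$.

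The symmetric argument shows $\Lcu$ is isolated in $\Lamcu$, and \cref{lemma:accintersect} then yields that $\Lcs \cap \Lcu$ is a single circle. The main obstacle I anticipate is the careful passage between the three pictures --- branching leaves, their approximating $\ep$-foliations, and the ideal-position representation. Specifically, I need stability of transverse intersection to transfer non-emptiness from $\Lcs \cap \Lcu$ down to $L_n \cap K_m$ through the collapsing maps $\hcsep$ and $\hcuep$, and I must verify that \cref{thm:ideal} can indeed be applied to $\Fcsep$ --- which is justified by \cref{lemma:onetwo} and \cref{lemma:nocloseleaves} --- so that the final step reduces cleanly to the disjointness of distinct leaves of the geodesic lamination on $S$.
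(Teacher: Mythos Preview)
Your overall strategy --- reduce to the accessible-boundary case by approximating with accessible boundary leaves on both sides --- is exactly the paper's strategy, and up through the sentence ``for $n,m$ sufficiently large, both $\Lcs \cap K_m$ and $L_n \cap K_m$ are non-empty'' you are fine. The final paragraph, however, contains a genuine error.

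You write that after putting $\Fcsep$ in ideal position via \cref{thm:ideal}, the leaf $K_m$ is ``itself a union of fibers''. But $K_m$ lies in $\Lamcu$, not $\Lamcs$. The homeomorphism from \cref{thm:ideal} applied to $\Fcsep$ straightens only the vertical leaves of $\Fcsep$; it does nothing for leaves of $\Fcuep$, and there is no reason the two foliations can be put in ideal position simultaneously. So the claim that $\Lcs \cap K_m$ and $L_n \cap K_m$ are each ``a non-empty union of full fibers'' is unjustified, and with it the conclusion that $\Lcs$ and $L_n$ share a common fiber of $\pi$. Indeed, two distinct leaves of a \emph{branching} foliation can perfectly well share a center circle --- this is exactly the point addressed in the proof of \cref{lemma:commoncircle}, which needs periodicity of both leaves and a nontrivial argument via \cref{lemma:nocloseleaves} to rule it out. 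Since your $\Lcs$ is assumed non-isolated, it need not be periodic, and that argument is unavailable.

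The repair is short and brings you back to the paper's proof: forget about $\Lcs$ at the end. You already have a \emph{sequence} of distinct accessible boundary leaves $L_n \in \Lamcs$ which, for a single fixed accessible boundary leaf $K_m \in \Lamcu$, satisfy $L_n \cap K_m \ne \varnothing$ for all large $n$. Taking two distinct such $L_{n_1}, L_{n_2}$ and applying \cref{lemma:commoncircle} (or equivalently \cref{lemma:acclamintersect} in its symmetric form) gives the contradiction directly, with no appeal to ideal position. This is precisely what the paper does (with the roles of $cs$ and $cu$ swapped), using the transversality of the bundles $\Ecs$ and $\Ecu$ rather than the approximating-foliation machinery to obtain the nearby intersections.
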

\begin{proof}
    Suppose at least one of $\Lcs$ or $\Lcu$ is not
    isolated. Say $\Lcu$ is not isolated.
    Then there is a sequence of distinct accessible boundary leaves
    $\Lcu_n \in \Lamcu$ converging to $\Lcu$ and an accessible boundary leaf
    $\Lcs_1 \in \Lamcs$ close to or equal to $\Lcs.$
    By the transversality of the $\Ecu$ and $\Ecs$ subbundles,
    there is $\delta > 0$ such that if 
    $x \in \Lcs_1$ and $y \in \Lcu_n$ and $d(x,y) < \delta,$
    then the intersection $\Lcs_1 \cap \Lcu_n$ is non-empty.
    We may assume that the leaves were chosen so that
    such points exist for all $n.$
    This contradicts the previous lemma.
\end{proof}
\begin{lemma}
    There are finitely many center circles.
\end{lemma}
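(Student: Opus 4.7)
The plan is to argue by contradiction, combining the bijection between center circles and intersecting pairs of isolated vertical leaves with a compactness argument. Suppose there are infinitely many center circles $\{\gamma_n\}$. By \cref{lemma:goodvert}, each center circle $\gamma_n$ is the unique intersection circle of a pair of isolated leaves $(\Lcs_n, \Lcu_n) \in \Lamcs \times \Lamcu$. By \cref{lemma:acclamintersect}, an isolated leaf of $\Lamcs$ intersects $\Lamcu$ in at most one circle, hence contains at most one center circle. Thus the leaves $\Lcs_n$ are pairwise distinct.

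The next step is to extract a convergent subsequence. Each $\gamma_n$ is a compact center leaf of $\Wcbran$, and I would first observe that the lengths of the $\gamma_n$ are uniformly bounded. This uses that each $\gamma_n$ is a well-placed center segment (in fact a closed leaf, with every stable leaf meeting it at most once, by the orientation analysis in the proof of \cref{lemma:coarsecontract}), so \cref{lemma:annulus} applied inside the containing vertical cs-leaf bounds its length in terms of the fiber structure; together with compactness of $M$, this gives a uniform bound. Consequently, after passing to a subsequence, $\gamma_n$ converges in Hausdorff distance to a compact $C^1$ curve $\gamma_\infty$ tangent to $\Ec$. By the closure axiom for branching foliations, $\gamma_\infty$ is a leaf of $\Wcbran$, and the bounded-period control for the approximating circles in $\Fcep$ (transported back via $\hcep$ using \cref{prop:centerapprox}) shows that $\gamma_\infty$ is itself a center circle. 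Passing to a further subsequence, the isolated leaves $\Lcs_n \in \Lamcs$ Hausdorff-converge to a leaf $\Lcs_\infty \in \Lamcs$ containing $\gamma_\infty$.

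The contradiction then comes from applying \cref{lemma:goodvert} to $\gamma_\infty$: since $\gamma_\infty$ lies in an intersection of $\Lamcs$ and $\Lamcu$, the containing leaf $\Lcs_\infty$ must itself be isolated. Since an isolated cs-leaf supports at most one center circle, $\Lcs_n \neq \Lcs_\infty$ for all large $n$, so we obtain a sequence of distinct leaves of $\Lamcs$ converging to $\Lcs_\infty$. This contradicts the fact that $\Lcs_\infty$ is isolated, which by definition means it has a neighborhood meeting no other leaf of $\Lamcs$.

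The main obstacle I anticipate is the verification that the Hausdorff limit $\gamma_\infty$ is a genuine center circle rather than a degenerate limit, for instance a point or a non-compact leaf that the $\gamma_n$ wrap around with periods tending to infinity. This requires the uniform length bound on the $\gamma_n$, which in turn relies on the well-placedness of $\gamma_n$ as a loop inside its periodic vertical cs-leaf (combining the coarse contraction of \cref{lemma:coarsecontract} with \cref{lemma:annulus}). Once the limit is controlled in this way, the contradiction with isolation is immediate.
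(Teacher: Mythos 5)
Your core idea is the paper's: accumulate infinitely many center circles somewhere, observe that the accumulation point forces an intersecting pair of vertical leaves, and contradict the isolation guaranteed by \cref{lemma:goodvert}. That part is sound. However, the entire middle of your argument --- the uniform length bound on the $\gamma_n$, the Hausdorff convergence of the circles themselves, and the verification that the limit $\gamma_\infty$ is a genuine center circle --- is unnecessary, and it is precisely the part that does not hold up as written. The paper's proof is a one-liner: pick points $p_n \in \gamma_n$; by compactness of $M$ a subsequence converges to some $p$, and since $\Lamcs$ and $\Lamcu$ are closed, $p \in \Lamcs \cap \Lamcu$. Hence $p$ lies on a cs-leaf and a cu-leaf that intersect, so by \cref{lemma:goodvert} both are isolated; but the distinct leaves $\Lcs_n \ni p_n$ accumulate at $p$, a contradiction. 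Note that \cref{lemma:goodvert} only needs a \emph{point} of $\Lamcs \cap \Lamcu$, not a limit \emph{circle}, so the ``main obstacle'' you identify is a phantom.

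Concretely, the detour has two gaps. First, your length bound invokes \cref{lemma:annulus}, which requires the segment to be well-placed \emph{and} contained in an annulus $A_{x,y}$; nothing in the cited lemmas shows that a center circle is well-placed (a stable leaf in the cylinder can a priori cross a circle isotopic to the fiber more than once --- the paper never claims otherwise, and in \cref{lemma:coarsecontract} Case Three such circles are handled without asserting well-placedness), nor that the circles $\gamma_n$ sit in annuli of uniformly bounded width independent of $n$. Second, even granting a Hausdorff limit curve, upgrading it to a center circle in the sense of \cref{sec:bran} (an $\hcep$-image of a circular leaf of $\Fcep$) is only gestured at. Both issues disappear once you delete the detour and argue with limit points instead of limit circles.
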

\begin{proof}
    If not, there are points $p_n \in \Lamcs \cap \Lamcu$ on distinct center circles
    converging to a point $p \in \Lamcs \cap \Lamcu$
    and the previous lemma gives a contradiction.
\end{proof}
The last three lemmas show that the branching foliations
$\Wcsbran$ and $\Wcubran$ satisfy conditions (4), (5), (6)
listed in \cref{thm:twofolns}. From this we see that the
appoximating foliations $\Fcsep$ and $\Fcuep$ also satisfy these conditions.

\section{No one-sided center leaves} \label{sec:noonesided} 

This section concludes the proof of
\cref{prop:approxgood} by showing that the approximating foliations
satisfy condition (\ref{item:noonesided}) in the hypotheses of \cref{thm:twofolns}.
We show that the branching foliations themselves do not intersect
in one-ended center leaves, and from this it follows that
the approximating foliations have the same property.
In particular, we show the following.

\begin{prop} \label{prop:noonesided}
    Let $f : M \to M$ be a partially hyperbolic diffeomorphism
    on a circle bundle over a higher-genus surface
    and suppose that $Df$ preserves the orientations
    of $\Eu, \Ec,$ and $\Es.$
    Then a vertical leaf of the cs branching foliation
    does not contain any one-sided center leaves.
\end{prop}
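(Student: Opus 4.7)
The plan is to prove this by contradiction. Suppose some vertical leaf $L^{cs}$ of the cs branching foliation $\Wcsbran$ contains a one-sided center leaf $L^c.$ Since $L^c$ is a properly embedded non-compact line and arises as a connected component of an intersection $L^{cs} \cap L^{cu}$ for some $L^{cu} \in \Wcubran,$ \cref{lemma:goodvert} forces $L^{cu}$ to be horizontal; otherwise both leaves would be vertical and the intersection would be a single center circle (hence compact). In particular, by \cref{lemma:horizontalplane}, any lift $\hat L^{cu}$ to $\hat M$ embeds via $\pi \circ h$ as a topological plane in $\bbH^2.$

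Next I would reduce to a periodic configuration. By \cref{lemma:accboundary}, the accessible boundary leaves of both $\Lamcs$ and $\Lamcu$ are dense and periodic under $f.$ Through a limiting argument using closedness of the branching foliations together with the horizontal structure of $L^{cu},$ I would produce an accessible boundary leaf $L^{cs}_0 \in \Lamcs$ containing a one-sided center leaf $L^c_0$ which itself lies in a horizontal cu leaf $L^{cu}_0.$ After replacing $f$ by an iterate we may arrange $f(L^{cs}_0) = L^{cs}_0,$ and a further iterate combined with the finiteness of center circles yields $f(L^c_0) = L^c_0.$

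For the contradiction, coarse contraction of $f$ on $L^{cs}_0$ (\cref{lemma:coarsecontract}) together with orientation preservation of $\Ec$ forces the homeomorphism $f|_{L^c_0} \colon \bbR \to \bbR$ to have a fixed point $p.$ By \cref{lemma:csonesided}, the stable direction at $p$ points into the open disk $D$ bounded by $L^c_0$ inside $L^{cs}_0,$ and any stable segment in $D$ is well-placed. Pushing a short stable segment at $p$ backward by $f^{-n}$ produces well-placed stable segments in $D$ of exponentially growing length, and by \cref{lemma:annulus} these must span annuli in $L^{cs}_0$ of linearly growing width. Projecting via $\pi \circ h,$ the vertical $\hat L^{cs}_0$ shadows a geodesic $\ell$ in $\bbH^2$ (\cref{thm:ideal}) while $\hat L^{cu}_0$ embeds there as a topological plane; the unbounded spread of the backward-iterated disk region then becomes incompatible with these two rigid embeddings, yielding the desired contradiction.

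The hard part will be the limiting argument in the second paragraph, since one-sidedness is a global topological property of $L^c$ inside $L^{cs}$ and may degenerate when cs leaves are approximated by accessible boundary leaves; one must simultaneously control convergence in $\Lamcs,$ convergence in $\Lamcu,$ and the relative position of the horizontal cu leaf carrying the one-sided center leaf. A secondary technical obstacle is the final quantitative step, where the compatibility of exponentially growing backward stable segments inside $D$ with the fixed horizontal embedding of $\hat L^{cu}_0$ in $\bbH^2$ must be turned into an explicit geometric contradiction.
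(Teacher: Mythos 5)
Your opening is right and matches the paper: assume a vertical $\Lcs$ contains a one-sided center leaf $\Lc$, and use \cref{lemma:goodvert} to conclude that the cu leaf $\Lcu$ through $\Lc$ is horizontal, hence (via \cref{lemma:horizontalplane}) projects to a properly embedded plane in $\bbH^2$. After that, however, your argument diverges and has two genuine gaps. The first is the reduction to a periodic configuration. The leaf $\Lcs$ need not be an accessible boundary leaf, and \cref{lemma:accboundary} only gives you \emph{nearby} periodic leaves; there is no reason a nearby accessible boundary cs leaf contains a one-sided center leaf, since one-sidedness is a global property of the intersection curve inside the cylinder and does not pass to limits (the approximating intersections with $\Lcu$, or with nearby cu leaves, can perfectly well be two-ended). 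You flag this yourself as ``the hard part,'' but it is not a technicality to be filled in later: the paper's proof is structured precisely to avoid any periodicity assumption. It applies $f^{-n}$ to the \emph{given} $\Lcs$ and $\Lc$ (landing in a different vertical leaf for each $n$) and relies only on constants that are uniform over all vertical leaves (\cref{lemma:annulus}, \cref{lemma:csonesided}).

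The second gap is that your endgame never actually produces a contradiction. Backward iterates of a stable segment growing like $\lam^{-n}$ inside the disk $D \subof \Lcs$ bounded by $\Lc$, spanning ever wider annuli, is entirely consistent with $\Lcs$ being an infinite cylinder over a geodesic; no ``rigidity of the embeddings'' is violated. The contradiction in the paper requires playing all three rates $\lam < \sig < 1 < \mu$ against each other: a well-placed stable segment of width $C$ contracts under $f^n$ to length $\le K\sig^n C$, this bounds a well-placed center subsegment of $\Lc_+$ by $K^2\sig^n C$, and the crucial step --- your proposal's missing ingredient --- is \cref{lemma:culength}, which shadows that center segment by an \emph{unstable} segment of comparable length inside the horizontal leaf $\Lcu$. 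That shadowing rests on the ``banana'' geometry of the one-ended curve's projection to $\bbH^2$ (\cref{lemma:hypdisk}, \cref{lemma:cudisk}) together with \cref{lemma:Lcdist} and \cref{lemma:awayfromvert}, which keep the disk away from $\Lamcu_\ep$ so that $\pi\circ\hcu$ has a uniformly continuous inverse there. Only after pulling the unstable segment back by $f^{-n}$, where it contracts by $\mu^{-n}$ while the stable endpoints spread by at most $\lam^{-n}(1+R)$, does one get $C-2 \le K_1K^2(\sig/\mu)^n C + 2(1+R)/\lam^n$ and hence a contradiction for $n$ and then $C$ large. Without bringing the unstable direction into the picture through the horizontal cu leaf, there is nothing to contradict.
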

We first state a result on $\bbH^2.$
For a geodesic $\ell$ in $\bbH^2,$
define a projection $\pi_\ell : \bbH^2 \to \ell$
along geodesics perpendicular to $\ell.$
Note that a set $X \subof \bbH^2$ is at finite distance from $\ell$
if and only if $\dist(x, \pi_\ell(x))$ is uniformly bounded for all $x \in X.$
Consider a properly embedded topological line $\al$ in $\bbH^2$
at finite distance from $\ell.$
Similar to a curve in a cylinder,
we say $\al$ is \emph{one-ended} if it does not
intersect every fiber of $\pi_\ell.$
One may show the following properties for such a curve.

\begin{lemma} \label{lemma:hypdisk}
    Let $\al$ be a one-ended properly embedded line
    at finite distance from a geodesic $\ell$ in $\bbH^2.$
    Then $\al$ bounds an open topological disc $D_0 \subof \bbH^2$
    with the following properties.
    \begin{enumerate}
        \item for every $z \in \pi_\ell(\al),$
        the fiber $\pi_\ell \inv (z)$ intersects $D_0$
        in a set of uniformly bounded diameter;
        \item
        the curve $\al$ may be split at a point $x_0$
        into two closed rays $\al_+$ and $\al_-$
        such that
        \[
            \al_+ \cup \al_- = \al,
            \quad
            \al_+ \cap \al_- = \{x_0\},
            \qandq
            \pi_\ell(\al_+) = \pi_\ell(\al_-) = \pi_\ell(\al);  \]
        \item
        if $\bt$ is a properly embedded topological ray in $D_0 \cup \al$
        starting at $x_0,$
        then $\pi_\ell(\bt) = \pi_\ell(\al).$
    \end{enumerate} \end{lemma}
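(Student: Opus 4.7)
The plan is to work in Fermi coordinates $(t,s)$ with respect to $\ell$, so that $\pi_\ell$ becomes the projection $(t,s)\mapsto t$ and $\bbH^2$ is identified with $\bbR^2$. The finite-distance hypothesis places $\al$ inside a strip $|s| < R$ for some $R > 0$, and $\pi_\ell|_\al$ is then proper. Since $\pi_\ell(\al)$ is a closed connected proper subset of $\ell$, the one-ended hypothesis forces it to be a closed half-line, and after reversing orientation if necessary I assume $\pi_\ell(\al) = [z_0, \infty)$. Parameterize $\al : \bbR \to \bbH^2$. The $t$-coordinate along $\al$ is continuous, has infimum $z_0$, and tends to $\infty$ at both ends, so it attains its minimum at some $\tau_0$. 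Setting $x_0 = \al(\tau_0)$, $\al_+ = \al([\tau_0, \infty))$ and $\al_- = \al((-\infty, \tau_0])$, each $\al_\pm$ begins at a point of $t$-coordinate $z_0$ and has $t$-coordinate going to infinity, so by continuity $\pi_\ell(\al_+) = \pi_\ell(\al_-) = [z_0, \infty)$, which is (2).

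Next I define $D_0$ and verify (1). The complement of the properly embedded line $\al$ in $\bbH^2 \cong \bbR^2$ has exactly two components. For any $t_0 < z_0$ the fiber $\pi_\ell\inv(t_0)$ is disjoint from $\al$ and hence lies in a single component; call this component $E$ and set $D_0$ to be the other. For $t_0 \ge z_0$, the set $\pi_\ell\inv(t_0) \cap \al$ is compact (closed and contained in $\{|s| < R\}$), so $\pi_\ell\inv(t_0) \setminus \al$ is a disjoint union of open arcs in $\pi_\ell\inv(t_0)$. Exactly two of these arcs are unbounded, namely the tails with $|s| \to \infty$, and each can be connected to $E$ by a horizontal path at a fixed $|s| > R$ which remains disjoint from $\al$. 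Hence the unbounded arcs lie in $E$, every bounded arc lies in $D_0$, and every bounded arc is contained in $\{t_0\} \times [-R, R]$. This yields a uniform diameter bound of $2R$ for $\pi_\ell\inv(t_0) \cap D_0$, proving (1), and simultaneously shows $\overline{D_0} \subset [z_0, \infty) \times [-R, R]$.

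For (3), let $\bt$ be a properly embedded topological ray in $D_0 \cup \al$ starting at $x_0$. By the inclusion just noted, $\pi_\ell(\bt) \subset [z_0, \infty)$, and since $\pi_\ell(\bt)$ is connected and contains $\pi_\ell(x_0) = z_0$ it must be either $[z_0, c]$ for some finite $c \ge z_0$ or $[z_0, \infty)$. In the first case $\bt$ would lie in the closed subset $\overline{D_0} \cap \pi_\ell\inv([z_0, c]) \subset [z_0, c] \times [-R, R]$, which is precompact in $\bbH^2$ and hence contradicts properness of $\bt$. Therefore $\pi_\ell(\bt) = [z_0, \infty) = \pi_\ell(\al)$.

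The main technical step, and the one place where the finite-distance hypothesis is used essentially, is the identification of the two unbounded arcs of $\pi_\ell\inv(t_0) \setminus \al$ as lying in $E$ rather than $D_0$: the strip containment is exactly what allows horizontal rays at $|s| > R$ to link these unbounded arcs back to the region over $\{t < z_0\}$. A minor additional subtlety is that $\al \cap \pi_\ell\inv(z_0)$ may contain more than one point, but any choice of $x_0$ from this set satisfies all three conclusions.
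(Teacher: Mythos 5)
Your proof is correct and is precisely the elementary argument the paper leaves to the reader (its ``proof'' is a figure plus an appeal to the Jordan curve theorem): Fermi coordinates, the two-component complement of a properly embedded line, horizontal paths at $|s|>R$ placing the unbounded fiber arcs in the outer component $E$, and compactness of $[z_0,c]\ti[-R,R]$ against properness. One unused intermediate claim is false as stated --- a bounded arc of $\pi_\ell\inv(t_0)\sans\al$ need not lie in $D_0$ (for instance the middle arc when a fiber crosses $\al$ four times transversally) --- but conclusion (1) only requires that the unbounded arcs lie in $E$, so nothing breaks.
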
        

\begin{figure}
    \centering
    \includegraphics{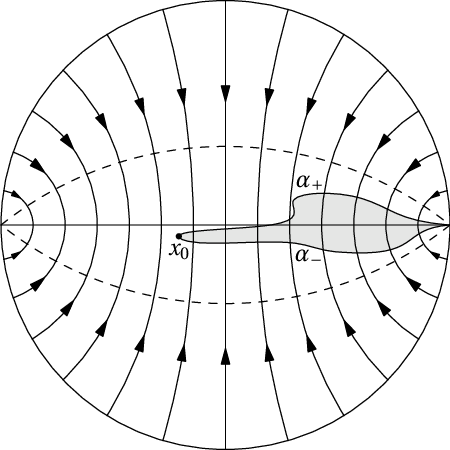}
    \caption{
    A graphical depiction of \cref{lemma:hypdisk}.
    The horizontal line here is the geodesic $\ell$
    in the Poincar\'e disk model of $\bbH^2$ and
    the two dashed curves are at a fixed distance from $\ell.$
    The arrows depict the projection $\pi_\ell$
    along geodesics perpendicular to $\ell.$
    The shaded region is the topological disk $D_0$
    which has boundary
    $\al_+ \cup \al_- = \al.$
    }
    \label{fig:banana}
\end{figure}
The details of the proof are left to the reader.
See \cref{fig:banana}.
As in the proof of \cref{lemma:csonesided}, the existence of the disc
$D_0$ follows from the Jordan curve theorem.

\medskip{}

We now prove \cref{prop:noonesided} by contradiction.

For the remainder of this section,
we work only in the covering space
$\Mhat.$
Therefore in this section,
$f : \Mhat \to \Mhat$ will denote the lift to $\Mhat$ of the diffeomorphism
given in the hypothesis of \cref{prop:noonesided}.
Similarly,
let $\Wcsbran$ and $\Wcubran$ denote the lift to $\Mhat$ of the branching foliations
tangent to $\Ecs$ and $\Ecu$ respectively.
Let $\ep > 0$ be small and let $\Fcsep, \Fcuep$ be lifts to $\Mhat$
of the approximating foliations given by \cref{thm:approxfoln}.
Similarly, let $\hcuep, \hcsep : \Mhat \to \Mhat$ be lifts to
$\Mhat$ of the corresponding collapsing maps.
Note that each of $\hcuep$ and $\hcsep$ is $\ep$-close to the identity map on $\Mhat.$

The definitions of horizontal, vertical, and one-ended leaves
work equally well in the setting of the branched foliations
$\Wcsbran$, $\Wcubran$, and $\Wcbran$
as for true foliations, and so we use them here.

In this section,
let $\pi : \Mhat \to \bbH^2$ be the projection which defines
the lifted circle fibering on $\Mhat.$
Since $\Fcsep$ is the lift of a true foliation
without compact leaves on $M,$
\cref{thm:brittenham} applies.
Let $\hcs : \Mhat \to \Mhat$ be the lift
to $\Mhat$ of the $C^1$ diffeomorphism given by \cref{thm:brittenham}.
For the foliation $\Fcuep,$ define $\hcu : \Mhat \to \Mhat$ analogously.
As explained in \cref{sec:bran}, the branching foliations $\Wcsbran$ and $\Wcubran$
intersect in a branching center foliation $\Wcbran.$

Consider a vertical leaf $\Lcs$ of $\Wcsbran$ and assume that there
is a one-sided center leaf $\Lc$ inside of $\Lcs.$
Let $\Lcu$ be the leaf of $\Wcubran$ containing $\Lc.$
By \cref{lemma:goodvert}, $\Lcu$ must be a horizontal leaf.

Let $\Lcuep$ be the leaf in $\Fcuep$ which approximates $\Lcu;$
that is, $\hcuep(\Lcuep) = \Lcu.$
Define a curve $\Lcep \subof \Lcuep$ by
$\hcuep(\Lcep) = \Lc.$
Let $\Lamcu_{\ep}$ denote the sublamination of all vertical leaves of $\Fcuep.$

\begin{lemma} \label{lemma:Lcdist}
    The curve $\Lcep$ lies at positive distance
    from $\Lamcu_{\ep}.$
\end{lemma}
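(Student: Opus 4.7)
The plan is to argue by contradiction. Suppose some sequence $\{x_n\} \subset \Lcep$ has $\dist(x_n, \Lamcu_\ep) \to 0$; since $\hcuep$ is $\ep$-close to the identity and carries $\Lamcu_\ep$ onto $\Lamcu$, the sequence $y_n := \hcuep(x_n) \in \Lc$ satisfies $\dist(y_n, \Lamcu) \to 0$. Note first that $\Lcu$, and hence $\Lcuep$, must be horizontal: since $\Lc \subset \Lcs \cap \Lcu$ is one-sided it is not a center circle, so \cref{lemma:goodvert} forbids $\Lcu$ from being vertical in $\Wcubran$.

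Next, using uniform transversality of $\Ecs$ and $\Ecu$, the closeness of $y_n \in \Lcs$ to some $L'_n \in \Lamcu$ forces $\Lcs$ to meet $L'_n$ transversally near $y_n$ for $n$ large. \Cref{lemma:goodvert} then says $\Lcs$ and each $L'_n$ are isolated and $\Lcs \cap L'_n$ is a single center circle; combined with the finiteness of center circles from the end of \cref{sec:wellplaced}, I pass to a subsequence with $L'_n = L'$ constant, and set $\gamma := \Lcs \cap L'$. Local transversality of $\Lcs$ and $L'$ along $\gamma$ then forces $\dist(y_n, \gamma) \to 0$ inside $\Lcs$; compactness of $\gamma$ gives a further subsequence with $y_n \to y_\infty \in \gamma$, and by \cref{lemma:properint}, $\Lc$ is properly embedded in $\Lcs$, so $y_\infty \in \Lc$. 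Thus $y_\infty \in \Lc \cap \gamma$; in particular, $y_\infty$ lies in both the horizontal cu leaf $\Lcu$ and the vertical cu leaf $L'$ of $\Wcubran$.

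To complete the argument, I would extract a contradiction from this coincidence. Applying the Brittenham homeomorphism $\hcu$, the horizontal surface $\hcu(\Lcu)$ meets the fibered cylinder $\hcu(L')$ along a curve through $\hcu(y_\infty)$ whose image under $\pi$ is an arc of the geodesic $\ell'$ shadowed by $L'$. Since $\pi \circ \hcu|_{\Lcuep}$ is an embedding into $\bbH^2$ (\cref{lemma:horizontalplane}) and the image of $\Lcep$ stays in a bounded neighborhood of a single ray of the geodesic $\ell$ shadowed by $\Lcs$, the resulting configuration forces $\ell$ and $\ell'$ to share the endpoint at infinity toward which this ray escapes, and constrains how $\Lcuep$ must approach the vertical leaf approximating $L'$ near the one-sided curve $\Lcep$; a careful argument in $\bbH^2$ then contradicts the non-crossing property of the branching foliation $\Wcubran$. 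The main obstacle I expect is precisely this last step: translating the branched configuration at $y_\infty$ into a definitive geometric obstruction will require simultaneously invoking horizontality (\cref{lemma:horizontalplane,lemma:awayfromvert}), the geodesic geometry of $\bbH^2$, and the non-crossing property of $\Wcubran$.
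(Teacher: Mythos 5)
Your proposal has a genuine gap, and it also misses the one observation that makes the paper's argument short. The key point you never establish is that the sequence $\{x_n\}$ must escape every compact subset of $\Mhat$: since $\Lcep$ is disjoint from the closed set $\Lamcu_\ep$, the points $x_n$ must leave every compact subset of $\Lcep$, and since $\Lc$ is properly embedded in $\Lcs$ (by \cref{lemma:properint}), which is itself properly embedded in $\Mhat$, the curve $\Lcep$ is properly embedded in $\Mhat$ and hence $x_n \to \infty$ in $\Mhat$. Once you have this, transversality gives a center circle of $\Lcs \cap L'_n$ near each $x_n$, and because the $x_n$ escape to infinity these circles are infinitely many distinct circles in the single leaf $\Lcs$ --- contradicting \cref{lemma:goodvert} directly. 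No analysis of endpoints at infinity or of the non-crossing property is needed.

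Concretely, two of your steps fail. First, the extraction of a subsequence with $L'_n = L'$ constant is not justified: the finiteness of center circles is a statement in $M$, whereas this entire section works in the cover $\Mhat$, where each center circle and each critical vertical leaf has infinitely many lifts; the $L'_n$ need not stabilize, and the case where they do not is exactly the paper's contradiction. Second, even granting a constant $L'$, your conclusion $y_n \to y_\infty \in \gam$ is incompatible with the escape-to-infinity fact above ($\gam$ is compact, so $\dist(y_n,\gam)\to 0$ is already absurd); the correct move there is to stop at that contradiction rather than to analyze the touching point $y_\infty \in \Lcu \cap L'$, which, as you acknowledge, you cannot turn into a contradiction. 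The final paragraph of your proposal is therefore an unfinished detour around an argument that closes in one line once properness of $\Lcep$ is invoked.
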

\begin{proof}
    Suppose not. Then there are sequences
    $\{x_n\}$ in $\Lcep$
    and $\{y_n\}$ in $\Lamcu_{\ep}$ such
    that $d(x_n, y_n)$ tends to zero.
    Since $\Lcep$ is disjoint from $\Lamcu_{\ep},$
    it must be the case that $\{x_n\}$ escapes every compact subset of
    $\Lcep.$
    Note that $\Lc$ is properly embedded in $\Lcs$
    which itself is properly embedded in $\Mhat,$
    and so $\Lcep,$ which is at finite distance from $\Lc,$
    is properly embedded in $\Mhat.$
    Therefore, $\{x_n\}$ escapes every compact subset of $\Mhat.$

    For large $n,$ the point $x_n$ is close to the vertical leaf $\Lcs$ of $\Wcsbran$
    and is close to some vertical leaf $L_n$ of $\Wcubran.$
    Therefore, $\Lcs$ and $L_n$ intersect in a center circle near $x_n.$
    This shows that $\Lcs$ contains infinitely many center circles,
    which contradicts \cref{lemma:goodvert}.
\end{proof}
Let $\ell$ be the unique geodesic in $\bbH^2$ such that
the vertical leaf $\Lcs$ lies at finite distance from $\pi \inv(\ell).$

\begin{lemma} \label{lemma:cudisk}
    Inside of $\Lcu,$ the curve $\Lc$ bounds an open topological disk $D$
    and there is a continuous function $q : \Lcu \to \ell$
    with the following properties:
    \begin{enumerate}
        \item if $z \in q(\Lc),$
        then $D$ intersects $q \inv(z)$ in a set of uniformly bounded diameter;
        \item
        $\Lc$ may be split at a point $x_0$ into two topological rays
        $\Lc_+$ and $\Lc_-$ such that
        \[
            \Lc_+ \cup \Lc_- = \Lc,
            \quad
            \Lc_+ \cap \Lc_- = \{x_0\},
            \qandq
            q(\Lc_+) = q(\Lc_-) = q(\Lc);  \]
        \item
        if $\bt$ is a properly embedded topological ray in $D \cup \Lc$
        starting at $x_0,$
        then $q(\bt) = q(\Lc).$
    \end{enumerate} \end{lemma}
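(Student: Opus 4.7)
The plan is to reduce \cref{lemma:cudisk} to its hyperbolic analogue \cref{lemma:hypdisk} by using \cref{lemma:horizontalplane} to embed the approximating leaf $\Lcuep \in \Fcuep$ into $\bbH^2$. Since $\Lcu$ is horizontal by \cref{lemma:goodvert}, its approximating leaf $\Lcuep$ is a horizontal leaf of the true foliation $\Fcuep$, so \cref{lemma:horizontalplane} applies and $F := \pi \circ \hcu|_{\Lcuep} \colon \Lcuep \to \bbH^2$ is an embedding of a topological plane. I would show the image $\al := F(\Lcep)$ is a properly embedded one-ended topological line at finite Hausdorff distance from $\ell$. Finite distance follows because $\Lcep$ is within bounded distance of $\Lc \subof \Lcs$ and $\Lcs$ lies at bounded distance from $\pi\inv(\ell)$; one-endedness follows from $\Lc$ being a one-sided center leaf in the cylinder $\Lcs$ (the setting of \cref{lemma:csonesided}), so both ends of $\Lc$ accumulate on a single end of $\Lcs$, i.e.~on one endpoint of $\ell$ on $\del \bbH^2$.

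Applying \cref{lemma:hypdisk} to $\al$ and $\ell$ then yields a topological disk $D_0 \subof \bbH^2$ bounded by $\al,$ together with a splitting $\al = \al_+ \cup \al_-$ meeting at a point $x_0'$. Since $F$ is an embedding of a plane, $\Lcuep \sans \Lcep$ has exactly two components; let $D_\ep$ be the unique one with $F(D_\ep) \subof D_0$ and set $D := \hcuep(D_\ep) \subof \Lcu,$ $x_0 := \hcuep(F\inv(x_0')),$ and $\Lc_\pm := \hcuep(F\inv(\al_\pm))$. Define
\[
    q \ := \ \pi_\ell \circ F \circ (\hcuep|_{\Lcuep})\inv \colon \Lcu \to \ell.
\]
Properties~(2) and~(3) of \cref{lemma:cudisk} then follow directly from the corresponding properties of \cref{lemma:hypdisk}, transported through the embedding $F$ and the $C^1$-diffeomorphism $\hcuep|_{\Lcuep}$.

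Property~(1) is more delicate. \cref{lemma:hypdisk}(1) bounds $\diam(\pi_\ell\inv(z) \cap D_0)$ uniformly in $\bbH^2$ for $z \in \pi_\ell(\al)$. To transfer this into an intrinsic diameter bound on $q\inv(z) \cap D$ in $\Lcu,$ I would invoke \cref{lemma:awayfromvert}: if $D_\ep$ lies at positive distance from the vertical sublamination $\Lamcu_\ep,$ then $F\inv$ is uniformly continuous on $F(D_\ep) \subof D_0$ and the diameter bound carries over from $\bbH^2$ to $\Lcuep,$ and thence to $\Lcu$ via $\hcuep.$

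The main obstacle is verifying $\dist(D_\ep, \Lamcu_\ep) > 0.$ \cref{lemma:Lcdist} gives this only along the boundary $\Lcep,$ so the work is to extend the positivity to all of $D_\ep.$ Supposing for contradiction that there are $p_n \in D_\ep$ and $z_n \in \Lamcu_\ep$ with $d(p_n, z_n) \to 0,$ the inclusion $F(D_\ep) \subof D_0$ forces $\pi(p_n)$ (and hence $\pi(z_n)$) to lie in a fixed tubular neighborhood of $\ell.$ Bounded convergent subsequences would force an accumulation point on $\Lcep,$ contradicting \cref{lemma:Lcdist}, so $p_n$ must escape every compact set in $\Mhat$; the geodesics shadowed by the vertical leaves through $z_n$ would then have to share the single endpoint of $\al$ on $\del \bbH^2$ with $\ell.$ I expect to close the argument by combining \cref{lemma:nocloseleaves} (no distinct vertical leaves of $\Wcubran$ lift to pairs at bounded Hausdorff distance) with the hyperbolic divergence of geodesics sharing only one ideal endpoint, ruling out such accumulation. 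This geometric step is where all the care goes.
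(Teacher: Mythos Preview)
Your overall architecture matches the paper's: embed $\Lcuep$ into $\bbH^2$ via $\pi\circ\hcu$, apply \cref{lemma:hypdisk} to the image $\al$ of $\Lcep$, and transport everything back through $\hcuep|_{\Lcuep}$. You also correctly isolate the one genuine difficulty, namely upgrading \cref{lemma:Lcdist} from $\dist(\Lcep,\Lamcu_\ep)>0$ to $\dist(D_\ep,\Lamcu_\ep)>0$.

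The gap is in your plan for that upgrade. Your unbounded case claims that the geodesics shadowed by the vertical leaves through $z_n$ must share the ideal endpoint $\xi$ of $\al$ with $\ell$; this is not true, since the leaves $L_n\in\Lamcu_\ep$ may vary with $n$ and need not asymptote to $\xi$ at all---you only know that a single point $\pi(z_n)$ on each shadowed curve drifts toward $\xi$. Even if the shared-endpoint claim held, \cref{lemma:nocloseleaves} would give nothing: two geodesics sharing one ideal point are already at infinite Hausdorff distance, so the lemma is vacuous there, and ``hyperbolic divergence'' cuts the wrong way since your points are moving toward the shared end, not away from it.

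The paper's argument is much simpler and avoids the bounded/unbounded dichotomy entirely. Push everything to $\bbH^2$ via $\pi\circ\hcu$: the image $\al$ of $\Lcep$ is a properly embedded line separating $\bbH^2$ into $D_0$ and its complement. The image of $\Lamcu_\ep$ is a closed set $\Lambda\subset\bbH^2$ disjoint from $\pi\circ\hcu(\Lcuep)$ (distinct leaves of a foliation map under the homeomorphism $\hcu$ to disjoint sets), hence disjoint from $D_0\cup\al$ and therefore contained in the other side of $\al$. Any geodesic from a point of $D_0$ to a point of $\Lambda$ must cross $\al$, so $\dist(D_0,\Lambda)\ge\dist(\al,\Lambda)>0$, the latter by \cref{lemma:Lcdist}. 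Since $\pi\circ\hcu(D_\ep)\subset D_0$, this gives a uniform lower bound on $d_{\bbH^2}\bigl(\pi\hcu(p),\pi\hcu(z)\bigr)$ for $p\in D_\ep$ and $z\in\Lamcu_\ep$; pull back through the uniform continuity of $\pi\circ\hcu$ (lifted from the compact manifold $M$) to get $\dist(D_\ep,\Lamcu_\ep)>0$. This is what the paper's terse sentence ``By \cref{lemma:Lcdist}, $D_\ep$ is at positive distance from $\Lamcu_\ep$'' is encoding.
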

\begin{proof}
    By \cref{lemma:horizontalplane},
    $\pi \circ \hcu$ restricted to $\Lcuep$ is an embedding into $\bbH^2.$
    Therefore $\pi \circ \hcu(\Lcep)$ is a properly embedded curve in $\bbH^2$
    at finite distance from $\ell$ and which satisfies the hypotheses of
    \cref{lemma:hypdisk}.
    Let $D_0$ be the disk given there and define a map
    $q_\ep : \Lcuep \to \ell$ by
    $q_\ep = \pi_\ell \circ \pi \circ \hcu.$
    Define $D_\ep = q_\ep \inv (D_0)$ and note that
    $D_\ep$ is a topological disk in $\Lcuep$ bounded by $\Lcep.$
    By \cref{lemma:Lcdist}, $D_\ep$ is at positive distance from $\Lamcu_\ep.$
    If follows from \cref{lemma:awayfromvert} that 
    $\pi \circ \hcu$ when restricted to $D_\ep$ has
    a uniformly continuous inverse.
    Therefore, there is $r > 0$ such that if
    $x,y \in D_\ep$ and $q_\ep(x) = q_\ep(y),$ then
    $d(x,y) < r$
    where distance is measured along $\Lcuep.$

    Recall that the restriction of $\hcuep$ to $\Lcuep$
    is a $C^1$ diffeomorphism onto $\Lcu$ and its derivative
    is $\ep$-close to the identity.
    Define $D = \hcuep(D_\ep)$
    and define $q : \Lcu \to \ell$
    as the unique map such that
    $q(\hcuep(x)) = q_\ep(x)$ for all $x \in \Lcuep.$
    Then $D$ and $q$ satisfy the conclusions of the lemma.
\end{proof}
In what follows, we use $x_0,$ $D,$ $L^c_+$ and $q$
as given by the above lemma.

\begin{lemma} \label{lemma:culength}
    There are constants $K_1$ and $R$ such that the following holds.
    If $\Jc$ is a compact segment in $\Lc_+$ with endpoints $x^c$ and $y^c,$
    then there is an unstable segment $\Ju$ in $\Lcu$ with endpoints
    $x^u$ and $y^u$ 
    such that
    \[
        d(x^u, x^c) < R,
        \quad
        d(y^u, y^c) < R,
        \qandq
        \length(\Ju) < K_1 \length(\Jc).
    \] \end{lemma}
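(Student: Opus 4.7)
The plan is to locate $\Ju$ as a segment of a single unstable leaf of $\Lcu$ running through the disk $D$ from \cref{lemma:cudisk}, using the $q$-coordinate to control both the endpoints and the length. First I would observe that $q = \pi_\ell \circ \pi \circ \hcu$ is globally Lipschitz on $\Lcu$: the maps $\pi$ and $\pi_\ell$ are $1$-Lipschitz, and on the approximating leaf $\Lcuep$ the embedding $\pi \circ \hcuep|_{\Lcuep}$ is a bi-Lipschitz homeomorphism onto $D_0 \subof \bbH^2$ by \cref{lemma:awayfromvert}, which applies because \cref{lemma:Lcdist} ensures $D$ sits at positive distance from the vertical sublamination $\Lamcu_\ep$. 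This immediately gives the preliminary bound $d_\ell(q(x^c), q(y^c)) \le L_q \cdot \length(\Jc)$.

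Next I would construct $\Ju$ by first selecting $x^u$ on the unstable leaf through $x^c$, a small uniform distance into $D$. Such an $x^u$ exists because $\Wu$ is transverse to the center foliation $\Fc$ inside $\Lcu$, so one side of the unstable leaf at $x^c$ enters $D$; in particular $d(x^u, x^c) < R/2$. Then I follow the unstable leaf from $x^u$ into $D$ until it first reaches the $q$-fiber through $y^c$, and call this first crossing $y^u$. By \cref{lemma:cudisk}(1), the diameter of $D \cap q^{-1}(q(y^c))$ inside $\Lcu$ is uniformly bounded, so $d(y^u, y^c) < R$. If instead the unstable leaf exits $D$ back through $\Lc$ before reaching the $q$-value $q(y^c)$, an analogous bound follows from parts (1) and (3) of \cref{lemma:cudisk}, since any such exit point has $q$-value in $q(\Lc)$ and its $q$-fiber in $D$ still has bounded diameter.

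For the length bound, I use that along any unstable curve in $D$ the $q$-coordinate varies at a uniformly positive rate. Since $\Lcu$ is a horizontal leaf, the unstable direction $\Eu|_{\Lcu}$ is transverse to the circle-fiber direction $\ker d\pi$, and by continuity of $\Eu$ together with compactness of $M$ this transversality is uniform in a suitable quantitative sense. Combined with the bi-Lipschitz modulus on $D$ from \cref{lemma:awayfromvert}, this yields a uniform lower bound on how fast $q$ changes along any unstable segment in $D$. Hence $\length(\Ju) \le K' \cdot d_\ell(q(x^u), q(y^u)) \le K_1 \length(\Jc)$, after absorbing into $K_1$ the bounded additive constants coming from the initial offset between $x^u$ and $x^c$ and the separate treatment of very short $\Jc$.

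The main obstacle is making precise the bi-Lipschitz-type control for $q$ restricted to unstable leaves lying in $D$: because $\hcu$ is only a homeomorphism, I cannot differentiate along it directly, so the smooth transversality between $\Eu$ and $\ker d\pi$ must be transported to $q$-control via the uniform-continuity modulus of $\pi \circ \hcuep$ on $D$. Concretely, this amounts to checking that the pushforward of the unstable foliation on $D$ to $D_0 \subof \bbH^2$ has tangent directions uniformly transverse to the direction perpendicular to $\ell$; this I would verify by a compactness argument on the quotient $M$ that exploits the positive distance between $D$ and $\Lamcu_\ep$ supplied by \cref{lemma:Lcdist}.
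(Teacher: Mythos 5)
There is a genuine gap at the key step, and it is precisely the step you flag as the ``main obstacle'': the claim that $q$ varies at a uniformly positive rate along unstable curves in $D.$ The compactness argument you sketch cannot deliver this. What positive distance from $\Lamcu_\ep$ buys you (via \cref{lemma:awayfromvert}) is uniform transversality of the leaf to the \emph{circle fibers}, i.e.\ a uniformly continuous inverse for $\pi$ restricted to $D_\ep.$ But the fibers of $q = \pi_\ell \circ \pi \circ \hcu$ are the intersections of the leaf with the walls $\pi\inv(g),$ where $g$ ranges over the geodesics perpendicular to $\ell;$ these walls are a purely geometric construct in $\bbH^2$ with no relation to the dynamics, and nothing forces the pushforward of $\Eu$ to be uniformly transverse to them. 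Indeed, the unstable direction is transverse to the \emph{center} foliation inside $\Lcu,$ and near $\Lc$ (which runs roughly parallel to $\ell$) the transverse direction points roughly perpendicular to $\ell$ --- exactly the direction in which $q$ is constant. So an unstable curve in $D$ can oscillate in the $q$-coordinate, and its length is not controlled by the variation of $q$ times any constant. (The same issue undermines the final inequality $\length(\Ju) \le K' d_\ell(q(x^u), q(y^u))$ even as a statement about endpoints, since $q$ need not be monotone along $\Ju.$)

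The paper's proof replaces this pointwise rate estimate with an area-versus-length argument, which is exactly the tool designed to handle such oscillation. One first adapts \cref{lemma:wellplaced} to the plane $\Lcu$ to get $\length(\Ju) < C_1\, \area(U_1(\Ju))$ for any unstable curve $\Ju \subof \Lcu$ (an unstable curve in a plane cannot return near itself often, as this would produce a closed transversal to the center foliation bounding a disk). One then takes $\Ju$ to be the subsegment of the distinguished unstable ray $\bt$ through $x_0$ --- for which item (3) of \cref{lemma:cudisk} guarantees $q(\bt) = q(\Lc),$ so the required subsegment with $q(\Ju) = q(\Jc)$ actually exists --- and observes via item (1) of \cref{lemma:cudisk} that $U_1(\Ju) \subof U_{R+1}(\Jc),$ whose area is at most $C_0 \length(\Jc)$ once $\length(\Jc) \ge 1.$ Your first paragraph (Lipschitz control of $q$ along $\Jc$) is fine but ends up unnecessary, and your endpoint construction can be repaired along the paper's lines, but without the area argument the length bound does not follow.
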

\begin{proof}
    Let $R > 0$ be the uniform bound on diameter given by item (1)
    of \cref{lemma:cudisk}.
    Then for any $x \in D$ and any compact curve $\Jc$ in $\Lc_+,$
    if $q(x) \in q(\Jc),$ then $x \in U_R(\Jc)$ where the neighbourhood
    $U_R(\Jc)$ is an open subset of $\Lcu.$
    If we consider only curves with $\length(\Jc) \ge 1$,
    then there is a uniform constant $C_0$ such that
    \[
        \area(U_{R+1}(\Jc)) < C_0 \length(\Jc).
    \]

    As $\Lcu$ is a topological plane,
    we can adapt the arguments of
    lemmas \ref{lemma:wellplaced} and \ref{lemma:csonesided}
    to find a constant $C_1$ such that
    \[
        \length(\Ju) < C_1 \area(U_1(\Ju))
    \]
    for any unstable curve $\Ju$ in $\Lcu.$
    Let $\bt \subof \Wu(x_0)$ be the unstable ray starting at $x_0$
    and such that $\bt \sans x_0$ is contained in $D.$
    By item (3) of \cref{lemma:cudisk}, $q(\bt) = q(\Lc).$

    Now consider a compact subcurve $\Jc$ of $\Lc_+$
    with endpoints $x^c$ and $y^c.$
    Without loss of generality, we may assume $\length(\Jc) \ge 1$.
    As $q(\bt) = q(\Lc),$
    there is a compact subcurve $\Ju$ of $\bt$
    with endpoints $x^u$ and $y^u$ such that
    \[
        q(\Ju) = q(\Jc),
        \quad
        q(x^u) = q(x^c),
        \qandq
        q(y^u) = q(y^c).
    \]
    Since $U_1(\Ju) \subof U_{R+1}(\Jc),$
    it follows that
    \begin{math}
        \length(\Ju) < C_1 C_0 \length(\Jc).
    \end{math} \end{proof}
We now adapt the arguments of \cite[\S 5.4]{hps2018seif}.
The rough idea is to use the one-ended center leaf
to find three long curve segments tangent to $\Ec, \Eu,$ and $\Es$
which are close to each other in Hausdorff distance.
Applying an iterate of the dynamics means that these curves
have to grow or shrink at different rates and produces a contradiction.
The proof is similar to those of 
lemmas \ref{lemma:coarsecontract} and \ref{lemma:periodcircles},
but the inequalities are slightly more complicated.

Partial hyperbolicity implies the existence of
constants $\lam < \sig < \mu$ such that
\[
    0 < \lam < \| Df v^s \| < \sig < 1 < \mu < \| Df v^u \|
\]
for all unit vectors $v^s \in \Es$ and $v^u \in \Eu.$
Let $C > 0$ be a very big constant and $n$ a large positive integer
which will be determined by inequalities given at the end of the proof.

Consider the one-sided center leaf $f \invn(\Lc)$ inside the vertical
cs leaf $f \invn(\Lcs).$
By \cref{lemma:csonesided}, $f \invn(\Lc)$ bounds a disk $D^{cs}$ inside of $f \invn(\Lcs).$
Consider a stable leaf $\Ls$ which intersects $f \invn(\Lc)$
and let
$\Ls_+ = \Ls \cup D^{cs}$ denote the ray which is contained in $D^{cs}.$
Take points $x,y \in f \invn(\Lcs)$ with $d(x,y) = C$
and such that the circles fibers $C_x$ and $C_y$ both intersect $f \invn(\Lc_+)$
and both intersect $\Ls_+.$ Recall that $A_{x,y} \subof f \invn(\Lcs)$ denotes
the annulus between these two fibers.
Consider the connected components of the intersection
$\Ls_+ \cap A_{x,y}.$
One of these components is a stable segment $\Js$ such that
its endpoints $x^s$ and $y^s$ satisfy
$x^s \in C_x$ and $y^s \in C_y.$
By \cref{lemma:csonesided}, $\Js$ is well-placed and so by \cref{lemma:annulus},
$\length(\Js) \le K C.$
Let $x^s_n = f^n(x^s)$ and $y^s_n = f^n(y^s)$ denote the endpoints of $f^n(\Js).$
Then,
\[
    d(x^s_n, y^s_n) \le \sig^n \length(\Js) \le K \sig^n C.
\]
Consider the circle fibers $C_{x^s_n}$ and $C_{y^s_n}$ in $\Lcs$
through the points $x^s_n$ and $y^s_n$
as well as the annulus $A_{x^s_n, y^s_n}$ between these fibers.
There is a subcurve $\Jc$ of $\Lc_+$ lying entirely in this annulus
and with endpoints $x^c$ on $y^c$ on the boundary fibers.
Since the diameter of the circle fibers is bounded by 1, it follows that
\[
    d(x^c, x^s_n) \le 1
    \qandq
    d(y^c, y^s_n) \le 1.
\]
By \cref{lemma:csonesided}, $\Jc$ is well-placed inside of $\Lcs,$
and so by \cref{lemma:annulus},
$\length(\Jc) \le K^2 \sig^n C.$
With $\Jc$ determined,
let $\Ju, x^u,$ and $y^u,$ be as in \cref{lemma:culength}.
Then,
\[    
    \length(f \invn(\Ju))
        \ \le \ \tfrac{K_1 K^2 \sig^n}{\mu^n} C
\]
which combined with
\begin{math}
    d(x^s, f \invn(x^u))
        \le \tfrac{1+R}{\lam^n} \end{math}
and
\begin{math}
    d(y^s, f \invn(y^u))
        \le \tfrac{1+R}{\lam^n} \end{math}
gives
\[
    C - 2 \ \le \ d(x^s, y^s) \ \le \
    \tfrac{K_1 K^2 \sig^n}{\mu^n} C + 2 \tfrac{1+R}{\lam^n}.
\]
Since $\sig < 1 < \mu,$ we can choose an integer
$n > 0$ large enough that $\tfrac{K_1 K^2 \sig^n}{\mu^n} < \tfrac{1}{2}.$
We can then choose $C$ large enough to produce a contradiction.
This concludes the proof of \cref{prop:noonesided}
which, combined with the result of the last two sections,
proves \cref{prop:approxgood}.

\smallskip

With \cref{prop:approxgood} proved, we no longer need to consider
partially hyperbolic dynamical systems. The remainder of the paper
is dedicated to proving \cref{thm:main} and does not involve
partial hyperbolicity.

\section{The averaged vector field} \label{sec:average} 

This section gives a proof of \cref{thm:twofolns} in the case where there
are no circles in $\Fc.$ 
Throughout this section, assume that $\Fcs, \Fcu,$
and their intersection $\Fc$
are foliations as in \cref{thm:twofolns} and that
$\phic$ is the continuous flow on $M$ associated to $\Fc.$
We refer to $\Fc$ as the ``center'' foliation and $\phic$ as the ``center'' flow,
even though the foliations in \cref{thm:twofolns} do not necessarily need
to be associated to a partially hyperbolic system.
For the remainder of the paper, a ``center circle'' is a circular leaf in $\Fc.$
All of $\Fcs,$ $\Fcu,$ and $\Fc$ are true foliations and we no longer need to worry
about branching.

By \cref{thm:ideal}, there is a homeomorphism that puts $\Fcs$ into ideal position.
Therefore, we assume in this section without loss of generality that $\Fcs$
is already in ideal position.

We will explain how to use the center flow $\phic$
to construct a vector field on the surface $S.$
If there are no center circles, then this vector field will be
non-zero everywhere and since $\operatorname{genus}(S) \ge 2,$
this will give a contradiction.
In later sections when we consider center circles,
the averaged vector field will have zeros and
further steps will be needed
to arrive at a contradiction.

\smallskip{}

Much of the work in this section will be on the intermediate covering
space $\hat M$ introduced in \cref{sec:intercover}.
We will use the symbol $\pi$ to denote both the projection $\pi : M \to S$
defining $M$ as a circle bundle over the surface $S$ and
the projection $\pi : \hat M \to \bbH^2$
defining $\hat M$ as a circle bundle over $\bbH^2.$
We leave the covering maps $\hat M \to M$ and $\bbH^2 \to S$ unnamed.
The foliations $\Fcs$ and $\Fc$ on $M$ lift to foliations $\Fcshat$ and $\Fchat$
on $\hat M.$ The center flow $\phic$ lifts to a flow $\hatphic$ on $\hat M.$

\begin{figure}
    \centering
    \includegraphics{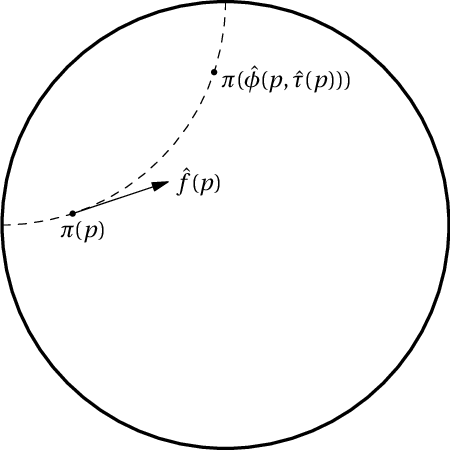}
    \caption[Geod]{ The map $\hat f : \hat M \to T \bbH^2$
    at a point $p \in \hat M.$
    This map is quotiented to produce the induced map
    $f : M \to T S.$ }
    \label{fig:geod}
\end{figure}
Let $\tau : M \to \bbR$ be a continuous positive function on $M.$
This lifts to a function $\hat \tau : \hat M \to \bbR$
by composing with the covering map $\hat M \to M.$
Consider a point $p \in \hat M$ and let $q = \hatphic(p, \hat \tau(p)).$
That is, $q \in \hat M$ is the point obtained by
flowing along the center foliation by time $\hat \tau(p).$
Then $\pi(p)$ and $\pi(q)$ are points in $\bbH^2.$
Let $\exp_{\pi(p)} : T_{\pi(p)} \bbH^2 \to \bbH^2$
denote the exponential map at the point $\pi(p).$
This map is bijective and so there is a unique vector
$v \in T_{\pi(p)} \bbH^2$ such that
\[
    \exp_{\pi(p)}(v) = \pi(q).
\]
We define a map $\hat f : \hat M \to T \bbH^2$
by setting $\hat f(p) = v.$
That is, $\hat f$ is implicitly defined as
\[
    \exp_{\pi(p)}(\hat f(p)) = \pi(\hatphic(p, \hat \tau(p)) )
\]
for all points $p \in \hat M.$ See \cref{fig:geod}.
Note that $\hat f(p)$ will be the zero vector of $T_{\pi(p)} \bbH^2$
if and only if $\pi(p) = \pi(q)$ in $\bbH^2.$

As $\hat \tau,$ $\hatphic,$ and the exponential map
are all continuous, $\hat f$ is continuous.
Moreover, one can see that
$\hat f$ is equivariant with respect to deck transformations.
Specifically,
if $\Gam : \hat M \to \hat M$
is a deck transformation of the covering $\hat M \to M,$
then there is a deck transformation $\gam : \bbH^2 \to \bbH^2$
of the covering $\bbH^2 \to S$ such that $\pi \Gam = \gam \pi$
and consequently
$\hat f(\Gam(p)) = D \gam(\hat f(p))$
where $D \gam : T \bbH^2 \to T \bbH^2$ is the derivative of $\gam.$
Because of this equivariance,
$\hat f$ quotients to a map $f : M \to TS$
where $TS$ is the tangent bundle of the surface.
We call $f$ the \emph{induced map};
it is induced by both the center flow $\phic$
and the choice of flow time $\tau.$

Here, we hope that the use of the letter $f$ does
not cause any confusion.
In the abstract setting of \cref{thm:twofolns},
we no longer have a partially hyperbolic map
$f : M \to M$ and can reuse the letter $f$ for another purpose.

So far, we have not used that $\Fcs$ is in ideal position.
We do so now.

\begin{lemma} \label{lemma:fzero}
    For a point $p \in M,$
    the vector $f(p) \in T_{\pi(p)} M$
    is a zero vector if and only if
    $p$ lies in a vertical leaf of $\Fcs$ and
    $\phic(p, \tau(p))$ lies on the same fiber as $p.$
\end{lemma}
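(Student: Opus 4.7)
The plan is to work with a fixed lift $\tilde p \in \Mhat$ of $p$ and $\tilde q = \hatphic(\tilde p, \hat\tau(\tilde p))$, using the fact already noted in the excerpt that $\hat f(\tilde p) = 0$ iff $\pi(\tilde p) = \pi(\tilde q)$. Since $\hat f$ is equivariant under deck transformations and derivatives of deck isometries preserve the zero vector, $f(p) = 0$ iff $\hat f(\tilde p) = 0$. The condition that $\phic(p, \tau(p))$ lies on the same fiber of $M$ as $p$ is precisely that the projected center curve $\pi \circ \phic(p, \cdot) \colon [0, \tau(p)] \to S$ is a loop at $\pi(p)$, while $\pi(\tilde p) = \pi(\tilde q)$ says further that this loop is null-homotopic in $S$ (equivalently, that its lift to the universal cover $\bbH^2$ is closed). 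So the lemma reduces to the following statement: given that the projected center curve is a loop at $\pi(p)$, it is null-homotopic in $S$ if and only if the cs-leaf through $p$ is vertical.

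For the forward implication, I would argue by contradiction: suppose the cs-leaf $L$ through $p$ is horizontal. The lift $\tilde L \subset \Mhat$ containing $\tilde p$ is then a topological plane, and $\pi|_{\tilde L}$ is injective by \cref{lemma:horizontalplane} (taking $h = \id$, since $\Fcs$ is in ideal position). The lifted center curve $\hatphic(\tilde p, \cdot)$ stays in $\tilde L$, so null-homotopy of its projection forces $\pi(\tilde p) = \pi(\tilde q)$ and hence $\tilde p = \tilde q$. This would produce a closed orbit of period $\tau(p) > 0$ for the continuous, fixed-point-free flow $\hatphic|_{\tilde L}$ on a topological plane, contradicting Brouwer's plane translation theorem. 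Ruling out closed center orbits on horizontal lifts in this way is the main obstacle, and the translation-theorem argument is what I expect to resolve it.

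For the converse, suppose $L$ is vertical and the projected center curve is a loop at $\pi(p)$. Since $\Fcs$ is in ideal position, $\tilde L = \pi\inv(\tilde\gamma)$ for some lift $\tilde\gamma \subset \bbH^2$ of a leaf $\gamma$ of a geodesic lamination on $S$. By condition (2) of \cref{thm:twofolns}, $\Fcs$ has no compact leaves, so $\gamma$ cannot be a simple closed geodesic (otherwise $\pi\inv(\gamma)$ would be a torus leaf); this forces the restriction $\tilde\gamma \to \gamma$ of the covering $\bbH^2 \to S$ to be a bijection. The lifted center curve lies in $\tilde L$, so its $\pi$-projection is a continuous path in $\tilde\gamma$ that further projects to a loop in $\gamma$; by bijectivity of $\tilde\gamma \to \gamma$, this lifted path must start and end at the same point $\pi(\tilde p)$, giving $\hat f(\tilde p) = 0$ and hence $f(p) = 0$.
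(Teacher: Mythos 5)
Your proposal is correct and follows essentially the same route as the paper: work on the intermediate cover $\Mhat$, use \cref{lemma:horizontalplane} (with $h=\id$, since $\Fcs$ is in ideal position) to show that a zero over a horizontal leaf would force a closed center orbit of a fixed-point-free flow on a topological plane, and in the vertical case use that the underlying geodesic cannot be closed, so the covering map is injective on it and on the leaf. The only cosmetic difference is in how the closed orbit is excluded: the paper notes it would bound an invariant disk containing a fixed point (Brouwer's fixed point theorem / Poincar\'e--Bendixson), whereas you cite the plane translation theorem, which is not quite the statement needed but rests on the same standard fact.
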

\begin{proof}
    Though the lemma is stated for points on $M,$
    we initially work on the covering space $\hat M.$
    Let $p$ be a point on $\hat M$
    and let $L \in \Fcshat$ be the leaf through $p.$
    Consider for now the case where $L$ is a horizontal leaf.
    Since the foliation is assumed to be in ideal position,
    \cref{lemma:horizontalplane} implies that $L$ is a topological disk
    and $\pi|_L$ is a homeomorphism to its image.
    If the center flow on $L$ had a periodic orbit,
    the orbit would bound a disk containing a fixed point,
    but the center flow has no fixed points.
    This implies for all $t > 0$ that
    $\hatphic(p, t) \ne p$ and the injectivity of $\pi|_L$
    implies that
    $\pi \hatphic(p, t) \ne \pi(p)$
    and therefore $\hat f(p)$ is non-zero.

    Now consider the case where $L$ is a vertical leaf
    of $\Fcshat.$
    Then $L = \pi \inv(\gam)$ where $\gam$ is a complete geodesic in $\bbH^2.$
    Clearly, $\hat f(p)$ is a zero vector
    if and only if $p$ and $\hatphic(p, \hat \tau(p))$
    lie on the same fiber.
    This finishes our consideration on $\hat M,$ but the lemma
    is stated for points in $M,$ not $\hat M.$
    To complete the proof, note that as $\Fcs$ has no compact leaves,
    $\gam \subof \bbH^2$ cannot project to a closed geodesic on $S.$
    Therefore
    the covering maps $\bbH^2 \to S$ and $\hat M \to M$
    must be injective when restricted to $\gam$ and $L$ respectively.
\end{proof}
\begin{lemma} \label{lemma:goodtime}
    If the center foliation $\Fc$ has no center circles
    and no one-ended leaves, then there is a choice of flow time
    $\tau$ such that the induced map $f$ is nowhere zero.
\end{lemma}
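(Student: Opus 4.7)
The plan begins by applying \cref{lemma:fzero}, which says that the induced map $f$ vanishes at $p$ only if $p$ lies in the vertical sublamination $\Lam$ of $\Fcs$ and $\phic(p,\tau(p))$ lies on the fiber $\pi\inv(\pi(p))$. Since $\Lam$ is a closed subset of the compact manifold $M$ it is compact, and the choice of $\tau$ is unrestricted outside $\Lam$; thus the task reduces to ensuring $\phic(p,\tau(p))\notin \pi\inv(\pi(p))$ at every $p \in \Lam$.

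Next, fix a vertical leaf $L$ of $\Fcs$. In ideal position, $L = \pi\inv(\gam)$ for a non-closed geodesic $\gam$ on $S$, and hence $L$ is a cylinder fibered by circles over $\gam$. The restriction $\Fc|_L$ is a continuous one-dimensional foliation of this cylinder. By the hypothesis that $\Fc$ has no center circles, $\Fc|_L$ has no circular leaves, and by the hypothesis of the lemma, $\Fc|_L$ has no one-ended leaves. \cref{thm:cylinderfoliation} then implies that every leaf of $\Fc|_L$ is properly embedded and two-ended. In particular, the forward $\phic$-orbit of each $p \in L$ escapes every compact subset of $L$ and meets the compact fiber $\pi\inv(\pi(p)) \cap L$ at only finitely many times; let $T(p) \ge 0$ denote the largest such return time.

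The crux is to produce a uniform upper bound $T(p) \le T_0$ for every $p \in \Lam$. With such a bound in hand, the constant choice $\tau \equiv T_0 + 1$ forces $\phic(p,\tau(p)) \notin \pi\inv(\pi(p))$ at every $p \in \Lam$, and \cref{lemma:fzero} then yields $f$ nowhere zero. I will derive this bound from upper semi-continuity of $T$ on the compact set $\Lam$: if $p_n \to p$ in $\Lam$ with $T(p_n) \to \infty$, then extracting a subsequential Hausdorff limit of the orbit segments $\{\phic(p_n,t) : 0 \le t \le T(p_n)\}$, using continuous dependence of $\phic$ on initial conditions together with compactness of $M$, produces in the limit a subset of a center leaf inside a vertical $\Fcs$-leaf which returns to its own fiber infinitely often. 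This forces either a center circle or a one-ended leaf of $\Fc|_L$ in the limiting vertical leaf, both of which are excluded by hypothesis, a contradiction.

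The main obstacle is this limit argument. The delicate point is that the $p_n$ may lie in distinct vertical leaves of $\Fcs$ converging (as leaves of the vertical sublamination) to the leaf through $p$, so one must argue that the two-ended structure passes to Hausdorff limits across nearby cylinders in a sufficiently controlled way. It is here that the compactness of $\Lam$ in $M$ and the full strength of the no-center-circle and no-one-ended assumptions are used; once $T$ is bounded, the conclusion is immediate from \cref{lemma:fzero}.
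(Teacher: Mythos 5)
Your overall strategy (reduce to the vertical sublamination via \cref{lemma:fzero}, argue by contradiction that return times are unbounded, pass to a limit, and invoke \cref{thm:cylinderfoliation}) is the same as the paper's, but the limit argument as you describe it does not close, and this is precisely the step that carries the content of the lemma. The problem is where you base the limit. You take $p_n \to p$ with $T(p_n) \to \infty$ and extract a limit of the segments $J_n = \phic(p_n,[0,T(p_n)])$. In the compact-open (or Hausdorff) limit based at $p_n$, the returns of $J_n$ to the fiber of $p_n$ occur at times tending to $+\infty$ and are simply lost: for each fixed $t$ you only learn that $\phic(p,t)$ lies in the limit set, and nothing prevents the orbit of $p$ from being a perfectly good two-ended leaf that never returns to its fiber after some finite time. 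So the limiting object need not ``return to its own fiber infinitely often,'' and it is not in general a subset of a single center leaf either (a Hausdorff limit of long orbit segments is typically a saturated set meeting many leaves). The contradiction you want does not follow.

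The missing idea is to re-base the sequence at an extremal point of $J_n$ rather than at its endpoint. Since $J_n$ begins and ends on the same fiber of the cylinder $L_n$, one first shows $\diam(J_n)\to\infty$ (this uses the no-circle hypothesis: bounded diameter would force the midpoints $\phic(p_n,t_n/2)$ to converge to a point whose center leaf has bounded diameter inside a vertical leaf, whose closure then contains a circle by \cref{thm:cylinderfoliation}). One then chooses $q_n \in J_n$ on the fiber of $L_n$ farthest from the fiber containing the endpoints; cutting $L_n$ along that fiber traps all of $J_n$ in one half-cylinder $L_n^+$, and both arms of $J_n$ on either side of $q_n$ have diameters tending to infinity. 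Passing to a subsequence with $q_n \to q$, the segments $J_n$ converge to the entire center leaf $\Fc(q)$, which is therefore contained in a half-cylinder $L^+$ of the limiting vertical leaf. A leaf confined to a half-cylinder is either one-ended (if properly embedded) or has nonempty $\omega$- or $\alpha$-limit set, which by \cref{thm:cylinderfoliation} contains a vertical circle or one-ended leaves; either way the hypotheses are contradicted. Without this re-basing at $q_n$, your upper semi-continuity claim for $T$ remains unproved, and it is exactly the point where the proof could fail.
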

The condition that $\Fc$ has no one-ended leaves
is already included in the assumptions of \cref{thm:twofolns},
but we state it in the lemma again for clarity.

\begin{proof}
    We will in fact find a constant $T > 0$ such that
    if $\tau(p) \ge T$ for all $p \in M,$
    then $f$ is nowhere zero.
    Assume no such $T$ exists. Then 
    there are sequences of points $\{p_n\}$ in $M$ and times $\{t_n\}$ in $\bbR$
    such that the times $t_n$ tend to $+\infty$
    and for every $n,$
    the points $p_n$ and $\phi(p_n, t_n)$ lie on the same fiber.

    For each $n,$ the segment $J_n$ = $\phi(p_n, [0, t_n])$
    lies in a leaf $L_n$ of the vertical lamination $\Lamcs.$
    Let $\diam(J_n)$ be the diameter of $J_n$
    with respect to the metric inside $L_n.$
    That is, any two points of $J_n$ can be connected by a path in $L_n$
    of length at most $\diam(J_n)$.

    We claim that $\diam(J_n)$ tends to infinity.
    Indeed, suppose that $\diam(J_n)$ were bounded.
    By going to a subsequence, assume that sequence
    $z_n$ = $\phi(p_n, \tfrac{t_n}{2})$ converges to a point $z \in M.$
    Both the center leaves $\Fc(z_n)$ and the center segments $J_n$
    converge in the compact-open topology
    to the center leaf $\Fc(z),$ showing that $\Fc(z)$ has finite diameter
    inside of a vertical cs-leaf.
    By \cref{thm:cylinderfoliation},
    the closure of $\Fc(z)$ contains a circle leaf,
    which contradicts the assumption of no center circles.
    This proves the claim.

    Define $r_n = \phi(p_n, t_n)$ so that $p_n$ and $r_n$ are the two
    endpoints of $J_n.$
    Let $q_n$ be a point in $J_n$ which lies on the fiber furthest away
    from the fiber containing $p_n$ and $r_n.$
    See \cref{fig:pqr}.
    \begin{figure}
        \centering
        \includegraphics{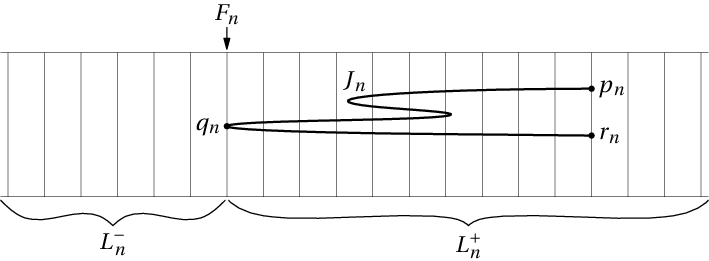}
        \caption{
        A center segment $J_n$ inside of a vertical
        cs leaf $L_n$ as in the proof of \cref{lemma:goodtime}.
        The top of the figure is identified with the bottom,
        so that each vertical line represents a circle in the circle fibering.
        }
        \label{fig:pqr} \end{figure}
    Note that the distances $d(p_n, q_n)$ and $d(q_n, r_n)$ both tend to infinity.
    Define times $s_n \in [0, t_n]$ so that
    $q_n = \phi(p_n, s_n).$
    Then the sequences
    $\{s_n\}$ and $\{t_n - s_n\}$ both tend to infinity.

    If we cut $L_n$ along the fiber
    $F_n$ = $\pi \inv \pi(q_n)$ containing $q_n,$
    then it divides $L_n$ into two half leaves $L_n^-$ and $L_n^+$
    where
    $\del L_n^- = \del L_n^+ = F_n.$
    We assume each of $L_n^+$ and $L_n^-$ contains the boundary fiber $F_n$
    and so one of
    $J_n \subof L_n^+$ or $J_n \subof L_n^-$
    holds.

    By passing to a subsequence, assume that $q_n$ converges to a point
    $q \in M.$ This point lies on a leaf $L \in \Lamcs.$
    By cutting $L$ along the fiber containing $q,$
    we can split $L$ into half leaves $L^-$ and $L^+.$
    We assume that the half leaves are labelled in such a way
    that $L_n^+$ converges to $L^+$ in the compact-open topology
    and similarly $L_n^-$ converges to $L.$

    By passing to a subsequence, we may assume that either
    $J_n \subof L_n^+$ or $J_n \subof L_n^-$ holds for all $n.$
    Assume the former.
    Then $J_n$ converges in the compact-open topology
    to the center leaf $\Fc(q)$ through $q$ and
    since $J_n \subof L_n^+$ for all $n,$
    it follows that $\Fc(q) \subof L^+.$
    By \cref{thm:cylinderfoliation},
    the closure of $\Fc(q)$ must contain
    either a vertical circle or a one-ended leaf (or both)
    and this contradicts our assumptions on $\Fc.$
\end{proof}

If the induced map $f : M \to TS$ is nowhere zero,
we can normalize the vectors to yield a map
\[
    f_1 : M \to T^1 S, \quad p \mapsto \frac{f(p)}{ \| f(p) \| }
\]
where $T^1 S$ is the unit tangent bundle.
Notice that the following diagram commutes:
\[
    \simpleCD
    {M}{f_1}{T^1 S}
    {}{}{}
    {S}{\operatorname{id}}{S}
\]
where the vertical arrows are the projections
which define the circle fiberings on $M$ and $T^1 S.$
Maps of this nature are studied in detail in
\cite{ham2020horizontal}.
See Section 4 of that paper in particular.
We will use the following facts:
\begin{enumerate}
    \item If $C$ is a fiber of $M,$ then
    the degree $d$ of the map $f_1|_C$
    is constant and independent of the fiber $C.$
    \item
    If $d \ne 0,$ then $f_1$ is homotopic to a covering map $M \to T^1 S.$
    \item
    If $d = 0,$ then $f_1$ is homotopic to a composition
    $M \xrightarrow{\pi} S \xrightarrow{X} T^1 S$
    where $X : S \to T^1 S$ is a unit vector field.
    We call $X$ the \emph{averaged vector field}.
\end{enumerate}
The averaged vector field is constructed by
averaging the angles of the unit vectors,
not by averaging the vectors themselves.
Again, see \cite{ham2020horizontal} for details.

\begin{proof}
    [Proof of \cref{thm:twofolns} in the case of no center circles.]

    Let $\tau : M \to \bbR$ be given as in \cref{lemma:goodtime},
    so that the induced map $f : M \to TS$ is nowhere zero.
    Consider on $\hat M$ a fiber $C$ inside of a vertical leaf $L.$
    As $L$ is vertical and the foliation is in ideal position,
    there is a geodesic $\ell \subof \bbH^2$ such that $L = \pi \inv(\ell).$
    For all points $p \in C,$ both $\pi(p)$ and $\pi(\hatphic(p, \hat \tau(p) ) )$
    lie on the same geodesic $\ell,$
    and so the vector $\hat f(p) \in T_{\pi(p)} \bbH^2$ is tangent to this geodesic.
    This implies that $\hat f_1|_C$ is not surjective
    as a map from $C$ to the set of unit tangent vectors $T^1_{\pi(p)} \bbH^2$
    and therefore the degree is $d = 0.$
    As explained above,
    this implies that there is an averaged vector field $X : S \to T^1 S,$
    but as $S$ is a higher-genus surface with non-zero 
    Euler characteristic, this gives a contradiction.
\end{proof}
This completes the proof in the case of no center circles.
The remaining sections of this paper all contribute to the proof
of \cref{thm:twofolns} in the presence of center circles.
The strategy is to first apply an isotopy so that
all of the center circles are fibers.
Then by adapting the above ideas,
we produce an averaged vector field
$X : S \to T S$ where now $X(p) = 0$ if and only if
$\pi \inv(p)$ is a center circle.
Of course, it is entirely possible for a higher-genus surface
to have a vector field with finitely many zeros,
and so a more sophisticated analysis of $X$ is needed
to arrive at a contradiction.

\section{Regularity near a center circle} \label{sec:regularity} 

We now consider \cref{thm:twofolns} in
the case where $\Fc$ contains vertical circles.
Here, we have to be slightly more careful when putting $\Fcs$ into
ideal position.
Let $\Lamcs$ be the lamination of vertical leaves.
A leaf $L \in \Lamcs$ is a \emph{critical leaf} if it contains a center circle.
By assumption, there are only finitely many critical leaves.

\begin{lemma} \label{lemma:critiso}
    If a leaf is critical, then it is isolated.
\end{lemma}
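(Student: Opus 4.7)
The plan is a contradiction argument: assume that $L \in \Lamcs$ is critical with center circle $\gamma \subset L$, and that $L$ is accumulated by a sequence of distinct vertical cs leaves $L_n \in \Lamcs$ with $L_n \to L$. The goal is to produce a distinct vertical circle $\gamma_n$ of $\Fc$ inside each $L_n$ for $n$ large. Since a vertical cs leaf of $\Fcs$ containing a given vertical circle of $\Fc$ is unique (this is part of assumption (5) of \cref{thm:twofolns}, combined with the fact that $\Fcs$ is a true foliation), distinct $L_n$ produce distinct $\gamma_n$, yielding infinitely many vertical circles and contradicting assumption (4).

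By assumption (5), let $L^{cu}$ denote the unique vertical cu leaf containing $\gamma$. I would first apply \cref{thm:brittenham} to arrange that both $L$ and $L^{cu}$ are unions of circle fibers, writing $L = \pi\inv(A)$ and $L^{cu} = \pi\inv(B)$ for subsets $A, B \subset S$. Since $L$ and $L^{cu}$ are transverse in $M$ and both contain the fiber direction along $\gamma$, the curves $A$ and $B$ must be transverse at the image point of $\gamma$; and because $L \cap L^{cu} = \pi\inv(A \cap B)$ is then a disjoint union of circle fibers, the connected loop $\gamma$ must itself be a single fiber $\pi\inv(p)$ for some $p \in A \cap B$.

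For $n$ large, $L_n = \pi\inv(A_n)$ with $A_n$ converging to $A$ in a neighborhood of $p$. By persistence of the transverse crossing of $A$ and $B$ at $p$, the set $A_n \cap B$ contains a point $p_n$ near $p$. Set $\gamma_n := \pi\inv(p_n)$; this fiber lies in $L_n \cap L^{cu}$ and its tangent line (being $TL_n \cap TL^{cu} = \Ecs \cap \Ecu = \Ec$) coincides with $\Ec$, so $\gamma_n$ is an $\Ec$-integral circle, hence a leaf of $\Fc$. Being a fiber it is automatically homotopic to a fiber, and therefore a vertical circle of $\Fc$. This produces the required contradiction with assumption (4).

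The main obstacle is arranging simultaneously that both $L$ and $L^{cu}$ are unions of fibers, since applying Brittenham's theorem to $\Fcs$ may move $L^{cu}$ out of fibered form and vice versa. One route is to work in the intermediate cover $\Mhat$ of \cref{sec:intercover}, where the two vertical sublaminations can be aligned with their respective geodesic structures on $\bbH^2$. An alternative that avoids simultaneous ideal position is to analyze $\Fc \cap L^{cu}$ directly via \cref{thm:cylinderfoliation}: by assumption (6) the circle $\gamma$ is the unique vertical circle on $L^{cu}$, and by assumption (7) there are no one-ended leaves, so one can argue that the local transverse intersection $L_n \cap L^{cu}$ near $\gamma$ must close up into a loop $\gamma_n$ homotopic to $\gamma$, with the rest of the argument proceeding as above.
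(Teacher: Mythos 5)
Your core idea is the same as the paper's: take the vertical cu leaf $\Lcu$ through the center circle $C$, intersect it with the vertical cs leaves $L_n$ accumulating on the critical leaf, and produce infinitely many distinct vertical circles, contradicting hypothesis (4) (or (6)). The paper's proof is three lines precisely because hypothesis (5) is an \emph{if and only if}: the ``if'' direction says that \emph{any} leaf of $\Fc$ lying in both a vertical leaf of $\Fcs$ and a vertical leaf of $\Fcu$ is automatically a vertical circle. So once you know $\Lcu \cap L_n \neq \varnothing$ for large $n$ (immediate from transversality of the two foliations and the fact that the $L_n$ accumulate on points of $C \subset \Lcu$), every component of that intersection is a vertical circle with no further argument. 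There is nothing to ``close up.''

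Your primary route instead tries to realize $C$, $\Lcu$, and the $L_n$ simultaneously as unions of fibers via \cref{thm:brittenham}, and you correctly flag that this is an obstacle: Brittenham's isotopy is applied to one foliation at a time, and nothing guarantees a single isotopy puts both vertical sublaminations in fibered form. As written, that route has a genuine gap. It is also unnecessary: you do not need $\gamma_n$ to literally be a fiber, only to be a leaf of $\Fc$ contained in a vertical cs leaf and a vertical cu leaf, which is exactly what a component of $L_n \cap \Lcu$ is. Your ``alternative'' at the end is essentially the paper's argument, but it still invokes \cref{thm:cylinderfoliation} and hypotheses (6)--(7) to force the intersection curves to be loops, when hypothesis (5) already delivers that conclusion directly.
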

\begin{proof}
    Suppose $\Lcs \in \Lamcs$ is a critical, non-isolated leaf.
    Let $C$ be the center circle in $\Lcs$ and
    let $\Lcu \in \Fcu$ be the cu leaf through $C.$
    The intersections of $\Lcu$ with $\Lamcs$ accumulate on $C$ and
    by the hypotheses of \cref{thm:twofolns},
    each of these intersections is a distinct center circle,
    a contradiction.
\end{proof}
The proof of \cref{thm:ideal} given in the appendix
defines an isotopy to put $\Fcs$ in ideal position.
The result is an isotoped foliation $h(\Fcs)$ such that:
\begin{itemize}
    \item The vertical sublamination is the pull back by $\pi : M \to S$
    of a geodesic lamination on the hyperbolic surface $S.$
    Therefore, this sublamination is smooth.
    \item
    The horizontal leaves are transverse to the fibers.
    In particular, the horizontal leaves are tangent
    to a plane field and this plane field is continuous
    when restricted to the open set $M \sans h(\Lamcs).$
    \item
    Together, the last two items imply that $h(\Fcs)$ is at
    every point in $M$ tangent to a plane field.
    However, the plane field might not vary continuously on all of $M.$
    In particular, there may be a sequence
    $\{x_n\}$ of points on horizontal leaves
    converging to a point $x$ on a vertical leaf,
    but such that their tangent planes
    $\Ecs_{x_n} \subof T_{x_n}M$ do not converge to the tangent plane
    $\Ecs_x \subof T_x M$ of the limit point.
\end{itemize}
This issue arises as the Douady-Earle map used
in the proof of \cref{thm:ideal}
to straighten out curves to geo\-de\-sics might not be $C^1$ on its boundary.
In a neighborhood of a vertical circle, we want continuity of
the tangent planes and so we prove the following upgraded version
of \cref{thm:ideal}.

\begin{addendum} \label{addendum:superDE}
    Let $\Fcal$ be a foliation which satisfies the hypotheses of \cref{thm:ideal}.
    Let $L_1, \ldots, L_n$ be isolated vertical leaves
    and for each $i,$ let $K_i$ be a compact subset of $L_i.$
    Then there is a homeomorphism $h$ isotopic to the identity such that
    $h(\Fcal)$ is in ideal position and $h$ is a $C^1$-diffeomorphism
    in a neighborhood of each $K_i.$
\end{addendum}
As with \cref{thm:ideal}, this addendum is proved in \cref{sec:ideal}.

\section{Indices and the Poincar\'e--Hopf theorem} \label{sec:indices} 

The Poincar\'e--Hopf theorem relates the Euler characteristic of a surface $S$
to the indices of the isolated zeros of a vector field $X$ on that surface.
We will use a specialized version of the Poincar\'e--Hopf theorem
for a surface with piecewise $C^1$ boundary and allowing zeros on the boundary.

If $p \in \del S$ and $X(p)$ is zero, we define the index of $X$ at $p$
by doubling the surface along the edge
containing $p,$ calculating the index for $p$ in this doubled surface
and then dividing by two.
For instance,
a ``half-saddle'' on the boundary has index $-1/2$ and
a ``half-sink'' or ``half-source'' has index $+1/2$.

When we say that a piece $\sig$ of the boundary is tangent to the flow $X,$
this allows the possibility of $X$ being zero at points along $\sig.$

\begin{thm} \label{thm:poincarehopf}
    Let $S$ be a surface with piecewise $C^1$ boundary
    and $X$ be a vector field defined on $S$ with the following properties:
    \begin{enumerate}
        \item $X$ has finitely many zeros;
        \item
        on the $C^1$ pieces of each boundary component of $S,$
        $X$ alternates between being tangent to $\del S$
        and being transverse to $\del S;$
        \item
        zeros of $X$ can occur on the boundary of $S,$
        but only in the interior of a $C^1$ piece where
        $X$ is tangent to $\del S.$
    \end{enumerate}
    In this setting, the Euler characteristic of $S$ satisfies
    \[
        \chi(S)
        \ = \ 
        \frac{1}{2} N_{\pitchfork}
        \ + \!\!\!
        \sum_{X(p) = 0} \!\! \operatorname{index}(p, X)
    \]
    where $N_{\pitchfork}$
    is the number of boundary pieces transverse to $X.$
\end{thm}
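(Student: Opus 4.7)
The plan is to reduce the statement to the classical Poincar\'e--Hopf theorem for surfaces with boundary transverse to the vector field, by doubling $S$ along the tangent pieces of $\del S$ and extending $X$ by reflection. Let $T \subset \del S$ denote the union of the tangent $C^1$ pieces, and form the double $S' = S \cup_T S^{-}$ by gluing $S$ to a mirror copy $S^{-}$ along $T$. The assumption that the tangent and transverse pieces alternate cyclically on each boundary component forces the number of tangent arcs to equal the number of transverse arcs; in $S'$, each transverse arc $\pi_i$ and its mirror $\pi_i^{-}$ are joined at the two old corners of $\del S$ on $\pi_i$ to form a single piecewise $C^1$ boundary circle, so $\del S'$ is a disjoint union of exactly $N_{\pitchfork}$ circles. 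Since each tangent arc contributes $1$ to $\chi(T)$, Mayer--Vietoris yields
\[
\chi(S') \;=\; 2\chi(S) - \chi(T) \;=\; 2\chi(S) - N_{\pitchfork}.
\]

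Next I would define a continuous vector field $X'$ on $S'$ by taking $X$ on $S$ and its reflection on $S^{-}$. Continuity across $T$ uses the hypothesis that $X$ is tangent to $\del S$ along every tangent arc, so that the vector and its reflection agree pointwise on $T$. The zeros of $X'$ are then: two copies in $S'$ of each interior zero of $X$, each with the same local index as in $S$; and one interior point of $S'$ for each boundary zero of $X$ (which lies in the interior of a tangent arc by hypothesis), whose local $S'$-index is exactly twice the half-index defined in the statement of the theorem. In particular every zero of $X'$ is interior, and $X'$ is nowhere zero on $\del S'$: on the interior of each arc of $\del S'$ the field $X'$ is transverse to $\del S'$, and at a corner $c$ of $\del S'$ the nonzero vector $X'(c) = X(c)$ is tangent to the tangent arc that now lies in $\operatorname{int}(S')$, hence not aligned with either of the two transverse arcs of $\del S'$ meeting at $c$.

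The final step is to apply the classical Poincar\'e--Hopf theorem on $S'$. The corners of $\del S'$ are harmless because $X'$ is nonzero at each one: a small $C^0$-rounding of the corners produces a $C^1$ surface homeomorphic to $S'$ on which $X'$ remains transverse to the boundary and keeps the same interior zeros and local indices. Poincar\'e--Hopf then gives
\[
\chi(S') \;=\; \sum_{p \in \operatorname{int}(S')} \operatorname{index}(p, X') \;=\; 2 \!\! \sum_{X(p) = 0} \!\! \operatorname{index}(p, X),
\]
where the second equality is the index bookkeeping recorded above. Combining this with $\chi(S') = 2\chi(S) - N_{\pitchfork}$ and dividing by $2$ yields the formula claimed in the theorem. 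The main technical subtlety is the analysis of $X'$ at the corners of $\del S'$, and this is controlled entirely by the hypothesis forbidding zeros of $X$ at corners, which makes the rounding step a routine perturbation.
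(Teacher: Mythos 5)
Your proof is correct and is exactly the argument the paper has in mind: the paper's justification is the one-sentence remark that the statement ``can be proved from the standard version by making two copies of $S$ and gluing them along the boundary segments tangent to the flow,'' and your write-up fills in precisely that doubling construction, including the Euler characteristic count $\chi(S') = 2\chi(S) - N_{\pitchfork}$ and the index bookkeeping at interior and boundary zeros. The only point worth flagging is that your corner analysis implicitly uses that each transverse piece is transverse to $X$ up to and including its endpoints (so that $X'$ is genuinely transverse to $\del S'$ at the corners after rounding), which is the intended reading of hypothesis (2).
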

Note that for this version of the theorem,
we do not allow a boundary component to be everywhere tangent
or everywhere transverse.
The above assumption that
$X$ alternates between
being tangent to $\del S$ and transverse to $\del S$ means that
each boundary component must have at least two $C^1$ pieces
and that the total number of tangent pieces
is equal to the total number of transverse pieces.
See figure \ref{fig:region} for an example.

The above version of the Poincar\'e--Hopf theorem
can be proved from the standard version
by making two copies of $S$ and gluing them along the boundary segments
tangent to the flow.
To apply the theorem, we will also need the following result
in order to calculate indices.

\begin{figure}
    \centering
    \includegraphics{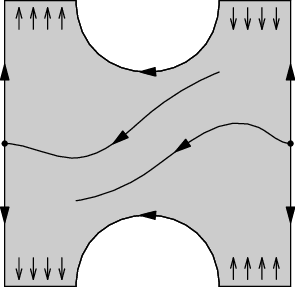}
    \caption{
    A surface with boundary for which \cref{thm:poincarehopf} applies.
    The piecewise $C^1$ boundary has eight pieces, four of which are transverse
    to the flow.
    The boundary has two half-saddles and we assume that there are no other
    fixed points.
    The Euler characteristic is therefore
    $\chi(S) = \tfrac{1}{2} 4 + (\tfrac{-1}{2}) + (\tfrac{-1}{2}) = 1$.}
    \label{fig:region}
\end{figure}
\begin{prop} \label{prop:productmap}
    Suppose $f,g : [-1,1] \to \bbR$ are continuous,
    strictly-increasing functions
    such that zero is the unique fixed point of both $f$ and $g.$
    Let $h$ be a homeomorphism of $\bbR^2$ such that
    $h(x,0) = (x,0)$ for all $x \in [-1,1]$
    and $h(0,y) = (0,y)$ for all $y \in [-1,1].$
    Define $r : h([-1,1]^2) \to \bbR^2$ by
    $r(h(x,y)) = h(f(x), g(x)).$

    For an integer $n \ge 1,$ choose a neighborhood $U$ of $(0,0) \in \bbR^2$
    such that $r^n$ is well defined on $U$
    and 
    define a vector field $X : U \to \bbR^2$
    by $X(p) = r^n(p) - p.$

    If $n$ is sufficiently large,
    then
    \begin{enumerate}
        \item $X$ is non-zero everywhere except at the origin,
        \item
        at the origin, the index of $X$ is in $\{-1,0,+1\}$, and
        \item
        this index is independent of $n.$
    \end{enumerate} \end{prop}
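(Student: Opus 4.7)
The plan is to reduce everything to the product map $F(x,y) = (f(x), g(y))$ via the conjugation $r = h \circ F \circ h\inv$, and then compute the index at the origin from the sign pattern of $f(x) - x$ and $g(y) - y$.

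First I would observe that, since $f : [-1,1] \to \bbR$ is continuous, strictly increasing, and has $0$ as its unique fixed point, the intermediate value theorem forces $f(x) - x$ to have a constant sign $a \in \{+,-\}$ on $(0,1]$ and a constant sign $b \in \{+,-\}$ on $[-1,0)$. Moreover, if $f(x) > x$ on $(0,1]$ then $x < f(x) < f^2(x) < \cdots$, so $\operatorname{sign}(f^n(x) - x) = a$ for every $n \ge 1$; the analogous statement holds on the negative side and for $g$, yielding signs $c, d$. This already gives part (1): a fixed point of $r^n$ corresponds via $h$ to a fixed point of $F^n(x,y) = (f^n(x), g^n(y))$, which forces $(x,y) = (0,0)$.

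Next I would argue that $h$ is orientation-preserving. Since $h$ fixes both coordinate axes pointwise, it permutes the four open quadrants of $\bbR^2 \sans \{\text{axes}\}$; continuity together with the fixing of each quadrant's boundary force each quadrant to be preserved, and this makes $h$ orientation-preserving. The fixed-point index of a continuous self-map at an isolated fixed point is invariant under orientation-preserving topological conjugacy, and in our setting it coincides with the winding number of $X = r^n - \operatorname{id}$ on a small circle about the origin. Hence the index of $X$ at the origin equals the index of
\[
    V := F^n - \operatorname{id} = (f^n(x) - x,\ g^n(y) - y)
\]
at the origin.

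To finish, I would compute the index of $V$ directly. In the open $k$-th quadrant of $\bbR^2$, the vector $V$ lies in a specific quadrant of $\bbR^2$ determined by the relevant pair of signs from $\{a,b\} \times \{c,d\}$. Tracking the angle of $V$ along a small circle about the origin, and using the values of $V$ on the four half-axes, each of the four quadrant arcs contributes an angular change of $\pm\pi/2$ whose sign is read off directly from $a,b,c,d$. Summing the four contributions gives total winding
\[
    \operatorname{index}(V, 0) \ = \ \tfrac{1}{4}(a - b)(c - d) \ \in \ \{-1, 0, +1\},
\]
since $a - b$ and $c - d$ each lie in $\{-2, 0, +2\}$. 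Because the signs $a, b, c, d$ depend only on $f$ and $g$ and not on $n$, this value is independent of $n$, yielding parts (2) and (3).

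The main obstacle is the middle step: identifying the winding number of $X = r^n - \operatorname{id}$, which a priori uses the ambient vector-space structure on $\bbR^2$, with the purely topological fixed-point index of $r^n$, so that the merely continuous conjugation by $h$ can legitimately be applied. Once that identification is in place, the product structure of $F$ reduces everything to a one-dimensional sign analysis.
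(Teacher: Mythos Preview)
Your proof is correct and takes a genuinely different route from the paper's own argument.

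The paper works directly with the vector field $X = r^n - \operatorname{id}$: it builds a piecewise curve $Q = Q_1 \cup Q_2 \cup Q_3 \cup Q_4$, one piece per quadrant, where each $Q_i$ is constructed according to which of $f^n(1) \to 0$ or $f^{-n}(1) \to 0$ holds (and likewise for $g$). The integer $n$ must be taken large so that certain coordinate inequalities for $h$ become valid, after which one checks that $X|_{Q_i}$ avoids the interior of one quadrant of $\bbR^2$; this pins the winding along each $Q_i$ to a quarter turn, whose direction depends only on the case, giving total winding in $\{-1,0,+1\}$ independent of $n$.

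Your approach sidesteps all of this by invoking conjugacy-invariance of the fixed-point index: since $r^n = h \circ F^n \circ h^{-1}$ and the winding of $X = r^n - \operatorname{id}$ coincides (in dimension~2) with the fixed-point index of $r^n$, one may transport the computation through $h$ to the product map $F^n$ and read off the index from the one-dimensional sign data of $f^n(x)-x$ and $g^n(y)-y$. This is cleaner and in fact slightly stronger: it shows the conclusion holds for every $n \ge 1$, not only for $n$ large. Your care about $h$ being orientation-preserving is not strictly needed (the fixed-point index is invariant under arbitrary topological conjugacy via the commutativity property $i(ab)=i(ba)$), but the quadrant-preservation argument you sketch is correct near the origin. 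The trade-off is that the paper's argument is entirely self-contained, whereas yours imports the identification of vector-field index with fixed-point index and its conjugacy invariance; you rightly flag this as the one step needing an external reference.
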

\begin{proof}
    Item (1) holds because $r$ is topologically conjugate to
    $f \ti g,$ which has no periodic points other than the origin.
    To prove items (2) and (3), we define a continuous curve
    $Q = Q_1 \cup Q_2 \cup Q_3 \cup Q_4$
    where each $Q_i$ defines the curve on one of the four quadrants of
    $\bbR^2.$
    The index of the vector field $X$ at the origin is given by the
    homotopy type of the map
    $X|_Q : Q \to \bbR^2 \sans \{(0,0)\}.$
    To determine this, we determine for each subcurve $Q_i,$
    the endpoint-fixing homotopy type of
    $X|_{Q_i} : Q_i \to \bbR^2 \sans \{(0,0)\}$
    and show in each quadrant that it corresponds to a
    90 degree rotation of the angle.
    Therefore, the winding number of $X|_Q$ must be in $\{-1,0,1\}.$
    All of the quadrants behave similarly, so we only consider
    the curve $Q_1,$ which is a subset of the quadrant
    $[0, \infty) \ti [0, \infty).$

    Since $f(0) = 0$ is the only fixed point of $f,$
    we either have
    \[
        \lim_{n \to \infty} f^n(1) = 0
        \qorq \lim_{n \to \infty} f \invn(1) = 0.
    \]
    Similarly, $g$ satisfies either 
    \[
        \lim_{n \to \infty} g^n(1) = 0
        \qorq \lim_{n \to \infty} g \invn(1) = 0.
    \]
    Therefore, the proof breaks into four cases.
    We analyze one of these cases in detail and
    outline the approach for the other three.
    The case we consider in detail is where
    \[
        \lim_{n \to \infty} f \invn(1) =
        \lim_{n \to \infty} g^n(1) = 0.
    \]
    Assume these limits hold and therefore $x < f(x)$ and $g(y) < y$
    for all $x$ and $y$ in $(0,1].$
    Then for each $n,$
    \[
        (f \ti g)^n \big( \{ f \invn(1) \} \ti [0,1] \big)
        \ = \
        \{1\} \ti [0, g^n(1)]
    \]
    and therefore
    \[
        r^n h \big( \{ f \invn(1) \} \ti [0,1] \big)
        \ = \
        h \big( \{1\} \ti [0, g^n(1)] \big).
    \]
    Write $h_x$ and $h_y$ for the two coordinates of $h.$
    If we choose $n$ sufficiently large, then
    \[
        h_x( f \invn(1), y) \ < \ h_x(1, g^n(y))
        \qquad \text{for all }
        y \in [0, 1].
    \]
    Similarly,
    \[
        (f \ti g)^n \big( [0, f \invn(1) ] \ti \{1\} \big)
        \ = \ [0, 1] \ti \{g^n(1)\}
    \]
    implies
    \[    
        r^n h \big( [0, f \invn(1)] \ti \{1\} \big)
        =
        h \big( [0,1] \ti \{g^n(1)\} \big)
    \]
    and if $n$ is sufficiently large, then
    \[
        h_y( x, g^n(1))
        <
        h_y( f^n(x), 1)
        \qquad \text{for all }
        x \in [0,1].
    \]
    Now choose $n$ large, and define
    \[
        Q_1 \ = \ h \bigg(
        \big(\{f \invn(1) \} \ti [0,1] \big)
        \ \cup \
        \big([0, f \invn(1)] \ti \{1\} \big) \bigg).
    \]
    The vector field $X$ is defined by $X(q) = r^n(q) - q.$
    Our choice of $n$ implies that at all points $q \in Q_1,$
    either the $x$-coordinate of $r^n(q)$ is greater than
    the $x$-coordinate of $q$ or
    the $y$-coordinate of $r^n(q)$ is less than
    the $y$-coordinate of $q$ (or both).
    This means that $X(q)$ does not take any values in the interior
    of the quadrant $(-\infty, 0] \ti [0, \infty).$
    Therefore, $X|_{Q_1}$ cannot wind around the origin and
    it must be homotopic to a simple 90 degree rotation in angle.

    We now briefly consider the other three cases.
    \begin{itemize}
        \item If $\lim_{n \to \infty} f^n(1) = \lim_{n \to \infty} g \invn(1) = 0,$
        then take
        \[  Q_1 = h \bigg(
            (\{1\} \ti [0, g \invn(1)])
            \cup
            ([0,1] \ti \{g \invn(1)\}) \bigg).  \]
        \item
        If $\lim_{n \to \infty} f^n(1) = \lim_{n \to \infty} g^n(1) = 0,$
        then take
        \[  Q_1 = h \bigg(
            (\{1\} \ti [0, 1])
            \cup
            ([0,1] \ti \{1\}) \bigg).  \]
        \item
        If $\lim_{n \to \infty} f^n(1) = \lim_{n \to \infty} g^n(1) = 0,$
        then take
        \[  Q_1 = h \bigg(
            (\{f \invn(1)\} \ti [0, g \invn(1)] )
            \cup
            ([0, f \invn(1)] \ti \{ g \invn(1)\} ) \bigg).
        \] \end{itemize}
    In each case, if $n$ is sufficiently large,
    then $X|_{Q_1}$ avoids the interior of one of the quadrants
    of $\bbR^2$ and this therefore determines the homotopy type
    of $X|_{Q_1}$ based on its values at the two endpoints of $Q_1.$
    Further, whether the rotation is clockwise or counterclockwise
    is purely determined by which of the four cases we are in
    and is independent of $n.$
\end{proof}

We also state a modified version of \cref{prop:productmap}
for a point on the boundary of a surface.

\begin{prop} \label{prop:halfproductmap}
    Suppose $f : [0,1] \to [0, +\infty)$ and $g : [-1,1] \to \bbR$
    are continuous, strictly-increasing functions
    such that zero is the unique fixed point of both $f$ and $g.$
    Let $h$ be a homeomorphism of $[0,\infty) \ti \bbR$ such that
    $h(x,0) = (x,0)$ for all $x \in [0,1]$
    and $h(0,y) = (0,y)$ for all $y \in [-1,1].$
    Define $r : h([0,1] \ti [-1,1]) \to [0, \infty) \ti \bbR$ by
    $r(h(x,y)) = h(f(x), g(x)).$

    For an integer $n \ge 1,$ choose a neighborhood
    $U$ of $(0,0) \in [0, \infty) \ti \bbR$
    such that $r^n$ is well defined on $U$
    and 
    define a vector field $X : U \to \bbR^2$
    by $X(p) = r^n(p) - p.$

    If $n$ is sufficiently large,
    then
    \begin{enumerate}
        \item $X$ is non-zero everywhere except at the origin,
        \item
        at the origin, the index of $X$ is in
        $\{-\frac{1}{2}, 0, +\frac{1}{2}\},$ and
        \item
        this index is independent of $n.$
    \end{enumerate} \end{prop}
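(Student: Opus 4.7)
The plan is to reduce this boundary version to \cref{prop:productmap} by an odd reflection (``doubling'') argument, exactly mirroring the way the paper defines the index of a boundary zero in \cref{sec:indices}. The key structural observation is that, because $h(0,y) = (0,y)$ for $y \in [-1,1]$ and $r$ acts as $h \circ (f \times g) \circ h^{-1}$, the map $r$ preserves the vertical segment $\{0\} \ti [-1,1]$, and hence the vector field $X(p) = r^n(p) - p$ is tangent to the boundary line $\{0\} \ti \bbR$ of the half-plane along that boundary. This is precisely the boundary-tangency condition under which doubling produces a continuous vector field on the full plane.

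More concretely, I would extend the data by odd reflection across the $y$-axis. Define $\tilde f \colon [-1,1] \to \bbR$ by $\tilde f(x) = f(x)$ for $x \ge 0$ and $\tilde f(x) = -f(-x)$ for $x < 0$; since $f$ is strictly increasing on $[0,1]$ with unique fixed point $0$, a short check shows $\tilde f$ is also strictly increasing on $[-1,1]$ with unique fixed point $0$. Leave $g$ unchanged. Extend $h$ to a homeomorphism $\tilde h$ of $\bbR^2$ by setting
\[
    \tilde h(x,y) \ = \
    \begin{cases} h(x,y) & x \ge 0, \\
    \big(-h_1(-x,y),\, h_2(-x,y)\big) & x < 0, \end{cases}
\]
where $h=(h_1,h_2)$; the condition $h(0,y)=(0,y)$ guarantees continuity across the axis, and one easily verifies that $\tilde h$ still fixes each point of $[-1,1] \ti \{0\} \cup \{0\} \ti [-1,1]$. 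Then $\tilde r$, defined by $\tilde r(\tilde h(x,y)) = \tilde h(\tilde f(x), g(y))$, satisfies all the hypotheses of \cref{prop:productmap}, and its restriction to the half-plane agrees with $r$. Consequently $\tilde X := \tilde r^n - \id$ restricts to $X$ on the half-plane.

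With this in hand, all three conclusions follow directly from \cref{prop:productmap} applied to $\tilde r$: nonvanishing of $\tilde X$ away from the origin gives (1); the index of $\tilde X$ at the origin is in $\{-1,0,+1\}$; and by the very definition of the index at a boundary zero given in \cref{sec:indices}, the index of $X$ at the origin is half the index of $\tilde X$, putting it in $\{-\tfrac12, 0, +\tfrac12\}$, which gives (2). Independence of $n$, item (3), is inherited from the same property for $\tilde X$.

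The main subtlety will be checking that the extensions $\tilde f$ and $\tilde h$ genuinely satisfy the hypotheses of \cref{prop:productmap}: strictly increasing with a unique fixed point at $0$ for $\tilde f$, and being a homeomorphism of $\bbR^2$ that fixes both axis segments pointwise for $\tilde h$. Both verifications are elementary once the odd-reflection formulas are written down, but care is needed at $x=0$ to ensure continuity and that the fixed-point structure is not accidentally enlarged. Everything else is a mechanical reduction to the already-proved two-sided statement.
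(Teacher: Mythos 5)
Your proposal is correct and is essentially the paper's own proof, which simply says to double the surface $[0,\infty)\ti\bbR$ along its boundary to reduce to \cref{prop:productmap}; you have just written out the odd-reflection details explicitly. The only tiny point to tighten is that continuity of $\tilde h$ across the axis uses $h_1(0,y)=0$ for \emph{all} $y$, which follows not from the stated condition on $[-1,1]$ but from the fact that any homeomorphism of the half-plane preserves its boundary.
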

\begin{proof}
    We can double the surface $[0, \infty) \ti \bbR$
    to reduce the hypotheses to those of \cref{prop:productmap}.
\end{proof}
Propositions \ref{prop:productmap} and \ref{prop:halfproductmap}
are formulated and proved in the setting of
a Euclidean metric on $\bbR^2,$ but we will actually want to apply them
to a vector field $X$ on the hyperbolic plane, where $X(p)$
points in the direction of a geodesic going from $p$ to $r^n(p).$
We can equip the open unit disk $D \subof \bbR^2$ either with the
Euclidean metric from $\bbR^2$ or with the hyperbolic metric.
For any $\ep > 0,$ there is a neighbourhood $U \subof D$ of the origin
such that if $p$ and $q$ both lie in $U,$ then the Euclidean line from
$p$ to $q$ and the hyperbolic geodesic from $p$ to $q$
differ in angle by less than $\ep.$
Therefore, the above results also hold in the hyperbolic setting.

\section{A good neighborhood of plaques} 

For the following definitions, assume that $\Fcs$ and $\Fcu$ are continuous
foliations on a circle bundle $M$ with fibers given by $\pi : M \to S$
and that $\Fcs$ and $\Fcu$ intersect in a one-dimensional foliation $\Fc.$
We assume that $S$ has constant negative curvature and is a quotient of $\bbH^2.$

Consider a leaf $C \in \Fc$ which is also
a fiber of the fibering;
that is, there is $p \in M$ such that $C = \pi \inv(p).$
Then, a \emph{neighbourhood of plaques}
is a homeomorphism of the form
$i : D \ti S^1 \to \pi \inv(D)$
where $D \subof S$ is a geometric disk centered at $p$
and where $\pi(i(q,z)) = q$ for all $(q,z) \in D \ti S^1.$
Each $z \in S^1$ defines a embedded disk $D_z \subof M$
by $D_z$ = $i(D \ti \{z\}).$
We call each such $D_z$ a \emph{plaque}.

We further say that this is a \emph{good} neighbourhood of plaques
if the following additional properties hold:
\begin{enumerate}
    \item There is a geodesic segment $\gam^s \subof D$ 
    passing through $p$ such that
    if $\Lcs$ denotes the leaf of $\Fcs$ containing $C,$ then
    \[
        \Lcs \cap \pi \inv (D) = \pi \inv(\gam^s).
    \]
    \item
    Similarly,
    there is a geodesic segment $\gam^u \subof D$ 
    passing through $p$ such that
    if $\Lcu$ denotes the leaf of $\Fcu$ containing $C,$ then
    \[
        \Lcu \cap \pi \inv (D) = \pi \inv(\gam^u).
    \]
    \item
    The geodesics segments $\gam^u$ and $\gam^s$ intersect at right angles at $p.$
    \item
    The intersection of $\Fcs$ with the foliation $\{ D_z : z \in S^1 \}$
    produces a one-dimensional foliation $\Fs$ on $\pi \inv(D).$
    \item
    Similarly,
    the intersection of $\Fcu$ with the foliation $\{ D_z : z \in S^1 \}$
    produces a one-dimensional foliation $\Fu$ on $\pi \inv(D).$
    \item
    Inside of a single plaque,
    the foliations $\Fs$ and $\Fu$ have local product structure
    in the following sense:
    there is a small neighbourhood $U$ of $p$ such that
    if $q,r \in \pi \inv(U)$ lie on the same plaque,
    then $\Fs(q)$ and $\Fu(r)$ intersect in exactly one point.
    Moreover, the intersection point varies continuously.
\end{enumerate}
We stress that all of the above conditions are $C^0$ in nature.
We do not assume that the foliations have $C^1$ leaves.
We have chosen the names $\Fu$ and $\Fs$ above for convenience,
but note that $\Fu$ and $\Fs$ in this context are not the stable
or unstable foliations of a partially hyperbolic map.

\begin{prop} \label{prop:straighten}
    Suppose $\Fcs_0$ and $\Fcu_0$ are foliations which satisfy
    the hypotheses of \cref{thm:twofolns}.
    Then there is a homeomorphism $h_1 : M \to M$ isotopic to the identity
    such that the isotoped foliations
    $\Fcs_1$ = $h_1(\Fcs_0)$ and $\Fcu_1$ = $h_1(\Fcu_0)$
    satisfy the following properties:
    \begin{enumerate}
        \item the cs foliation is $\Fcs_1$ is in ideal position,
        \item
        every center circle $C$ in $\Fc_1$ = $\Fcs_1 \cap \Fcu_1$
        is a fiber,
        that is, $C = \pi \inv(p)$ for some $p \in S,$ and
        \item
        every center circle $C$ in $\Fc_1$ has
        a good neighborhood of plaques.
    \end{enumerate} \end{prop}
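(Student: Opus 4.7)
The plan is to build $h_1$ as a composition $h^{(3)} \circ h^{(2)} \circ h^{(1)}$ that enforces conditions (1), (2), (3) in turn. Since $\Fc$ contains only finitely many vertical circles $C_1, \dots, C_n$ (a hypothesis of \cref{thm:twofolns}), each lying in a critical cs-leaf $L_i^{cs}$ which is isolated in $\Lamcs$ by \cref{lemma:critiso}, I would apply \cref{addendum:superDE} to $\Fcs_0$, taking the compact sets $K_i \subof L_i^{cs}$ to be closed tubular annuli around the $C_i$. The resulting $h^{(1)}$ puts $\Fcs_0$ in ideal position, so each critical cs-leaf becomes $\pi\inv(\ell_i^s)$ for a geodesic $\ell_i^s$ on $S$, and $h^{(1)}$ is in addition a $C^1$-diffeomorphism in a neighborhood of each $C_i$.

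For condition (2), each $C_i$ is now an embedded simple closed curve in the smooth cylinder $\pi\inv(\ell_i^s)$, isotopic there to a fiber. I would pick a point $p_i \in \ell_i^s$ and construct a diffeotopy of the cylinder, supported near $C_i$, that moves $C_i$ onto $\pi\inv(p_i)$. Using the $C^1$-smoothness from the previous step, realize an honest product tubular neighborhood $N_i \cong \pi\inv(\ell_i^s) \ti (-\delta,\delta)$ of the critical cs-leaf in $M$ (with the $N_i$ pairwise disjoint) and extend the cylinder diffeotopy to $N_i$ by damping it transversally. The extension preserves $\pi\inv(\ell_i^s)$ setwise, and for $\delta$ small the nearby horizontal cs-leaves remain graphs over $S$, hence transverse to the circle fibering. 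Composing the extensions yields $h^{(2)}$ with $h^{(2)}(h^{(1)}(\Fcs_0))$ still in ideal position and each $C_i$ mapped to the fiber $\pi\inv(p_i)$.

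For condition (3), the critical cu-leaf $L_i^{cu}$ is now a vertical cylinder through the fiber $\pi\inv(p_i)$. I would choose pairwise disjoint small disks $D_i \subof S$ centered at $p_i$ and work in the solid torus $\pi\inv(D_i) \cong D_i \ti S^1$. The annulus $A_i = L_i^{cu} \cap \pi\inv(D_i)$ contains the core circle $\{p_i\} \ti S^1$; since the cu-plane field contains the fiber direction at every point of this core, a small isotopy of the solid torus (rel boundary, and fixing the strip $\pi\inv(\ell_i^s \cap D_i)$ setwise) straightens $A_i$ to a product annulus $\tau_i \ti S^1$ for a $C^1$ curve $\tau_i \subof D_i$ through $p_i$. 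A final fiber-preserving diffeotopy of $\pi\inv(D_i)$, identity on $\pi\inv(\ell_i^s \cap D_i)$ and on the boundary, then rotates $\tau_i$ onto the geodesic $\gamma_i^u$ through $p_i$ orthogonal to $\ell_i^s$; being fiber-preserving, it does not disturb the ideal position of $\Fcs$. This produces $h^{(3)}$, giving a good neighborhood of plaques at each $C_i$; the local product structure inside each plaque (item (vi) of the definition) follows automatically from continuity and transversality of the foliations $\Fcs_1$ and $\Fcu_1$ restricted to each disk $D_z$.

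The main technical difficulty lies in Step~2: moving a cs-cylinder loop $C_i$ to an actual fiber genuinely requires a non-fiber-preserving modification, and this modification must not destroy the ideal position of $\Fcs$ achieved in Step~1. The key enabling feature is the $C^1$-smoothness provided by \cref{addendum:superDE}, which delivers a genuine tubular normal form $\pi\inv(\ell_i^s) \ti (-\delta,\delta)$ on which the transversal damping described above is well-defined and preserves transversality of the horizontal cs-leaves to the fibering. The analogous adjustment in Step~3 is easier because, once we have reduced to straightening a curve in the disk $D_i$, the final alignment to a geodesic can be performed fiber-preservingly, which automatically preserves ideal position of $\Fcs$.
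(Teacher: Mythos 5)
Your overall architecture (apply \cref{addendum:superDE} first, then move each center circle onto a fiber, then arrange the plaque structure) matches the paper's, but there is a genuine gap in your Step~2. The damped extension of the cylinder diffeotopy to a tubular neighborhood $N_i \cong \pi\inv(\ell_i^s) \ti (-\delta,\delta)$ does \emph{not} preserve the cs foliation, and your assertion that ``for $\delta$ small the nearby horizontal cs-leaves remain graphs over $S$, hence transverse to the circle fibering'' is unjustified and false in general. Horizontal leaves of $\Fcs$ accumulate on the critical vertical leaf $\pi\inv(\ell_i^s)$, and by continuity of the tangent plane field their tangent planes converge to that of the vertical leaf, which contains the fiber direction; so arbitrarily close to the critical leaf there are horizontal leaves whose tangent planes make an arbitrarily small angle with the fibers (the paper makes exactly this point after \cref{lemma:horizontalplane}). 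A damped isotopy of the form $(x,\theta,t)\mapsto(\phi_{\rho(t)}(x,\theta),t)$ applies a shear whose $\theta$-dependence is of definite size (the displacement needed to straighten $C_i$ genuinely varies with the fiber coordinate, since $C_i$ is not a fiber), and a short computation with the derivative shows that the image of such a nearly-vertical tangent plane can come to contain the fiber direction; transversality is then destroyed, and ideal position of $\Fcs$ with it. Shrinking $\delta$ does not help --- it makes $\rho'$ larger --- and no choice of damping profile rescues the claim uniformly over all nearby horizontal leaves.

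The paper avoids this entirely by making the second homeomorphism $h_b$ preserve $\Fcs$ \emph{leaf by leaf}: using \cref{lemma:straightcylinder} it builds a family of curves $J_z$ in the critical cs-cylinder, each meeting $C$ once, and sweeps out surfaces $S_z$ transverse to $\Fcs$; on each $S_z$ it defines $h$ by flowing along the trace foliation $\Fs=\Fcs\cap S_z$, moving a segment of $\Fu(q)$ onto the fiber segment. Since every point moves only inside its own cs leaf, $h_b(\Fcs)=\Fcs$ and ideal position is preserved tautologically, while $\Fcu$ is deformed so that the center circle becomes a fiber and the local product structure needed for the good neighborhood of plaques is carried along. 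Your Step~3 has the same latent issue (a solid-torus isotopy ``rel boundary'' that is not shown to preserve transversality of the horizontal cs-leaves), and it is in fact unnecessary: in the paper conditions (2) and (3) are achieved simultaneously by the single foliation-preserving isotopy. To repair your argument you would need to replace the generic tubular damping by a construction that moves points only within their $\Fcs$ leaves, which is precisely the content of \cref{lemma:straightcylinder} and the surfaces $S_z$.
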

Before proving the proposition, we first discuss return maps to plaques.
Let $\phi$ be a continuous flow on $M$ whose orbits are exactly the leaves of $\Fc.$
Let $\{ D_z : z \in S^1 \}$ be a good neighborhood of plaques
associated to a center circle $C = \pi \inv(p_0).$
Say each plaque $D_z$ projects to $D \subof S.$
Let $n \ge 1$ be an integer.
Then there is a small compact disk $K \subof D$ much smaller than $D$ and
centered at $p_0 \in S$ such that the following holds.
For any point $p \in \pi \inv(K) \cap D_z \subof M,$
the forward orbit $\phi(p, (0,\infty))$ does not leave $\pi \inv(D)$ before
intersecting $D_z$ at least $n$ times.

We can then define the \emph{$n$-th return time to the plaque}
as a function $\tau : \pi \inv(K) \to (0, \infty)$ where
$\phi( p, (0, \tau(p)] )$ intersects the plaque through $p$
exactly $n$ times,
including at $\phi(p, \tau(p)).$
Associated to the $n$-th return time is the
\emph{$n$-th return map to the plaque} defined by
\[
    r : \pi \inv(K) \to \pi \inv(D), \quad p \mapsto \phi(p, \tau(p)).
\]
Note that $r$ maps each set $\pi \inv(K) \cap D_z$ 
into $D_z.$
In this paper,
return times and return maps always refer to the definitions above.

\smallskip{}

The rest of the section is dedicated to the proof of \cref{prop:straighten}.
First, we give a lemma about cylinders which will be used inside of a single
vertical cs leaf.
In what follows, the word \emph{smooth} denotes the $C^\infty$ category.

\begin{lemma} \label{lemma:straightcylinder}
    Suppose $\al$ is a vertical circle in $\bbR \ti S^1$
    which is $C^1$-regular and is disjoint from $\bbR \ti \{0\}.$
    Then, there is $\ep > 0$ and a smooth embedding
    $j : (-\ep, 1) \ti S^1 \to \bbR \ti S^1$
    with the following properties.
    \begin{enumerate}
        \item $j(x,z) = (x,z)$ for all $(x,z) \in (-\ep,+\ep) \ti S^1,$
        \item
        $\al$ is contained in $j( (-\ep, 1) \ti S^1 ),$ and
        \item
        for each $z \in S^1,$ the curve $j( (-\ep,1) \ti \{z\} )$
        intersects $\al$ in a unique point and
        this intersection is transverse.
    \end{enumerate} \end{lemma}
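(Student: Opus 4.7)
The plan is to construct a smooth non-vanishing vector field $V$ on $\bbR \ti S^1$ that equals $\partial_x$ near $\{0\} \ti S^1$ and is transverse to $\al$ everywhere along $\al$, and then to take $j$ to be a reparameterization of the time-flow of $V$ starting from the circle $\{0\} \ti S^1$. Once such a $V$ is in hand, the three conditions of the lemma fall out quickly: property (1) from $V = \partial_x$ near the initial circle, property (2) from flowing long enough to sweep past $\al$, and property (3) from the transversality of $V$ to $\al$ together with the fact that distinct flow lines of a vector field never meet.

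To build $V$, I will use that $\al$, being disjoint from the reference circle and isotopic to it, lies in one component of the complement, say $(0,\infty) \ti S^1$ after reflecting the cylinder if necessary. Since $\al$ is $C^1$-regular and two-sided in the orientable cylinder, it admits a continuous unit normal field $N$, which I orient to point away from $\{0\} \ti S^1$. Extending $N$ to a continuous vector field on a tubular neighborhood $U$ of $\al$ via pullback under the nearest-point projection, mollifying to a smooth vector field $N'$ on $U$, and using the openness of transversality on the compact set $\al$, I obtain a smooth $N'$ that is still transverse to $\al$ at every point of $\al$ and still has strictly positive $x$-component there. Choosing $\ep_1 > 0$ small enough that $(-\ep_1, 2\ep_1) \ti S^1$ is disjoint from $\overline{U}$ and gluing $\partial_x$ with $N'$ via a smooth partition of unity, I produce a smooth vector field $V$ defined on an open set containing $\{0\} \ti S^1 \cup \al$ that equals $\partial_x$ on $(-\infty, \ep_1) \ti S^1$ and has strictly positive $x$-component throughout the relevant region.

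Let $\phi_t$ denote the flow of $V$. Positivity of the $x$-component guarantees that every flow line starting on $\{0\} \ti S^1$ is strictly monotone in $x$, so the map $(t, z) \mapsto \phi_t(0, z)$ is injective and an immersion, hence an embedding on its open domain. By transversality and the coherent orientation of $V$ along $\al$, every such flow line meets $\al$ in exactly one point in finite forward time. Choosing $T > 0$ greater than all these crossing times (possible by compactness of $\al$) and a smooth increasing reparameterization $\psi : (-\ep, 1) \to (-\ep_1, T)$ that is the identity near $0$, I set $j(x, z) = \phi_{\psi(x)}(0, z)$; the three required properties then follow from the construction. The main obstacle is the smoothing step: the normal field along a merely $C^1$ curve is only continuous, and the smoothing must not destroy transversality. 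This is handled by openness of transversality on the compact set $\al$, and it is the only place where the $C^1$-regularity of $\al$ plays an essential role.
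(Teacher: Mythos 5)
Your overall strategy (interpolate between $\partial_x$ near the core circle and a field transverse to $\al$ near $\al$, then integrate) is the same as the paper's, but your construction has a genuine gap at the step where you assert that the unit normal $N$ to $\al$, oriented to point away from $\{0\} \ti S^1$, ``still has strictly positive $x$-component'' along $\al$. A $C^1$-embedded circle isotopic to a fiber need not be a graph over the $S^1$ factor: it can double back in the $x$-direction. Concretely, let $\al$ bound the region obtained from $(-\infty,1)\ti S^1$ by attaching a thin finger ending in a bulb; along the near edge of the bulb the complementary component \emph{not} containing $\{0\}\ti S^1$ lies locally on the side of \emph{smaller} $x$, so every vector transverse to $\al$ and pointing into that far component has strictly negative $x$-component there. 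Hence the two properties your argument needs simultaneously --- (i) $V$ crosses $\al$ coherently from the near side to the far side (this is what gives ``at most one intersection per flow line''), and (ii) $V$ has positive $x$-component (this is what gives injectivity of $j$ and guarantees the flow sweeps past $\al$ in finite time) --- are incompatible for a general $\al.$ Dropping (i) breaks uniqueness in property (3): a flow line monotone in $x$ can cross a wiggly $\al$ three times, all transversally. Dropping (ii) breaks the embedding claim and the claim that every flow line reaches $\al.$ A secondary issue of the same flavor: your two fields $\partial_x$ and $N'$ live on disjoint sets, so the partition-of-unity gluing either leaves $V$ undefined on the region in between (and then the flow from $\{0\}\ti S^1$ never reaches $\al$), or, if you extend $\partial_x$ globally and take $(1-\rho)\partial_x+\rho N',$ the sum can vanish exactly because $N'$ may be antiparallel to $\partial_x.$

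The paper's proof avoids all of this by never asking for monotonicity in the ambient coordinate $x.$ It builds a smooth nonvanishing field $Z$ on a tubular neighborhood $U$ of $\al$ whose integral curves are \emph{short} arcs each crossing $\al$ transversally exactly once --- uniqueness of the crossing comes from the tubular-neighborhood structure, not from a global monotone coordinate --- then inserts a smooth circle $\bt$ just below $\al$ and an intermediate annulus $A$ between $\delta\ti S^1$ and $\bt$ carrying a product field $Y$ transverse to both boundary circles, and interpolates $\partial_x \to Y \to Z.$ To rescue your version you would need to replace ``positive $x$-component'' by monotonicity of a collar coordinate adapted to $\al,$ which is essentially what the paper's annulus $A$ and field $Z$ supply.
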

\begin{proof}
    For concreteness, assume $\al$ is contained in $(0, \infty) \ti S^1.$
    Choose a constant $r > 0$ such that $\al$ is disjoint from
    $[0, r) \ti S^1,$ and define $\delta = r/2.$
    Define a unit vector field $X$ on $(-r,+r) \ti S^1$
    which is pointing in
    the positive direction of the $\bbR$ coordinate of $\bbR \ti S^1.$
    In other words, the integral curves of $X$ are
    curves of the form $(-r, r) \ti \{z\}$ for $z \in S^1.$

    Define a smooth, non-zero vector field $Z$
    in a small neighborhood $U$ of $\al.$
    After replacing $U$ by an even smaller neighborhood,
    we may assume that every integral curve of $Z$ is a short
    curve which transversely intersects $\al$ at a unique point.
    In effect, $U$ and $Z$ define a tubular neighbourhood of $\al,$
    but $Z$ is smooth whereas $\al$ is only $C^1.$

    Let $\bt \subof U$ be a smooth vertical circle close to $\al,$
    but disjoint from $\al$ such that
    each integral curve of $Z$ intersects $\bt$ transversely
    and at a single point.
    Moreover, assume $\bt$ is on the side of $\al$ closer to $0 \ti S^1.$
    Then, going from ``left to right,''
    we have the vertical circles
    \[
        0 \ti S^1, \quad \delta \ti S^1, \quad \bt, \qandq \al.
    \]
    The circles $\delta \ti S^1$ and $\bt$ bound a set $A \subof \bbR \ti S^1$
    which is homeomorphic to an annulus.
    Since topological surfaces have a unique smooth structure,
    $A$ is diffeomorphic to $[0,1] \ti S^1.$
    Moreover, there is a smooth embedding
    $i : [0,1] \ti S^1 \to \bbR \ti S^1$
    such that $A$ is the image of $i$ and the derivative of $i$
    is well-defined and invertible for all points in $A,$
    even those on the boundary.
    See for instance \cite{hat2013kirby}
    for details.

    Using $i,$ we can define a smooth non-zero vector field $Y$ on $A$
    which is transverse to the boundary.
    In particular, $Y$ points into $A$ along $\delta \ti S^1$
    and points out of $A$ along $\bt.$
    By smoothly interpolating $X$ and $Y$ in a neighborhood of $\delta \ti S^1$
    and smoothly interpolating $Y$ and $Z$ in a neighborhood of $\bt,$
    we can produce a smooth non-zero vector field on all of
    $((-r, +r) \ti S^1) \cup A \cup U$
    which is equal to $X$ in a neighborhood of $0 \ti S^1$
    and equal to $Z$ in a neighborhood of $\al.$
    Integrating this vector field yields the function $j$
    specified in the statement of the lemma.
\end{proof}
We now use this lemma to prove \cref{prop:straighten}.
Let $\Fcs_0$ and $\Fcu_0$ be as in the hypotheses of the proposition
and let $\Fc_0$ denote their intersection.
We construct the homeomorphism $h_1$ of the proposition
as a composition $h_b \circ h_a$ where $h_a$ puts
the cs foliation into ideal position, and then $h_b$ preserves this
foliation and works inside of the cs leaves
to straighten the vertical circles into fibers.
For simplicity, assume that $\Fc_0$ contains a single center circle
$C_0.$ The proof adapts easily to the case of multiple circles.
The $L_0$ denote the leaf of $\Fcs_0$ containing $C_0.$

Let $h_a : M \to M$ be an isotopy such that $\Fcs_a = h_a(\Fcs_0)$ is
in ideal position.
Similarly, define $\Fcu_a = h_a(\Fcu_0),$
$\Fc_a = h_a(\Fc_0),$
$C_a = h_a(C_0),$ and $L_a = h_a(L_0).$
By \cref{addendum:superDE}, we may assume that $h_a$ is $C^1$
in a neighbourhood of $C_0$ and that this neighbourhood
contains a large compact subset of $L_0.$
Therefore, the isotoped foliations $\Fcs_a$ and $\Fcu_a$
are tangent to continuous plane fields in a neighborhood of $C_a.$
Moreover, these two plane fields are transverse in this neighborhood.

Choose a coordinate chart $\psi : [-1,1]^2 \to S$ such that
\begin{itemize}
    \item $\psi(\{0\} \ti [-1,1])$ is a geodesic segment
    contained in $\pi(L_a),$
    \item
    $\pi(C_a)$ is contained in $\psi(\{0\} \ti (0,1)),$ and
    \item
    $\psi([-1,1] \ti \{0\})$ is a geodesic segment
    which intersects $\psi(\{0\} \ti [-1,1])$ at right angles.
\end{itemize}
This chart can be constructed using the exponential map on $S.$
Once $\psi$ is defined, use it to define a smooth embedding
$\Psi : [-1,1]^2 \ti S^1 \to M$
which maps each circle $\{x\} \ti \{y\} \ti S^1$ to the fiber
$\pi \inv(\psi(x,y)).$
By our use of \cref{addendum:superDE},
we may assume that $\Fcs_a$ and $\Fcu_a$ have
continuous tangent plane fields in a neighborhood of
$\Psi(\{0\} \ti [-1,1] \ti S^1).$
Use $\Psi$ to pull back the foliations to $[-1,1]^2 \ti S^1.$
That is, define foliations on $[-1,1]^2 \ti S^1$ by
$\Fcs = \Psi \inv(\Fcs_a)$ and $\Fcu = \Psi \inv(\Fcu_a).$
For the rest of this section, we will work with the foliations
inside the space $[-1,1]^2 \ti S^1.$

Define $C = \Psi \inv(C_a)$ and note that $C \subof \{0\} \ti (0,1) \ti S^1.$
Also note that $\{0\} \ti [-1,1] \ti S^1$ is a leaf of
the pulled back foliation $\Fcs.$
By \cref{lemma:straightcylinder}, there is $\ep > 0$ and
a family $\{J_z : z \in S^1 \}$ of $C^1$ curves
with the following properties:
\begin{itemize}
    \item $J_z$ is contained in $\{0\} \ti [-1,1] \ti S^1,$
    \item
    $J_z \cap (\{0\} \ti [-\ep,\ep] \ti S^1) = \{0\} \ti [-\ep,\ep] \ti \{z\},$
    \item
    $J_z$ intersects $C$ exactly once
    and this intersection is quasi-transverse, and
    \item
    every point of $C$ lies in exactly one curve $J_z.$
\end{itemize}
For each specific value $z_0 \in S^1,$ define a surface
\[
    S_{z_0} \ = \ \big\{ \ (x,y,z) \in [-\ep,\ep] \ti [-1,1] \ti S^1 \ : \ 
    (0,y,z) \in J_{z_0} \ \big\}.
\]
This produces a family of surfaces $\{ S_z : z \in S^1\}.$
Up to replacing $\ep$ with a smaller value,
we may assume that $\Fcs$ is transverse to $S_z$ for all $z \in S^1.$

For now, just consider one specific value of $z \in S^1.$
Let $\Fs$ denote the intersection of $\Fcs$ with $S_z;$
it is a continuous 1-dimensional foliation with $C^1$ leaves
defined on all of $S_z.$
Let $p = (0,0,z)$ and let $q$ be the unique intersection of $J_z$ with
the circle $C.$
Since $S_z$ is transverse to $C$ and $C$ lies inside a leaf of $\Fcu,$
there is a neighborhood $U \subof S_z$ of $q$
such that $\Fcu$ is transverse to $S_z$ at all points in $U.$
Let $\Fu$ denote the intersection of $\Fcu$ with $U;$
it is a continuous 1-dimensional foliation with $C^1$ leaves
defined on $U.$
Since $\Fcs$ and $\Fcu$ are transverse, the 1-dimensional foliations
$\Fs$ and $\Fu$ are transverse on $U,$ and therefore have local product structure
in a small neighborhood of $q.$
Let $\Fu(q)$ denote the leaf of $\Fu$ through the point $q$
and use similar notion for leaves through other points.

Choose $\delta > 0$ much smaller than $\ep.$
In particular, $\delta$ should be chosen small enough that
the segment $[-\delta, \delta] \ti \{0\} \ti \{z\}$
is transverse to $\Fs$ and for any point 
$p' \in [-\delta,\delta] \ti \{0\} \ti \{z\}$
the leaf $\Fs(p')$ through $p'$ intersects
$\Fu(q)$ in a single point, $q'.$
See figure \ref{fig:surfacez} for a depiction of these points.

\begin{figure}
    \centering
    \includegraphics{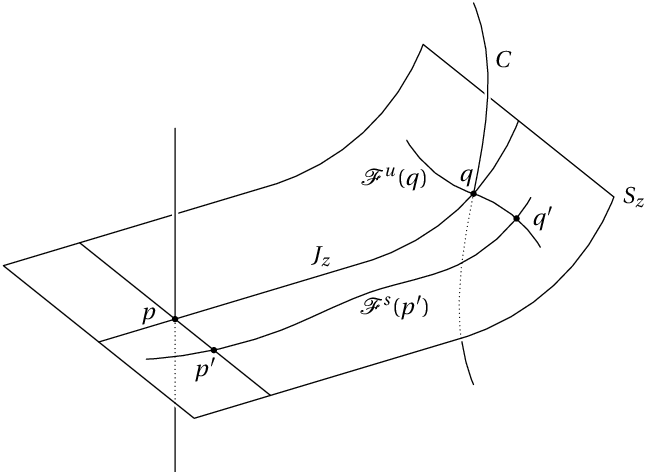}
    \caption{
    A depiction of the curve $J_z$ and the corresponding surface $S_z$.
    The vertical line through the point $p$ depicts the circle
    $\{0\} \ti \{0\} \ti S^1$ where the top and bottom points are identified.
    Similarly, the top and bottom points of $C$ are identified.}
    \label{fig:surfacez}
\end{figure}
We now define a homeomorphism $h : S_z \to S_z$
with the following properties:
\begin{itemize}
    \item $h$ is isotopic to the identity,
    \item
    $h$ equals the identity on a neighborhood of the boundary of $S_z,$
    \item
    $h$ preserves the $\Fs$ foliation, and
    \item
    $h$ maps a small segment of $\Fu(q)$ to $[-\delta,\delta] \ti \{0\} \ti \{z\}.$
\end{itemize}
The last two items mean that $h$ maps each $q'$ as above
to its corresponding $p'.$

We can define such a function $h$ by flowing along the leaves
of the foliation $\Fs.$
First choose an orientation of $\Fs$ and then let $\phi^s$
denote the unit-speed flow along the leaves of $\Fs.$
Then the homeomorphism $h$ will be a function of the form
$h(v) = \phi^s(v, t(v))$
where the function $t : S_z \to \bbR$ gives the flow time.
We first define $t$ on $\Fu(q)$ so that $h$ maps a subset of $\Fu(q)$
to $[-\delta,\delta] \ti \{0\} \ti \{z\}.$
Then we extend $t$ continuously to all of $S_z.$
This can be done in such a way that
the resulting $h$ is a homeomorphism.
Moreover, we can set $t$ equal to zero on 
a neighborhood of the boundary of $S_z$
so that $h$ is the identity on this neighborhood.
The foliations $\Fs$ and $\Fu$ have local product structure
in a neighborhood of $q.$
Therefore, the isotoped foliations $h(\Fs)$ and $h(\Fu)$
have local product structure in a neighborhood of $p = h(q).$

We have described for a single curve $J_z$
how to construct an isotopy on 
its corresponding surface $S_z.$
We can do the same construction for all curves
$\{ J_z : z \in S^1 \},$ producing isotopies
on their corresponding surfaces $\{ S_z : z \in S^1 \}.$
Moreover, these isotopies can be constructed in a way
which varies continuously with $z \in S^1.$
The result is a homeomorphism $h$ defined on all of
$[-1,1]^2 \ti S^1$ which preserves the foliation $\Fcs$
and maps the circle $C$ to $\{0\} \ti \{0\} \ti S^1.$
Since $h$ is the identity near the boundary of $[-1,1]^2 \ti S^1,$
we can use it along with $\Psi : [-1,1]^2 \ti S^1 \to M$ to define
a homeomorphism $h_b : M \to M$
which is the identity outside of the image of $\Psi.$
Then, the composition $h_1 = h_b \circ h_a$
is a homeomorphism which satisfies the conclusions
of \cref{prop:straighten}.

\section{Flow times} 

For this section, assume that in order to prove \cref{thm:twofolns},
we have first applied the isotopy given by \cref{prop:straighten}
to put $\Fcs$ into ideal position
and to make the center circles coincide with fibers.
Further,
each center circle has an associated
``good neighborhood of plaques'' $\{ D_z : z \in S^1 \}.$
After these changes, $\Fcs$ and $\Fcu$ are both $C^0$ foliations
and $\Fcs$ is in ideal position.

The intersection $\Fc = \Fcs \cap \Fcu$ is a $C^0$ foliation and
we use in this section $\phi$ to denote the center flow,
which is now a $C^0$ flow and not necessarily tangent to a vector field.
Recall from \cref{sec:average} that a choice of flow time
$\tau : M \to [0, \infty)$ defines the induced map
$f : M \to TS.$
For each $p \in M,$ we call $f(p)$ the \emph{induced vector} at $p.$

The main goal of this section is to prove the following.

\begin{prop} \label{prop:fullgoodtime}
    With $\Fcs, \Fcu,$ and $\phi$ as above,
    there is a choice of flow time $\tau : M \to [0, \infty)$
    and an integer $n \ge 1$ such that the following hold:
    \begin{enumerate}
        \item at a point $p \in M,$
        the induced vector $f(p) \in T_{\pi(p)} S$ is zero
        if and only if $p$ lies on a center circle, and
        \item
        every center circle $C$ has a neighborhood $U$ such that
        if $p \in U,$ then $\tau(p)$ gives the $n$-th return time
        to the plaque through $p.$
    \end{enumerate} \end{prop}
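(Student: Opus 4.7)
Build $\tau$ in two pieces: near each center circle $C_i$ take $\tau$ to be the $n$-th return time to the plaque (which directly yields item (2)), and outside a neighborhood of these circles take $\tau$ to be a large constant $T$; glue using bump functions. Condition (1) then splits into a local verification inside the return-time region and a global contradiction argument outside it.

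\textbf{Construction.} By \cref{lemma:horizontalplane}, horizontal cs-leaves are topological disks and so carry no periodic orbit of the fixed-point-free flow $\phi$; hence every center circle lies in a vertical leaf of both $\Fcs$ and $\Fcu,$ and by condition (5) of \cref{thm:twofolns} is itself a vertical circle. Thus there are only finitely many center circles $C_1, \dots, C_k$. For each $i$, shrink the good neighborhood of plaques around $C_i$ to pairwise disjoint nested neighborhoods $V_i' \Subset V_i$ on which the $n$-th plaque return time $\tau_i^n$ is well-defined and continuous for every $n \ge 1$. Choose continuous bump functions $\psi_i$ which equal $1$ on $V_i'$ and are supported in $V_i,$ and set
\[
    \tau(p) \ = \ \sum_{i=1}^k \psi_i(p)\,\tau_i^n(p) + T\bigg(1 - \sum_{i=1}^k \psi_i(p)\bigg).
\]
This $\tau$ is continuous and positive, equals $\tau_i^n$ on $V_i'$ (giving item (2) with $U = V_i'$) and equals $T$ off $\bigcup V_i$.

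\textbf{Inside $V_i'$.} On each plaque $D_z$ the projection $\pi$ is injective, and $\tau = \tau_i^n$ maps each $p \in V_i'$ to the same plaque as $p$, so the condition $\pi(\phi(p, \tau(p))) = \pi(p)$ is equivalent to $\phi(p, \tau(p)) = p,$ i.e.\ $p$ is periodic. Shrinking $V_i'$ further if needed so that $C_i$ is the only periodic orbit of $\phi$ meeting $V_i',$ we conclude $f^{-1}(0) \cap V_i' = C_i$.

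\textbf{Outside $\bigcup V_i'$: the main obstacle.} Suppose for contradiction that no choice of $n, T$ makes $f$ nonvanishing outside the $V_i'$; then there are sequences $n_j, T_j \to \infty$ and points $p_j \notin \bigcup_i V_i'$ with $f(p_j) = 0$ for the corresponding $\tau$. By (the proof of) \cref{lemma:fzero}, each $p_j$ lies on a vertical cs-leaf $L_j$ and the segment $J_j = \phi(p_j, [0, \tau(p_j)])$ has endpoints on the same fiber; moreover $\tau(p_j) \to \infty$. As in \cref{lemma:goodtime}, the diameter of $J_j$ inside $L_j$ tends to infinity, and taking midpoints $q_j \to q$ and passing to the limit gives a center leaf through $q$ whose closure in its ambient vertical cs-leaf contains, by \cref{thm:cylinderfoliation}, either a one-ended leaf (excluded by hypothesis (7) of \cref{thm:twofolns}) or a vertical circle. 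The vertical circle must be one of the $C_i,$ and hence $L_j = L_i$ for large $j$. The delicate final step, which is the main obstacle, is to promote this to $p_j \in V_i'$: since $C_i$ is isolated, \cref{thm:cylinderfoliation}(4) shows that the flow on $L_i$ is topologically attracting or repelling on each side of $C_i$, so at least one of $\alpha(q), \omega(q)$ equals $C_i$; this forces one endpoint of $J_j$ to converge to $C_i$ as $j \to \infty$, and since the two endpoints of $J_j$ share a fiber, $\pi(p_j) \to \pi(C_i)$, eventually placing $p_j$ inside $V_i'$ and producing a contradiction.
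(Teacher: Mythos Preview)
Your strategy is natural, but the argument has two genuine gaps, both in the handling of the transition between the return-time region and the constant-time region.

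\textbf{First gap: the domain of $\tau_i^n$.} You ask for fixed nested neighborhoods $V_i' \Subset V_i$ on which the $n$-th plaque return time is defined for \emph{every} $n \ge 1$. This fails when the center flow is topologically repelling from $C_i$ on one side (which \cref{thm:cylinderfoliation}(4) allows). On the repelling side, forward orbits leave $\pi^{-1}(D)$ after finitely many returns, so the domain of $\tau_i^n$ shrinks down to $C_i$ as $n \to \infty$. You therefore cannot fix $V_i$ in advance, and if you let $V_i, V_i'$ depend on $n$ then in the contradiction argument the condition $p_j \notin V_i'(n_j)$ gives no uniform bound away from $C_i$.

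\textbf{Second gap: the final step.} Even granting the setup, your claimed implication ``$\alpha(q)$ or $\omega(q) = C_i$, hence one endpoint of $J_j$ converges to $C_i$'' is false. Following the proof of \cref{lemma:goodtime}, the point $q_j$ is on the fiber of $L_i$ \emph{farthest} from the fiber through the endpoints $p_j, r_j$, and the intrinsic leaf distance $d(p_j, q_j)$ tends to infinity. Since $q_j \to q$ lies on (or accumulates on) $C_i$, this means the fiber through $p_j$ moves off to an end of the cylinder $L_i$, so $\pi(p_j)$ moves \emph{away} from $\pi(C_i)$, not toward it. You cannot conclude $p_j \in V_i'$.

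The paper avoids both problems by abandoning bump functions and instead constructing $\tau$ orbit-by-orbit on the critical leaf $L_i$ via a dedicated patching lemma (\cref{lemma:patching}). The idea is to pick a global transversal $\alpha \subset L_i$ on the attracting side (reversing time if needed), choose $t_0 > T$ so large that $\pi(\phi(\alpha, t_0))$, $\pi(\alpha)$, and $\pi(\phi(\alpha, -t_0))$ are pairwise disjoint, and then interpolate $\tau$ linearly \emph{along each orbit} between the $n$-th return time near $C_i$ and the constant $T$ far away. The disjointness of these projections is what guarantees that $p$ and $\phi(p,\tau(p))$ land on different fibers throughout the transition region; this is precisely the control your bump-function interpolation lacks. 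With $\tau$ so defined on $L_i$, the paper then invokes \cref{lemma:Ugoodtime} (the correct adaptation of \cref{lemma:goodtime}, which first shows $J_n$ avoids a fixed neighborhood $V$ of $C_i$ before taking limits) to handle the remaining vertical leaves.
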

The remainder of this section is dedicated to the proof of this proposition.
It will be clear from the proof that we can choose the integer $n \ge 1$
as large as desired.
As in the previous section,
we assume for simplicity that there is a unique center circle $C.$
The proof easily generalizes to the case of multiple center circles.

\begin{lemma} \label{lemma:Ugoodtime}
    Let $U \subof M$ be an open set containing the center circle $C.$
    Then there is $T > 0$ such that
    for all $p \in \Lamcs \sans U$ and all $t \in [T, +\infty),$
    the points $p$ and $\phi(p,t)$ lie on distinct fibers.
\end{lemma}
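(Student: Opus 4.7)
The plan is to argue by contradiction, extending the strategy of \cref{lemma:goodtime} to accommodate the finitely many center circles now contained in $U$. Suppose no such $T$ exists; then there are sequences $p_n \in \Lamcs \sans U$ and $t_n \to +\infty$ with $\phi(p_n, t_n)$ on the same fiber as $p_n$. By compactness of $\Lamcs \sans U$ and the fact that $U$ contains every center circle, extract a subsequence with $p_n \to p^* \in \Lamcs \sans U$; in particular $p^*$ avoids every center circle. Let $L^* = \Fcs(p^*)$, a cylinder.

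I then apply \cref{thm:cylinderfoliation} to the orbit $\Fc(p^*)$ inside $L^*$, using hypotheses (\ref{item:lastcircle}) and (\ref{item:noonesided}) to constrain its asymptotic behavior. By (\ref{item:noonesided}), neither $\alpha$- nor $\omega$-limit can contain a one-ended leaf, so each is either empty or a single vertical circle. If both were vertical circles, \cref{thm:cylinderfoliation}(3) would force them to be distinct, contradicting (\ref{item:lastcircle}); if exactly one were a vertical circle, the orbit would be contained in a half-cylinder of $L^*$ and hence one-ended, again violating (\ref{item:noonesided}). Thus both limits are empty, and $\Fc(p^*)$ is a properly embedded two-ended leaf in $L^*$ that exits every compact subset in both directions.

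To reach the contradiction, I lift to the intermediate cover $\hat M$ of \cref{sec:intercover}. Choose lifts $\hat p_n \to \hat p^*$ so that $\hat L_n = \pi \inv(\gam_n) \to \hat L^* = \pi \inv(\gam^*)$ for a limit geodesic $\gam^* \subset \bbH^2$. Because each center segment is confined to a vertical cs leaf projecting to a non-closed geodesic of $S$, the projected loop $\pi \circ \phi(p_n, \cdot)$ is contractible, so the lift $\hat \phi(\hat p_n, t_n)$ lies on the fiber of $\hat p_n$ in $\hat M$; its $\bbH^2$-projection therefore stays bounded, and I may extract $\hat r_n := \hat \phi(\hat p_n, t_n) \to \hat r^* \in \hat L^*$ on the fiber of $\hat p^*$. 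Since $\hat \Fc(\hat p^*)$ is proper and two-ended in $\hat L^*$, it is closed there; compact-open convergence of $\hat \Fc(\hat p_n)$ to $\hat \Fc(\hat p^*)$ on compact subsets of $\hat L^*$ then identifies $\hat r^* = \hat \phi(\hat p^*, s^*)$ for some finite $s^*$.

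Finally, fix a compact neighborhood $K$ of $\hat r^*$ in $\hat L^*$. Because $\hat \Fc(\hat p^*)$ is two-ended and proper, $\hat \Fc(\hat p^*) \cap K$ consists of finitely many compact arcs traversed only during a uniformly bounded time interval. Combining compact-open convergence with the fact that each $\hat \Fc(\hat p_n)$ itself must have empty $\omega$-limit in $\hat L_n$ (since an $\omega$-limit center circle would lie in $U$ at positive distance from the fiber of $p_n$, ruling out arbitrarily late returns to that fiber) forces the times $t$ with $\hat \phi(\hat p_n, t) \in K$ to also be uniformly bounded in $n$; as $\hat r_n \in K$ for large $n$, this bounds $t_n$, contradicting $t_n \to +\infty$. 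The main technical hurdle is precisely this last step: controlling late-time returns of perturbed orbits to $K$, where in principle new recurrences could appear. The argument handles this by exploiting properness of the two-ended limit orbit together with the elimination of center-circle $\omega$-limits for the lifted orbits $\hat \Fc(\hat p_n)$.
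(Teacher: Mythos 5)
Your argument has a genuine gap at the last step, and it is exactly the difficulty the lemma exists to overcome. Compact-open convergence of the orbits through $p_n$ to the orbit through $p^*$ controls $\phi(p_n,t)$ only for $t$ in a fixed compact interval, uniformly in $n$; it says nothing about $\phi(p_n,t)$ for $t$ comparable to $t_n \to \infty$. Consequently (i) the limit $\hat r^*$ of the points $\hat r_n$ need not lie on the orbit through $\hat p^*$ at all --- it lies in the closure of a union of nearby orbits, which may meet other leaves of $\Fc$ --- and (ii) the assertion that the times $t$ with the lifted orbit of $\hat p_n$ in $K$ are uniformly bounded in $n$ does not follow from properness of the limit orbit, nor from properness of each individual orbit: an orbit can leave $K$, travel arbitrarily far, and return at a time growing with $n$. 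Ruling out precisely such late returns is the content of the lemma, so this step is essentially circular; no actual mechanism for excluding the ``new recurrences'' you mention is supplied. A secondary problem: an orbit whose $\omega$-limit set is a vertical circle is not properly embedded, hence is not a ``one-ended leaf'' in the sense of \cref{sec:folncyl}, so condition (7) of \cref{thm:twofolns} does not exclude the case where $\Fc(p^*)$ (or $\Fc(p_n)$, when $p_n$ lies on the critical leaf $L \supset C$ but outside $U$) spirals onto $C$; your claim that each orbit has empty $\omega$-limit is likewise unjustified, since the single return at time $t_n$ is compatible with the orbit eventually being trapped in $U$.

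The paper sidesteps the uniformity problem by taking limits of the segments $J_n = \phi(p_n,[0,t_n])$ at points deep inside them, not at their endpoints. First, \cref{thm:cylinderfoliation}(4) provides a trapping neighbourhood $V \subof U$ of $C$ such that any orbit of $\Fc$ entering $V \cap L$ remains in $U \cap L$ for all forward or all backward time; since both endpoints of $J_n$ lie outside $U$, the entire segment $J_n$ avoids $V$. One then shows $\diam(J_n) \to \infty$ and chooses $q_n \in J_n$ on the fiber farthest from the common fiber of the endpoints, so that $J_n$ is contained in one half-leaf cut along the fiber through $q_n$. A subsequential limit $q$ of the $q_n$ lies outside $V$, and the closure of $\Fc(q)$ is trapped in a half-leaf disjoint from $C$; by \cref{thm:cylinderfoliation} that closure must contain a one-ended leaf or a vertical circle other than $C$, contradicting the hypotheses. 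The contradiction thus comes from the trapped geometry of a limit orbit taken at interior points of the $J_n$, not from any claimed bound on return times.
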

\begin{proof}
    This is an adaptation of the proof of \cref{lemma:goodtime}.
    Let $L \in \Lamcs$ be the vertical cs leaf containing $C.$
    By \cref{lemma:critiso}, $L$ is an isolated leaf of the lamination,
    we can replace $U$ by a smaller neighbourhood and assume that
    $U \cap L$ has a single connected component
    and that no other leaves of $\Lamcs$ intersect $U.$
    We also assume that $U$ is a union of fibers.

    By the last item of \cref{thm:cylinderfoliation},
    there is a smaller neighborhood $V$ of $C$
    with the following property:
    if $p \in V \cap L,$
    then one of the two sets
    $\phi(p, [0, +\infty) )$ or $\phi(p, (-\infty, 0] )$
    is a subset of $U \cap L.$
    Consider $J_n$ as in the proof of \cref{lemma:goodtime},
    but now with endpoints outside of $U.$
    Then $J_n$ is disjoint from $V$ for all $n.$

    If $\diam(J_n)$ were bounded, the $J_n$ would converge to
    a full orbit of bounded diameter outside of $V$
    implying the existence of a center circle outside of $V.$
    Therefore the diameters tend to infinity.
    Define $p_n,$ $q_n,$ and $r_n$ as in the proof of \cref{lemma:goodtime}.
    By passing to a subsequence, $\{q_n\}$ converges to a point $q \in M \sans V$
    and the closure of $\Fc(q)$ lies in a half leaf $L^+$
    which is disjoint from $C.$
    As before,
    the closure of $\Fc(q)$ must contain
    either a vertical circle or a one-ended leaf (or both)
    and this gives a contradiction.
\end{proof}
To choose a good flow time, we define $\tau : M \to [0, \infty)$
so that $\tau$ is given by a constant $T > 0$ far away from the center circles
and so that near a center circle flowing by $\tau$ yields 
the $n$-th return map to a plaque $D_z$ for some large $n.$
We must carefully patch these two definitions together to
define $\tau$ on all of $M.$
Most of the finicky details of this patching are handled
by the following technical lemma about flows on cylinders.

\begin{lemma} \label{lemma:patching}
    Suppose $\phi$ is a flow generated by
    a continuous vector field on $\bbR \ti S^1$ such that
    $\phi$ has no one-ended orbits and
    $0 \ti S^1$ is the unique periodic orbit of $\phi.$
    Further assume
    for every $z \in S^1$ that
    the curve $[-1,1] \ti \{z\}$ is transverse to the flow.
    Let $T > 0.$ Then there exist
    \begin{itemize}
        \item an integer $n \ge 1,$
        \item
        a continuous function $\tau : \bbR \ti S^1 \to [T, +\infty),$
        \item
        a homeomorphism $r : \bbR \ti S^1 \to \bbR \ti S^1,$
        \item
        an open neighborhood $U$ of $0 \ti S^1,$ and
        \item
        a compact set $K \subof \bbR \ti S^1$ \end{itemize}
    with the following properties:
    \begin{enumerate}
        \item the functions $\tau$ and $r$ are related by
        $r(p) = \phi(p, \tau(p))$ for all $p \in \bbR \ti S^1,$
        \item
        $\tau(p) = T$ for all $p \notin K,$
        \item
        if $p \in \big([-1,0) \cup (0,1]\big) \ti S^1$,
        then $p$ and $r(p)$ lie on different vertical fibers, and
        \item
        for any point $p = (x,z) \in U, r(p)$ is equal to the
        $n$-th return of the forward orbit $\phi(p, [0, \infty) )$
        to the segment $[-1,1] \ti \{z\}.$
    \end{enumerate} \end{lemma}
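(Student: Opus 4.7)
The plan is to define $\tau$ as a continuous interpolation: equal to the $n$-th return time to the family of transverse segments $\{[-1,1]\ti\{z\}\}_{z\in S^1}$ in a neighborhood of the periodic orbit $0\ti S^1$, and equal to the constant $T$ away from that orbit. The integer $n$ is chosen large so that $nP>T$, where $P$ is the period of $0\ti S^1$; this creates room to fit all conditions simultaneously. The map $r$ is then $\phi(\cdot,\tau(\cdot))$, and the compact set $K$ will be the support of the deviation $\tau-T$.

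For the setup, transversality of each $[-1,1]\ti\{z\}$ to $\phi$ forces the $z$-component of the generating vector field to be nowhere zero and of constant sign on the strip $[-1,1]\ti S^1$. Assembling the first-return maps to the individual segments yields a global map $P_1(x,z)=(r_1(x,z),z)$ defined and continuous on an open neighborhood of $0\ti S^1$; its $n$-th iterate $P_n$ is continuous on a smaller neighborhood, with associated return time $\tau_n$ satisfying $\tau_n(0,z)=nP$. Pick $n$ with $nP>T+1$ and choose $\ep>0$ so small that $\tau_n$ is defined and exceeds $T$ on the closure of $U=(-\ep,\ep)\ti S^1$. Choose a slightly larger open $V$ on which $\tau_n$ is still defined, and a continuous cutoff $\al$ equal to $1$ on $\bar U$ and vanishing outside $V$. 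Set $\tau(p)=T+\al(p)(\tau_n(p)-T)$ on $V$ and $\tau(p)=T$ off $V$, and let $K=\bar V$. Then $\tau$ is continuous, satisfies $\tau\ge T$ everywhere, equals $T$ off $K$, and equals $\tau_n$ on $U$, so conditions (1), (2), and (4) are immediate.

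Condition (3) follows on $U$ from the uniqueness of the periodic orbit: if $r(p)=P_n(p)$ lay on the same fiber as $p\in U\setminus(0\ti S^1)$, then the first coordinate of $p$ would be a nontrivial periodic point of the first-return Poincar\'e map, producing a second periodic orbit of $\phi$. For $p\in([-1,0)\cup(0,1])\ti S^1$ outside $U$, the point lies either in the interpolation region $V\setminus U$ or outside $V$; in the interpolation region, $r(p)$ varies continuously in $\al$ between $\phi(p,T)$ and $P_n(p)$, and a careful choice of the cutoff profile avoids the discrete set of times at which the orbit crosses the fiber $\{x\}\ti S^1$. For $p$ outside $V$, where $\tau=T$, the Poincar\'e--Bendixson classification together with the no-one-ended-orbits hypothesis forces the orbit through $p$ to exit the strip, and one uses this exit behavior to arrange that $\phi(p,T)$ lies off the fiber.

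The main obstacle is verifying that $r=\phi(\cdot,\tau(\cdot))$ is a homeomorphism. Since $r$ preserves each orbit, the question reduces to strict monotonicity of $t\mapsto t+\tau(\phi(p_0,t))$ along every orbit. Inside $U$, this is automatic because $r|_U=P_n$ is a homeomorphism of $U$ onto its image that preserves $z$; outside $K$, $r$ is the time-$T$ map of $\phi$, also a homeomorphism. In the interpolation annulus $K\setminus U$, the rate of change of $\tau$ along the flow splits into contributions from $\al$ and from $\tau_n$, and the task is to arrange that this rate exceeds $-1$ on every compact orbit arc. I would achieve this by making the transition region of $\al$ thick in the flow direction, so that its flow-variation is uniformly small; compactness of $K$ and the uniform lower bound on $|\dot z|$ on $[-1,1]\ti S^1$ coming from transversality provide the estimates needed to close the argument.
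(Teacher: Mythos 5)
Your overall strategy (exact $n$-th return map near $0 \ti S^1$, constant time $T$ far away, interpolation in between) is the same as the paper's, but the interpolation is where all the difficulty of this lemma lives, and your spatial-cutoff version of it has a genuine gap at condition (3). Your exact-return region $U$ is a small neighborhood of $0 \ti S^1$, so most of $\big([-1,0)\cup(0,1]\big) \ti S^1$ falls in the transition annulus $V \sans U$ or in the region where $\tau \equiv T$. In the transition annulus, $r(p)$ is some intermediate point $\phi(p,s)$ with $T \le s \le \tau_n(p)$, and nothing prevents it from landing on the fiber of $p$: the hypotheses only give transversality to the \emph{horizontal} segments $[-1,1]\ti\{z\}$, i.e.\ $\dot z \ne 0$, so the set of times at which an orbit meets the vertical fiber $\{x\}\ti S^1$ need not be discrete, and in any case "choosing the cutoff profile" to dodge a bad set simultaneously for a two-parameter family of points is not a perturbation argument. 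In the constant-$T$ region the situation is worse: $T$ is prescribed in advance, the orbit of $p \in (0,1]\ti S^1$ does not exit the strip in the attracting case (it spirals into $0\ti S^1$), and there is no freedom left to "arrange that $\phi(p,T)$ lies off the fiber." The paper resolves exactly this by choosing a global transversal circle $\al$, flowing it by $\pm t_0$ with $t_0 > T$ large enough that $\phi(\al,-t_0)$ projects beyond $x=1$; then every point of $(0,1]\ti S^1$ lies either in the inner annuli, where $r$ is the genuine $n$-th return map and uniqueness of the periodic orbit gives (3), or in the outer annulus $A_3$, where $\tau \ge 2t_0$ forces $r(A_3)\subof A_1$ and $\pi(A_1)\cap\pi(A_3)=\varnothing$. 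Condition (3) is thus engineered into the geometry rather than checked afterwards.

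Your homeomorphism argument is also only a sketch at the critical point. With $\tau = T + \al\,(\tau_n - T)$, the flow-derivative of $\tau$ is dominated by $\dot\al\,(\tau_n - T)$, and $\tau_n - T$ grows like $n$ times the winding time; so you need the cutoff to transition over a flow-time of order $n$, i.e.\ the annulus $V\sans U$ must contain at least on the order of $n$ full windings of every orbit. A transition annulus at fixed positive distance from $0\ti S^1$ is crossed in bounded time, so "thick in the flow direction" is not something you get for free from compactness; it has to be built in, and the construction depends on $n$. The paper avoids this entirely by interpolating $\tau$ linearly in the \emph{time parameter along each orbit} (between the values $T$, $2t_0$, and the return time), which makes $t \mapsto \tau(\phi(p,t))$ non-decreasing by construction and hence $t \mapsto t + \tau(\phi(p,t))$ strictly increasing. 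If you reorganize your interpolation to be done orbitwise in flow-time, and enlarge the exact-return region as the paper does, your argument goes through; as written, conditions (3) and the homeomorphism claim are not established.
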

\begin{proof}
    We first consider only the subset $[0, \infty) \ti S^1.$
    Since $0 \ti S^1$ is the only periodic orbit,
    \cref{thm:cylinderfoliation} implies that it
    must either be topologically attracting
    or topologically repelling on $[0, \infty) \ti S^1.$
    We assume for now that it is topologically attracting.
    As there are no one-ended orbits,
    every forward orbit in $(0, \infty) \ti S^1$
    must limit on $0 \ti S^1.$

    By modifying a trapping region near $0 \ti S^1,$
    we can find a global transversal,
    that is, a vertical circle $\al$ which intersects every orbit
    in $(0, \infty) \ti S^1$ exactly once.
    We assume that $\al \subof (0,1) \ti S^1.$

    Let $\pi : \bbR \ti S^1 \to \bbR$ denote projection onto the $\bbR$ coordinate.
    Then $\pi(\al)$ is a compact subset of (0,1).
    For $t \in \bbR,$ let $\phi(\al, t)$ denote the image of $\al$ under the time-$t$ map
    of the flow.
    Choose $t_0 > T$ large enough that the sets
    \[
        \pi \big(\phi(\al, t_0) \big), \quad
        \pi(\al), \qandq
        \pi \big(\phi(\al, -t_0) \big)
    \]
    are pairwise disjoint and such that 
    $\pi \big(\phi(\al, -t_0) \big)$ is a subset of $(1,\infty)$.
    For the remainder of the proof, we use $\al_1$, $\al_2$, and $\al_3$
    to denote
    $\phi(\al, t_0)$, $\al$, and $\phi(\al, -t_0)$ respectively.
    Define
    \begin{itemize}
        \item $A_1$ as the closed annulus between $0 \ti S^1$ and $\al_1,$
        \item
        $A_2$ as the closed annulus between $\al_1$ and $\al_2,$
        \item
        $A_3$ as the closed annulus between $\al_2$ and $\al_3,$ and
        \item
        $A$ as the union $A = A_1 \cup A_2.$
    \end{itemize}    
    For a point $p = (x,z) \in A,$ the forward orbit $\phi(p, [0, +\infty))$
    is contained in $A$ and attracts towards $0 \ti S^1.$
    Therefore, $\phi(p, [0, +\infty))$ intersects the segment
    $[-1,1] \ti z$ infinitely many times.
    Let $n \ge 1$ be a large integer and define
    $r : A \to A$ as the $n$-th return map on each segment $[-1,1] \ti z.$
    Let $\tau : A \to [0, \infty)$ be the associated $n$-th return time,
    so that $r(p) = \phi(p, \tau(p))$ for all $p \in A.$
    We assume that $n$ is chosen large enough that $\tau(p) > 2 t_0$
    for all $p \in A.$

    We now extend $\tau$ to a function on all of $[0, \infty) \ti \bbR.$
    We will do this for each orbit individually, but in such a way
    that it is clearly continuous everywhere.
    Consider a point $p \in \al_2.$
    For $t \in [0, \infty),$ the point $\phi(p,t)$ lies in $A$
    and $\tau(\phi(p,t))$ is already defined.
    Define
    $\tau( \phi(p, -t_0)) = 2 t_0$
    and
    \[
        \tau \big( \phi(p,t) \big) = T
        \text{ for all } t \in (-\infty, -2 t_0).
    \]
    With $\tau(\phi(p,t))$ now defined for all
    $t \in (-\infty,-2t_0] \cup \{-t_0\} \cup [0,+\infty),$
    define $\tau(\phi(p,t))$
    for $t$ on the intervals $[-2t_0, -t_0]$ and $[-t_0, 0]$
    by linear interpolation.
    With $\tau : [0, \infty) \ti S^1 \to [T, \infty)$
    now defined, define $r : [0, \infty) \ti S^1 \to [0, \infty) \ti S^1$
    by $r(p) = \phi(p, \tau(p)).$

    We now verify that $\tau$ and $r$ have all of the desired properties.
    For an individual orbit $\phi(p, \bbR)$ with $p \in \al_2,$
    the inequality $T < 2 t_0 < \tau(p)$ implies
    that the function $\bbR \to \bbR, t \mapsto \tau(\phi(p,t))$
    is non-decreasing.
    Therefore, $r$ restricts to a homeomorphism on the orbit $\phi(p,\bbR).$
    As this holds for every orbit in $(0, \infty) \ti \bbR$
    and $r$ is the identity map on $0 \ti S^1,$
    it follows that $r$ is a homeomorphism of $[0, \infty) \ti \bbR.$

    In the statement of the lemma, item (1) holds by the definitions
    of $\tau$ and $r.$
    Item (2) holds (for points in $[0,\infty) \ti S^1$)
    by defining $K$ as the annulus between $0 \ti S^1$ and $\phi(\al_2, -2t_0).$
    Item (4) holds since $r(p)$ is the $n$-th return map for any $p \in A.$

    To prove item (3), first note that $\phi(\al_3, 2 t_0) = \al_1$
    and $\tau(A_3) \subof [2 t_0, \infty)$
    together imply that $r(A_3) \subof A_1.$
    The definition of $t_0$ implies that $\pi(A_1)$ and $\pi(A_3)$ are disjoint.
    Now consider a point
    \[
        q \, = \, (x,z) \, \in \, (0,1] \ti S^1
        \, \subof \, A_1 \cup A_2 \cup A_3.
    \]
    If $q \in A = A_1 \cup A_2,$
    then $r(q)$ is the $n$-th return of the forward orbit of $q$
    to the segment $[-1,1] \ti z.$
    Since $0 \ti S^1$ is the unique periodic orbit of the flow,
    $r(q) \ne q$ and so $\pi(r(q)) \ne \pi(q).$
    If instead $q \in A_3,$ then $\pi(q) \in \pi(A_3)$ and
    $\pi(r(q)) \in \pi(A_1),$ so $\pi(r(q)) \ne \pi(q).$
    This finishes the proof of item (3).

    \medskip{}

    We have completed the proof in the case where the flow on $[0,\infty) \ti S^1$
    is topologically attracting towards $0 \ti S^1.$
    Assume now that the flow $\phi$ is repelling away from $0 \ti S^1.$
    Let $\psi$ be the time-reversal of $\phi.$
    That is, $\psi(p, t) = \phi(p, -t).$
    Then the above work shows that there are functions
    $r_\psi : [0, \infty) \ti S^1 \to [0, \infty) \ti S^1$
    and
    $\tau_\psi : [0, \infty) \ti S^1 \to [T, \infty)$
    which satisfy the conclusion of the lemma
    for $\psi$ in place of $\phi.$
    As $r_\psi$ is a homeomorphism, we can define $r$ as the inverse of $r_\psi$
    and then define $\tau$ by $\tau(p) = \tau_\psi(r(p)).$
    Then $r$ and $\tau$ satisfy the conclusions of the lemma
    for the original flow $\phi.$
    This solves the problem on $[0,\infty) \ti S^1.$
    To solve it on all of $\bbR \ti S^1,$
    we do the same steps on $(-\infty,0] \ti S^1,$
    taking care to use the same large integer $n \ge 1$
    on both sides of $0 \ti S^1.$
\end{proof}    
With these lemmas established,
we now prove \cref{prop:fullgoodtime}.
As $\Fcs$ is in ideal position, the sublamination $\Lamcs$
of vertical leaves projects to a geodesic foliation on $S.$
Let $L \in \Lamcs$ be the leaf containing the unique center circle
$C = \pi \inv(p_0)$ where $p_0 \in S.$
Let $\gam^s \subof S$ be the geodesic for which $L = \pi \inv(\gam^s)$
and let $D \subof S$ be a small disk centered at $p_0$ such that
$\pi \inv(D)$ is a good neighborhood of plaques for $C.$

Let $T > 0$ be given by \cref{lemma:Ugoodtime}, with $\pi \inv(\ior(D))$ being
the neighborhood of $C$ used in that lemma.
Let $\rho : \bbR \to \gam^s$ be a parameterization of the geodesic such that
$\rho([-1,1]) = D \cap \gam^s.$
Using this, define a diffeomorphism $\rho_L : \bbR \ti S^1 \to L$ such that
\[
    \pi(\rho_L(x,z)) = \rho(x)
    \quad
    \text{ for all }
    \quad
    (x,z) \in \bbR \ti S^1.
\]
Then the flow $\phi$ on $L$ pulls back to a flow on $\bbR \ti S^1.$
Applying \cref{lemma:patching} to the pulled back flow,
we find a flow time $\tau : L \to [T, \infty)$
with the following properties:
\begin{itemize}
    \item in a neighborhood of $C,$
    $\tau$ is the $n$-th return time to the plaque,
    \item
    if $p \in L \cap \pi \inv(D),$
    then $p$ and $\phi(p, \tau(p))$ lie on the same fiber
    if and only if $p \in C,$
    \item
    outside of a compact subset of $L,$
    $\tau(p) = T.$
\end{itemize}

Since $T$ was given by \cref{lemma:Ugoodtime},
we may conclude from this that
the points $p$ and $\phi(p, \tau(p))$ lie on different fibers
for all $p \in L,$ not just those points in $\pi \inv(D) \cap L.$

Extend $\tau$ to all of $\Lamcs$ by setting $\tau(p) = T$
for all points $p \in \Lamcs \sans L.$
As $L$ is an isolated leaf,
this extension is continuous.
Let $K$ be a very small compact disk centered at $p_0$
such that the $n$-th return time to the plaque
is well defined for all points in $\pi \inv(K).$
Then use this $n$-th return time to extend
$\tau$ to a continuous function on
$\Lamcs \cup \pi \inv(K).$
Finally, choose any continuous extension to all of $M.$
The resulting function $\tau : M \to [T, \infty)$
satisfies all of the conclusions of \cref{prop:fullgoodtime}.

\section{The averaged flow revisited} \label{sec:revisit} 

Assume now that we are in the setting of \cref{prop:fullgoodtime},
where $\Fcs$ and $\Fcu$ are $C^0$ foliations with $\Fcs$ in ideal position,
every center circle has a good neighborhood of plaques,
the flow time $\tau : M \to [0, \infty)$ is given by \cref{prop:fullgoodtime},
and $f : M \to TS$ is the induced map.
In this setting, there is a finite set $\varnothing \ne Z \subof S$
such that $\pi \inv(Z) \subof M$ is the union of all center circles.
We now construct the averaged vector field,
a continuous function $X : S \to TS$ where $X(p) = 0$
if and only if $p \in Z.$

When restricted to $M \sans \pi \inv(Z),$
the induced function $f$ maps points in $M \sans \pi \inv(Z)$
to non-zero vectors in the tangent bundle of $S \sans Z.$
If we normalize these vectors,
this gives a bundle map
$f_1 : M \sans \pi \inv(Z) \to T^1(S \sans Z).$
The proof of \cite[Proposition 4.4]{ham2020horizontal}
also applies to an open manifold such as $M \sans \pi \inv(Z).$
As in the proof at the end of \cref{sec:average},
the presence of vertical cs leaves implies that
the degree of $f_1$ must be zero when restricted to a fiber.
Therefore, the averaging along fibers yields a unit vector field
$X_1 : S \sans Z \to T^1(S \sans Z).$

Define a continuous vector field on all of $S$ by
\[
    X : S \to TS,
    \quad
    p \mapsto
    \begin{cases}
        \dist(p, Z) \cdot X_1(p) & \text{if } p \notin Z, \\
        0 & \text{if } p \in Z.
    \end{cases}  \]
We now bound the indices of the fixed points of $X.$

\begin{lemma} \label{lemma:critindex}
    At each point $p \in Z,$ the index of $X$ at $p$ is in $\{-1,0,+1\}.$
    Moreover, if we consider a small disk $D$ centered at $p$ and split $D$
    along the image $\pi(L)$ of the leaf $L \in \Fcs$ through $\pi \inv(p),$
    then this produces two half-disks and 
    the index of $X$ at $p$ is in $\{ \tfrac{-1}{2} , 0 , \tfrac{+1}{2} \}$
    on each of the half-disks.
\end{lemma}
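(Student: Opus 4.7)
The plan is to work in the good neighborhood of plaques $\{D_z : z \in S^1\}$ around the center circle $C = \pi\inv(p)$ and reduce the index of $X$ at $p$ to a local index of the return map on an individual plaque, where \cref{prop:productmap} and \cref{prop:halfproductmap} apply.

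The first step is to analyse the return map $r(q) := \phi(q, \tau(q))$ on each plaque. By \cref{prop:fullgoodtime}, $\tau$ is the $n$-th return time to the plaque near $C,$ with $n$ arbitrarily large, so $r$ preserves each plaque $D_z$. Since orbits of $\phi$ lie in both $\Fcs$- and $\Fcu$-leaves, $r|_{D_z}$ preserves the transverse foliations $\Fs, \Fu$ from the good-plaque structure and acts orientation-preservingly on each of their leaves. Moreover, $(p,z)$ is its only fixed point in a sufficiently small plaque: a fixed point of $r$ must be periodic under $\phi,$ hence must lie on a center circle, and the only center circle through a sufficiently small $D_z$ is $C$ itself by the finiteness condition~(\ref{item:firstcircle}) of \cref{thm:twofolns}. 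Using the local product structure of $\Fs$ and $\Fu,$ a local homeomorphism $h_z$ straightens them to horizontal and vertical lines; in those coordinates $r$ becomes $(x,y) \mapsto (f_z(x), g_z(y))$ with $f_z, g_z$ continuous, strictly increasing, and with $0$ as unique fixed point. By \cref{prop:productmap}, for $n$ sufficiently large the vector field $X_z(q) := r(q) - q$ (identifying $D_z$ with $D$ via $\pi$) has index $i_z \in \{-1,0,+1\}$ at $p,$ independent of $n$. The half-disk variant is analogous: $\pi(L) = \gam^s$ corresponds in the straightened coordinates to $\{y = 0\},$ splitting $D$ into $\{\pm y > 0\},$ and \cref{prop:halfproductmap} (after swapping $x$ and $y$ to match its setup) gives the half-disk index of $X_z$ at $p$ in $\{-\tfrac{1}{2}, 0, +\tfrac{1}{2}\}$.

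The second step converts these plaque-level indices into indices of the averaged vector field $X$ on $S$. For $q$ close to $p$ in $D,$ the induced vector $f(q,z) = \exp_q\inv \pi(r(q,z))$ differs from the Euclidean displacement $X_z(q)$ by a vector of magnitude $O(|X_z(q)|^2),$ so both have the same direction on a small loop $\gam$ around $p$. Consequently, the winding number on $\gam$ of $s \mapsto f_1(\gam(s), z)$ equals $i_z$. As an integer depending continuously on $z,$ this winding number is constant: $i_z = i$ for all $z \in S^1$. The averaged vector field $X,$ constructed following \cite{ham2020horizontal} by averaging continuous lifts of the angle of $f_1$ along fibers, has winding on $\gam$ equal to this common value $i$ (the fiber degree of $f_1$ being zero makes such lifts globally well-defined). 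Hence $\operatorname{index}(X, p) = i \in \{-1, 0, +1\},$ and the same averaging applied to each side of $\gam^s$ gives the half-disk bound.

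The main technical obstacle is this averaging step: rigorously identifying $\operatorname{index}(X, p)$ with the common value $i_z$ amounts to a degree computation for the torus bundle map $f_1|_{\pi\inv(\gam)} : \pi\inv(\gam) \to T^1 S|_\gam,$ handled using the tools developed in \cite{ham2020horizontal} and recalled in \cref{sec:average}.
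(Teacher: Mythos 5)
Your proposal follows essentially the same route as the paper: reduce to the $n$-th return map on a plaque, use the local product structure to invoke \cref{prop:productmap} and \cref{prop:halfproductmap}, note the resulting index is constant in $z$ by continuity, and then transfer it to the averaged field using that the fiber degree of $f_1$ is zero. The only difference is that the paper carries out the final averaging step explicitly (lifting the angle map to $\tilde g:\bbR^2\to\bbR$ and integrating over the fiber coordinate to see the winding number is preserved), whereas you defer this to the cited reference; the ingredients you identify are exactly the ones the paper uses.
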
   
\begin{proof}
    We can assume that $D$ here is such that $\pi \inv(D)$ gives
    the good neighbourhood of plaques
    for the center circle $C = \pi \inv(p).$
    Consider one these plaques, $D_z$ for a choice of $z \in S^1.$
    Then the induced map $f : M \to T^1 S$ restricted to $D_z$
    defines a vector field $X_z$ on $D$ as follows.
    Recall that the good neighborhood of plaques is given by an embedding
    $i : D \ti S^1 \to M$
    where each plaque is of the form
    $D_z = i (D \ti z)$ for some $z \in S^1.$
    For each such $z,$ define a vector field $X_z$ on the disk $D$
    by
    \[
        X_z(q) = f(i(q,z)).
    \]
    For points on $D_z$ near the fiber $\pi \inv(p),$
    the flow time $\tau$ is the $n$-th return time to the plaque $D_z$ and
    the local product structure of the good neighborhood of plaques
    implies that (up to conjugating by a homeomorphism)
    the $n$-th return map is
    a product of two one-dimensional maps.
    Therefore, we are in the setting of \cref{prop:productmap}.
    (Strictly speaking, the induced map $f : M \to TS$ and therefore
    the vector field $X_z$ is defined using the hyperbolic metric
    instead of the Euclidean metric,
    but see the discussion at the end of \cref{sec:indices} to handle this.)
    By \cref{prop:productmap},
    $X_z$ has an isolated zero at $p$ with index in $\{-1, 0, +1\}.$
    By continuity, this index must be constant and independent of $z \in S^1.$

    We claim that the averaged vector field $X$ has the same index at $p.$
    This can be proved via algebraic topology.
    First identify $D$ with the unit disc in Euclidean space $\bbR^2$
    such that $S^1 = \del D.$
    Then define a map $g : S^1 \ti S^1 \to S^1$ where
    $g(q,z)$ is the unit vector pointing in the direction of $X_z(q).$
    Then $g$ lifts to a map $\tilde g : \bbR^2 \to \bbR$ on the universal cover
    where the covering map $\bbR \to S^1$ is defined by identifying
    $S^1$ with $\bbR/\bbZ.$
    That $X_z$ has an index $j \in \{-1, 0, 1\}$ means that
    $\tilde g(x + 1, y) = \tilde g(x, y) + j$
    for all $(x,y) \in \bbR^2.$
    The geodesic $\pi(L)$ cuts though the disk $D$ and therefore intersect $\del D$
    in two points.
    At either of these two points of intersection,
    the vector $X_z$ points in a constant direction
    independent of $z$ and therefore
    $\tilde g(x, y + 1) = \tilde g(x, y)$
    for all $(x,y) \in \bbR^2.$
    The averaging of angles used to produce the averaged vector field
    can be realized as a function $\tilde g_{\text{avg}} : \bbR \to \bbR$
    defined by
    \[
        \tilde g_{\text{avg}}(x) = \int_{0}^1 \tilde g(x,y) \, dy
    \]
    and therefore
    \begin{math}
        \tilde g_{\text{avg}}(x + 1) = \tilde g_{\text{avg}}(x) + j.
    \end{math}
    This quotients down to a map $g_{\text{avg}} : S^1 \to S^1$ of degree $j$
    and shows that the averaged vector field $X$ has index $j$ at the point $p.$

    The same reasoning holds when we split $D$ into two half-disks.
    We can use \cref{prop:halfproductmap} to show that index of $X$
    on these half-disks lies in 
    $\{-\frac{1}{2}, 0, +\frac{1}{2}\}.$
\end{proof}
\section{Dumbbells} 


Consider a flow $\phi$ defined on a oriented surface with boundary $S_0$
and generated by a continuous vector field.
Previously, we used $\phi$ to denote the flow along the center foliation $\Fc,$
but in this section $\phi$ can be any flow generated by a continuous vector field
and the next section $\phi$ will be generated by the averaged vector
field.
We use the notation $\phi^t(x)$ and $\phi(x,t)$ interchangeably for the flow.

Assume that $S_0$ has a piecewise $C^1$ boundary where
each $C^1$ segment in the boundary is either
tangent to the flow or transverse to the flow.
We call a segment $\sig$ an \emph{outflow edge} if
it is transverse to the flow and
there is a neighbourhood $U$ of $\sig$ in $S_0$ such that for $x \in U,$
the forward orbit through $x$ exits the surface through $\sig.$
To be precise, for every $x \in U$ there is $t \ge 0$ such that
$\phi(x, [0,t]) \subof U$ and $\phi^t(x) \in \sig.$

The definition of an \emph{inflow edge} is analogous:
there is a neighbourhood $U$ of $\sig$ in $S_0$ such for every $x \in U$
there is $t \le 0$ such that
$\phi(x, [t,0]) \in U$ and $\phi^t(x) \in \sig.$
In figure \ref{fig:region} given in \cref{sec:indices},
the boundary has two outflow edges on the left and two inflow edges on the
right.

We call a boundary component $C$ of $S_0$ a \emph{dumbbell} if
the flow has no fixed points on $C$ and
$C$ consists of $4n$ segments $\sig_1,\sig_2, \ldots, \sig_{4n}$ where
\begin{itemize}
    \item $\sig_i$ is tangent to the flow for $i$ odd,
    \item
    $\sig_{2+4k}$ is an outflow edge for $0 \le k < n,$ and
    \item
    $\sig_{4k}$ is an inflow edge for $0 < k \le n.$
\end{itemize}
\Cref{fig:dumbbell} shows dumbbells for $n = 1$ and $n = 2.$
The shape for $n = 1$ is what motivated the name.
We call a segment of the dumbbell a \emph{lobe} if
it is tangent to the flow.

\begin{figure}
    \centering
    \includegraphics{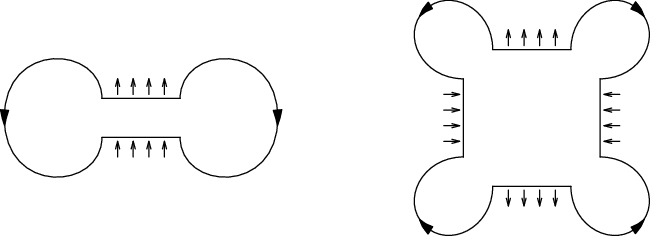}
    \caption{
    Dumbbells with $n = 1$ at left and $n = 2$ at right.
    }
    \label{fig:dumbbell}
\end{figure}

We now show how we can excise an invariant subset $\Gam$ from a surface
leaving dumbbells on the boundary.

\begin{prop} \label{prop:excise}
    Let $\phi$ be a flow tangent to a continuous vector field $X$
    on a closed oriented surface $S.$
    Suppose $\Gam$ is a compact $\phi$-invariant subset
    such that $\phi$ has no fixed points, periodic orbits, or
    isolated orbits in $\Gam.$
    Then there is a compact set $K$ containing $\Gam$ such that every boundary
    component of $S \sans \ior(K)$ is a dumbbell.

    Moreover for any given $\ep > 0,$
    the set $K$ can be constructed in such a way
    that $d_H(\Gam, K) < \ep$ in Hausdorff distance and
    all of the inflow and outflow edges have length less than $\ep.$
\end{prop}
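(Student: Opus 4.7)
The plan is to construct $K$ as a small flow-saturated tubular neighborhood of $\Gam,$ built from finitely many thin flow boxes, and then verify that each boundary component inherits the dumbbell structure by a local analysis. Since $X$ is nowhere vanishing on the compact set $\Gam,$ uniform flow box charts are available. I would begin by choosing a finite collection of short open transversal arcs $\tau_1, \ldots, \tau_N$ to $X$ in $S,$ pairwise disjoint and of length less than $\ep,$ such that the flow-saturates of the $\tau_i$ cover a neighborhood of $\Gam.$ For each $i,$ pick small positive times $T_i^{\pm}$ and form the closed flow box $B_i = \overline{\phi((-T_i^-, T_i^+) \ti \tau_i)}.$ Choose the parameters so that $\bigcup_i B_i$ contains $\Gam$ in its interior and lies in the $\ep$-neighborhood of $\Gam,$ and set $K$ to be the closure of the union of the connected components of $\bigcup_i B_i$ that meet $\Gam.$

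By construction $\del K$ is piecewise smooth, with each piece either transverse to the flow (a short arc of length less than $\ep$ coming from an end of some flow box) or tangent to the flow (a longer arc along the side of a flow box). To verify the dumbbell structure on a boundary component $C$ of $S \sans \ior K,$ I would use that lobes and transverse arcs alternate along $C.$ Each transverse arc carries a well-defined inflow or outflow label coming from the flow direction, which is constant across a short transverse arc. The central local claim is that the two transverse arcs abutting a single lobe $\ell$ have opposite labels: the flow along $\ell$ has a constant direction, so at one endpoint it enters $K$ (making the adjacent transverse arc an outflow edge for $S \sans \ior K$) and at the other endpoint it exits $K$ (making the adjacent transverse arc an inflow edge). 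This forces the cyclic pattern lobe, outflow, lobe, inflow, lobe, outflow, $\ldots$ required of a dumbbell with $4n$ segments for some $n \ge 1.$

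The main obstacle is the local verification and the elimination of degenerate boundary components, and this is precisely where the three negative hypotheses on $\Gam$ enter. The hypothesis of no fixed points in $\Gam$ yields the uniform flow-box cover. The hypothesis of no periodic orbits in $\Gam$ rules out boundary components consisting of a single closed orbit (with no transverse arcs), which would violate the dumbbell definition. The hypothesis of no isolated orbits in $\Gam$ allows us to arrange, by perturbing the transversal arcs and flow times, that every lobe of $\del K$ is flanked on each side by a proper transverse arc rather than running off along an isolated orbit of $\Gam.$ A more subtle issue is the elimination of concave corners of $K,$ where two adjacent transverse arcs might acquire the same inflow/outflow label and destroy the alternation; this is arranged by choosing the flow-box parameters in generic position and, wherever two flow boxes overlap, replacing them by a single flow-convex sub-neighborhood so that every corner of $\del K$ is convex from the viewpoint of $K.$ With these choices the alternation argument applies and yields the required dumbbell structure.
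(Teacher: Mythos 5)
Your construction (a union of thin flow boxes over short transversals) is genuinely different from the paper's, but as written it has a real gap precisely at the step where all the content lies: verifying that each boundary component of the resulting $K$ is a dumbbell. For a union of flow boxes $B_i = \overline{\phi((-T_i^-,T_i^+)\ti\tau_i)}$, the boundary of $\bigcup_i B_i$ acquires corners wherever $\del B_i$ meets $\del B_j,$ and two transverse ends of different boxes can perfectly well cross each other. This produces two adjacent transverse edges on $\del K$ with no lobe between them, which already violates the strict alternation required of a dumbbell before one even asks about inflow/outflow labels. Your proposed remedy --- ``generic position'' and replacing overlapping boxes by ``a single flow-convex sub-neighborhood'' --- is not a construction; it is an assertion that the problem can be fixed, and it is not clear it can be while keeping $\Gam \subof \ior(K).$ The combinatorics of how the boxes overlap is governed by the return maps of the flow to the transversals, which for an exceptional minimal-type set $\Gam$ is exactly the complexity one must avoid. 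Relatedly, your ``central local claim'' (that the two transverse arcs abutting a lobe carry opposite labels) presupposes the alternating lobe/transverse structure that has not been established, and your lobes are orbit arcs through the \emph{endpoints of the chosen transversals} --- arbitrary orbits near $\Gam,$ with no control on their number or configuration.

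The paper's proof supplies exactly the missing structure, and uses the three hypotheses on $\Gam$ for different purposes than you do. It fixes a transverse flow $\psi$ and considers the ``one-sided extremal'' set $K^+ = \Gam \sans U^+,$ where $U^+$ is the set of points of $\Gam$ approached from the positive $\psi$-side within distance $\ep.$ Invariance of $\Gam$ together with the absence of fixed and periodic orbits shows that $K^+$ meets only finitely many orbits and that each such intersection sits inside a compact arc $I_j$; these arcs (and their counterparts $J_i$ for $K^-$) \emph{are} the lobes, and their endpoints are joined by $\psi$-arcs of length $\ep,$ which are the inflow/outflow edges. The hypothesis of no isolated orbits is used to guarantee $K^+ \cap K^- = \varnothing,$ so the upper and lower lobes are disjoint and the concatenation closes up into simple curves (you instead invoke this hypothesis for a vague and nonessential purpose). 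Without an argument identifying a finite set of extremal orbit arcs of $\Gam$ as the lobes --- or some substitute for it --- the dumbbell structure does not follow from your construction.
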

\begin{proof}
    As $S$ is oriented, in a neighborhood $V$ of $\Gam$ we can define
    a smooth unit vector field $Y$ transverse to $X$ and
    integrate $Y$ to a get a smooth flow $\psi.$
    Let $\ep > 0$ be as in the statement of the proposition.
    By replacing $\ep$ with a smaller value if necessary,
    assume that $\psi(x, [-\ep,\ep] ) \subof V$ 
    for all $x \in \Gam.$
    Define a set $U^+ \subof \Gam$ by $x \in U^+$ if
    $\psi(x, (0,\ep) )$ intersects $\Gam.$
    Since $\Gam$ is $\phi$-invariant,
    one can see that
    $U^+$ is open in the relative topology of $\Gam \subof S$
    and therefore the set $K^+$ defined as $K^+ = \Gam \sans U^+$ is compact.

    Consider an orbit $L$ of $\phi$ which intersects $K^+.$
    Since the flow has no fixed points or periodic orbits in $\Gam,$
    the orbit $L$ must be a topological line.
    Suppose that the intersection $L \cap K^+$ contained a sequence $\{x_n\}$
    that tended to one of the two ends of $L.$
    Then $\{x_n\}$ would accumulate on a point $x \in \Gam$ and
    we could show that $x_n \in U^+$ for some large $n.$
    Therefore, $L \cap K^+$ is contained in a compact curve inside of $L.$
    A similar argument shows that 
    $K^+$ intersects only finitely many orbits of $\phi.$

    Let $L_1, \ldots, L_n$ be all of the orbits which intersect $K^+$ and
    for each $L_j,$ let $I_j$ be the smallest connected subset of $L_j$
    which contains $L_j \cap K^+.$
    Analogous to $U^+,$ define $U^-$ by $x \in U^-$ if
    $\psi(x, (-\ep,0) )$ intersects $\Gam$
    and define the complement $K^- = \Gam \sans U^-.$
    By the same logic,
    there are finitely many orbits $E_1, \ldots, E_m$ of $\phi$
    which intersect $K^-$ and we define compact curves $J_i \subof E_i$
    where $J_i$ is the smallest connected subset containing $E_i \cap K^-.$
    As $\Gam$ contains no isolated orbits of $\phi,$
    the sets $K^+$ and $K^-$ are disjoint
    and therefore all of the compact curves
    $I_1, \ldots, I_n,J_1, \ldots, J_m$ are pairwise disjoint.
    We assume for now that none of these sets is a singleton set
    and therefore each of them is homeomorphic to a compact interval.

    Now consider one of the endpoints $x_1$ of $I_1.$
    Assume this is the ``upper'' endpoint of $I_1$ so that
    $\phi^t(x_1) \in U^+$ for all $t > 0.$
    For one such $t > 0,$ let $x$ denote $\phi^t(x_1).$
    Then $x \in U^+$ implies that there is 
    an orbit of $\phi$ in $\Gam$ that intersects $\psi(x, (0,\ep) ).$
    Let $E$ be the closest such orbit;
    that is, let $s \le \ep$ be the smallest positive number such that
    $\psi^s(x) \in \Gam$ and let $E$ be the orbit of $\phi$ through $\psi^s(x).$
    If no such $E$ existed,
    then orbits would accumulate on $L_1$ from the ``positive'' side,
    contradicting the fact that $L_1 \cap K^+$ is non-empty.
    Observe that $s$ depends continuously on $t \in (0, \infty)$
    and that the orbit $E$ is independent of $t.$

    At the endpoint $x_1$ of $I_1,$ the set $\psi(x_1, (0,\ep) )$ is disjoint
    from $\Gam$ and the point $y_1$ defined by $y_1 = \psi(x_1, \ep)$ is in $\Gam.$
    Therefore, $y_1$ lies in $K^-$ and $E$ is one of the leaves
    $E_1, \ldots, E_m$ defined above.
    Up to relabelling, we may assume $E = E_1.$
    For every $r > 0,$ there are corresponding $0 < s < \ep$ and $t > 0$
    such that
    \[
        \phi^r(y_1) = \psi^s(\phi^t(x_1))
    \]
    and so $y_1$ is one of the two endpoints of $J_1.$

    The above reasoning shows that every endpoint $x$ of a segment $I_j$
    is connected to an endpoint $y$ of a segment $J_i$
    by a short curve of the form $\psi( x, [0, \ep] )$
    transverse to the flow $\phi.$
    Thus these short transverse curves along with the $I_j$ and $J_i$
    can be concatenated together into a finite number of
    piecewise $C^1$ curves $C_1, \ldots C_\ell.$
    For each curve $C_k,$ $\Gam$ accumulates on exactly one side of the curve.
    Therefore, we can define the desired set $K$ as the closure
    of those connected components of $S \sans (C_1 \cup \cdots \cup C_\ell)$
    which intersect $\Gam.$

    We now consider the more general case where one or more of the sets
    $I_1, \ldots, I_n,$ or $J_1, \ldots, J_m$ is a singleton.
    Say $I_1 = \{x_1\}$ is a singleton and that $y_1 = \psi(x_1, \ep) \in J_1.$
    Then for every $x \in L_1,$ there is $0 < s \le \ep$ such that
    $\psi(x, s) \in E_1.$
    Moreover, the equality $s = \ep$ holds exactly when $x = x_1.$
    Therefore $J_1 = \{y_1\}$ is also a singleton set
    and both $I_1$ and $J_1$ can safely removed from consideration.
    In this way, each of the singleton sets $I_j$ pairs with
    a singleton set $J_i$ and these can be removed.
\end{proof}
\section{Analysing the flow} \label{sec:finale} 

In \cref{sec:revisit}, we constructed a continuous vector field $X$
on the closed surface $S.$ Since in this setting $\Fcs$ is in ideal position,
its vertical sublamination $\Lamcs$ projects via $\pi : M \to S$ to
a geodesic lamination $\Lam \subof S.$

Together, $X$ and $\Lam$ have the following properties:
\begin{enumerate}
    \item the vector field $X$ is tangent to the geodesic lamination $\Lam;$
    \item
    the vector field is zero at a point $p$ if and only if
    $\pi \inv(p)$ is a center circle and
    we refer to these as the \emph{critical points};
    \item
    each critical point lies on an isolated geodesic in $\Lam$
    and we call these geodesics the \emph{critical geodesics};
    \item
    each critical geodesic contains exactly one critical point;
    \item
    at each critical point, the index of $X$ at $p$ is in $\{-1,0,+1\};$
    moreover, if we consider a small disk $D$ centered at $p$ and split $D$
    along the critical geodesic to produce two half-disks,
    then the index of $X$ at $p$ is in $\{ \tfrac{-1}{2} , 0 , \tfrac{+1}{2} \}$
    on each of the half-disks.
\end{enumerate}
Item (3) is given by \cref{lemma:critiso} and item (5) by \cref{lemma:critindex}.

The plan now is to first excise the non-isolated leaves of $\Lam$ from $S,$
producing a surface with boundary $S_0 \subof S$
where $\Lam \cap S_0$ consists only of isolated geodesic arcs, each of which is
compact. We then split $S_0$ into a number of ``regions'' $R_i$ by cutting along
the ``critical arcs.'' By applying the Poincar\'e--Hopf theorem to the vector
field $X$ restricted to a region $R_i,$ we show that at least one of these
regions has positive Euler characteristic and therefore must be a topological
disk. Its pre-image $\pi \inv(R_i)$ must contain half of a vertical leaf of $\Fcu,$ and
this causes a contradiction, completing the overall proof.

For simplicity, we assume that the continuous vector field $X$
integrates to a flow $\phi$ on $S$ and we explain in the following remark
how to adapt the proof when this is not the case.

\begin{remark}
    The definition of a dumbbell and the proof of \cref{prop:excise}
    do not actually rely on $X$ integrating to a flow $\phi$ on all of $S.$
    We can instead define an outflow edge $\sig$ as having
    a small neighborhood $U$ such that any integral curve
    starting at a point $x \in U$ and tangent to $X$ must remain in $U$
    until it hits $\sig.$
    The proof of \cref{prop:excise} only considers the flow $\phi$
    on the lamination $\Gam.$
    In our setting, $\Gam$ will be a geodesic lamination and so
    the flow is well defined.
    After applying the proposition and restricting to a subset $S_0 \subof S,$
    we may replace $X$ by a smooth approximation
    such that $X$ is unchanged on the finitely many arcs of $\Lam \cap S_0$
    which contain critical points.
    Thus, we may freely assume that $X$ integrates to a flow $\phi.$
\end{remark}
Let $\Gam$ be the sublamination of $\Lam$ consisting of all
of the non-isolated leaves in $\Lam.$
Apply \cref{prop:excise} to $\Gam$ and let $S_0$ denote the subset $S \sans \ior(K)$
given by the proposition. Each boundary component of $S_0$ is therefore
a dumbbell.

If $L$ is an isolated leaf in $\Lam,$ then $L \cap S_0$ is a compact geodesic arc
and each of the two endpoints of $L$ lie on the boundary of $S_0.$
One of these endpoints lies on an inflow edge 
and the other lies on an outflow edge.
If $L$ is a critical geodesic, we call $L \cap S_0$ a \emph{critical arc.}

Now cut $S_0$ along all of the critical arcs.
This produces a finite collection of pieces, each of which is a surface
with piecewise $C^1$ boundary.
We call each such piece a \emph{region}.

Each $C^1$ segment in the boundary of a region must be one of the following:
\begin{enumerate}
    \item a critical arc;
    \item
    a lobe of a dumbbell; or
    \item
    a subcurve of an inflow or outflow edge of a dumbbell.
\end{enumerate}
Suppose $\sig$ is an outflow edge of a dumbbell.
If none of the critical arcs intersects $\sig,$
then $\sig$ will be an outflow edge for one of the regions.
If instead, $k \ge 1$ of the critical arcs intersect $\sig,$ then
the splitting into regions will split $\sig$ into $k + 1$ subsegments
and each subsegment will be an outflow edge for one of the regions.

\begin{prop} \label{prop:eulerpositive}
    The Euler characteristic of a region $R$ satisfies
    $\chi(R) \ge \frac{n}{2}$ where $n$ is the number of lobes
    appearing in the boundary of $R.$
\end{prop}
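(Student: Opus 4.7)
The plan is to apply Theorem~\ref{thm:poincarehopf} to the vector field $X$ restricted to the region $R$; the whole argument then reduces to bookkeeping the tangent-versus-transverse boundary pieces together with bounding the indices of the zeros of $X$.

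First I would classify each $C^1$ piece of $\partial R$ according to the three possibilities listed at the start of the section: a lobe of a dumbbell (tangent to $X$), a critical arc (tangent to $X$), or a subsegment of an inflow or outflow edge (transverse to $X$). Let $n$, $k$, and $m$ denote the number of pieces of each type, with $n$ as in the statement of the proposition.

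Next I would verify that along every connected component of $\partial R$ the tangent and transverse pieces alternate. Lobes of a dumbbell are separated by inflow/outflow edges by the very definition of a dumbbell; each critical arc has one endpoint in the interior of an inflow edge and the other in the interior of an outflow edge, so cutting $S_0$ along a critical arc simply splits a transverse subsegment on either side into two flanking a newly exposed tangent piece. This yields $N_{\pitchfork} = m = n + k$ and at the same time checks the three structural hypotheses of Theorem~\ref{thm:poincarehopf} for the pair $(R,X)$.

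Finally I would enumerate the zeros of $X$ in $R$. By properties (2)--(4) of $X$ and $\Lam$, each critical point lies on exactly one critical geodesic, and therefore becomes an interior point of a unique critical arc in $\partial R$ after the cutting. Thus $R$ has no interior zeros and exactly $k$ boundary zeros, one in the interior of each critical arc. By property (5), which comes from Lemma~\ref{lemma:critindex}, the index of each such boundary zero, computed on the half-disk lying inside $R$, is at least $-\tfrac{1}{2}$. Substituting into Theorem~\ref{thm:poincarehopf} gives
\[
    \chi(R) \ = \ \tfrac{1}{2}(n+k) \ + \!\!\! \sum_{X(p)=0} \!\! \operatorname{index}(p, X) \ \ge \ \tfrac{1}{2}(n+k) \, - \, \tfrac{k}{2} \ = \ \tfrac{n}{2}.
\]
The main subtlety, and the step I would be most careful about, is the alternation argument for $\partial R$: one must exclude boundary components that are purely tangent closed orbits or entirely transverse circles, and confirm that every endpoint of a critical arc really sits in the interior of an in/out edge so that the cutting produces a valid alternating piecewise-$C^1$ boundary. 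Once this combinatorial structure is in place, the bound $-\tfrac{1}{2}$ on each half-disk index forces the conclusion $\chi(R) \ge n/2$.
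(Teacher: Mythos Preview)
Your proof is correct and follows essentially the same approach as the paper: apply Theorem~\ref{thm:poincarehopf}, use alternation to count $N_\pitchfork$ as the number of tangent pieces $n+k$, and bound the contribution of each critical arc by $-\tfrac{1}{2}$ via Lemma~\ref{lemma:critindex}. The paper's version is more terse and uses slightly different bookkeeping (writing $m$ for your $n+k$), but the argument is the same.
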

\begin{proof}
    Consider the $C^1$ segments of a boundary component of $R.$
    These must alternate between segments tangent to the flow
    (lobes and critical arcs) and segments transverse to the flow
    (inflow and outflow edges).
    By \cref{thm:poincarehopf},
    each inflow or outflow edge contributes exactly $+\frac{1}{2}$
    to the Euler characteristic.
    If the boundary has $2m$ segments, then exactly $m$ of them are
    inflow/outflow edges.
    If the boundary has $n$ lobes, then none of these contribute to $\chi(R)$
    since they are tangent to the flow and contain no critical points.
    This leaves $m - n$ critical arcs, each of which has a single critical point
    with index $ \ge -\frac{1}{2}$. Therefore,
    \[
        \chi(R) \ \ge \ \frac{m}{2} - \frac{m-n}{2} \ = \ \frac{n}{2}
        \qedhere
    \] \end{proof}
\begin{cor} \label{cor:diskregion}
    At least one region is a topological disk
    and has a critical point on its boundary.
\end{cor}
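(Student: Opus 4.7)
The plan is to combine the Poincar\'e--Hopf estimate of \cref{prop:eulerpositive} with the elementary fact that no geodesic lamination on a hyperbolic surface admits a bigon as a complementary region.

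First I would produce a disk region. Summing the bound $\chi(R_i) \ge n(R_i)/2$ from \cref{prop:eulerpositive} over all regions gives $\sum_i \chi(R_i) \ge \ell/2$, where $\ell$ is the total number of lobes appearing on $\partial S_0$. By \cref{prop:fullgoodtime} the zero set $Z$ is non-empty, and each critical geodesic must be non-compact: otherwise the $\pi$-preimage of a closed critical geodesic would be a compact leaf of $\Fcs$, contradicting hypothesis (2) of \cref{thm:twofolns}. Consequently the sublamination $\Gam$ of non-isolated leaves of $\Lam$ is non-empty, and \cref{prop:excise} produces at least one dumbbell, which has index at least one and so contributes at least two lobes. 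Thus $\ell \ge 2$, giving $\sum_i \chi(R_i) \ge 1$, and since any compact orientable surface with boundary has $\chi \le 1$, some region $R$ satisfies $\chi(R) = 1$ and is a topological disk.

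Next I would argue that any such disk $R$ carries a critical arc on its boundary. If $c(R) = 0$, then $\partial R$ lies entirely in $\partial S_0$, a disjoint union of dumbbell circles. A connected simple closed curve in a disjoint union of circles must lie in, and equal, one of those circles; hence $\partial R$ coincides with a single dumbbell $D$. Writing $n_D$ for the index of $D$, every segment of $D$ belongs to $\partial R$, so $n(R) = 2 n_D$, and combining with the bound $n(R) \le 2 \chi(R) = 2$ from \cref{prop:eulerpositive} forces $n_D = 1$.

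The main obstacle is to rule out $n_D = 1$. A dumbbell of index one consists of two lobes joined by one inflow and one outflow transverse edge, each transverse edge of length $O(\ep)$. As $\ep \to 0$, the two lobes converge to two geodesic arcs of leaves of $\Gam$ and the two transverse edges collapse to points, so in the limit the disk $R$ together with the enclosed component of $K$ fills in a complementary region of $\Gam$ in $S$ whose ideal boundary consists of just two geodesic arcs meeting at two ideal points---an ideal bigon. Such a bigon cannot exist on a hyperbolic surface: if the two arcs lie on distinct complete geodesics, then these would share both endpoints on the circle at infinity of $\bbH^2$ and must therefore coincide, while if they lie on a single leaf then that leaf would be asymptotic to itself at two distinct ideal points and would have to close up, contradicting the no-periodic-orbits assumption on $\Gam$. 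This contradiction rules out $n_D = 1$, so every disk region must meet a critical arc on its boundary, and the corollary follows.
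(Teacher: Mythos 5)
Your route is genuinely different from the paper's, and the first two thirds of it are sound: producing a disk region from \cref{prop:eulerpositive} (indeed any region with a lobe on its boundary already has $\chi \ge \tfrac{1}{2}$, hence equals $1$, hence is a disk), and reducing the remaining issue to ruling out a disk region $R$ whose boundary is an entire dumbbell with exactly two lobes. The paper avoids this case analysis altogether: since $Z \ne \varnothing$, some critical arc has an endpoint on an outflow edge $\sig$ of a dumbbell; after cutting, the first subsegment of $\sig$ is adjacent to both a lobe and a critical arc, so the region containing it has $\chi \ge \tfrac12$ and a critical point on its boundary in one stroke. Your approach instead hinges on the nontrivial fact that a two-lobed dumbbell cannot bound a disk on its $\Gam$-free side, and that is where the gap is.

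The justification you offer for that fact does not work as written. The excision in \cref{prop:excise} is carried out for a single fixed $\ep$; no limit $\ep \to 0$ is available, and even rerunning the construction along $\ep_k \to 0$ would not help, because the lobes $I_j, J_i$ are precisely the arcs along which $\Gam$ stays \emph{at least} $\ep$ away on one side, so they lengthen (and in general become unbounded) as $\ep$ shrinks rather than converging to two fixed geodesic arcs. Moreover $K$ lies on the opposite side of the dumbbell from $R$, so ``$R$ together with the enclosed component of $K$'' does not describe the configuration. The correct argument is a fixed-$\ep$ one extracted from the proof of \cref{prop:excise}: beyond each endpoint of the lobe $I_1 \subof \ell_1$ the ray of $\ell_1$ lies in $U^+$, and the construction pairs \emph{every} point of that ray, for all forward (resp.\ backward) time, with a point of the orbit $\ell_2$ through the corresponding endpoint of $J_1$ by a transverse arc of length at most $\ep$. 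Lifting the disk to $\bbH^2$, both ends of the lift $\tilde\ell_1$ stay within $\ep$ of the single lift $\tilde\ell_2$ carrying $\tilde J_1$, so $\tilde\ell_1$ and $\tilde\ell_2$ are asymptotic at both points of $\del \bbH^2$, which is impossible for distinct disjoint geodesics. So your ``no bigon'' conclusion is true, but it must be derived from the structure of the excision rather than from a limiting picture; as written, this step is a genuine gap.
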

\begin{proof}
    Consider an outflow edge $\sig$ of a dumbbell that contains at least
    one endpoint of a critical arc.
    When we split $S_0$ into regions,
    $\sig$ will be split into multiple subsegments.
    Each subsegment shares at least one endpoint with a critical arc.
    Exactly two of the subsegments (the first and last subsegments of $\sig$)
    share their other endpoint with a lobe.
    This implies that there is a region $R$ that has both a critical arc
    and a lobe on its boundary.
    Then $R$ is an oriented surface with boundary and $\chi(R) \ge \frac{1}{2},$
    so it must be a disk.
\end{proof}
This corollary, combined with the next result,
gives the needed contradiction.

\begin{lemma} \label{lemma:nodisk}
    If a region has a critical point on its boundary,
    then it is not a topological disk.
\end{lemma}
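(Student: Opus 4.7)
The plan is to suppose toward contradiction that $R$ is a topological disk with a critical point $p\in\partial R$ and then to show that the whole half of the vertical $cu$ leaf $L^{cu}$ through $C=\pi^{-1}(p)$ would be trapped inside the compact solid torus lifting $R$, which is impossible because a properly embedded non-compact surface cannot fit in a compact set. Let $L^{cs}$ be the critical vertical $cs$ leaf through $C$. Lift $R$ through $\bbH^2\to S$ to a compact topological disk $\tilde R\subset\bbH^2$, and let $\tilde p\in\partial\tilde R$ lie on the critical geodesic $\tilde\gam^s$; since $\bbH^2$ is contractible, $\Mhat\cong\bbH^2\times S^1$ and $\pi^{-1}(\tilde R)$ is a compact solid torus. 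By the good neighborhood of plaques at $C$ given by \cref{prop:straighten}, the projection of $L^{cu}$ near $\tilde p$ coincides with a geodesic segment $\tilde\gam^u$ perpendicular to $\tilde\gam^s$ at $\tilde p$. Let $\hat L^{cu}$ denote the lift of $L^{cu}$ to $\Mhat$ containing the fiber $\hat C=\pi^{-1}(\tilde p)$, and let $\hat L^{cu}_+$ be the half of $\hat L^{cu}$ entering $\pi^{-1}(\tilde R)$ along $\tilde\gam^u_+$; this is a properly embedded, non-compact half-cylinder.

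The contradiction will follow once $\hat L^{cu}_+\subset\pi^{-1}(\tilde R)$ is established, and by connectedness this reduces to showing $\hat L^{cu}_+$ never crosses $\pi^{-1}(\partial\tilde R)$ away from $\hat C$. The boundary $\partial\tilde R$ decomposes into critical arcs, lobes, and inflow/outflow edges. An exit through the critical arc lying on $\tilde\gam^s$ would give an intersection of $\hat L^{cu}$ with the lift of $L^{cs}$ beyond $\hat C$, but items (5) and (6) of \cref{thm:twofolns} together with the disjointness of leaves of $\Fcs$ force $L^{cu}\cap L^{cs}$ to be the single circle $C$. An exit through another critical arc on some $\tilde\gam^{s'}$ would similarly require $L^{cu}$ to meet a second critical $cs$ leaf; items (5) and (6) then force both intersections to equal the unique vertical circle of $L^{cu}$, placing $C$ into two distinct $\Fcs$ leaves, a contradiction. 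An exit through a lobe inside a non-critical leaf $L^{cs''}\subset\Gam$ would yield $L^{cs''}\cap L^{cu}\ne\varnothing$, forcing by item (5) a vertical circle in $L^{cs''}$, contradicting the non-criticality of $L^{cs''}$.

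The main obstacle is the remaining case, ruling out exit through an inflow/outflow edge of $\partial\tilde R$. These edges are short segments transverse to $X$, sitting inside an $\ep$-neighborhood of $\tilde\Gam$ with $\ep>0$ freely chosen in \cref{prop:excise}. The plan is to exploit this freedom: since $L^{cu}$ is disjoint from every vertical $cs$ leaf making up $\pi^{-1}(\Gam)$ (by the same item (5) argument as for lobes), and since $\tilde\Gam$ meets $\overline{\tilde R}$ only in the lobes of $\partial\tilde R$, there is a positive separation between $\pi(\hat L^{cu}_+)\cap\overline{\tilde R}$ and the $\tilde\Gam$-lobes on $\partial\tilde R$; choosing $\ep$ smaller than this separation moves every inflow/outflow piece off of $\pi(\hat L^{cu}_+)$, so no exit through such an edge can occur. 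The delicate point, and the main technical difficulty, is verifying this uniform separation near the lobes: one must rule out that $\pi(\hat L^{cu}_+)$ accumulates on the $\tilde\Gam$-leaves carrying those lobes, using the Hausdorff shadowing of $\hat L^{cu}_+$ by $\pi^{-1}(\tilde\gam^u_+)$ together with the fact that $\tilde\gam^u$ is itself not a leaf of the geodesic lamination $\tilde\Lam$ (it meets $\tilde\gam^s\in\tilde\Lam$ transversely at $\tilde p$). Once this separation is in hand, connectedness of $\hat L^{cu}_+$ forces $\hat L^{cu}_+\subset\pi^{-1}(\tilde R)$, and the non-compactness of the half-cylinder inside the compact solid torus provides the desired contradiction.
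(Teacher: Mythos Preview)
Your overall strategy matches the paper's: assume $R$ is a disk, lift it to $\hat R \subset \bbH^2$, and show that a half of the lifted vertical $cu$ leaf $\hat L^{cu}$ is trapped in the compact solid torus $\pi^{-1}(\hat R)$, contradicting proper embedding. Your case analysis for critical arcs and lobes is correct and in fact more explicit than the paper's one-line ``since $C$ is the unique center circle in $L$, it follows that $L \cap \pi^{-1}(\del R) = C$.''

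The gap is precisely where you flag it, the inflow/outflow edges, and your proposed resolution does not close it. Two problems. First, there is a circularity: the region $\tilde R$ and its lobes are constructed only \emph{after} $\ep$ has been fixed in \cref{prop:excise}, so you cannot afterwards shrink $\ep$ relative to a separation that is defined in terms of $\tilde R$. Second, your fallback via ``Hausdorff shadowing of $\hat L^{cu}_+$ by $\pi^{-1}(\tilde\gam^u_+)$'' is not well-founded. The segment $\gam^u$ comes from the good neighbourhood of plaques and describes $L^{cu}$ only locally near $C$; since $\Fcu$ is \emph{not} in ideal position here, the geodesic that $\hat L^{cu}$ actually shadows need not be the extension of $\tilde\gam^u$, and in any case a bounded tube about a single geodesic can still meet $\tilde\Gam$.

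The paper avoids all of this with a clean global compactness argument carried out \emph{before} the regions are built. Any intersection of $\Lamcu$ with $\pi^{-1}(\Gam)$ would put a center circle inside a non-isolated vertical $cs$ leaf and hence produce infinitely many center circles, so $\Lamcu$ and $\pi^{-1}(\Gam)$ are disjoint. Then $\pi(\Lamcu)$ and $\Gam$ are disjoint \emph{compact} subsets of $S$ and therefore sit at some positive distance $d$, independent of any region. Choosing $\ep < d$ once and for all in \cref{prop:excise} guarantees that $\pi(L^{cu}) \subset \pi(\Lamcu)$ misses every inflow/outflow edge; combined with your (correct) arguments for critical arcs and lobes, this gives $L^{cu} \cap \pi^{-1}(\del R) = C$ immediately.
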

\begin{proof}
    Since $\Gam$ is the set of non-isolated leaves of
    the geodesic lamination $\Lam$ on $S,$
    its pre-image $\pi \inv(\Gam) \subof M$ is a sublamination of $\Fcs$ consisting
    of all of the non-isolated vertical leaves of $\Fcs.$

    Recall that the foliation $\Fcu$ is not in ideal position.
    In the current setting, $\Fcu = h_1(\Fcu_0)$ is a $C^0$ foliation
    with $C^0$ leaves
    where $h_1$ is given by \cref{prop:straighten}.
    By applying \cref{thm:ideal} to $\Fcu_0$ and then composing the resulting
    homeomorphism with $h_1 \inv,$
    we can find a homeomorphism $h^u : M \to M$
    isotopic to the identity and
    such that $h^u(\Fcu)$ is in ideal position.
    Let $\Lamcu$ denote the sublamination of $\Fcu$ consisting
    of vertical leaves;
    that is, $L \in \Lamcu$ if $h^u(L)$ is a union of fibers.

    The sets $\pi \inv(\Gam)$ and $\Lamcu$ cannot intersect,
    since such an intersection would produce 
    infinitely many center circles on $M.$
    Therefore on the surface $S,$ the compact subsets $\pi(\Lamcu)$ and $\Gam$
    are disjoint and are at some positive distance from each other.
    We can assume that the dumbbells given by applying \cref{prop:excise}
    have inflow and outflow edges that are much smaller than this distance
    and therefore $\pi(\Lamcu)$ does not intersect any of the inflow or outflow
    edges.

    Now consider a region $R$ and assume both that $R$ is a topological
    disk and that it has a critical point $p$ on its boundary.
    The fiber $\pi \inv(p) \subof M$ is a center circle, which we denote by $C.$
    Let $L$ denote the cu leaf $L \in \Lamcu$ that contains $C.$
    Since $C$ is the unique center circle in $L,$
    it follows that
    $L \cap \pi \inv(\del R) = C.$

    Since $R$ is simply connected, we can lift it to a topological disk
    $\hat R \subof \bbH^2$ via the universal covering map $\bbH^2 \to S.$
    Recall that $\hat M$ denotes the covering space of $M$ which is obtained
    obtained by pulling the circle bundle back by the covering $\bbH^2 \to S.$
    This covering space is homeomorphic to $\bbH^2 \ti S^1$
    and we use $\pi$ to denote the projection $\pi : \hat M \to \bbH^2.$
    We can therefore lift $C \subof M$ to a fiber $\hat C \subof \hat M$
    and $L \subof M$ to an embedded cylinder $\hat L \subof \hat M$ such that
    $\hat L \cap \pi \inv(\del \hat R) = \hat C.$
    Therefore, one of the two connected components of
    $\hat L \sans \hat C$
    is contained entirely in the compact set $\pi \inv(\hat R).$
    However, the existence of $h^u$ above means that there is a geodesic
    $\gam \subof \bbH^2$ such that $\hat L$
    lies at finite distance from $\pi \inv(\gam).$
    This implies that is $\hat L$ is properly
    embedded in $\hat M,$ which gives a contradiction.
\end{proof}
\appendix \section{Ideal foliations} \label{sec:ideal} 

This appendix gives the proofs of
\cref{thm:ideal} and \cref{addendum:superDE}
involving ideal foliations.

We start by proving a two dimensional version of \cref{thm:ideal}. Recall that a \emph{quasi-geodesic lamination} in a hyperbolic surface $\Sigma$ is a compact set $\Lambda$ which is a disjoint union of immersions $\{\gamma_x: \bbR \to \Sigma\}_{x \in \Lambda}$ with $\gamma_x(0)=x$ and with the property that when lifted to the universal cover $\tilde \Sigma \cong \bbH^2$ one has the following properties: 

\begin{enumerate}
\item there is $C>0$ so that every lift $\tilde \gamma: \bbR \to \tilde \Sigma$ of a curve in $\{\gamma_a\}_a$ is a $C$-quasi-geodesic, that is, one has that $C^{-1} |t-s| - C < d_{\tilde \Sigma}(\tilde \gamma(t), \tilde \gamma(s)) < C |t-s| + C.$. 
\item if $\gamma_1, \gamma_2$ are two (possibly the same) curves in $\{\gamma_x\}_{x \in \Lambda}$ and $\tilde \gamma_1, \tilde \gamma_2$ are lifts to $\Sigma$, then either $\tilde \gamma_1$ is a reparametrization of $\tilde \gamma_2$ or their images are disjoint.
\end{enumerate}

See \cite[Chapters 1 and 2]{calegari2007book} for more on laminations on surfaces. We will say that a quasi-geodesic lamination $\Lambda$ is \emph{redundant} if for every way to write $\Lambda$ as union of curves $\{\gamma_x\}_{x \in \Lambda}$ there are two curves $\gamma_1,\gamma_2$ which are not reparametrizations of the same curve and which have lifts $\tilde \gamma_1$ and $\tilde \gamma_2$ so that their images in $\tilde \Sigma$ lie at bounded Hausdorff distance from each other. A \emph{geodesic lamination} is a quasi-geodesic lamination for which ever curve is a geodesic, since two distinct geodesics in $\bbH^2$ are unbounded distance from each other, it follows that a geodesic lamination is always non-redundant.

\begin{remark}\label{rem-param}
Given a quasi-geodesic lamination $\Lambda$ represented by a disjoint union of curves $\{\gamma_x\}_{x \in \Lambda}$ one can see that it is possible to reparametrize the curves $\gamma_x$ in order to have the following property: if $x_n \to x$ in $\Lambda$, then, $\gamma_{x_n} \to \gamma_x$ uniformly in compact sets. This is immediate if the curves are $C^1$ (in which case one can take parametrizations by arc-length), else, one can use the Morse Lemma to get geodesic representatives of each quasi-geodesic, consider the parametrization associated to the closest point projection and then do an averaging argument in order to get an injective parametrization (see e.g. \cite{fuller1965}).  We will use this idea in order to put a quasi-geodesic lamination in 'ideal' position. 
\end{remark} 

The following result is classical but we did not find a precise reference
(see \cite{eps1966curves} for similar results):

\begin{lemma}\label{lema-qglamtight}
Let $\Lambda$ be a quasi-geodesic lamination of a hyperbolic surface $\Sigma$ which is not redundant. Then, there exists a homeomorphism $h$ isotopic to identity such that $h(\Lambda)$ is a geodesic lamination. 
\end{lemma}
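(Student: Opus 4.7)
The plan is to lift $\Lambda$ to $\tilde \Lambda \subof \bbH^2$, straighten every leaf to a geodesic using Morse's lemma, check that the result is a bona fide geodesic lamination, and then realize the straightening as an equivariant homeomorphism of $\bbH^2$ descending to the desired $h$. Each leaf $\tilde\gamma$ of $\tilde\Lambda$ is a $C$-quasi-geodesic, so by Morse's lemma it has a unique pair of distinct endpoints on $\del \bbH^2$ and lies within some Hausdorff distance $M = M(C)$ of the unique geodesic $\tilde\gamma^*$ joining them. The candidate geodesic lamination $\tilde\Lambda^*$ is the union of these geodesics $\tilde\gamma^*$. Non-redundancy enters precisely here: two distinct leaves of $\tilde \Lambda$ sharing both ideal endpoints would lie at Hausdorff distance at most $2M$, contradicting the hypothesis. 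Hence the straightening is leafwise injective; two leaves of $\tilde\Lambda^*$ cannot cross, since linking of endpoints on $\del\bbH^2$ would force the originals to cross; and $\tilde\Lambda^*$ is closed by continuity of the endpoint map supplied by \cref{rem-param}. Deck invariance is automatic, so $\tilde\Lambda^*$ descends to a geodesic lamination $\Lambda^*$ on $\Sigma$.

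Next I would construct $h$ in two stages. On $\tilde\Lambda$ itself, using the parametrizations from \cref{rem-param}, define an equivariant continuous map $\tilde h_0 : \tilde\Lambda \to \tilde\Lambda^*$ that sends each curve $\gamma_x$ to its straightening $\gamma_x^*$ via closest-point projection, averaged as in \cite{fuller1965} to force injectivity; this map has displacement at most $M$. To extend over the complement, observe that each connected component $U$ of $\bbH^2 \sans \tilde\Lambda$ is a topological disk whose frontier is a union of leaves of $\tilde\Lambda$ meeting at ideal points, and the straightening sets up a canonical bijection with a component $U^*$ of $\bbH^2 \sans \tilde\Lambda^*$ sharing the same combinatorial boundary pattern. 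On each $U$, extend $\tilde h_0$ to a homeomorphism $U \to U^*$ compatibly with the boundary identification; by picking one representative per deck orbit and spreading by the $\pi_1(\Sigma)$-action, this assembles into an equivariant homeomorphism $\tilde h : \bbH^2 \to \bbH^2$ that descends to $h : \Sigma \to \Sigma$ with $h(\Lambda) = \Lambda^*$.

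The main obstacle is the extension step: the region-by-region extensions must glue continuously across shared boundary leaves while remaining equivariant. The combinatorial matching between regions is forced by endpoint preservation, so the real work is picking the interior extensions compatibly --- routine for finite-sided ideal polygons but more delicate when a region has infinitely many boundary leaves accumulating on $\del \bbH^2$, where one can invoke a Douady--Earle-type or convex-hull extension to handle the ideal boundary uniformly. Once $h$ is built, isotopy to the identity is immediate: $\tilde h$ has uniformly bounded displacement, so the equivariant geodesic straight-line homotopy between $\tilde h$ and the identity descends to a homotopy on $\Sigma$, which by classical surface-topology results (e.g.\ \cite{eps1966curves}) can be upgraded to an isotopy.
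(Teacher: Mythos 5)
Your proposal follows essentially the same route as the paper: straighten each quasi-geodesic via the Morse lemma (with non-redundancy guaranteeing the straightening is injective), build the map on $\tilde\Lambda$ by closest-point projection averaged as in \cite{fuller1965}, extend over complementary regions, and use bounded displacement for the isotopy. The gluing/equivariance issue you flag as the main obstacle is exactly where the paper's argument does its work: it uses Riemann mappings of each complementary region to the disk and the \emph{conformally natural} Douady--Earle extension of \cite{de1986conformally}, whose naturality gives basepoint-independence and equivariance, with continuity of the assembled map following from the finiteness of complementary regions (each has area at least $\pi$) and the continuity estimates of \cite[Lemma 2]{de1986conformally}.
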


\begin{proof}
We first constuct a homeomorphism restricted to $\Lambda$ and then we use \cite{de1986conformally} to extend this homeomorphism to $\Sigma$. 

For this, we will work with $\pi: \tilde \Sigma \to \Sigma$ the universal cover. Since $\tilde \Sigma$ is isometric to $\bbH^2$ it has a Gromov compactification with a circle $\partial \tilde \Sigma$ which is equivariantly homeomorphic to the Gromov boundary of $\pi_1(\Sigma)$. The assumptions on $\Lambda$ imply that there is a bijection between the connected components of the lift of $\Lambda$ to the universal cover and a geodesic lamination $\hat \Lambda$ which is $\pi_1(\Sigma)$ equivariant. More precisely, for each $x \in \tilde  \Lambda$ if $\tilde \gamma_x$ is the lift of $\gamma_{\pi(x)}$ there is a unique geodesic $\ell_x$ which is bounded distance away from the image of $\tilde \gamma_x$. We can define a map $\varphi_x: \tilde \gamma_x(\bbR) \to \ell_x$ given by sending each point to the orthogonal projection onto $\ell_x$. This map is $\pi_1(\Sigma)$-equivariant bounded distance from the inclusion, and it also varies continuously as one changes the quasi-geodesic in $\tilde \Lambda$. Thus, one can use the averaging method in \cite{fuller1965} to find a $\pi_1(\Sigma)$-equivariant homeomorphism $\hat h: \tilde \Lambda \to \hat \Lambda$. See \cite[\S 8]{hp2018survey} for a detailed account. 

Now we need to extend the map $\hat h$ to the whole $\tilde \Sigma$ defining a map $h: \tilde \Sigma \to \tilde \Sigma$ which coincides with $\hat h$ on $\tilde \Lambda$. For $y \in \tilde \Sigma \setminus \tilde \Lambda$ we know that the connected component $P_y$ of $y$ in $ \tilde \Sigma \setminus \tilde \Lambda$ is a topological disc whose boundary in $\tilde \Sigma \cup \partial \tilde \Sigma$ is a union of curves of $\tilde \Lambda$ and maybe some arcs of $\partial \tilde \Sigma$. Note that each complementary region projects to some open subset of $\Sigma$ with area at least $\pi$ (the smallest area is if some component projects to an ideal triangle by a Gauss-Bonnet argument), so after projecting to $\Sigma$ we have that the complement of $\Lambda$ has finitely many connected components, so, there are finite orbits of components of complement of $\tilde \Lambda$ in $\tilde \Sigma$ by the fundamental group $\pi_1(\Sigma)$. 

Given a connected component $P$ of $\tilde \Sigma \setminus \tilde \Lambda$ one has a component $Q$ of $\tilde \Sigma \setminus \hat \Lambda$ associated to it via $\hat h$. We define the map $h$ from $P$ to $Q$ which extends $\hat h$ as follows: pick $(y,v) \in T^1P$ and $(z,v) \in T^1Q$ with the hyperbolic metric on each, and consider the Riemann mappings $R_{y,v}: P \to \mathbb{D}$ and $R_{z,w}: Q \to \mathbb{D}$ that map respectively $(y,v)$ and $(z,w)$ to $(0,1) \in T^1\mathbb{D}$. Since the boundaries of $P$ and $Q$ are arc-connected, these Riemann mappings extend to the closures as homeomorphisms to the closed disk which we still denote as $R_{y,v}$ and $R_{z,w}$. We can therefore induce a map $\varphi= R_{z,w} \circ \hat h \circ R_{y,v}^{-1} : S^1 \to S^1$ where $\hat h$ is the restriction of $\hat h$ to $\partial P$ (and defined as the identity on the arcs that correspond to $\partial \tilde \Sigma$).  In \cite{de1986conformally} an extension $\Phi$ of $\varphi$ is constructed which is \emph{conformally natural}, which in our context implies that the map it defines from $P$ to $Q$ by considering $R_{z,w}^{-1} \circ \Phi \circ R_{y,v}$ does not depend on the choice of $(y,v)$ and $(z,w)$ and that it is equivariant by the action of $\pi_1(\tilde \Sigma)$ in $\tilde \Sigma$, that is, if we define $h: \tilde \Sigma \to \tilde \Sigma$ in such a way as to be defined in each $P$ as above, we get that for every $\gamma \in \pi_1(\Sigma)$ we have that $h$ restricted to $\gamma P$ is defined as $\gamma \circ h|_P \circ \gamma^{-1}$. The map $h$ is a bijection and extends to the identity on $\partial \tilde \Sigma$, so it is enough to show it is continuous to get that $h$ is the desired extension. 

Continuity follows from the fact that in $\Sigma$ there are finitely many connected components, the uniqueness properties of the definition and the continuity estimates of \cite[Lemma 2]{de1986conformally}. 
\end{proof}

We now proceed to the proof of \cref{thm:ideal}: 

\begin{proof}[Proof of \cref{thm:ideal}]
We write $\Sigma$ for the surface $S$ with a fixed hyperbolic metric that we will leave unchanged. Using Britenham-Thurston's general position, one can assume that the foliation is given by a sublamination $\Gamma$ which consists of the preimage under $p: M \to \Sigma$ (the fiber bundle projection) of a quasi-geodesic lamination $\Lambda$, and the rest of the leaves are everywhere transverse to the fibers of $p$ (i.e. $p$ restricted to the other leaves is a submersion everywhere). 

Let $\hat \Lambda$ the geodesic lamination ensured by the previous lemma and let $h: \Sigma \to \Sigma$ be a homeomorphism homotopic to the identity mapping $\Lambda$ to $\hat \Lambda$ (note that by construction, $h$ is smooth in $\Sigma \setminus \Lambda$). We need to lift this to $M$.

In $\Sigma$ we can consider a small disk $D$ whose closure is disjoint from $\Lambda$ and $\hat D = h(D)$. Note that $M \setminus p^{-1}(D) \cong (\Sigma \setminus D) \times S^1$ and  $M \setminus p^{-1}(\hat D)  \cong (\Sigma \setminus D) \times S^1$ so one can easily lift $h$ to a map $H_0:M \setminus p^{-1}(D) \to M \setminus p^{-1}(\hat D)$ as $(x,t) \mapsto (h(x), t)$ via the identifications above. Note that this maps every leaf of $\Gamma$ to a vertical leaf obtained as the preimage by $p$ of a geodesic in $\Sigma$ and that the other leaves remain transverse to the circle fibers. Now, we need to extend $H$ to $p^{-1}(D)$ which is also identifiable with $D \times S^1$ only that the gluing corresponds to a map from $\partial D \times S^1$ to $\partial (\Sigma \setminus D) \times S^1$ which preserves fibers. So, it is enough to define an extension coinciding with the gluing map in the boundary of $D$ and which preserves fibers. This will continue to leave horizontal leaves transverse to the fibers of the circle bundle. This completes the proof.
\end{proof}

Having finished the proof of \cref{thm:ideal}, let us now prove \cref{addendum:superDE} showing that it is possible to improve the regularity close to compact parts of isolated leaves of the vertical lamination. It is likely that better regularity results hold in more generality as all the constructions enjoy a lot of flexibility, but to avoid increasing technicalities of the paper we chose to indicate only the used regularities.

\begin{addendum}\label{add-idealposition} 
Let $M$ and $\Fcal$ be as in \cref{thm:ideal}. Then, the homeomorphism $h$ produced in the theorem can be chosen so that for every compact set $K$ contained in an isolated vertical leaf, the map $h$ is smooth in a neighborhood of $K$. 
\end{addendum}

Here, a vertical leaf is isolated if at each of its points it is not accumulated by other vertical leaves. 

\begin{proof}
Note that the set $K$ is contained in the preimage by $p: M \to \Sigma$ of a compact interval $I$ of an isolated leaf of $\Lambda$. One can modify the homeomorphism $h$ constructed in Lemma \ref{lema-qglamtight} in a neighborhood of $I$ in order that restricted to a neighborhood of $I$ it becomes a diffeomorphism: to do this, we use the following.

\begin{claim}
Let $\varphi: \bbR^2 \to \bbR^2$ be a homeomorphism sending $\bbR \times \{0\}$ to $\bbR \times \{0\}$. Then, given a compact interval $J \subset \bbR \times \{0\}$ and a neighborhood $U$ of $J$ there is a homeomorphism $\hat \varphi: \bbR^2 \to \bbR^2$ sending $\bbR \times \{0\}$ to $\bbR \times \{0\}$, coinciding with $\varphi$ outside $U$ and which is a diffeomorphism in a neighborhood of $J$. 
\end{claim}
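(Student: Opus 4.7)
The strategy is to smooth $\varphi$ first along the axis, extend the smoothing to a small rectangular neighborhood of $J$ by a product construction, and glue back to $\varphi$ outside that rectangle through an axis-preserving small-perturbation interpolation. After possibly composing with a reflection, I may assume $\varphi$ preserves the upper half-plane.

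I would fix nested open intervals $J \Subset J_1 \Subset J_2 \subset U \cap (\bbR \times \{0\})$ and a small $\varepsilon > 0$ so that the closed rectangle $R = \overline{J_2} \times [-\varepsilon, \varepsilon]$ lies in $U$. The restriction $\varphi_0 = \varphi|_{\bbR \times \{0\}}$ is a homeomorphism of $\bbR$; using mollification inside $J_1$ followed by a smooth cutoff, I would produce a $C^\infty$ diffeomorphism $f : \bbR \to \bbR$ that coincides with $\varphi_0$ outside $J_1$ and is arbitrarily $C^0$-close to $\varphi_0$ on $J_2$. Then on $R$ the product map $\psi(x,y) = (f(x), y)$ is a smooth diffeomorphism onto its image which preserves the axis via $f$ and, for $f$ sufficiently close to $\varphi_0$, is uniformly close to $\varphi|_R$.

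To patch $\psi$ to $\varphi$, consider $\eta = \varphi^{-1} \circ \psi$ on $R$: it is a homeomorphism of $R$ onto a subset close to $R$, preserves the axis (and each half-plane), equals the identity on $\partial R$ outside a small set, and is $C^0$-close to the identity. Working separately on each half-rectangle $R^{\pm} = \overline{J_2} \times [0, \pm\varepsilon]$, I would use a standard small-displacement isotopy argument (valid because $\eta$ is close to the identity) to isotope $\eta|_{R^{\pm}}$ to a homeomorphism $H^{\pm}$ that equals $\eta$ on a smaller sub-rectangle $V^{\pm} = \overline{J_1'} \times [0, \pm\varepsilon']$ containing $J$, equals $\eta|_{\text{axis}}$ on the bottom edge $\overline{J_2} \times \{0\}$, and equals the identity on the other three edges of $R^{\pm}$. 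Gluing $H^+$ and $H^-$ along the axis, where both agree with the axis reparameterization $(x,0) \mapsto (\varphi_0^{-1}(f(x)), 0)$, and extending by the identity outside $R$, produces a homeomorphism $H$ of $\bbR^2$ supported in $R$ and preserving the axis. The map $\hat \varphi := \varphi \circ H$ then equals $\varphi$ outside $R \subset U$, preserves the axis, and equals $\psi$ on $V^+ \cup V^-$, hence is a smooth diffeomorphism near $J$.

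The main obstacle is to make the interpolation simultaneously (i) axis-preserving, (ii) supported in a compact subset of $U$, and (iii) leave $\eta$ unchanged on a fixed neighborhood of $J$. Splitting into the upper and lower half-rectangles reduces (i)--(ii) to a disk problem, and the common boundary along the axis is handled automatically because both extensions agree there by construction. The crucial point for (iii) is the smallness of the perturbation: since $\eta$ is uniformly close to the identity, its displacement can be damped out in the annular region between $V^{\pm}$ and $\partial R^{\pm} \setminus \text{axis}$ through an isotopy supported in that annular region, all while still yielding a homeomorphism.
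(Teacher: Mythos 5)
Your construction is correct in outline, but it takes a genuinely different route from the paper: the paper's entire proof of this claim is a citation to the smoothing argument in Hatcher's exposition of the Kirby torus trick, whereas you give a self-contained, hands-on construction (mollify the boundary homeomorphism $\varphi_0$, replace $\varphi$ near $J$ by the product map $\psi(x,y)=(f(x),y)$, and absorb the discrepancy $\eta=\varphi^{-1}\circ\psi$ into a compactly supported, axis-preserving homeomorphism $H$ so that $\hat\varphi=\varphi\circ H$). What your approach buys is elementarity and explicitness; what the citation buys is brevity and the fact that all the point-set details are already checked in the literature. Two remarks on your write-up. First, the assertion that $\eta$ ``equals the identity on $\partial R$ outside a small set'' is literally false on the top, bottom and side edges of $R$ (it is only the identity on the axis outside $J_1$); this does not damage the argument, since your interpolation step is precisely what forces $H^\pm$ to be the identity on the three non-axis edges, but the sentence should be corrected. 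Second, the ``standard small-displacement isotopy argument'' is the only non-routine step and should not be left as a black box; in dimension two it can be discharged without any local-contractibility machinery: once $\eta$ is close enough to the identity that $\eta(V^\pm)$ is a disk contained in $R^\pm$ meeting the axis exactly along $\eta(\overline{J_1'}\times\{0\})$, the set $R^\pm\setminus \operatorname{int}(\eta(V^\pm))$ is again a closed disk, and the Schoenflies theorem lets you extend the prescribed boundary data ($\eta$ on $\partial V^\pm$, $\eta|_{\mathrm{axis}}$ on the axis edge, the identity on the remaining edges) to a homeomorphism of $R^\pm\setminus\operatorname{int}(V^\pm)$ onto $R^\pm\setminus\operatorname{int}(\eta(V^\pm))$; gluing this to $\eta|_{V^\pm}$ gives $H^\pm$, and the smallness of the displacement is used only to guarantee the inclusions above. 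With that step made precise (together with the reduction to half-plane-preserving $\varphi$, which you correctly flag, and the standard fact that the mollification of a strictly increasing function has strictly positive derivative), your proof is complete.
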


This claim follows from the smoothing argument in \cite[\S 2]{hat2013kirby}.
Once this is established, the proof of \cref{thm:ideal} automatically gives the conclusion of the addendum.
\end{proof}


\bibliographystyle{alpha}
\bibliography{dynamics}

\end{document}